\newtheorem{theorem}{Theorem}[section]
\newtheorem{proposition}[theorem]{Proposition}
\newtheorem{lemma}[theorem]{Lemma}
\newtheorem{lemma-definition}[theorem]{Lemma and Definition}
\theoremstyle{definition}
\newtheorem{definition}[theorem]{Definition}
\newtheorem{example}[theorem]{Example}
\newtheorem{xca}[theorem]{Exercise}
\newtheorem{corollary}[theorem]{Corollary}
\newtheorem{prop}[theorem]{Proposition}
\theoremstyle{remark}
\newtheorem{remark}[theorem]{Remark}
\numberwithin{equation}{section}
\def\A{\mathbb A}
\def\B{\mathbb B}
\def\G{\mathbb G}
\def\H{\mathbb H}
\def\C{\mathbb C}
\def\Q{\mathbb Q}
\def\P{\mathbb P}
\def\R{{\mathbb R}}
\def\Z{\mathbb Z}
\def\N{\mathbb N}
\def\ZZ{\mathcal Z}
\def\FF{\mathcal F}
\def\EE{\mathcal E}
\def\HH{\mathcal H}
\def\LL{\mathcal L}
\def\RR{\mathcal R}
\def\OO{\mathcal O}
\def\PP{\mathcal P}
\def\n{\noindent}
\newcommand{\im}{ \hbox{\rm Im} }
\begin{document}
\title {Mixed Hodge Structures}
\author{Fouad El Zein}
\address{Institut de Math\'ematiques de Jussieu\\
Paris, France}
 \email{elzein@math.jussieu.fr}
\author{L\^e D\~ung Tr\'ang}\footnote{\n The notes were partially written 
with the help of the Research fund of Northeastern University} 
\address{CMI, Universit\'e de Provence\\
F-13453 Marseille cedex 13, France}
 \email{ledt@ictp.it}
\keywords{Hodge theory, algebraic geometry, Mixed Hodge structure}
 \subjclass{Primary 14C30, 32S35; Secondary 14F40, 14F05, 32J25}
\maketitle \centerline {Summer School  on Hodge Theory and Related
Topics}
 \centerline {International center for theoretical Physics}
\centerline {14 June 2010 - 2 July 2010}
 \centerline { Trieste -
ITALY}

\begin{abstract}
With a basic knowledge  of cohomology theory, the background necessary to
understand Hodge theory
 and polarization, Deligne's Mixed Hodge
Structure  on cohomology of complex algebraic varieties is
described.
\end{abstract}

\maketitle

\section*{Introduction}

We assume that the reader is familiar with the basic theory of
manifolds, basic algebraic geometry as well as cohomology theory. For instance, the reader
should be familiar with the contents of  \cite{Warn}, \cite{bott-tu} or \cite{tu_m}, 
the beginning of \cite{H}, the notes  \cite{E-T}  in this book.
\vskip.1in

According to Deligne \cite{HII} and \cite{HIII}, the cohomology space $H^n(X,\C)$
of a complex algebraic variety $X$ carries two finite filtrations by
complex sub-vector spaces, the weight filtration $W$ and the Hodge filtration $F$. In
addition, the subspaces $W_j$ are rationally defined, i.e., $W_j$ is
generated by  rational elements in $H^n(X,\Q)$. 

For a
smooth compact variety, the filtration $W$ is trivial; however the filtration $ F$ and
its conjugate $\overline F$, with respect to the rational
cohomology, define  the Hodge decomposition. 
The structure in linear algebra
defined on a complex vector space by such decomposition is called a Hodge
Structure (HS).

For any complex algebraic variety $X$, the filtration $W$ is defined on $H^n(X, \Q)$.
We can define the homogeneous part $Gr^W_j H^n(X, \Q)= W_j/W_{j-1}$
of degree $j$ in the graded vector space 
$Gr^W_* H^n(X, \Q):= \oplus_{j\in \Z} W_j/W_{j-1}$
associated to the filtration $W$ of $H^n(X, \Q)$.
For each integer $j$, there exists
an isomorphism: 
$$Gr^W_j H^n(X, \Q) \otimes_{\Q} \C \simeq Gr^W_j
H^n(X, \C),$$ 
such that conjugation on $Gr^W_j H^n(X, \C)$ with
respect to $Gr^W_j H^n(X, \Q)$ is well-defined.  Deligne showed
that the
filtration $F$ with its conjugate induce  a Hodge
decomposition on $ Gr^W_j H^n(X, \C)$ of weight $j$.
These data define a Mixed Hodge Structure
on the cohomology of the complex algebraic variety $X$. 

On a smooth variety $X$,
the weight filtration $W$ and the Hodge filtration $F$ reflect properties 
of the Normal Crossing
Divisor (NCD) at infinity of a completion of the variety, but are
independent of the choice of the Normal Crossing
Divisor.
  
Inspired by the properties  of \'etale cohomology
 of varieties over
fields with positive characteristic, constructed by A. Grothendieck
and M. Artin, Deligne established the existence of a Mixed Hodge Structure on the
cohomology of complex algebraic varieties, depending naturally 
on algebraic morphisms but not on
continuous maps. The theory has been fundamental in the study of
topological
properties of complex algebraic varieties.

After this introduction, we recall the analytic background  of
Hodge decomposition on the cohomology of K\"{a}hler manifolds.
We mention \cite{g-h} and \cite{Vo}, which are 
references more adapted to algebraic
geometers, and where one can also find 
other sources as classical books by A.
Weil \cite{Weil} and R.O. Wells \cite {wel} for which we refer for full proofs in the first two sections. In the second section, we recall the Hard
Lefschetz Theorem and Riemann bilinear relations in order to define a
polarization on the cohomology of projective smooth varieties.

We give an abstract definition of Mixed Hodge Structures in the 
third section as an object of
interest in linear algebra, following closely Deligne \cite{HII}. The
algebraic properties of Mixed Hodge Structures are 
developed on objects with three opposite
filtrations in any abelian category. In section four, we need to
develop algebraic homology techniques on filtered complexes up to
filtered quasi-isomorphisms of complexes. The main contribution by
Deligne was to fix the axioms of a Mixed Hodge Complex (MHC)
giving rise to a Mixed Hodge Structure on its cohomology. 
A delicate lemma of Deligne on two filtrations needed to prove that the
weight spectral sequence of a Mixed Hodge Complex is in the category 
of Hodge Structure is explained.
This central result is
stated and applied in section five, where in particular, the
extension of Hodge Structures to all smooth compact complex algebraic
varieties is described by
Hodge filtrations on the analytic de Rham complex and not harmonic
forms, although  the proof is by
reduction to K\"{a}hler manifolds. 

For a non-compact smooth algebraic variety $X$,  we
need to take into account the properties at infinity, that is the properties
of the Normal Crossing Divisor, the complement of the variety 
in a proper compactification, and introduce 
Deligne's logarithmic de Rham
complex in section $6$.

If $X$ is singular, we  introduce a smooth simplicial
 covering to construct the Mixed Hodge Complex in section $7$. We mention also an
 alternative construction.
 
Each section starts with the statement of the results. The proofs
 are given in later subsections because they can be technical.
Since a Mixed Hodge Structure involves rational cohomology constructed via
techniques different than de Rham cohomology, when we define a
morphism of Mixed Hodge Structures, we ask for compatibility with the filtrations
as well as the identification of cohomology via the various
constructions.  For this reason it is convenient to work
constantly at the level of complexes in the language of filtered
and bi-filtered derived categories to ensure that the Mixed Hodge Structures
constructed on cohomology are canonical and do not depend on the
particular definition of cohomology used. Hence, we add a
lengthy introduction on hypercohomology in section  $4$.

As applications let us mention Deformations of smooth proper
analytic families which define a linear invariant called Variation
of Hodge Structure (VHS) introduced by P. Griffiths, and limit Mixed Hodge
Structures adapted to study the degeneration of Variation of 
Hodge Structures. Variation of Mixed Hodge
Structure (VMHS) are the topics of other lectures.

Finally we stress that we only introduce the necessary language
to express the statements and their implications in  Hodge theory, 
but we encourage mathematicians
to look into the foundational work of Deligne in references
\cite{HII} and \cite{HIII}  to discover his dense and unique style, since here intricate
proofs in Hodge theory and spectral sequences are only surveyed.
We remark that further developments in the theory occurred
for cohomology with coefficients in Variations of Hodge
Structure and the theory of differential Hodge modules (\cite{BBD},\cite{Se1}).

\vskip 0.3cm
\centerline {Contents}

Introduction \hfill  1 \\
\n 1 Hodge decomposition \hfill  3  \\
 1.1 Harmonic forms on a differentiable manifold \hfill  4 \\
 1.2  Complex manifolds and decomposition into types \hfill   6 \\
 1.3 Hermitian metric, its associated Riemannian metric and $(1,1)$ form \hfill  10 \\
 1.4 Harmonic forms on compact complex manifolds \hfill  12 \\
 1.5  K\"{a}hler manifolds \hfill  13\\
 1.6 Cohomology class of a subvariety and Hodge conjecture \hfill  15 \\
  2 Lefschetz decomposition and Polarized  Hodge structure \hfill  20 \\
 2.1  Lefschetz decomposition and primitive cohomology \hfill  20 \\
 2.2 The  category of Hodge Structures  \hfill  23 \\
 2.3 Examples \hfill  27 \\
  3 Mixed Hodge Structures (MHS) \hfill   29 \\
 3.1 Filtrations \hfill  31 \\
 3.2 Opposite filtrations \hfill  33 \\
 3.3  Proof of the canonical decomposition of the weight filtration \hfill  36\\
 3.4 Complex Mixed Hodge Structures  \hfill  38 \\
  4 Hypercohomology and spectral sequence of a filtered complex   \hfill  39 \\
 4.1 Derived filtered   category \hfill  42 \\
 4.2 Derived   functor on a filtered complex \hfill  45 \\
 4.3 The spectral sequence defined by a  filtered complex \hfill  47 \\
  5 Mixed Hodge Complex (MHC)  \hfill  50 \\
 5.1 HS on the cohomology  of a smooth compact algebraic variety \hfill   54 \\
 5.2 MHS on  the cohomology of a mixed Hodge Complex \hfill 56 \\
  6 Logarithmic complex, normal crossing divisor and the mixed cone   \hfill 59   \\
 6.1 MHS on the cohomology of smooth varieties \hfill 60 \\
 6.2 MHS of a normal crossing divisor (NCD) \hfill  66  \\
 6.3 Relative cohomology and the mixed cone \hfill  68 \\
  7  MHS on the cohomology of a complex  algebraic variety  \hfill  69 \\
 7.1 MHS on  cohomology of simplicial varieties \hfill 70 \\
 7.2 MHS on the cohomology of a complete embedded algebraic  variety \hfill 77 \\
   References \hfill  79

\section{Hodge decomposition}

In this section we explain  the importance of the Hodge
decomposition and its relation to various concepts in geometry.
Let $ \EE^*(X)$ denotes the de Rham complex of differential forms
with complex values on a complex manifold $X$ and $\EE^{p,q}(X)$
the differentiable forms with complex coefficients of type $(p,q)$ (see \ref{type} below). 
The cohomology subspaces of type $(p,q)$
are defined as:
\begin{equation*}
     H^{p,q}(X) = \frac{Z_d^{p,q}(X)}{d \EE^*(X) \cap
     Z_d^{p,q}(X)}\, \, \mbox{ where } \, \, Z_d^{p,q}(X)= Ker\, d
     \cap \EE^{p,q}(X)
 \end{equation*}
\begin{theorem}[Hodge decomposition] Let $X$ be a compact K\"{a}hler
manifold (see \ref{Kahler}). There exists a decomposition, called the Hodge decomposition, 
of the complex cohomology
  spaces into a direct sum of complex subspaces:
  \begin{equation*}
H^i (X, \C ) = \oplus_{p+q=i} H^{p,q}(X), \quad {\rm satisfying }
\quad H^{p,q}(X) = \overline {H^{q,p}(X)}.
 \end{equation*}
 \end{theorem}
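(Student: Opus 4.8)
The plan is to realize each de Rham cohomology class by a canonical harmonic representative and then to exploit the fact that, on a Kähler manifold, the relevant Laplacian commutes with the projections onto forms of pure type. First I would invoke the Hodge theorem on the compact oriented Riemannian manifold underlying $X$: having fixed the Kähler metric, the associated Laplacian $\Delta_d = d d^* + d^* d$ has finite-dimensional kernel $\HH^i$ (the harmonic $i$-forms), and the orthogonal decomposition $\EE^i(X) = \HH^i \oplus d\,\EE^{i-1}(X) \oplus d^*\EE^{i+1}(X)$ yields a natural isomorphism $\HH^i \simeq H^i(X,\C)$ sending a harmonic form to its class. This reduces the theorem to producing a bidegree decomposition of $\HH^i$.

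The crucial input is the set of Kähler identities relating the two conjugate Dolbeault Laplacians to $\Delta_d$. Writing $L$ for the operator $\alpha \mapsto \omega \wedge \alpha$, with $\omega$ the Kähler form, and $\Lambda$ for its adjoint, one proves the commutation relations $[\Lambda, \bar\partial] = -i\,\partial^*$ and $[\Lambda, \partial] = i\,\bar\partial^*$. From these a formal computation gives the fundamental identity
\[
\Delta_d = 2\Delta_\partial = 2\Delta_{\bar\partial},
\]
where $\Delta_\partial = \partial\partial^* + \partial^*\partial$ and $\Delta_{\bar\partial} = \bar\partial\bar\partial^* + \bar\partial^*\bar\partial$. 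Since $\bar\partial$ raises the type by $(0,1)$ and $\bar\partial^*$ lowers it by $(0,1)$, the Laplacian $\Delta_{\bar\partial}$ preserves bidegree; hence so does $\Delta_d$. Consequently a form is harmonic if and only if each of its $(p,q)$-components is harmonic, and we obtain $\HH^i = \oplus_{p+q=i}\HH^{p,q}$, where $\HH^{p,q}$ denotes the harmonic forms of type $(p,q)$.

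Next I would transport this splitting to cohomology. Defining $H^{p,q}(X)$ as the image of $\HH^{p,q}$ under the isomorphism $\HH^i \simeq H^i(X,\C)$, one checks that it agrees with the subspace described just before the theorem by verifying that every $d$-closed form of type $(p,q)$ is cohomologous to its harmonic projection, which again lies in $\HH^{p,q}$. This produces the direct sum $H^i(X,\C) = \oplus_{p+q=i} H^{p,q}(X)$. For the conjugation symmetry, note that the metric and $d$ are real, so $\Delta_d$ commutes with complex conjugation; since conjugation interchanges the types $(p,q)$ and $(q,p)$, it maps $\HH^{p,q}$ isomorphically onto $\HH^{q,p}$, and passing to cohomology yields $H^{p,q}(X) = \overline{H^{q,p}(X)}$.

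The main obstacle is the proof of the Kähler identities, since everything downstream is formal once $\Delta_d = 2\Delta_{\bar\partial}$ is in hand. The cleanest route is to observe that these identities are pointwise, first order, and linear in the $1$-jet of the metric, so it suffices to verify them for the flat metric on $\C^n$ by an explicit computation with the standard operators, and then to use that a Kähler metric osculates the flat one to second order at any point in suitable holomorphic coordinates. A secondary technical point will be checking that the harmonic representative of a class of pure type is itself of pure type, which is precisely what the commutation of $\Delta_d$ with the bidegree projections provides.
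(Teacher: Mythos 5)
Your proposal is correct and follows essentially the same route as the paper: identify $H^i(X,\C)$ with the space of $\Delta_d$-harmonic forms via the Hodge theorem, use the K\"ahler identity $\Delta_d = 2\Delta_{\overline{\partial}}$ to conclude that $\Delta_d$ preserves bidegree, so that harmonic forms split into harmonic $(p,q)$-components, and deduce $H^{p,q}(X)=\overline{H^{q,p}(X)}$ from the reality of $\Delta_d$. The only difference is one of completeness: the paper admits the Laplacian identity without proof (citing Wells and Griffiths--Harris), whereas you sketch the standard derivation from the commutation relations $[\Lambda,\overline{\partial}]=-i\,\partial^*$, verified pointwise against the flat metric which a K\"ahler metric osculates to second order in holomorphic normal coordinates.
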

 
 Since a smooth complex projective variety is  K\"{a}hler (see \ref{examples}), we
 deduce:
 
 \begin{corollary}There exists a Hodge decomposition on the
 cohomology of a smooth complex projective variety.
\end{corollary}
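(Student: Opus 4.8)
The plan is to deduce the corollary directly from the Hodge decomposition theorem, so it suffices to exhibit a smooth complex projective variety $X$ as a compact Kähler manifold; once that reduction is in place there is nothing more to do. Compactness is immediate: by definition $X$ is a closed analytic subvariety of some projective space $\P^N(\C)$, and $\P^N(\C)$ is compact, so the closed subset $X$ is compact. The real content of the argument is therefore to produce a Kähler metric on $X$.

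First I would recall that $\P^N(\C)$ carries the Fubini--Study form $\omega_{FS}$, a closed positive real $(1,1)$-form endowing projective space with the structure of a compact Kähler manifold. One verifies in homogeneous coordinates that, up to the standard normalization, $\omega_{FS}$ is locally given by $\tfrac{i}{2\pi}\,\d\bar\d \log \|z\|^2$, which is closed since it is $\d\bar\d$ of a function, and that the associated Hermitian form is positive definite. Next I would restrict to $X$: writing $\iota\colon X \hookrightarrow \P^N(\C)$ for the inclusion, the pullback $\omega := \iota^*\omega_{FS}$ is again a real $(1,1)$-form, it is closed because pullback commutes with $d$, and it is positive because the restriction of a positive definite Hermitian form to a complex subspace of each tangent space is still positive definite. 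Hence $\omega$ is a Kähler form and $X$ is a compact Kähler manifold.

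Finally, applying the Hodge decomposition theorem to the compact Kähler manifold $X$ yields the asserted decomposition $H^i(X,\C) = \oplus_{p+q=i} H^{p,q}(X)$ together with the symmetry $H^{p,q}(X) = \overline{H^{q,p}(X)}$. The only delicate points in the whole argument are the two facts invoked by the reference to \ref{examples}: that the Fubini--Study form is genuinely Kähler, and that the Kähler condition is inherited by complex submanifolds. I expect these to be the main obstacle only in the sense that they carry all the geometric substance; granting them, the corollary follows with no further work, the entire proof being a reduction to the already established compact Kähler case.
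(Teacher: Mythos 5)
Your proof is correct and follows exactly the paper's route: the corollary is deduced from the Hodge decomposition theorem by noting that a smooth projective variety is a compact K\"ahler manifold, with the K\"ahler structure obtained by restricting the Fubini--Study form (the two facts labelled \ref{examples} in the paper). Nothing in your argument deviates from, or adds a gap to, what the paper does.
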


  The above Hodge decomposition  theorem  uses successive
 fundamental concepts  from  Riemannian geometry
 such as harmonic forms,  complex analytic ma\-nifolds such as
  Dolbeault cohomology and from  Hermitian geometry.  It will be extended
  later to algebraic geometry.
  
 The manifold  $X$ will denote successively in this section  a
differentiable,  complex analytic, and finally a compact
 K\"{a}hler manifold. We give in this section a summary of the theory for the reader who is not
familiar with  the  above theorem, which includes the definition of the {\it Laplacian} in Riemannian geometry and the space of harmonic forms isomorphic to the cohomology of $X$.
On complex manifolds, we start with a {\it Hermitian structure}, its underlying {\it Riemannian structure} and its associated {\it fundamental $(1,1)$ form } used to construct the Lefschetz decomposition. 
It is only on {\it K\"{a}hler manifolds} when the  form is closed that the components of type $(p,q)$ of an  {\it harmonic form}  are  harmonics. For  an extended exposition of the theory with
full proofs, including the subtle linear algebra of Hermitian metrics  needed here, we
refer to \cite {wel}, see also  \cite{g-h}, \cite{Vo}  ( for original  articles see \cite{Hodge} and even \cite{Lefsch}). 

\subsection{Harmonic forms on a differentiable manifold}
  A Riemannian manifold $X$ is endowed with a {\it  scalar product on
its tangent bundle } defining a metric. Another basic result in
analysis states that
 the {\it cohomology of  a  compact
smooth  Riemannian manifold is
 represented by real harmonic global differential forms}.
To define harmonic  forms, we need to introduce Hodge {\it
${\mathbb S}tar$-operator and the Laplacian}.

\subsubsection{Riemannian metric} A bilinear form $g$ on $X$
 is defined at each point $x$ as a product on the
 tangent space  $T_x$  to $X$ at $x$
\begin{equation*}
 g_x( , ) : T_x \otimes_{\R} T_x \to \R
\end{equation*}
where $g_x$ vary smoothly with $x$, that is $h_{ij}(x):= g_x(\frac
{\partial}{\partial x_i} , \frac { \partial}{\partial x_j} ) $ is
differentiable in $x$, then the  product is given as $g_x =
\sum_{i,j} h_{ij}(x) dx_i\otimes d x_j$. It is the local
expression of a global product on fiber bundles $g: T_{X}
\otimes T_{X} \to \R_X $. It is called a metric if moreover the
matrix of the the product defined by $h_{ij}(x)$ is positive
definite, that is: $ g_x(u,u) > 0$ for all $u \neq 0\in
T_x$.

A globally induced metric on the cotangent bundle, is  defined
locally on the fiber $\EE^1_x:= T_x^*:= Hom(T_x, \R)$ and more
generally on the differential forms $\EE^p_x$ as follows.

Let $e_1, \ldots,e_n$ be an orthonormal basis of $T_x$,
and $(e_j^*)_{j\in [1,n]}$ its dual basis of $T_x^*$. The vectors
$e_{i_1}^*\wedge \ldots\wedge e_{i_p}^*$, for all $ i_1<\ldots<i_p$, is a
basis of  $\EE^p_x$, for all $x \in X$.

\begin{lemma-definition}
There exists
 a unique metric globally defined on the bundle $\EE^p_X$ such
that,  for all $x \in X$, the vectors
$e_{i_1}^*\wedge \ldots\wedge e_{i_p}^*$, $ i_1<\ldots<i_p$, is an orthonormal
basis of  $\EE^p_x$ whenever $e_1, \ldots,e_n$ is an orthonormal basis of $T_x$.
\end{lemma-definition}

Indeed, $e_{i_1}^*\wedge \ldots,\wedge
e_{i_p}^*$ defines a local differentiable section of $\EE^p_X$ due to
the fact that the Gram-Schmidt process of normalization construct
an orthonormal basis varying differentiably in
terms of the point $x \in X$.

\begin{definition} We suppose that the manifold $X$ is oriented.
For each point $x \in X$ let  $e_j, j\in [1,n]$ be an orthonormal
 positively oriented basis of $T_{X,x}$.
 The volume form, denoted $vol$, is defined by the metric as the unique
  nowhere vanishing section  of $\Omega^n_X$,
 satisfying $vol_x:= e_1^*\wedge \ldots \wedge e_n^*$ for each point $x \in X$.
\end{definition}

 \begin{xca} The volume form may be defined directly without
 orthonormalization process as follows.
  Let $x_1, \ldots, x_n$ denote a local ordered set of
 coordinates on an open subset $U$ of a covering of $X$,
 compatible with the orientation.  Prove that
\begin{equation*} \sqrt {det(h_{ij})} dx_1 \wedge \cdots \wedge dx_n
 \end{equation*}
 defines the global volume form.
 \end{xca}
 
\subsubsection{ $L^2$ metric}
For $X$ compact, we deduce from the volume form a global pairing
called the $L^2$ metric
\begin{equation*} \forall \psi, \,\eta \in \EE^i (X), (\psi,
\,\eta)_{L^2}
  = \int_X g_x(\psi(x), \, \eta(x)) vol(x)
 \end{equation*}
  
\subsubsection{Laplacian}
 We prove the existence of an operator $d^*:\EE^{i+1}_X
 \to \EE^{i}_X $ called formal adjoint to the differential
 $d:\EE^i_X \to \EE^{i+1}_X$, satisfying:
 
\begin{equation*}
 (d \psi,  \eta)_{L^2} = (\psi, d^* \eta)_{L^2}, \quad \forall \psi \in
  \EE^i (X), \forall \eta \in \EE^{i+1}(X).
 \end{equation*}
 
 The adjoint operator is defined by constructing first an operator $*$ 
 
\begin{equation*}
 \EE^i_X \xrightarrow{*} \EE^{n-i}_X
 \end{equation*}
 
 by requiring at each point $x \in X$:
 
 \begin{equation*}  
 g_x(\psi(x), \,\eta(x))
 vol_x  = \psi_x \wedge * \eta_x, \quad \forall \psi_x, \,\eta_x \in
 \EE^i_{X,x}.
 \end{equation*}
 
 The section $vol$ defines an isomorphism of bundles
   $\R_X \simeq \EE^n_X $, inducing the isomorphisms:
 $ Hom ( \EE^{i}_X, \R_X) \simeq Hom (\EE^i_X, \EE^n_X) \simeq
 \EE^{n-i}_X$.
 Finally, we deduce $*$  by composition with
  the isomorphism $\EE^i_X
 \rightarrow Hom ( \EE^{i}_X, \R_X) $ defined by the scalar product
 on $\EE^i_X$. 
 Locally, in terms of an orthonormal basis,  the formula apply for  $\psi(x)= e_{i_1}^*\wedge \ldots\wedge e_{i_p}^*  =   \eta (x)$; it shows
   \begin{equation*} 
   *( \psi(x)) = \epsilon e_{j_1}^*\wedge \ldots\wedge e_{j_{n-p}}^*
   \end{equation*}
   where $\{j_1, \ldots, j_{n-p}\} $ is the complement of  $\{i_1, \ldots, i_p\}$ in $[1,n]$ and 
$\epsilon$ is the sign of the permutation of  $[1,n]$ defined by  ${i_1, \ldots, i_p, j_1, \ldots, j_{n-p}}$.  We have on $\EE^i_X$:
 
 \begin{equation*}
 *^2 = (-1)^{i(n-i)}Id,\quad d^* = (-1)^{n+in+1} *\circ d \circ * =  (-1)^i *^{-1}\circ d \circ * 
 \end{equation*}
 
\begin{lemma}
We have for all $\alpha, \beta \in \EE^i (X)$:
 \begin{equation*}  (\alpha,
\,\beta)_{L^2}
  = \int_X \alpha \wedge  * \beta.
 \end{equation*}
 \end{lemma}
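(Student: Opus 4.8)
The plan is to observe that this lemma is nothing but the pointwise identity defining the operator $*$, read globally after integration over $X$. Recall that $*$ was constructed precisely so that, at every point $x\in X$,
\begin{equation*}
 g_x(\psi_x,\eta_x)\, vol_x = \psi_x \wedge * \eta_x ,\qquad \forall\, \psi_x,\eta_x \in \EE^i_{X,x}.
\end{equation*}
First I would apply this fiberwise identity to the values $\alpha(x),\beta(x)$ of the two given global forms. Since $*$ is a globally defined bundle operator (its local expression in an orthonormal frame, computed just above, sends a smooth section to a smooth section), $*\beta$ is again a smooth $(n-i)$-form, so $\alpha\wedge *\beta$ is a smooth section of $\Omega^n_X$, i.e.\ a genuine $n$-form on $X$ that can be integrated.

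Then I would integrate the resulting equality of $n$-forms over the compact oriented manifold $X$:
\begin{equation*}
 \int_X g_x(\alpha(x),\beta(x))\, vol(x) = \int_X \alpha \wedge * \beta .
\end{equation*}
By the definition of the $L^2$ metric the left-hand side is exactly $(\alpha,\beta)_{L^2}$, which yields the claim. There is essentially no genuine obstacle here: the whole content of the statement lies in the construction of $*$ that precedes it, and the argument is a one-line unwinding of definitions followed by integration. The only point meriting a word of care is the passage from the fiberwise relation, originally stated for elements of $\EE^i_{X,x}$, to its evaluation at the global sections $\alpha,\beta$ and its integration; this is immediate once one knows, as established in the construction of $*$, that $*$ carries a smooth form to a smooth form, so that the pointwise identity holds as an equality of smooth $n$-forms and may be integrated termwise.
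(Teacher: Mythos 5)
Your proof is correct and is exactly the argument the paper intends (the lemma is stated without proof precisely because it is this one-line unwinding): the pointwise defining relation $g_x(\alpha(x),\beta(x))\,vol_x = \alpha_x \wedge *\beta_x$ integrated over $X$, with the left side being the definition of $(\alpha,\beta)_{L^2}$. Your remark that $*$ takes smooth sections to smooth sections, so the identity holds as an equality of smooth $n$-forms before integrating, is the right point of care and nothing more is needed.
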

 
\begin{definition} The  formal adjoint $d^*$ of $d$ is a differential operator
and the operator  $\Delta$  defined as:
 \begin{equation*}
  \Delta = d^*\circ d + d \circ d^*
 \end{equation*}
 is called the  Laplacian.  Harmonic forms are defined as the solutions of the
 Laplacian and denoted by:
 
 \begin{equation*}
  \HH^i(X) = \{ \psi \in \EE^i(X) : \Delta (\psi) = 0 \}
 \end{equation*}
 \end{definition}
 
  The Harmonic forms depend on the choice of the metric. A fundamental result
  due to Hodge and stated here without proof is that harmonic
  forms  represents the cohomology spaces:
  
\begin{theorem} On a  compact
smooth oriented  Riemannian manifold each cohomology class is
 represented by a unique real harmonic global differential form:
 \begin{equation*}
  \HH^i (X) \simeq H^i (X, \R).
 \end{equation*}
\end{theorem}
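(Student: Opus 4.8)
The plan is to build the isomorphism from the orthogonal Hodge decomposition of the space of smooth $i$-forms, with the analytic heart supplied by elliptic operator theory, and to reduce everything else to the formal properties of $d$ and $d^*$.

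First I would record the algebraic characterization of harmonic forms. Using the adjunction $(d\psi,\eta)_{L^2}=(\psi,d^*\eta)_{L^2}$ built into the definition of $d^*$, one computes for any $\psi\in\EE^i(X)$ that
\begin{equation*}
(\Delta\psi,\psi)_{L^2}=(dd^*\psi+d^*d\psi,\psi)_{L^2}=\|d\psi\|_{L^2}^2+\|d^*\psi\|_{L^2}^2 .
\end{equation*}
Hence $\Delta\psi=0$ if and only if $d\psi=0$ and $d^*\psi=0$; in particular every harmonic form is closed, so sending a harmonic form to its de Rham class gives a well-defined map $\HH^i(X)\to H^i(X,\R)$. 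It remains to show this map is bijective.

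The main analytic input, and the step I expect to be the principal obstacle, is the Hodge orthogonal decomposition
\begin{equation*}
\EE^i(X)=\HH^i(X)\oplus d\,\EE^{i-1}(X)\oplus d^*\,\EE^{i+1}(X).
\end{equation*}
Here I would check by a symbol computation that $\Delta$ is a second-order elliptic operator, formally self-adjoint and non-negative for the $L^2$ metric. On a compact manifold the general theory of elliptic operators — Sobolev spaces, elliptic regularity, and the Fredholm alternative — then yields that $\ke\Delta=\HH^i(X)$ is finite-dimensional and that $\EE^i(X)=\HH^i(X)\oplus\im\Delta$ orthogonally. Since $\Delta\eta=d(d^*\eta)+d^*(d\eta)$ shows $\im\Delta\subseteq d\,\EE^{i-1}(X)+d^*\,\EE^{i+1}(X)$, and since the three listed subspaces are mutually orthogonal — $(d\alpha,d^*\beta)_{L^2}=(dd\alpha,\beta)_{L^2}=0$ because $d^2=0$, while harmonic forms are orthogonal to both images by the characterization above — the sum is direct and already exhausts $\EE^i(X)$, giving the displayed decomposition. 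This elliptic package is exactly the part I would cite from \cite{wel} or \cite{Vo} rather than prove in detail.

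Finally I would deduce bijectivity from the decomposition. For surjectivity, take a closed form $\psi$ and write $\psi=h+d\alpha+d^*\beta$ with $h$ harmonic; applying $d$ gives $0=d\psi=dd^*\beta$, whence $\|d^*\beta\|_{L^2}^2=(dd^*\beta,\beta)_{L^2}=0$, so $d^*\beta=0$ and $\psi=h+d\alpha$ is cohomologous to the harmonic form $h$. For injectivity, if a harmonic form $h=d\alpha$ is exact then $\|h\|_{L^2}^2=(h,d\alpha)_{L^2}=(d^*h,\alpha)_{L^2}=0$ since $d^*h=0$, forcing $h=0$. Therefore $\HH^i(X)\to H^i(X,\R)$ is an isomorphism.
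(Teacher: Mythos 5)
Your proof is correct, and there is nothing in the paper to compare it against: the text explicitly states this theorem without proof, deferring the analysis to \cite{wel} and \cite{g-h}. Your argument is exactly the standard one from those references --- the identity $(\Delta\psi,\psi)_{L^2}=\|d\psi\|_{L^2}^2+\|d^*\psi\|_{L^2}^2$ characterizing harmonic forms as closed and coclosed, the orthogonal decomposition $\EE^i(X)=\HH^i(X)\oplus d\,\EE^{i-1}(X)\oplus d^*\,\EE^{i+1}(X)$ with the elliptic package (regularity, Fredholm alternative for the self-adjoint operator $\Delta$) correctly isolated as the one step to be cited rather than proved, and the clean formal deductions of surjectivity and injectivity --- so it supplies precisely what the paper black-boxes, with no gaps.
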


\subsection{Complex manifolds and decomposition into types}
 An  underlying  real differentiable structure is attached to a complex manifold $X$, then
 a real and a complex  tangent bundles are defined  and denoted by $T_{X,\R}$
 and $T_X$.  The comparison is possible by embedding $T_X$ into $T_{X,\R} \otimes
 \C$, then 
 a delicate point is to describe holomorphic functions among differentiable functions, and more generally the complex tangent
space and  holomorphic differential forms in terms of the real structure
of the manifold. Such description
is a prelude to Hodge theory and is fundamental in Geometry,
whether we mention Cauchy-Riemann equations satisfied by the real
components of a holomorphic function, or the position of the
holomorphic tangent bundle inside the complexified real tangent
bundle. A weaker feature of a complex manifold is the existence of
an almost complex structure $J$ on the real tangent bundle
which is enough
to define the type of differential forms.

 \subsubsection{Almost-complex structure} Let $V$ be a real vector space.
 An almost-complex structure on $V$ is defined by
  a  linear map $J: V \to V$ satisfying $J^2 = - Id$; then $\dim_{\R} V$ is even
  and the eigenvalues are $i$ and $-i$ with associated eigenspaces
  $V^{+}$ and $V^{-}$ in $V_{\C} = V
  \otimes_{\R} \C$:
  
\begin{equation*}
V^{+} = \{ x - i J x : x \in V\} \subset V_{\C}, \quad V^{-} = \{ x +
i J x : x \in V \} \subset V_{\C}.
\end{equation*}
where we write  $ v$ for  $v \otimes 1$ and $i v$ for the element $v \otimes i$. The
conjugation $\sigma$ on $\C$ extends to a real linear map $\sigma$
on $V_{\C}$  such that $\sigma (a + i b) = a - i b$;
 we write $\sigma (\alpha) = {\overline \alpha}$ for $\alpha \in
V_{\C}$. We note that the subspaces $V^{+}$ and $V^{-}$ are
conjugate: $\overline { V^{+}} = V^{-}$ such that  $\sigma$ induces an anti-linear 
isomorphism $\sigma:  V^{+} \simeq V^{-}$ .

\begin{example}
 Let $V$ be a complex vector space, then
the identity bijection  with the underlying real space $V_{\R}$ of
$V$, $\varphi: V \to V_{\R}$  transports the action of $i$ to a
real linear  action $J$ satisfying $J^2 = - Id$, defining an
almost complex structure on $V_{\R}$. A complex basis 
$ (e_j)_{j\in [1,n]}$ defines a
real basis $(\varphi(e_j), i(\varphi(e_j))$ 
for $  j \in [1, n]$,  such that: $ J(\varphi(e_j) := i \varphi(e_j)$. 
For $V = \C^n$, $\varphi (z_1,\ldots,z_n) = (x_1, y_1, \ldots,  x_n, y_n)$ and 
$$ J (x_1, y_1, \ldots,  x_n, y_n) = (- y_1, x_1, \ldots,   -y_n, x_n).$$
Reciprocally, 
an almost-complex structure $J$ on a real vector space $W$ corresponds to a structure of
complex vector space on $W$ as follows. There exists a family of
vectors ($e_j)_{j\in [1,n]}$  in $W$ such that $\{e_j, J
(e_j)\}_{j\in[1,n]}$ form a basis of $W$. First, one chooses a non-zero
vector $e_1$  in $W$ and proves that the space $ W_1 $ generated by $ e_1, J(e_1)$ 
is of dimension two, then we continue with $e_2 \notin W_1$
and so on; then, there is a complex structure on $W$ for which 
$e_j, j\in [1,n]$ is a complex basis with the action of $i \in \C$
defined by $i . e_j := J (e_j)$.
\end{example}

\begin{remark} An almost-complex structure on a real space $V$ is equivalent to a
complex structure. In what follows we work essentially with
$V_{\C}:= V\otimes_{\R} \C$, then the product  with a complex number $\lambda .(v \otimes
1)= v \otimes \lambda $ will be written $\lambda v$. This product is not to
confuse with the product $\lambda * v $ for the  complex structure
on $V$; in particular, $v\otimes i := i v $ while $i * v := J v $.
We note that the map $\varphi: V \to V^{+}  \subset V_{\C}$
defined by $ x \mapsto  x - i J x $ is an isomorphism of complex
vector spaces.
\end{remark}

\subsubsection{Decomposition into types}
Let $V$ be a complex vector space and $(V_{\R}, J)$ the underlying
  real vector space with its involution $J$ defined by
   multiplication by $i$. The following decomposition of $V_{\C}$
   is called a decomposition into types
\[ V_{\C} \simeq V^{1,0} \oplus V^{0,1} \, \hbox{ where }
V^{1,0} = V^{+}, \, V^{0,1} = V^{-}, \, {\overline  V^{1,0}} =
 V^{0,1} \]
  the conjugation is with respect to the real structure of $V$,
 that is inherited from the conjugation on $\C$.
  Let $W := Hom_{\R} (V_{\R}, \R)$ and $W_{\C} := W \otimes_{\R} \C$;
 a morphism in $Hom_{\C} (V,
\C)$ embeds, by extension to $V_{\C} $, into
 $W_{\C}$ and its  image  is denoted by  $ W^{1,0}$, then $W^{1,0}$ vanishes on
$V^{0,1}$. Let  $W^{0,1}$ be the subspace vanishing on
$V^{1,0}$,  thus
  we deduce from the almost complex structure a decomposition
$W_{\C} \simeq W^{1,0} \oplus W^{0,1}$  orthogonal to the
decomposition of $V_{\C} $.
  Such decomposition extends to the exterior power:
  $$ \wedge^n W_{\C} = \oplus_{p+q=n} W^{pq}, \quad {\rm where}
  \,\, W^{pq} = \wedge^p W^{1,0}\otimes \wedge^q
  W^{0,1}, {\overline  W^{p,q}} =
 W^{q,p}.$$
 
 \begin{remark} On a fixed vector space, the notion of almost-complex 
 structure is equivalent to complex structure. The
  interest in this notion will become clear for a
  differentiably  varying complex structure on the tangent bundle
of a manifold, since not all such structures determine an
analytic complex structure on $M$.  In order to lift an almost-complex
 structure on $T_M$  to  a complex structure
on $M$, necessary and sufficient
conditions on 
are stated in Newlander-Nirenberg
theorem (see \cite{Vo} Theorem 2.24).
\end{remark}

\subsubsection{Decomposition into types on the complexified tangent bundle}\label{type}
 The existence of a complex
structure on the manifold  gives an almost-complex structure on
the differentiable tangent bundle $T_{X,\R}$ and a decomposition
into types of the complexified tangent bundle $T_{X,\R}
\otimes_{\R} \C$ which splits as a direct sum $T_X^{1,0} \oplus
T_X^{0,1}$ where $T_{X,z}^{1,0} = \{u -i Ju: u \in T_{X,\R,z}\}$
and $\overline {T_X^{1,0}} = T_X^{0,1}$. \\
For a complex manifold $X$ with local complex coordinates $z_j =
x_j + i y_j, j \in [1,n],$ we define  $J (\frac{\partial}{\partial
x_j}) =\frac{\partial}{\partial y_j}$ on the real tangent space
$T_{X,\R,z}$ of the underlying differentiable manifold. Since the change of coordinates is
analytic, the definition of $J$ is compatible with the
corresponding change of differentiable coordinates, hence $J$ is
globally  defined on the tangent bundle $T_{X,\R}$. 

The
holomorphic tangent space $T_{X,z}$ embeds isomorphically onto
$T_X^{1,0}$ generated by $ \frac{\partial}{\partial z_j}:=
\frac{1}{2} (\frac{\partial}{\partial x_j} - i
\frac{\partial}{\partial y_j})$ of the form $u_j - i J(u_j)$ such that
 the basis ($\frac {\partial}{\partial z_j}, j\in [1,n]$) in $T_{X,\R,z}\otimes \C$
is complex  dual to ($dz_j = dx_j+i dy_j, j\in [1,n]$). From now on we
identify the two complex bundles $T_X$ and $T_X^{1,0}$ via the isomorphism:
\begin{equation*}
T_{X,\R,z} \xrightarrow {\sim} T_{X,z}^{1,0}: \frac{\partial}{\partial x_j}
\mapsto \frac{\partial}{\partial z_j} = \frac {1}{2} (\frac
{\partial}{\partial x_j} - i \frac {\partial}{\partial y_j}),
 \frac{\partial}{\partial y_j} \mapsto i\frac{\partial}{\partial z_j} = \frac {1}{2} (\frac
{\partial}{\partial y_j} + i \frac {\partial}{\partial x_j}) 
\end{equation*}
which transforms the action of $J$ into the product with $i$ since
$J(\frac{\partial}{\partial x_j}) = \frac{\partial}{\partial
y_j}$. The  inverse is defined by twice the real part: 
$u+iv \to 2u$.

The dual of the above decomposition of the tangent space is
written as: 
\begin{equation*}
\EE^1_X\otimes_{\R} \C \simeq \EE^{1,0}_X \oplus
\EE^{0,1}_X
\end{equation*}
which induces a decomposition of the sheaves of
differential forms into types:
\begin{equation*}
 \EE^k_X\otimes_{\R} \C \simeq \oplus_{p+q=k} \,\EE^{p,q}_X
 \end{equation*}
 In terms of complex local coordinates on an open set $U$, $\phi
 \in \EE^{p,q} ( U )$ is written as a linear combination with differentiable
 functions as coefficients of:
 $$dz_I \wedge d \overline { z }_J:= dz_{i_1}\wedge \cdots \wedge
  dz_{i_p}\wedge d\overline {z}_{j_1}\wedge \cdots \wedge d \overline
  {z}_{j_q} \quad {\rm where} \,\,I = \{ i_1, \cdots, i_p \},
  J = \{ j_1, \cdots, j_q \} .$$
 
\subsubsection{The double complex}  The differential $d$ decomposes  
   as well  into $d= \partial +
   {\overline \partial}$, where:
    \begin{equation*}
\begin{split} \partial:\EE^{p,q}_X \to \EE^{p+1,q}_X: & f dz_I \wedge
 d \overline { z }_J \mapsto \sum_i\frac{\partial}{\partial z_i}
 (f) dz_i \wedge dz_I \wedge d \overline { z }_J, \\
 & \overline \partial:\EE^{p,q}_X \to \EE^{p,q+1}_X: f
dz_I \wedge d \overline { z }_J \mapsto
 \sum_i (\frac{\partial}{\partial \overline z_i})  (f)
 d{\overline z_i} \wedge dz_I \wedge d \overline { z }_J
 \end{split}
\end{equation*}
are compatible with
  the decomposition up to a shift on the bidegree, we deduce from
  $d^2 = \partial^2 + {\overline \partial}^2 +
 \partial\circ {\overline \partial} + {\overline \partial}\circ
  \partial = 0 $ that $ \partial^2 = 0$, $ {\overline \partial}^2  = 0$ and $
 \partial\circ {\overline \partial} + {\overline \partial}\circ
  \partial = 0 $. It follows that $(\EE_X^{*,*}, \partial,
 \overline{\partial})$ is a double complex (see \cite{g-h} p 442).
 Its
associated simple 
 complex, denoted $s(\EE_X^{*,*}, \partial,
 \overline{\partial})$, is isomorphic to
   the de Rham complex:
 \begin{equation*}
 (\EE^*_X\otimes_{\R} \C,d)\simeq s(\EE_X^{*,*}, \partial,
 \overline{\partial}).
\end{equation*}
In particular we recover the notion of the subspace  of cohomology
of type $(p, q)$:
\begin{definition} The subspace of de Rham cohomology represented
by a global  closed form of type ($p,q$) on $X$ is denoted as:
\begin{equation*}
 H^{p,q}(X):= \{ {\rm cohomology \, classes \, representable \, by \,  a
 \, global \, closed \, form \, of \, type}\,\, (p,q)\, \}
\end{equation*}
\end{definition}
\noindent We have: $ \overline{ H^{p,q}(X)} = H^{q,p}(X)$.

\begin{remark}
 The definition
shows that $\sum_{p+q = i} H^{p,q}(X) \subset H^i_{DR}(X)$.
Although the differential forms decompose into types, it does not
follow that the cohomo\-logy also decomposes into a direct sum of
subspaces $H^{p,q}(X)$ , since the differentials
are not compatible with types.  It is striking that there is a
 condition, easy to state,  on K\"{a}hler manifolds, that imply
  the full decomposition. Hodge theory on such
manifolds shows that: 
$$H^i_{DR}(X) = \oplus_{p+q = i} H^{p,q}(X).$$
\end{remark}

\begin{example}
 Let $\Lambda \subset \C^r$ be a rank-$2r$ lattice
 \[ \Lambda = \{ m_1 \omega_1 + \ldots+  m_{2r} \omega_{2r};
  \quad m_1,\ldots, m_{2r} \in \Z\}\]
  where $\omega_i$ are complex vectors in $\C^r$, linearly
  independent over $\R$. The quotient group $ T_{\Lambda} = \C^r /\Lambda$
  inherits a complex structure locally
 isomorphic to $\C^r$, with change of coordinates induced by
 translations by elements of $\Lambda$.  Moreover $ T_{\Lambda}$
 is  compact. The quotient $ T_{\Lambda} = \C^r /\Lambda$ is a compact
 complex analytic manifold called a complex torus. The projection
 $\pi: \C^r \to T_{\Lambda}  $
 which is holomorphic and locally  isomorphic, is in fact a
 universal covering. 
 
 \vskip.1in 
 \noindent{\it Family of tori.} The above torus depends on the choice of the lattice.
  Given two lattices ${\Lambda_1}$ and $ {\Lambda_2}$, there exists
  an $\R-$linear isomorphism $g \in GL (2r,\R) $ such that
  $g({\Lambda_1}) = {\Lambda_2}$, hence it induces a diffeomorphism
  of the underlying differentiable structure of the tori
  $g: T_{\Lambda_1} \simeq T_{\Lambda_2}$. However, it is not true
  that all tori are isomorphic as complex manifolds.
  
  \begin{lemma} Two complex tori $ T_{\Lambda_1}$ and $ T_{\Lambda_2}$
  are isomorphic if and only if there exists a complex linear map
  $g \in GL ( r,\C)$  inducing an isomorphism of lattices $g:
  {\Lambda_1}\simeq {\Lambda_2}$.
  \end{lemma}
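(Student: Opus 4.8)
The plan is to prove both implications, with the substance lying in the forward direction.

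For the easy direction, suppose $g \in GL(r,\C)$ satisfies $g(\Lambda_1) = \Lambda_2$. Since $g$ is a $\C$-linear automorphism of $\C^r$ it is in particular a biholomorphism, and because it carries $\Lambda_1$ onto $\Lambda_2$ it is compatible with the two translation actions; hence it descends to a well-defined holomorphic map $\bar g : T_{\Lambda_1} \to T_{\Lambda_2}$, with $\overline{g^{-1}}$ as holomorphic inverse. Thus $\bar g$ is an isomorphism of complex manifolds.

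For the converse, suppose $f : T_{\Lambda_1} \to T_{\Lambda_2}$ is a biholomorphism, and write $\pi_i : \C^r \to T_{\Lambda_i}$ for the covering projections. The first step is to lift $f$ to the universal covers. Since $\C^r$ is simply connected and $\pi_2$ is the universal covering (as recorded above), the holomorphic map $f \circ \pi_1 : \C^r \to T_{\Lambda_2}$ admits a lift $\tilde f : \C^r \to \C^r$ with $\pi_2 \circ \tilde f = f \circ \pi_1$; lifting $f^{-1}$ as well and composing the two lifts produces deck transformations, which shows $\tilde f$ is a biholomorphism of $\C^r$. Because $\tilde f$ intertwines the deck transformation groups — which are exactly the translations by $\Lambda_1$ and by $\Lambda_2$ — for every $\lambda \in \Lambda_1$ there is a $\rho(\lambda) \in \Lambda_2$ with $\tilde f(z + \lambda) = \tilde f(z) + \rho(\lambda)$ for all $z$, and $\rho : \Lambda_1 \to \Lambda_2$ is a group isomorphism.

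The crux is to show $\tilde f$ is affine. Differentiating the relation $\tilde f(z + \lambda) = \tilde f(z) + \rho(\lambda)$ in $z$ shows that the holomorphic Jacobian matrix $D\tilde f$ is $\Lambda_1$-periodic, so each of its entries descends to a holomorphic function on the compact complex manifold $T_{\Lambda_1}$. By the maximum principle such a function is constant, so $D\tilde f$ equals a constant matrix $A$ and $\tilde f(z) = Az + b$ for some $b \in \C^r$; moreover $A \in GL(r,\C)$ since $\tilde f$ is invertible. Substituting back gives $A\lambda = \rho(\lambda) \in \Lambda_2$, so $A(\Lambda_1) \subseteq \Lambda_2$, and running the same argument for $\tilde f^{-1}$ yields $A^{-1}(\Lambda_2) \subseteq \Lambda_1$; hence $A(\Lambda_1) = \Lambda_2$ and $g := A$ is the desired complex-linear isomorphism of lattices. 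The translation $b$ is harmless: precomposing $f$ with the translation of $T_{\Lambda_1}$ that moves the class of the origin to the correct point lets one take $b = 0$, though this is not needed for the statement. The main obstacle is exactly the step forcing $\tilde f$ to be affine: everything hinges on converting the periodicity of $D\tilde f$ into constancy, which is where the compactness of the torus — through the fact that a global holomorphic function on a compact connected complex manifold is constant — is indispensable. The lifting step is routine covering-space theory, and the extraction of the lattice isomorphism from the linear part is then purely formal.
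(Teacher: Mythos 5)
Your proof is correct and follows essentially the same route as the paper's: lift $f$ to the universal cover, use the quasi-periodicity $\tilde f(z+\lambda)=\tilde f(z)+\rho(\lambda)$ to see that the derivative is $\Lambda_1$-periodic, hence descends to the compact torus and is constant, so $\tilde f$ is affine and, after composing with a translation, complex linear. Your write-up is in fact more complete than the paper's, which omits the easy direction and the verification that $A(\Lambda_1)=\Lambda_2$ via the inverse lift, but no new idea is involved.
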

  
\begin{proof} Given two complex torus $ T_{\Lambda_1}$ and $ T_{\Lambda_2}$,
 an analytic morphism $f: T_{\Lambda_1}\to T_{\Lambda_2}$ will
 induce a morphism on the fundamental groups and there exists an holomorphic
  lifting to the
 covering  $F : \C^r \to \C^r$  compatible with
 $f$ via the projections: 
 $$\pi_2 \circ F = f \circ \pi_1.$$ 
 Moreover, given a point $z_1 \in \C^r$
 and $\lambda_1 \in \Lambda_1$, there exists $\lambda_2 \in \Lambda_2$
 such that  $F(z_1 + \lambda_1) = F(z_1) + \lambda_2$. 
 
 By continuity, this relation
 will remain true by varying $z \in \C^r$; then $F$ and its
derivative are periodic. This derivative depends holomorphically
on parameters of $T_{\Lambda_1}$, hence  it is necessarily constant; then, $F$ is
 complex affine on $\C^r$, since its derivative is constant.
We may suppose that $F$ is complex linear after composition with
a translation.
\end{proof}

\noindent{\it Moduli space}. We may parameterize all lattices as follows:\\
- the group $GL(2r, \R)$ acts transitively on the set of all lattices of $\C^r$.\\
We choose   a basis $\tau = (\tau_1,
\ldots,\tau_{2i-1},\tau_{2i}\ldots\tau_{2r}),$  $i \in [1, r]$, of
a lattice $L$, then it defines a basis of $\R^{2r}$ over $\R$. An element $\varphi$ of 
$GL(2r, \R)$ is given by the linear transformation which sends $\tau$ into
the basis $\varphi(\tau)=\tau'$ of $\R^{2r}$ over $\R$. The
element $\varphi$ of $GL(2r, \R)$ is carrying the lattice $L$ onto 
the lattice $L'$ defined by the $\tau'$.\\
- The isotopy group of this action is $GL(2r, \Z)$, since 
$\tau$ and $\tau'$ define the same lattice if and only if
$\varphi\in GL(2r, \Z)$.\\
Hence the space of lattices is the quotient group $GL(2r, \R)/GL(2r, \Z)$.\\
- Two tori defined by the lattice $L$ and $L'$ are analytically isomorphic 
if and only if there is an element of $GL(r,\C)$ which transform 
the lattice $L$ into the lattice $L'$, as the preceding lemma states.

It follows
that the parameter space of complex tori is the quotient: 
$$GL(2r,
\Z) \backslash GL(2r, \R)/ GL(r, \C)$$ 
where $GL(r, \C)$ is
embedded naturally in $GL(2r, \R)$ as a  complex linear map is
$\R-$linear. 

For $r =1$, the quotient $GL(2, \R)/ GL(1, \C)$ is
isomorphic to $\C - \R $, since, up to complex isomorphisms, a
lattice is generated by $1, \tau \in \C  $ independent over $\R$,
hence
 completely determined by $ \tau \in \C - \R $.
The moduli space is the orbit space of the action
of $GL(2,\Z)$ on the space $GL(2, \R)/ GL(1, \C)=\C - \R $.
Since $GL(2,\Z)$ is the disjoint union of
$SL(2,\Z)$ and the integral $2\times 2$-matrices of determinant
equal to $-1$, that orbit space is the one of the action of
$SL(2,\Z)$ on the upper half plane:
 $$ ( \left(\begin{array}{cc}
  a & b \\
  c & d \\
\end{array} \right), z) \mapsto \frac{az+b}{cz+d}.$$

 \vskip 0,2cm
 
\n {\it Hodge decomposition on complex tori}. The cohomology of a
complex torus $T_{\Lambda}$ is easy to compute by K\"{u}nneth
formula, since it is diffeomorphic to a product of circles:
$T_{\Lambda} \simeq (S^1)^{2r}$. Hence $H^1(T_{\Lambda}, \Z)
\simeq \Z^{2r}$ and $H^j(T_{\Lambda}, \Z) \simeq \wedge^j
H^1(T_{\Lambda}, \Z) $.

The cohomology with complex coefficients
may be computed by de Rham cohomology. In this case,  since the
complex tangent space is trivial, we have natural  cohomology
elements given by the translation-invariant forms, hence with
constant coefficients.
 The finite complex vector space $T_0^*$ of constant holomorphic $1-$forms
 is isomorphic to $\C^r$ and generated by
 $d z_j, j \in[1,r]$  and the Hodge  decomposition reduces to prove
 $H^j(X, \C)\simeq  \oplus_{p+q = j} \wedge^p T_0^*\otimes \wedge^q
  \overline {T_0^*}, p \geq 0, q \geq 0 $ which is here a consequence
  of the above computation of the cohomology.
\end{example}
\subsubsection{Dolbeault cohomology}
\hbox{}
\

\noindent For  $r \geq 0$, the complex $(\EE_X^{r,*}, \overline{\partial})$, is 
called the $r$-Dolbeault complex.
 \begin{definition} The  $i$-th cohomology of the  global
sections of the $r$-Dolbeault complex is called the
$\overline{\partial}$ cohomology of $X$ of type $(r,i)$
\begin{equation*} H_{\overline{\partial}}^{r,i}(X):=
  H^i(\EE_X^{r,*}(X), \overline{\partial}) =
  \frac { Ker (\overline{\partial}^{i}: \EE^{r,i}(X)\to \EE^{r,i+1}(X))}
 {Im (\overline{\partial}^{i-1}: \EE^{r,i-1}(X)\to \EE^{r,i}(X))}
\end{equation*}
\end{definition}
On complex manifolds, we consider the sheaves of holomorphic forms $\Omega^r_X :=
\wedge \Omega1_X $.
\begin{lemma}\label{dol}
 The Dolbeault complex $(\EE_X^{r,*}, \overline{\partial})$ for
$ r\geq 0$, is a fine resolution of $\Omega^r_X$, hence
 \begin{equation*}
 H^i(X,\Omega^r_X) \simeq  H^i(\EE_X^{r,*}(X),
  \overline{\partial}) := H_{\overline{\partial}}^{r,i}(X)
\end{equation*}
 \end{lemma}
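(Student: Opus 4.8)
The plan is to prove this as a statement of abstract de Rham type: first exhibit $(\EE_X^{r,*}, \overline{\partial})$ as a fine (hence acyclic) resolution of the sheaf $\Omega^r_X$ of holomorphic $r$-forms, and then invoke the general principle that an acyclic resolution computes the sheaf cohomology $H^i(X,\Omega^r_X)$ as the cohomology of its complex of global sections. I would break this into three steps: (i) identify $\Omega^r_X$ with $\ke(\overline{\partial}\colon \EE^{r,0}_X \to \EE^{r,1}_X)$; (ii) prove local exactness in positive degrees, i.e. the $\overline{\partial}$-Poincar\'e lemma; (iii) show each $\EE^{r,q}_X$ is fine, hence acyclic, and assemble the conclusion.

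For step (i), I would write a local section of $\EE^{r,0}_X$ as $\phi = \sum_I f_I\, dz_I$ with the $f_I$ smooth and compute, using the formula for $\overline{\partial}$ recorded above, $\overline{\partial}\phi = \sum_{I,j} (\partial f_I/\partial \overline{z}_j)\, d\overline{z}_j \wedge dz_I$. Since the $d\overline{z}_j \wedge dz_I$ are linearly independent, $\overline{\partial}\phi = 0$ holds precisely when every $f_I$ satisfies the Cauchy--Riemann equations $\partial f_I/\partial \overline{z}_j = 0$, that is, when each $f_I$ is holomorphic; this identifies the kernel with $\Omega^r_X$. For step (ii), the claim is that a $\overline{\partial}$-closed form of type $(r,q)$ with $q \geq 1$ is locally $\overline{\partial}$-exact. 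I would reduce to a polydisc and induct on the largest index $k$ for which $d\overline{z}_k$ appears, the base case being the one-variable solution: for smooth $g$ on a disc,
\[ f(z) = \frac{1}{2\pi i}\int \frac{g(\zeta)}{\zeta - z}\, d\zeta \wedge d\overline{\zeta} \]
solves $\partial f/\partial \overline{z} = g$ by the Cauchy--Pompeiu formula, so that correcting $\phi$ by $\overline{\partial}$ of a primitive removes one $d\overline{z}$.

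For step (iii), each $\EE^{r,q}_X$ is a sheaf of modules over the sheaf $\EE^0_X$ of smooth functions, which admits partitions of unity subordinate to any open cover; hence $\EE^{r,q}_X$ is fine, therefore soft, and on the paracompact manifold $X$ soft sheaves satisfy $H^i(X,\EE^{r,q}_X) = 0$ for $i>0$. Given the resolution $0 \to \Omega^r_X \to \EE^{r,0}_X \to \EE^{r,1}_X \to \cdots$ with all terms acyclic, I would split it into short exact sequences $0 \to \ZZ^{r,q} \to \EE^{r,q}_X \to \ZZ^{r,q+1} \to 0$, where $\ZZ^{r,q}$ denotes the sheaf of $\overline{\partial}$-cocycles, and chase the associated long exact cohomology sequences; acyclicity yields the dimension-shifting isomorphisms $H^i(X,\Omega^r_X) \simeq H^{i-1}(X,\ZZ^{r,1}) \simeq \cdots$ that give the claimed identification with $H^i(\EE_X^{r,*}(X), \overline{\partial})$. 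The main obstacle is step (ii): the $\overline{\partial}$-Poincar\'e lemma is the only genuinely analytic input, and the inductive passage from the one-variable Cauchy--Pompeiu solution to the several-variable statement, keeping track of smooth dependence on the remaining parameters so that the corrected form indeed gains one fewer $d\overline{z}$, requires care, whereas steps (i) and (iii) are essentially formal.
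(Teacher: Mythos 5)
Your proposal is correct and follows precisely the route the paper intends: the paper gives no details but defers to the $\overline{\partial}$-Poincar\'e lemma in Griffiths--Harris, and your three steps (kernel identification via the Cauchy--Riemann equations, local $\overline{\partial}$-exactness by Cauchy--Pompeiu plus induction on the antiholomorphic degree, and fineness of the $\EE^{r,q}_X$ yielding acyclicity and the dimension-shifting argument) are exactly the standard proof being cited. You also correctly identify the only genuinely analytic input, namely step (ii), so nothing is missing.
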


For the proof, see the $\overline{\partial}$-Poincar\'e lemma in
\cite{g-h} p. 25 (continued p. 45). \\
A cohomology class of $X$ of type $(r,i)$ defines a
$\overline{\partial}-$cohomology class of the same type, since ($
d \varphi = 0) \Rightarrow (\overline{\partial}\varphi = 0)$. 

On a
K\"{a}hler manifold (see below in \ref{Kahler}) 
we can find a particular representative
$\omega$ in each class of $\overline{\partial}$-cohomology, called
harmonic form for which: 
$$(\overline{\partial}\omega = 0)
\Rightarrow (d \omega = 0).$$

\subsubsection{Holomorphic Poincar\'e lemma} 
 The holomorphic version of Poincar\'e lemma
 shows that the de Rham complex of
holomorphic forms $\Omega^*_X$ is a resolution of the constant
sheaf $\underline{\C}_X$. However, since the resolution is not
acyclic, cohomology spaces are computed only as
 hypercohomology of the global sections functor, that is
  after acyclic resolution.
 \begin{equation*}
H^i(X,\C) \simeq \H^i(X,\Omega^*_X):= R^i\Gamma
(X,\Omega^*_X)\simeq H^i( \EE^*(X)\otimes_{\R}\C )
\end{equation*}

\subsection{Hermitian metric, its associated Riemannian metric and $(1,1)$ form}
 The Hermitian product $h$ on the complex manifold $X$
 is defined at each point $z$ as a product on the holomorphic
 tangent space  $T_z$  to $X$ at $z$
\begin{equation*}
 h_z: T_z \times T_z \to \C
\end{equation*}
 satisfying $h(u,v) =  \overline {h(v,u)}$, linear in $u$ and anti-linear in $v$, moreover
 $h_z$ vary smoothly with $z$. In terms of the basis
 $(\frac { \partial}{\partial z_j}, j \in [1,n])$ dual to
 the basis $(dz_j,j \in [1,n])$ of $T_z^*$,
   the matrix $h_{ij}(z):= h_z(\frac {
\partial}{\partial z_i} , \frac { \partial}{\partial z_j} ) $
is differentiable in $z$ and $h_{ij}(z)= \overline {h_{ji}(z)}$.
 In terms of {\it the above isomorphism  $T_z  \xrightarrow
 {\sim} T^{1,0}_z$} defined by $ \frac{\partial}{\partial z_j} = \frac {1}{2} (\frac
{\partial}{\partial x_j} - i \frac {\partial}{\partial y_j})$,
  it is
equivalent to define the Hermitian product as
\begin{equation*}
 h_z:T^{1,0}_z \times T^{1,0}_z
 \to \C,\,\, h_z = \sum_{i,j} h_{ij}(z) dz_i\otimes d \overline {z_j}
 \in  \EE^{1,0}_z
\otimes_{\C} \EE^{0,1}_z
\end{equation*}
where the Hermitian product on $T^{1,0}_z$ is viewed as a linear form on 
$ T^{1,0}_z \otimes \overline {T^{1,0}_z}$. The product is called a Hermitian metric if moreover the real
number $h_z(u, \overline {u}) > 0$ is positive definite for all $u
\neq 0\in T_z$; equivalently  the matrix $h$  of the product defined by
$h_{ij}(z)$ (equal to its conjugate transpose: $h = {^t{\overline h}}$ is positive
definite: $ {^t{\overline u}} h u > 0 $.

\subsubsection{ Associated Riemannian structure} The Hermitian metric may be defined on
 $T_{X,\R,z}$ identified with $ T^{1,0}_z$ via  $\frac { \partial}{\partial x_i} \to 
 \frac { \partial}{\partial z_i}$ and extended via the complex structure s.t. 
 $\frac { \partial}{\partial y_i} = J (\frac { \partial}{\partial x_i}) :=  i \frac { \partial}{\partial  x_i} \to 
 i \frac { \partial}{\partial z_i} $, then using the decomposition  
  as the sum of its
real and imaginary parts
\begin{equation*}
 h_{z |T_{X,\R,z}} = Re \,( h_z) + i Im \,( h_z)
\end{equation*}
we associate to the metric  a Riemannian structure on $X$
 defined by $g_z:= Re \,( h_z) $.
 
 In this way, the complex manifold $X$ of dimension $n$ is viewed as a Riemann manifold
 of dimension  $2n$ with metric $g_z$.
 Note that $g$ is defined over the real numbers, since it is represented by a
  real matrix over a basis of the real tangent space.
  
  Since $h_z$ is hermitian, the metric satisfy:
  $g_z (Ju,Jv)= g_z (u,v)$. In fact we can recover the Hermitian product from
  a Riemannian structure satisfying this property.\\
  From now on, the Hodge operator star will be defined on $T_{X,\R,z}$ with respect to $g$
  and extended complex linearly to $T_{X,\R,z}\otimes  \C$.
  
\begin{lemma-definition}[ The (1,1)-form $ \omega $]
The hermitian metric defines  a
 real exterior $2-$form
 $ \omega := - \frac { 1}{2} Im \, ( h) \,$ of type ($1,1$) on  $\, T_{X,\R,z}\,$,
 satisfying:\\
 $\omega ( Ju,v) = - \omega (u,J v)$ and $  \omega ( Ju, Jv) =  \omega (u, v)$; moreover: $2 \omega (u,J v) =  g(u,v)$ and 
$$ \omega  =
\frac { i}{2} \sum_{i,j} h_{ij}(z) dz_i\wedge d \overline {z_j} \quad
\hbox{if} \quad  h_z = \sum_{i,j} h_{ij}(z) dz_i\otimes d \overline {z_j}
$$.
 \end{lemma-definition}
 
 \begin{proof}
 We have an exterior $2$-form since $2 \omega (v,u) = - Im \, h (v,u) =
  Im \, h (u, v)
 = - 2 \omega (u,v)$. We check:
$2 \omega (u,J v) = - Im \, h (u,i v)= - Im \,(-i h (u, v)) = Re
\,(h (u,v) = g(u,v)$, and $ 2 \omega (J u, v) = - Im \, h (i u, v) =
- Im \, (ih ( u, v)) = - Re\, h ( u, v)= - g(u,v)$,
while $Im \, h = \frac { 1}{2i} \sum_{i,j} h_{ij}(z) (dz_i\otimes d \overline {z_j}
-  d\overline {z_j}\otimes d z_i) = -i \sum_{i,j} h_{ij}(z) dz_i\wedge d \overline {z_j}$
 \end{proof}
\begin{example}
 1) The Hermitian metric on $\C^n$ is defined by
 $h:= \sum_{i=1}^n dz_i\otimes d \overline {z_i}$:
\begin{equation*}
 h(z,w)= \sum_{i=1}^n z_i {\overline w}_i
\end{equation*}
  where $ z = (z_1, \ldots , z_n)$ and  $ w = (w_1, \ldots , w_n)$.
Considering the underlying real structure on  $\C^n \simeq
\R^{2n}$ defined by $(z_k = x_k + i y_k) \mapsto (x_k, y_k)  $,
 $Re \,h$ is the Euclidean inner product on $\R^{2n}$ and $Im \, h
$ is a non-degenerate
alternating bilinear form, i.e., a symplectic form: $\Omega =  \frac{i}{2}
 \sum_{i=1}^n dz_i\wedge d \overline {z_i} = \sum_{i=1}^n dx_i\wedge d y_i$.

Explicitly, on $\C^2$, the standard Hermitian form is expressed on $\R^4$ as:
\begin{equation*}
\begin{split}
 h((z_{1,1}, z_{1,2}),(z_{2,1}, z_{2,2}))= & x_{1,1}x_{2,1}+
 x_{1,2}x_{2,2}+ y_{1,1}y_{2,1} + y_{1,2}y_{2,2}\\
&+ i (x_{2,1}y_{1,1}- x_{1,1}y_{2,1}
 + x_{2,2}y_{1,2}-x_{1,2}y_{2,2}).
 \end{split}
\end{equation*}
 2) If $\Lambda \subset \C^n$ is a full lattice, then the
 above metric induces a Hermitian metric on the torus
$\C^n/\Lambda$.

\n 3) The {\it Fubini -Study metric on the projective space $\P^n$}.
Let $\pi:\C^{n+1}-\{0\} \to \P^n$ be the  projection  and consider a
section of $\pi$ on an  open subset  $Z: U \to \C^{n+1}-\{0\}$,
then the form $\omega =\frac{i}{2\pi}
\partial {\overline \partial} log \vert \vert Z \vert \vert ^2$
is a  fundamental $(1,1)$ form, globally defined on $\P^n$, since the forms 
glue together on the intersection of two open subsets (this formula is related to 
the construction of the first Chern class of the canonical line bundle on the projective space,
in particular the coefficient $\frac{i}{2}$ leads later to a positive definite form, while $\pi$
leads to an integral cohomology class). 
 For the
section $Z=(1, w_1,\ldots,w_n)$, 
\begin{equation*}
\omega =
\frac{i}{2\pi}\frac{(1+\vert w \vert^2)\sum_{i=1}^n d w_i\wedge d \overline {w_j} - 
\sum_{i,j =1}^n  \overline {w_j}  w_i d w_i\wedge d \overline {w_j}}
{(1+\vert w \vert^2)^2}
\end{equation*}
which reduces at  the point $ O =(1, 0,\ldots, 0)$,  to $\omega =
\frac{i}{2\pi}\sum_{i=1}^n d w_i\wedge d \overline {w_i}
> 0$, hence  a positive real form , i.e., it takes real values on real tangent vectors, and
is associated to a positive Hermitian form. 
Since $\omega$ is invariant under the transitive holomorphic action of $SU(n+1)$ on $\P^n$,
we deduce that the hermitian form 
\begin{equation*}
h =
(\sum_{i,j =1}^n h_{i,j}(w) d w_i \otimes d \overline {w_i})(1+\vert w \vert^2)^{-2}, \, 
\pi h_{i,j}(w) = (1+\vert w \vert^2)\delta_{ij} -  \overline {w_j} w_i
\end{equation*}
 is a positive definite Hermitian form. Hence
the projective space has a Hermitian metric.
\end{example}

\subsection{Harmonic forms on compact complex manifolds} In this
section, a study of the $\overline{\partial} $ operator, similar
to the previous study of $d$,  leads to the representation of
Dolbeault cohomology by $\overline{\partial}$-harmonic forms 
(Hodge theorem).
\subsubsection{ $L^2$-inner product } We choose a Hermitian metric $h$ with associated $(1,1)$-form 
 $\omega$ and  volume form $vol = \frac{1}{n!} \omega^n$.
 We extend  the $L^2$-inner product  $(\psi,\eta)_{L^2}$ ( resp. the Hodge  star * operator) defined on the real forms by  the underlying Riemannian metric,
 first linearly into a complex bilinear form  $<\psi,\eta>_{L^2}$ on  $\EE^*_X:= \EE^*_{X, \R} \otimes \C $ 
 ( resp.  $ \tilde *: \EE^p_{X, \R} \otimes \C \to \EE^{2n-p}_{X, \R} \otimes \C$).
 However, it is more appropriate to introduce the conjugate  operator:
\begin{equation*}
*: \EE^{p,q}_X \to \EE^{n-p, n-q}_X, \quad   * (\varphi):= { \tilde *} (\overline \varphi)  \end{equation*}
and then we consider the Hermitian product : 
 $ (\psi,\eta)_{L^2} := <\psi,\overline \eta>_{L^2} $, hence:
 $$  (\psi,\eta)_{L^2} =  \int_X \psi \wedge * \eta, \,  \psi , \eta \in \EE^p (X), \,
 \hbox{and} \, (\psi,\eta)_{L^2} =  0  \,
 \hbox{if} \,  \psi  \in \EE^p (X),  \eta \in \EE^q (X), p \not= q. $$
 This definition enables us to obtain a Hermitian metric on the complex vector space of global forms 
 $\EE^* (X)$ on $X$,  out of the real metric since we have $ v \wedge *
   v = \sum_{| I | = p } |v_I |^2 *(1) $ where $*(1) = vol, v = \sum_{| I | = p } v_I e_I $ for 
   ordered multi-indices $ I = i_1 < \ldots < i_p $ of an orthonormal basis $e_i$ , hence the integral vanish iff all $v_I $ vanish everywhere (\cite{wel}  p.$174$). 
With respect to such Hermitian metric,
 the adjoint operator to  $ \overline{\partial}$ is defined:
  \begin{equation*}\overline{\partial}^*:\EE^{p,q}_X \to
  \EE^{p,q-1}_X, \, \,  \overline{\partial}^* = - * \circ\overline{\partial}\circ * \, : \, (\overline{\partial}^*\psi, \eta)_{L^2} = (\psi,
\overline{\partial} \eta)_{L^2}.
\end{equation*}
 Then, we introduce  the $\overline{\partial}$-Laplacian:
 \begin{definition} The $\overline{\partial}$-Laplacian  is defined as:
\begin{equation*}
  \Delta_{\overline{\partial}} = \overline{\partial}\circ
  \overline{\partial}^* + \overline{\partial}^* \circ \overline{\partial}
 \end{equation*}
 Harmonic forms of type $(p,q)$ are defined as the solutions of the
 $\overline{\partial}$-Laplacian
 \begin{equation*}
  \HH_{\overline{\partial}}^{p,q}(X) = \{ \psi \in \EE^{p,q}(X)\otimes \C:
   \Delta_{\overline{\partial}} (\psi) = 0\}.
 \end{equation*}
 \end{definition}
 A basic result, which  proof involves deep analysis in Elliptic operator theory and it is well documented
 in various books, for instance   (\cite{wel}  chap. IV, sec. 4) or   \cite{g-h}, p. 82 - 84, is stated here without proof:
 \begin{theorem}[Hodge theorem] On a  compact
complex  Hermitian manifold each $\overline{\partial}$-cohomology
class of type $(p,q)$ is represented by a unique
$\overline{\partial}$-harmonic global
 complex differential form of type
$(p,q)$:
 \begin{equation*}
 H_{\overline{\partial}}^{p,q}(X)  \simeq   \HH_{\overline{\partial}}^{p,q}(X)
 \end{equation*}
 moreover the space of ${\overline{\partial}}$-harmonic forms is
 finite dimensional, hence the space of
 ${\overline{\partial}}$-cohomology also.
\end{theorem}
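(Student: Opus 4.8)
The plan is to reduce the statement to the general theory of self-adjoint elliptic operators on a compact manifold, exactly as in the proof of the Hodge theorem for the $d$-Laplacian recalled in \S1.1. The two facts I must establish are that $\Delta_{\overline{\partial}}$ is \emph{elliptic} and that it is \emph{formally self-adjoint} for the $L^2$-Hermitian product. The latter is immediate from the defining adjointness relation $(\overline{\partial}^*\psi,\eta)_{L^2} = (\psi,\overline{\partial}\eta)_{L^2}$, which gives $(\Delta_{\overline{\partial}}\psi,\eta)_{L^2} = (\psi,\Delta_{\overline{\partial}}\eta)_{L^2}$. To check ellipticity I would compute the principal symbol at a nonzero cotangent vector $\xi$: the symbol of $\overline{\partial}$ is exterior multiplication by the $(0,1)$-component $\xi^{0,1}$ and the symbol of $\overline{\partial}^*$ is, up to sign, contraction by its metric dual, so their graded anticommutator yields $\sigma(\Delta_{\overline{\partial}})(\xi) = |\xi^{0,1}|^2\,\mathrm{Id}$, which is invertible for $\xi\neq 0$. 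Hence $\Delta_{\overline{\partial}}$ is a second-order self-adjoint elliptic operator on the bundle $\EE^{p,q}_X$.

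Next I invoke the fundamental decomposition theorem for such operators on the compact manifold $X$: the kernel $\HH^{p,q}_{\overline{\partial}}(X) = \ke\Delta_{\overline{\partial}}$ is finite-dimensional, there is an $L^2$-orthogonal direct sum $\EE^{p,q}(X) = \HH^{p,q}_{\overline{\partial}}(X)\oplus\im\Delta_{\overline{\partial}}$, and a Green's operator $G$ inverts $\Delta_{\overline{\partial}}$ on the second factor while commuting with $\overline{\partial}$ and $\overline{\partial}^*$. The finite-dimensionality asserted at the end of the statement is already contained here. From the identity $(\Delta_{\overline{\partial}}\psi,\psi)_{L^2} = \|\overline{\partial}\psi\|_{L^2}^2 + \|\overline{\partial}^*\psi\|_{L^2}^2$ one reads off that a form is harmonic if and only if it is simultaneously $\overline{\partial}$-closed and $\overline{\partial}^*$-closed.

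To upgrade the two-term splitting, note that $\Delta_{\overline{\partial}} = \overline{\partial}\,\overline{\partial}^* + \overline{\partial}^*\overline{\partial}$ gives $\im\Delta_{\overline{\partial}}\subseteq\overline{\partial}\,\EE^{p,q-1}(X) + \overline{\partial}^*\EE^{p,q+1}(X)$. Checking, via the adjointness relation and the harmonicity criterion, that the three spaces $\HH^{p,q}_{\overline{\partial}}(X)$, $\overline{\partial}\,\EE^{p,q-1}(X)$ and $\overline{\partial}^*\EE^{p,q+1}(X)$ are mutually $L^2$-orthogonal then yields the refined orthogonal Hodge decomposition $\EE^{p,q}(X) = \HH^{p,q}_{\overline{\partial}}(X)\oplus\overline{\partial}\,\EE^{p,q-1}(X)\oplus\overline{\partial}^*\EE^{p,q+1}(X)$. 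Finally I identify cohomology with harmonic forms: given a $\overline{\partial}$-closed $\psi$, decompose $\psi = h + \overline{\partial}\alpha + \overline{\partial}^*\beta$; applying $\overline{\partial}$ and using $\overline{\partial}\psi = 0 = \overline{\partial}h$ forces $\overline{\partial}\,\overline{\partial}^*\beta = 0$, whence $\|\overline{\partial}^*\beta\|_{L^2}^2 = (\overline{\partial}\,\overline{\partial}^*\beta,\beta)_{L^2} = 0$, so $\psi = h + \overline{\partial}\alpha$ and the class of $\psi$ contains the harmonic form $h$. Uniqueness follows from orthogonality of the summands, since a harmonic form lying in $\overline{\partial}\,\EE^{p,q-1}(X)$ must vanish; this gives the isomorphism $H^{p,q}_{\overline{\partial}}(X)\simeq\HH^{p,q}_{\overline{\partial}}(X)$.

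The genuine obstacle, and the reason the statement is quoted rather than proved in the text, is precisely the elliptic decomposition theorem invoked in the second paragraph. Its proof rests on the hard analytic machinery of elliptic operators on compact manifolds — Sobolev space completions, Gårding's inequality and the resulting elliptic \emph{a priori} estimates, elliptic regularity (so that weak solutions of $\Delta_{\overline{\partial}}u = f$ are smooth), and the Rellich compactness lemma — which together furnish the finite-dimensional kernel and the Green's operator. Everything else above is formal linear algebra on the $L^2$-Hermitian space $\EE^{p,q}(X)$ combined with the symbol computation.
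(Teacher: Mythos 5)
Your proposal is correct and is precisely the argument the paper defers to: the text states this theorem explicitly \emph{without proof}, citing Wells (chap.~IV, sec.~4) and Griffiths--Harris (pp.~82--84), and your reduction --- formal self-adjointness from the defining adjunction, the symbol computation $\sigma(\Delta_{\overline{\partial}})(\xi) = |\xi^{0,1}|^2\,\mathrm{Id}$, the elliptic decomposition with Green's operator, the three-term orthogonal splitting, and the representation-plus-uniqueness argument --- is exactly the standard proof found in those references. You have also correctly isolated the genuine analytic content (G\aa rding's inequality, elliptic regularity, Rellich compactness) as the invoked black box, which is the very reason the paper quotes rather than proves the statement.
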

\begin{corollary} For a  compact
complex  Hermitian manifold:
 \begin{equation*}
 H^q(X,\Omega^p_X)\simeq \HH_{\overline{\partial}}^{p,q}(X)
\end{equation*}
\end{corollary}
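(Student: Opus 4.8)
The plan is to obtain the corollary as a direct composition of the two results that immediately precede it, so that no new analysis is required. First I would invoke Lemma \ref{dol}, which asserts that the Dolbeault complex $(\EE_X^{p,*},\overline{\partial})$ is a fine resolution of the sheaf $\Omega^p_X$ of holomorphic $p$-forms. Since fine sheaves are acyclic for the global sections functor, computing the hypercohomology of $\Omega^p_X$ by means of this resolution yields the canonical isomorphism $H^q(X,\Omega^p_X)\simeq H_{\overline{\partial}}^{p,q}(X)$ between the sheaf cohomology and the $\overline{\partial}$-cohomology of global forms of type $(p,q)$.

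Next I would apply the Hodge theorem stated just above, available precisely because $X$ is assumed compact complex Hermitian: every $\overline{\partial}$-cohomology class of type $(p,q)$ contains a unique $\overline{\partial}$-harmonic representative, which gives the isomorphism $H_{\overline{\partial}}^{p,q}(X)\simeq \HH_{\overline{\partial}}^{p,q}(X)$. Composing this with the previous identification produces $H^q(X,\Omega^p_X)\simeq \HH_{\overline{\partial}}^{p,q}(X)$, which is exactly the assertion of the corollary.

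Since both inputs are already in hand, the corollary is a formal consequence and there is no genuine obstacle in its proof; the entire analytic weight sits in the two cited statements. The substantive work—the $\overline{\partial}$-Poincar\'e lemma underlying Lemma \ref{dol}, together with the elliptic operator theory behind the existence and uniqueness of harmonic representatives in the Hodge theorem—has already been carried out or cited there. The only point deserving a word of care is that both isomorphisms are canonical in the bidegree $(p,q)$, so that the composite respects types and no compatibility question arises.
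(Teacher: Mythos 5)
Your proposal is correct and is precisely the argument the paper intends: the corollary is the composition of Lemma \ref{dol} (the Dolbeault fine resolution identifying $H^q(X,\Omega^p_X)$ with $H_{\overline{\partial}}^{p,q}(X)$) with the Hodge theorem identifying the latter with $\HH_{\overline{\partial}}^{p,q}(X)$. Nothing further is needed.
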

In general there is a spectral sequence relating Dolbeault and de Rham cohomology groups \cite{Fr} and \cite{Del-Ill} . It will follow from Hodge theory  that this spectral sequence  degenerates at rank $1$.

\subsection{K\"{a}hler manifolds}\label{Kahler} 
The exterior product of harmonic forms is not in general harmonic
neither the restriction of harmonic forms to a submanifold is
harmonic for the induced metric. In general, the
$\overline{\partial}$-Laplacian $\Delta_{\overline{\partial}}$ and
the Laplacian $\Delta_{d}$ are not related. The theory  becomes
 natural when we add the K\"{a}hler condition on the metric, when the fundamental form 
 is  closed  with respect to the differential $d$;  we refer then to (\cite{wel}, ch. V, section 4) to
 establish the relation between 
 $\Delta_{\overline{\partial}}$ and $\Delta_{d}$ and for full proofs in general,
most importantly the type  components of harmonic forms  become
harmonic. The proofs in Hodge theory involve the theory of
elliptic differential operators on a manifold and will not be
given here as we aim just to give the statements.

\begin{definition} The Hermitian metric   is K\"{a}hler
if its associated $(1,1)$-form $\omega$ 
is closed: $d \omega = 0$. In this case, the manifold $X$
 is called a K\"{a}hler manifold.
 \end{definition}
 
\begin{example}\label{examples}
1) A Hermitian metric on a compact Riemann surface (of complex dimension $1$)
 is K\"{a}hler
since  $d \omega$ of degree $ 3 $ must vanish. \\
2) The Hermitian metric on a compact complex  torus is K\"{a}hler. However, in
  general a complex tori is  not a projective variety. Indeed, by the projective embedding theorem of
  Kodaira (see \cite{Vo} Theorem 7.11 p.164), the cohomology must be polarized, or the K\"{a}hler form  is integral in order to get an algebraic torus,
   called also an  abelian variety since it is a commutative group.   This will be the case for $r = 1$, indeed a
  complex tori of dimension $1$  can be always embedded as a cubic curve in the
  projective plane via Weierstrass function and its derivative.\\ 
3) The projective space with the Fubini-Study metric is K\"{a}hler.\\
4) The restriction of a K\"{a}hler metric on a submanifold is
K\"{a}hler with  associated $(1,1)$-form  
induced by the  associated $(1,1)$-form 
on the ambient manifold.\\
5) The product of two K\"{a}hler manifolds is K\"{a}hler.\\
6)  On the opposite,  a Hopf surface is an example of a compact complex smooth surface which is not K\"{a}hler. Indeed such surface is the orbit space of the action of a group isomorphic to $\Z$ on $\C^2 - \{0\}$ and generated  by the automorphism:
  $(z_1,z_2) \to (\lambda_1 z_1, \lambda_2 z_2)$ where  $\lambda_1,\lambda_2
 $ are complex  of module  $< 1$ (or $ > 1$). 
 
 For example, consider 
 $ S^3 = \{z = (z_1,z_2) \in \C^2 : \vert  z_1 \vert ^2 + | z_2 |^2 = 1 \} $ and  consider
  $$   
 f: S^3 \times \R \simeq  \C^2 - \{ 0 \}: f(z_1,z_2,t) = ( e^t z_1,e^t z_2)
 $$
then the action of $\Z$ on $\R$  defined for $m \in \Z $ by $t \to t+m$ is transformed into the action by $ (\lambda_1^m, \lambda_2^m) = (e^m, e^m)$. We deduce that  the action has no fixed point and the quotient surface is homeomorhic to $S^3 \times S^1$, hence compact. Its fundamental group is isomorphic to $\Z$ as the surface is a quotient of a simply connected surface and  its cohomology in degree $1$ is also  isomorphic to $\Z$,
  while the cohomology of a K\"{a}hler variety in odd degree must have an even dimension as a consequence of the conjugation property in the Hodge decomposition. 
 \end{example}
 On
 K\"{a}hler manifolds, the following key relation between  Laplacians  is just  stated here (see \cite{wel} for a proof, or   \cite{g-h} p. 115):
\begin{lemma}
 \begin{equation*}
  \Delta_{d} = 2 \Delta_{\overline{\partial}} = 2 \Delta_{\partial}
  \end{equation*}
  \end{lemma}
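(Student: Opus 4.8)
The plan is to deduce this identity from the \emph{K\"ahler (Hodge) commutation identities}, which relate $\partial,\overline\partial$ and their formal adjoints to the Lefschetz operator $L(\alpha)=\omega\wedge\alpha$ and its $L^2$-adjoint $\Lambda=L^*$. Concretely, I would first establish
\begin{equation*}
[\Lambda,\overline\partial] = -i\,\partial^{*},\qquad [\Lambda,\partial] = i\,\overline\partial^{*},
\end{equation*}
where $\partial^{*}=-*\circ\partial\circ *$ is defined in complete analogy with the $\overline\partial^{*}$ introduced above. These two identities are the heart of the matter, and establishing them is the step I expect to be the main obstacle.

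To prove the K\"ahler identities I would exploit the fact that they are identities between first-order differential operators whose coefficients involve only the metric and its first derivatives. The K\"ahler hypothesis $d\omega=0$ is precisely what guarantees that about any point $x_0$ one can choose holomorphic coordinates centered at $x_0$ in which $h_{i\bar j}(x_0)=\delta_{ij}$ and $dh_{i\bar j}(x_0)=0$; that is, the metric agrees with the flat Hermitian metric on $\C^n$ through first order at $x_0$. Granting the existence of such coordinates (which is itself equivalent to the K\"ahler condition), both sides of each identity agree at $x_0$ with their counterparts for the standard metric on $\C^n$, so it suffices to verify the identities once on flat $\C^n$. There the operators $\partial,\overline\partial,L,\Lambda$ are expressed explicitly through the coordinate wedgings $dz_i\wedge$, $d\overline z_j\wedge$ and their adjoint contractions, and the commutators are checked by a direct finite computation.

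With the K\"ahler identities in hand, the Laplacian statement becomes pure algebra. Writing $d=\partial+\overline\partial$, $d^{*}=\partial^{*}+\overline\partial^{*}$ and expanding $\Delta_d=dd^{*}+d^{*}d$ gives
\begin{equation*}
\Delta_d = \Delta_\partial + \Delta_{\overline\partial} + (\partial\,\overline\partial^{*} + \overline\partial^{*}\partial) + (\overline\partial\,\partial^{*} + \partial^{*}\,\overline\partial).
\end{equation*}
Substituting $\overline\partial^{*}=-i[\Lambda,\partial]$ and using $\partial^{2}=0$ shows $\partial\,\overline\partial^{*}+\overline\partial^{*}\partial=0$, and symmetrically $\partial^{*}=i[\Lambda,\overline\partial]$ with $\overline\partial^{2}=0$ kills the other cross term; here one uses only $\partial^{2}=\overline\partial^{2}=0$ and $\partial\overline\partial+\overline\partial\partial=0$, which follow from $d^{2}=0$ and the type decomposition. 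The same substitutions, combined with the anticommutation $\partial\overline\partial=-\overline\partial\partial$, collapse $\Delta_\partial-\Delta_{\overline\partial}$ to zero. Hence $\Delta_\partial=\Delta_{\overline\partial}$, the two cross terms disappear, and we obtain $\Delta_d=2\Delta_{\overline\partial}=2\Delta_\partial$.
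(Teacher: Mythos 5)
Your proof is correct: the K\"ahler identities $[\Lambda,\overline\partial]=-i\,\partial^*$ and $[\Lambda,\partial]=i\,\overline\partial^*$, obtained by reducing to the flat metric on $\C^n$ via the first-order osculation that $d\omega=0$ provides, followed by the purely formal algebra (the cross terms $\partial\overline\partial^*+\overline\partial^*\partial$ and $\overline\partial\partial^*+\partial^*\overline\partial$ vanish using only $\partial^2=\overline\partial^2=0$, and $\Delta_\partial=\Delta_{\overline\partial}$ follows from $\partial\overline\partial=-\overline\partial\partial$), is exactly the classical argument, and the signs and the expansion of $\Delta_d$ all check out. The paper itself states this lemma without proof, deferring to \cite{wel} and to \cite{g-h} p.~115, and your argument is precisely the standard proof found in those references, so you have simply supplied the proof the paper chose to omit.
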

 
 We denote by $ \HH^{p,q}(X )$ the space of harmonic forms of type
  ($p,q$), that is the global sections  $\varphi  \in \EE^{p,q}_X
  (X)$ such that $\Delta_{d}(\varphi) = 0$, which is equivalent to
  $\Delta_{\overline{\partial}} \varphi = 0 $ 
as a consequence of  the lemma.\\
  Let  $\varphi := \sum_{p+q =i}\varphi^{p,q} \in \oplus_{p+q =i}
  \EE^{p,q}(X)$ be harmonic,
 \[\Delta_{d}\varphi = 2 \sum_{p+q
 =r}(\Delta_{\overline{\partial}}\varphi^{p,q}) = 0 \in \oplus_{p+q =r}
  \EE^{p,q}(X)\Rightarrow \forall p,q, \, 2\Delta_{\overline{\partial}}
  \varphi^{p,q} = \Delta_{d} \varphi^{p,q} = 0\]
 since $\Delta_{\overline{\partial}}$ is compatible with type.
  Hence, we deduce:
  
\begin{corollary} i) The $\overline{\partial}$-harmonic forms of type
 ($p,q$) coincide with the harmonic forms of same type:
  $ \HH^{p,q}_{\overline{\partial}}
  (X) =\HH^{p,q}(X )$.\\
  ii) The projection on the $(p,q)$ component of
an harmonic form is harmonic and we have a natural decomposition:
\begin{equation*}
   \HH^r(X)\otimes\C = \oplus_{p+q=r} \HH^{p,q}(X), \,
   \HH^{p,q}(X)= \overline {\HH^{q,p}(X)}.
  \end{equation*}
\end{corollary}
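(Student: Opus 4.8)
The plan is to reduce everything to the K\"ahler identity $\Delta_d = 2\Delta_{\overline{\partial}} = 2\Delta_{\partial}$ established in the preceding Lemma, together with two elementary facts: that $\Delta_{\overline{\partial}}$ preserves the bidegree (since $\overline{\partial}$ raises it by $(0,1)$ and its adjoint $\overline{\partial}^*$ lowers it by $(0,1)$), and that conjugation interchanges types via $\overline{\EE^{p,q}_X} = \EE^{q,p}_X$. For part (i) I would argue directly: a form $\psi$ of pure type $(p,q)$ lies in $\HH^{p,q}(X)$ by definition iff $\Delta_d \psi = 0$, and by the K\"ahler identity $\Delta_d = 2\Delta_{\overline{\partial}}$ this holds iff $\Delta_{\overline{\partial}} \psi = 0$, i.e. iff $\psi \in \HH^{p,q}_{\overline{\partial}}(X)$. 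Hence the two spaces coincide, with no further computation needed.

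For the decomposition in part (ii) I would first invoke the computation displayed just before the statement: if $\varphi = \sum_{p+q=r} \varphi^{p,q}$ is $\Delta_d$-harmonic, then $0 = \Delta_d \varphi = 2\sum_{p+q=r} \Delta_{\overline{\partial}} \varphi^{p,q}$, and because $\Delta_{\overline{\partial}}$ preserves type the summands $\Delta_{\overline{\partial}} \varphi^{p,q}$ lie in the distinct spaces $\EE^{p,q}(X)$; hence each vanishes, so every component $\varphi^{p,q}$ is itself harmonic. Noting that $\Delta_d$ is a real operator, so that $\HH^r(X)\otimes_{\R}\C$ is exactly the space of complex-valued $r$-forms killed by $\Delta_d$, this shows $\HH^r(X) \otimes \C \subseteq \oplus_{p+q=r} \HH^{p,q}(X)$. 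The reverse inclusion is clear since any $(p,q)$-form annihilated by $\Delta_d$ is in particular a complex harmonic $r$-form. Directness of the sum is then inherited from the direct sum decomposition $\EE^r_X \otimes_{\R} \C = \oplus_{p+q=r} \EE^{p,q}_X$ into types, already in place at the level of forms.

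For the conjugation symmetry $\HH^{p,q}(X) = \overline{\HH^{q,p}(X)}$ I would use that $d$ is a real operator (the complex-linear extension of the real exterior derivative), and that $d^*$, built from $d$ and the real star operator, is likewise real; consequently $\Delta_d = d d^* + d^* d$ commutes with conjugation, i.e. $\Delta_d \overline{\alpha} = \overline{\Delta_d \alpha}$. Thus if $\alpha \in \HH^{q,p}(X)$, then $\overline{\alpha}$ has type $(p,q)$ and satisfies $\Delta_d \overline{\alpha} = 0$, giving $\overline{\HH^{q,p}(X)} \subseteq \HH^{p,q}(X)$; applying the same reasoning to the conjugate yields the reverse inclusion, hence equality.

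The point deserving care, rather than a genuine obstacle, is that the entire argument rests on the K\"ahler identity of the Lemma, whose proof (the nontrivial analytic and Hermitian linear-algebra input, via the Hodge theorem for $\Delta_{\overline{\partial}}$ and the commutation relations of the Lefschetz operators) is cited and not reproduced. Once that identity is granted, the only facts requiring verification are the bidegree-compatibility of $\Delta_{\overline{\partial}}$ and the reality of $d^*$, both immediate from the definitions given above.
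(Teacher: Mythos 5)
Your proof is correct and follows essentially the same route as the paper: the K\"ahler identity $\Delta_d = 2\Delta_{\overline{\partial}}$ for part (i), type-preservation of $\Delta_{\overline{\partial}}$ applied to the type components of a harmonic form for the decomposition in (ii), and the reality of $\Delta_d$ for the conjugation symmetry. You merely make explicit some details the paper leaves implicit (the reality of $d^*$, directness of the sum, and the double inclusion), which is a harmless elaboration rather than a different argument.
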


\n  Since $\Delta_d $   is real, we deduce the conjugation
property.

\subsubsection{Hodge decomposition} 

Let  $H^{p,q}(X)$ denotes the Dolbeault groups 
represented by ${\overline{\partial}}-$closed $(p,q)-$ forms. In general, such forms need not 
to be $d-$closed, and reciprocally the $(p,q)$ components of a $d-$ closed  form are not necessarily ${\overline{\partial}}-$closed nor $d-$ closed, however this cannot happens on compact K\"{a}hler manifolds.
\begin{theorem} Let $X$ be a compact K\"{a}hler
manifold. There is an isomorphism of cohomology classes of type
$(p,q)$ with harmonic forms of the same type
\begin{equation*}
     H^{p,q}(X) \simeq \HH^{p,q}(X).
 \end{equation*}
 \end{theorem}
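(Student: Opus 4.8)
The plan is to recognize that both sides of the claimed isomorphism have already been described in terms of harmonic forms by the results preceding this statement, so that the proof reduces to chaining two identifications together. By definition $H^{p,q}(X)$ is the Dolbeault cohomology $H_{\overline{\partial}}^{p,q}(X)$ of $\overline{\partial}$-closed $(p,q)$-forms modulo $\overline{\partial}$-exact ones.

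First I would invoke the Hodge theorem for the $\overline{\partial}$-operator, valid on any compact complex Hermitian manifold and hence in particular on our K\"{a}hler $X$: it provides a canonical isomorphism $H_{\overline{\partial}}^{p,q}(X) \simeq \HH_{\overline{\partial}}^{p,q}(X)$, representing each Dolbeault class by a unique $\overline{\partial}$-harmonic form of the same type. This step makes no use of the K\"{a}hler hypothesis.

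The K\"{a}hler condition enters only through the identity $\Delta_d = 2\Delta_{\overline{\partial}}$ established above, whose consequence recorded in part (i) of the preceding Corollary is precisely that a form of pure type $(p,q)$ is annihilated by $\Delta_d$ exactly when it is annihilated by $\Delta_{\overline{\partial}}$, giving $\HH_{\overline{\partial}}^{p,q}(X) = \HH^{p,q}(X)$. Composing the two identifications yields $H^{p,q}(X) \simeq \HH^{p,q}(X)$, as asserted.

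There is in fact no serious obstacle remaining: all the analytic difficulty --- the existence and finite-dimensionality of harmonic representatives, resting on the elliptic theory of the Laplacian --- has already been absorbed into the Hodge theorem quoted earlier, while the interchange of the two Laplacians is exactly the K\"{a}hler identity already at hand. The only point demanding care is to keep the two notions of harmonicity distinct until that identity permits their identification: on a general Hermitian manifold $\HH_{\overline{\partial}}^{p,q}(X)$ and $\HH^{p,q}(X)$ need not coincide, and it is the closedness of the fundamental $(1,1)$-form $\omega$ that forces $\Delta_d$ and $\Delta_{\overline{\partial}}$ to agree up to the factor $2$.
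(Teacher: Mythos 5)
Your proposal is correct and follows essentially the same route as the paper: the text's own argument is exactly the composition of the Hodge theorem for $\overline{\partial}$ on a compact Hermitian manifold ($H_{\overline{\partial}}^{p,q}(X)\simeq \HH_{\overline{\partial}}^{p,q}(X)$) with the K\"ahler identity $\Delta_d = 2\Delta_{\overline{\partial}}$, which identifies $\overline{\partial}$-harmonic $(p,q)$-forms with $d$-harmonic ones as in part (i) of the preceding Corollary. You correctly isolate where the K\"ahler hypothesis enters, which is precisely the point the paper's one-line verification ($\Delta_{\overline{\partial}}\varphi = 0 \Leftrightarrow \Delta_d\varphi^{p,q}=0$ for each component) is making.
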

Indeed,  let  $ \varphi = \varphi^{r,0} + \cdots +\varphi^{0,r} $, then $\Delta_{\overline{\partial}} \varphi =   \Delta_{\overline{\partial}} \varphi^{r,0} + \cdots + \Delta_{\overline{\partial}}\varphi^{0,r} =  0$ is equivalent to  $\Delta_d  \varphi^{p,q} = 0$ for each $p+q = r$,  since 
$\Delta_d = 2 \Delta_{\overline{\partial}}$.  
 
 \subsubsection{Applications to Hodge theory} 
 
 We remark first the
 isomorphisms:
 \begin{equation*}
 H^{p,q}(X)\simeq H_{\overline{\partial}}^{p,q}(X)\simeq H^q(X, \Omega_X^p)
 \end{equation*}
 which shows in particular for $q=0$ that the space of global  holomorphic
 $p$-forms  $H^0(X, \Omega_X^p)$ injects into $ H^p_{DR}(X)$
with image  $H^{p,0}(X)$.\\
 - Global holomorphic $p$-forms are closed and harmonic for any K\"{a}hler
  metric. They vanish if and only if they are exact.\\
 - There are no non-zero global holomorphic forms on $\P^n$,
 besides locally constant functions,  and
  more generally:
  \begin{equation*}
 H_{\overline{\partial}}^{p,q}(\P^n)\simeq H^q(\P^n, \Omega_{\P^n}^p)=
 \begin{cases}
  0, & \mbox{if } p\neq q \\ \C & \mbox{if }p = q\end{cases}
 \end{equation*}
 
\begin{remark} i) The holomorphic form $z\,dw$ on $\C^2$ is not closed.\\
ii) The spaces $H^{p,q} $ in $H_{DR}^{p+q}$ are isomorphic to the
holomorphic invariant $H^q( X, \Omega^p)$, but the behavior
relative to  the fiber of a proper holomorphic   family $X \to T$
for $t \in T$ is different for each space: $H^q( X_t, \Omega^p)$
is holomorphic in $t$, de Rham cohomology $H_{DR}^{p+q}(X_t)$ is
locally constant but the embedding of $H^{p,q}(X_t) $ into de Rham
cohomology $H_{DR}^{p+q}(X_t)$ is not holomorphic.
\end{remark}

\begin{xca} Let $X$ be a compact K\"{a}hler
manifold and $\omega :=
 - Im \, h $ its  real $2$-form of type $(1,1)$
 ($\omega \in \EE_X^{1,1}\cap \EE^2_X$).
 The  volume form $vol \in \EE^{2n}(X)$ defined by $g $ on $X$,
  can be defined by $\omega$ and
 is equal to $\frac{1}{n!} \omega^n$.\\
 Indeed, if we consider an orthonormal  complex basis $e_1, \ldots,e_n$ of the tangent space 
 $\, T_{X,\R,x}\,$,  then $e_1, Ie_1, \ldots,e_n, I e_n$  is a real 
 oriented orthonormal basis for $g$
 and we need to prove  $\frac{1}{n!} \omega^n (e_1 \wedge I e_1\wedge \cdots \wedge I e_n ) = 1$.\\
  Let $ \varphi_i, i \in [1,n]$ denote a local  basis of the  complex cotangent bundle $T^{*(1,0)}$,
  unitary for the Hermitian metric, then it is orthogonal  and $\| \varphi_j\|^2 = 2$ with respect to the Riemannian metric (for example on $\C^n$, the complex unitary basis $dz_j$ is written as $ dx_j + i dy_j$  with $ dx_j $ and $dy_j$ elements of an orthonormal basis for the Riemannian structure). We deduce from the decomposition 
 $\varphi_j = \psi'_j + i \psi''j$ into real components, the following formula of the volume:
\begin{equation*}
\omega = \frac{i}{2}\sum_j \varphi_j \wedge {\overline  \varphi}_j, \quad
vol = \frac {\omega^n}{n!}  =  \psi'_1 \wedge \psi''_1 \wedge \cdots
\wedge  \psi'_n \wedge \psi''_n.
\end{equation*}
since $ \varphi_j \wedge {\overline  \varphi}_j = (\frac{2}{i}) \psi'_j \wedge \psi''_j$. In particular, $\omega^n$ never vanish.
\end{xca}

\begin{lemma}
For a  compact K\"{a}hler manifold $X$:
$$H^{p,p}(X,\C) \not= 0\quad\hbox{ for } 0 \leq p \leq \dim X.$$
\end{lemma}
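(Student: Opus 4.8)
The plan is to exhibit, for each $p$ with $0 \leq p \leq \dim X$, an explicit nonzero class in $H^{p,p}(X,\C)$ by taking powers of the fundamental K\"ahler form $\omega$. Recall that on a K\"ahler manifold the form $\omega$ is a real closed $2$-form of type $(1,1)$, so its de Rham class lives in $H^{2}(X,\C)$ and, via the Hodge decomposition established in the preceding theorem, in the component $H^{1,1}(X)$. The natural candidate for a nonzero element of $H^{p,p}(X)$ is therefore the class of $\omega^p$, which is a closed form of type $(p,p)$.

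\emph{First} I would check that $\omega^p$ is genuinely a class of type $(p,p)$: since $\omega$ has type $(1,1)$ and the wedge product of forms of types $(1,1)$ adds bidegrees, $\omega^p$ has type $(p,p)$; and since $d\omega = 0$, Leibniz gives $d(\omega^p) = 0$, so $\omega^p$ represents a de Rham class in $H^{2p}(X,\C)$ lying in $H^{p,p}(X)$. \emph{The key step} is then to verify that this class is nonzero in cohomology. Here the main input is compactness together with the volume computation from the preceding exercise, which shows that $\omega^n = n!\,vol$ is, up to a positive scalar, the volume form, and in particular $\int_X \omega^n > 0$. The cleanest route is to use Poincar\'e duality: if the class $[\omega^p]$ were zero in $H^{2p}(X,\C)$, then so would be $[\omega^p] \smile [\omega^{n-p}] = [\omega^n]$ in $H^{2n}(X,\C)$, contradicting $\int_X \omega^n > 0$.

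Concretely, \emph{the argument} runs as follows. Suppose $\omega^p = d\eta$ for some form $\eta$ of degree $2p-1$. Since $\omega^{n-p}$ is closed, the product $\omega^n = \omega^p \wedge \omega^{n-p} = d\eta \wedge \omega^{n-p} = d(\eta \wedge \omega^{n-p})$ would be exact. But then by Stokes' theorem on the compact manifold $X$ (without boundary) we would have $\int_X \omega^n = \int_X d(\eta \wedge \omega^{n-p}) = 0$, whereas the volume computation gives $\int_X \omega^n = n!\,\mathrm{vol}(X) > 0$. This contradiction shows $[\omega^p] \neq 0$ in $H^{2p}_{DR}(X,\C)$, and since the class has pure type $(p,p)$ by the discussion above, we conclude $H^{p,p}(X,\C) \neq 0$.

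\emph{The main obstacle} is really just making sure the bidegree bookkeeping and the identification of $\omega^n$ with a positive multiple of the volume form are in place; both are supplied by the earlier exercise, which establishes $vol = \frac{1}{n!}\omega^n$ and that $\omega^n$ never vanishes. Once those are granted, the nonvanishing is a formal consequence of Stokes' theorem on a compact oriented manifold. One should note that compactness is essential—it is what allows integration of top-degree forms and the use of Stokes—and that the K\"ahler hypothesis enters precisely through $d\omega = 0$, which is what makes all the powers $\omega^p$ closed in the first place.
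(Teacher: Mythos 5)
Your proof is correct and follows essentially the same route as the paper: both arguments show $[\omega^p]\neq 0$ because its cup product with $[\omega^{n-p}]$ is the class of $\omega^n$, whose integral $\int_X \omega^n = n!\,\mathrm{vol}(X) > 0$ is nonzero. You merely make explicit, via Stokes' theorem, the step the paper leaves implicit.
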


In fact, the integral of the volume form  $\int_X \omega^n > 0$.
It follows that the cohomology class  $\omega^n \not= 0 \in
H^{2n}(X,\C)$, hence the cohomology class  $\omega^p \not= 0 \in
H^{p,p}(X,\C)$ since its cup product with $\omega^{n-p}$ is not
zero.

\subsection{Cohomology class of a subvariety and Hodge conjecture}
To state the Hodge conjecture,
 we construct  the  class in de Rham cohomology
 of a closed complex algebraic
subvariety (resp. complex analytic subspace) of codimension $p$ in
a smooth complex projective variety (resp. compact K\"{a}hler
manifold); it is of Hodge type ($p,p$).

\begin{lemma}
  Let  $X$  be a  complex manifold and $Z$  a compact complex
  subanalytic space of dim $m$ in $X$. The integral  of the
  restriction of
a form $\omega$ on $X$ to the smooth
  open subset of $Z$ is convergent and  defines a linear function
  on forms of degree $2m$. It induces a linear map on cohomology
\[cl (Z): H^{2m}(X, \C) \to \C, \quad [\omega] \mapsto
\int_{Z_{smooth}} \omega_{|Z }\] 
Moreover, if $X$ is compact K\"ahler,  $cl(Z)$ vanish on all
components of the Hodge decomposition which are distinct from $H^{m,m}$.
\end{lemma}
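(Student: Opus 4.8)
The plan is to verify the four assertions in turn: convergence of the integral, linearity, invariance under passage to cohomology via Stokes, and the Hodge-type vanishing. First I would establish convergence. Since $Z$ is a compact complex analytic space of complex dimension $m$, its singular locus $Z_{sing}$ is a closed analytic subset of real codimension at least $2$, and $Z_{smooth} = Z \setminus Z_{sing}$ is a complex submanifold of $X$ of complex dimension $m$. By Lelong's theorem the $2m$-dimensional Hausdorff measure of $Z$ is locally finite, and globally finite by compactness; since a smooth form $\omega$ of degree $2m$ on $X$ restricts to a bounded $2m$-form on $Z_{smooth}$, the integral $\int_{Z_{smooth}} \omega_{|Z}$ converges. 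Linearity in $\omega$ is immediate from linearity of the integral, so $\omega \mapsto \int_{Z_{smooth}} \omega_{|Z}$ is a linear functional on closed forms of degree $2m$.

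Next I would show that this functional descends to $H^{2m}(X,\C)$, i.e. that it vanishes on exact forms. The difficulty is that $Z_{smooth}$ is noncompact, so Stokes' theorem does not apply directly and the boundary behaviour near $Z_{sing}$ must be controlled. The cleanest route is to invoke a resolution of singularities $\pi \colon \tilde Z \to Z$ (Hironaka), with $\tilde Z$ a smooth compact complex manifold of dimension $m$, $\pi$ a biholomorphism over $Z_{smooth}$, and the exceptional locus mapping into the measure-zero set $Z_{sing}$. Then $\int_{Z_{smooth}} \omega_{|Z} = \int_{\tilde Z} \pi^{*}\omega$, and for $\omega = d\eta$ one obtains $\int_{\tilde Z} \pi^{*} d\eta = \int_{\tilde Z} d(\pi^{*}\eta) = 0$ by Stokes on the closed manifold $\tilde Z$. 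Hence $cl(Z)$ is well defined on cohomology classes. Alternatively one argues directly by excising a shrinking tubular neighbourhood of $Z_{sing}$ and using finiteness of volume to kill the boundary term in the limit, but the resolution makes the argument transparent.

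Finally, assuming $X$ compact K\"ahler, I would treat the Hodge type, and here I expect the argument to be short. Decompose a class by the Hodge decomposition and take $\omega$ of pure type $(p,q)$ with $p+q = 2m$ and $(p,q) \neq (m,m)$; then necessarily $p > m$ or $q > m$. The restriction of $\omega$ to the complex submanifold $Z_{smooth}$ of complex dimension $m$ is again of type $(p,q)$ for the induced complex structure, and such a form vanishes identically on an $m$-dimensional complex manifold, since there are no nonzero $(p,0)$-covectors for $p > m$ (and likewise for the antiholomorphic factor). Thus $\omega_{|Z_{smooth}} \equiv 0$ and the integral vanishes; equivalently $\pi^{*}\omega$ is a type $(p,q)$ form on the $m$-dimensional manifold $\tilde Z$, hence zero. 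Therefore $cl(Z)$ is supported on $H^{m,m}$.

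The main obstacle is the Stokes step: making rigorous the passage to a compact model so that the integral over the noncompact $Z_{smooth}$ behaves like an integral over a closed manifold. Once this is in place, convergence follows from Lelong's finiteness, and the Hodge-type vanishing reduces to a pointwise dimension count on the restriction of $(p,q)$-forms to an $m$-dimensional complex submanifold.
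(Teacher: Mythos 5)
Your proof is correct and follows essentially the same route as the paper: a Hironaka desingularization $\pi\colon \tilde Z \to Z$ identifying $\int_{Z_{smooth}}\omega_{|Z}$ with $\int_{\tilde Z}\pi^*\omega$, Stokes' theorem on the compact smooth model to kill exact forms, and the pointwise observation that a $(p,q)$-form with $p+q=2m$, $(p,q)\neq(m,m)$ restricts to zero on an $m$-dimensional complex manifold. Your invocation of Lelong's finiteness for convergence is an extra justification the paper does not spell out (convergence there is absorbed into the pull-back to the compact resolution), but it changes nothing structurally.
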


If $Z$ is compact and smooth, the integral is well defined, because, by
Stokes theorem, if $ \omega = d \eta$, the integral vanish since
it is equal to the integral
   of $\eta$ on the boundary $\partial Z = \emptyset$ of $Z $.
   
If $Z$ is not smooth, the easiest proof is to use a
desingularisation (see \cite{Hi}) $\pi: Z' \to Z$ inducing an
isomorphism $Z'_{smooth} \simeq Z_{smooth}$ which implies an equality
of the integrals of $\pi^* (\omega_{|Z })$ and $\omega_{|Z }$, which
is independent of the choice of $Z'$. The restriction of $\omega$
of degree $2m$ vanish unless it is of type $m,m$ since
$Z_{smooth}$ is an analytic manifold of dimension $m$.

\subsubsection{Poincar\'e duality}  On compact oriented 
differentiable manifolds, we use the
wedge product of differential forms to define the cup-product on
de Rham cohomology and integration to define the trace,  so that
we can state Poincar\'e duality \cite{g-h}.

\vskip.1in
\n {\it Cup-product.}
For a manifold $X$, the cup product is a natural bilinear
  operation on de Rham cohomology
\begin{equation*} H^i(X,\C)\otimes  H^j(X,\C)\xrightarrow{\smile}
H^{i+j}(X,\C)
\end{equation*}
  defined by the
wedge product on the level of differential forms. 

  It is a topological product defined on cohomology with coefficients in $\Z$,
   but its
definition on cohomology groups with integer coefficients is less
straightforward.

\vskip.1in
\n {\it The Trace map}.  On a compact oriented
mani\-fold $X$ of dimension $n$, the integral over $X$ of a
differential $\omega$ of highest degree $n$ depends only on its
class modulo boundary by Stokes theorem, hence it defines a map
called the trace:
\[ Tr: H^n(X, \C) \to \C \quad [\omega] \mapsto \int_X \omega.\]
The following theorem is stated without proof:

\begin{theorem}[Poincar\'e duality ]  Let $X$ be a compact
oriented mani\-fold of dimension $n$.   The cup-product
and the trace map:
\begin{equation*} H^j(X,\C)\otimes  H^{n-j}(X,\C)\xrightarrow{\smile}
H^n(X,\C) \xrightarrow {Tr} \C
\end{equation*}
define  an isomorphism:
$$H^j(X,\C)\xrightarrow {\sim} Hom (H^{n-j}(X,\C), \C).$$
\end{theorem}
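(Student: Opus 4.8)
The plan is to prove the statement by Hodge theory, reusing the machinery developed above. First I would reduce to real coefficients: since $H^j(X,\C)=H^j(X,\R)\otimes_{\R}\C$ and the pairing in question is the $\C$-bilinear extension of the real pairing $([\alpha],[\beta])\mapsto\int_X\alpha\wedge\beta$, non-degeneracy over $\R$ will imply non-degeneracy over $\C$. Because $X$ is compact I may choose a Riemannian metric $g$ (via a partition of unity), which gives the Hodge star $*\colon \EE^i_X\to\EE^{n-i}_X$, the $L^2$-product, the Laplacian $\Delta$ and the spaces $\HH^i(X)$ of harmonic forms. I would first record that the pairing descends to cohomology: if $\alpha=d\alpha'$ and $d\beta=0$ then $\int_X d\alpha'\wedge\beta=\int_X d(\alpha'\wedge\beta)=0$ by Stokes (the boundary of $X$ is empty), and symmetrically in the second variable, so $\int_X\alpha\wedge\beta$ depends only on the classes $[\alpha]$, $[\beta]$.

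The key step is non-degeneracy, obtained via harmonic representatives. By the Hodge theorem of Section~1.1 each class in $H^j(X,\R)$ has a unique harmonic representative and $\HH^j(X)$ is finite dimensional. Given $0\neq[\alpha]\in H^j(X,\R)$, let $\alpha$ be its harmonic representative, so $\alpha\neq0$. From the relations between $*$, $d$ and $d^*$ recorded above one checks that $*$ commutes with $\Delta$; hence $\Delta(*\alpha)=*(\Delta\alpha)=0$, so $*\alpha$ is harmonic, and in particular closed (on a compact manifold $\Delta\omega=0$ forces $d\omega=0$ and $d^*\omega=0$). Thus $[*\alpha]\in H^{n-j}(X,\R)$, and by the Lemma giving $(\alpha,\beta)_{L^2}=\int_X\alpha\wedge*\beta$ I get
\begin{equation*}
\langle[\alpha],[*\alpha]\rangle=\int_X\alpha\wedge*\alpha=(\alpha,\alpha)_{L^2}=\|\alpha\|_{L^2}^2>0,
\end{equation*}
since the $L^2$-product is positive definite and $\alpha\neq0$. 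This exhibits, for every nonzero class on the left, a class on the right pairing nontrivially with it, so the induced map $H^j(X,\R)\to Hom(H^{n-j}(X,\R),\R)$ is injective.

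To conclude I would close the dimension count. Injectivity of this map gives $\dim H^j(X,\R)\le\dim H^{n-j}(X,\R)$, and running the identical argument with $j$ replaced by $n-j$ (again $*$ sends harmonic $(n-j)$-forms to harmonic $j$-forms) yields the reverse inequality; all dimensions are finite by the Hodge theorem. Hence the two dimensions agree and the injective map between finite-dimensional spaces of equal dimension is an isomorphism. Tensoring with $\C$ gives the stated isomorphism $H^j(X,\C)\xrightarrow{\sim}Hom(H^{n-j}(X,\C),\C)$; note that although the harmonic representatives and the star operator depend on the choice of metric, the resulting pairing $\int_X\alpha\wedge\beta$ and hence the isomorphism do not. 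The one genuinely hard ingredient is the representation of cohomology by harmonic forms together with its finite dimensionality, but this is precisely the Hodge theorem already quoted, so no new analysis is needed here. As an alternative avoiding Hodge theory one could argue by induction on a finite good cover using the Mayer--Vietoris sequence and the five lemma, treating the base case of a Euclidean ball via the Poincar\'e lemma in the compactly supported setting, in the spirit of \cite{bott-tu}; I would expect the Hodge-theoretic route above to be shorter given the tools already at hand.
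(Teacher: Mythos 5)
Your proof is correct, but note that the paper deliberately states this theorem \emph{without} proof (``The following theorem is stated without proof''), so there is nothing to match against; what you have done is supply the missing argument, and you did it using precisely the toolkit the paper sets up in Section~1.1: the Hodge theorem, the star operator, and the lemma $(\alpha,\beta)_{L^2}=\int_X\alpha\wedge *\beta$. All the individual steps check out: the pairing descends by Stokes; $*$ does commute with $\Delta$ (a short computation from $d^*=(-1)^i *^{-1}\circ d\circ *$ and $*^2=(-1)^{i(n-i)}\mathrm{Id}$, worth writing out since the paper only records those two identities); and harmonicity forces closedness on a compact manifold via $(\Delta\alpha,\alpha)_{L^2}=\|d\alpha\|^2_{L^2}+\|d^*\alpha\|^2_{L^2}$, which you correctly invoke. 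One simplification you could make: since $*^2=\pm\mathrm{Id}$ and $*$ preserves harmonicity, $*$ is already a linear isomorphism $\HH^j(X)\xrightarrow{\sim}\HH^{n-j}(X)$, so $\dim H^j(X,\R)=\dim H^{n-j}(X,\R)$ falls out at once and you need not run the injectivity argument a second time. Your closing remark is also well taken: the Mayer--Vietoris induction on a good cover is the standard metric-free proof and, unlike the Hodge-theoretic route, adapts to integral coefficients (where the paper's later statement about the unimodular pairing modulo torsion lives, a refinement your argument over $\R$ or $\C$ cannot reach); conversely, the harmonic-form proof is shorter here and has the merit of exhibiting an explicit dual class $[*\alpha]$ with $\langle[\alpha],[*\alpha]\rangle=\|\alpha\|_{L^2}^2>0$, which is exactly the computation the paper reuses later for the Hodge--Riemann bilinear relations.
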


If the compact complex analytic space $Z$ is of codimension $p$ in
the smooth compact complex manifold $X$ its class $cl(Z) \in
H^{2n-2p}(X,\C)^*$ corresponds, by Poincar\'e duality  on $X$, to
a fundamental cohomology class $[\eta_Z] \in H^{2p}(X,\C)$. Then
we have by definition:

\begin{lemma-definition} For a complex compact manifold $X$,
the fundamental cohomology class $[\eta_Z]  \in H^{p,p}(X,\C)$ of
a closed complex submanifold $Z$ of codimension $p$ satisfies the
following relation:
$$ \int_X \varphi \wedge \eta_Z =  \int_Z \varphi_{|Z }, \quad \forall
 \varphi \in \EE^{n-p,n-p}(X)
.$$
\end{lemma-definition}

\begin{lemma} For a  compact K\"{a}hler manifold $X$,
the cohomology class of a compact complex analytic closed
submanifold $Z$ of codim $p$ is a non-zero element  $[\eta_Z]
\not= 0 \in H^{p,p}(X,\C)$, for $ 0 \leq p \leq $ dim $X$.
\end{lemma}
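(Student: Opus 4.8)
The plan is to test the class $[\eta_Z]$ against a cleverly chosen closed form and to exploit the Poincar\'e-type pairing recorded in the preceding Lemma and Definition. Write $n = \dim X$ and $m = n-p = \dim Z$, and let $\omega$ denote the fundamental $(1,1)$-form of the K\"ahler metric on $X$. First I would take as test form $\varphi := \omega^{m}$, which is of type $(m,m) = (n-p,n-p)$, so that the fundamental class relation applies and yields
$$\int_X \omega^{m} \wedge \eta_Z = \int_Z (\omega^{m})_{|Z} = \int_Z (\omega_{|Z})^{m}.$$

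Next I would observe that the left-hand side depends only on the cohomology class of $\eta_Z$. Since the metric is K\"ahler we have $d\omega = 0$, hence $\omega^{m}$ is a closed form; therefore $\int_X \omega^{m}\wedge \eta_Z$ is precisely the value under the trace map of the cup product $[\omega^{m}] \smile [\eta_Z]$, and in particular it must vanish whenever $[\eta_Z]=0$. Thus it will suffice to show that the right-hand integral is strictly positive.

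For the positivity I would invoke that the restriction $\omega_{|Z}$ is again the associated $(1,1)$-form of a K\"ahler metric on $Z$, namely the metric induced from $X$ (Example \ref{examples}, item 4). Consequently $\frac{1}{m!}(\omega_{|Z})^{m}$ is exactly the Riemannian volume form of $Z$ for the induced metric, as computed in the Exercise above, so that
$$\int_Z (\omega_{|Z})^{m} = m!\int_Z \mathrm{vol}_Z > 0,$$
because $Z$ is a non-empty compact manifold. When $p=n$ the same computation degenerates to the test form $\omega^{0}=1$, and the identity simply counts the positive number of points of the finite set $Z$. Combining the two displays gives $\int_X \omega^{m}\wedge \eta_Z > 0$, whence $[\eta_Z]$ cannot be the zero class; since we already know from the Lemma and Definition that $[\eta_Z]\in H^{p,p}(X,\C)$, the lemma follows.

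I expect the only genuinely delicate point to be this positivity step: one must be certain that restricting $\omega$ to $Z$ produces not merely some closed $(1,1)$-form but the fundamental form of the \emph{induced} K\"ahler metric, so that the volume identity $\frac{1}{m!}\omega^m = \mathrm{vol}$ is legitimately available on $Z$ and forces the top-degree integral to be strictly positive. Everything else is a direct assembling of results already established, together with the elementary degree bookkeeping that $\omega^{m}\wedge\eta_Z$ sits in top degree $2n$ on $X$.
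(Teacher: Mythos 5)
Your proposal is correct and takes essentially the same route as the paper's own proof: pair $[\eta_Z]$ against $\omega^{n-p}$ via the fundamental-class identity $\int_X \varphi \wedge \eta_Z = \int_Z \varphi_{|Z}$ and conclude positivity from the fact that $\omega_{|Z}$ is a K\"ahler form on $Z$. The additional details you supply (that $\frac{1}{m!}(\omega_{|Z})^{m}$ is the induced volume form, and the degenerate case $p = \dim X$) merely make explicit what the paper compresses into a single line.
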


\begin{proof} For a compact  K\"{a}hler  manifold $X$,
let $\omega$ be a K\"{a}hler form,  then the integral on $Z$ of
the restriction $ \omega_{|Z }$ is positive since it is a
K\"{a}hler form on $Z$, hence $[\eta_Z] \not= 0$:
$$ \int_X (\wedge^{n-p}\omega) \wedge \eta_Z = \int_Z
\wedge^{n-p}(\omega_{|Z })  > 0.$$
\end{proof}

 \subsubsection{Topological construction} The  dual vector space
 $Hom_{\C} (H^{2m}(X, \C), \C)$   is naturally isomorphic
 to  the homology vector space which suggests that the
  fundamental class is defined naturally
 in homology. Indeed, Hodge conjecture has gained so much attention that
 it is of fundamental importance to discover what conditions can be made
 on classes of algebraic subvarieties, including the definition of the classes
  in cohomology   with coefficients in $\Z$.
  
 The theory of homology and
cohomology traces its origin to the work of Poincar\'e in the late
nineteenth century. There are actually many different theories,
for example, simplicial and singular. In 1931, Georges de Rham
proved a conjecture of Poincar\'e on a relationship between cycles
and differential forms that establishes for a compact orientable manifold
 an isomorphism between singular cohomology with real
coefficients and what is known now as de Rham cohomology. In fact
the direct homological construction  of the class of $Z$ is
natural and well known.

 \medskip
\n  {\it Homological class}. The idea that homology should
represent the classes of topological subspaces has been probably
at the origin of homology theory, although the effective
construction of homology groups is different and more elaborate. The simplest way to construct Homology
groups is to use singular simplices. But the definition of
homology groups $H_j (X, \Z)$ of a triangulated space is quite
natural. Indeed, the sum of oriented triangles of highest
dimensions of an oriented triangulated topological space $X$ of
real dimension $n$, defines an homology class
 $[X] \in H_n(X, \Z)$ (\cite{g-h}, Ch 0, paragraph 4). 
 
We take as granted here that  a  closed  subvariety $Z$ of  dimension $m$ in
a compact complex algebraic variety $X$ can be triangulated such
that the sum of its oriented triangles of highest dimensions
 defines its class in  the homology group $ H_{2m}(X, \Z)$.

 \subsubsection{ Cap product}
 Cohomology groups of a topological space $H^i(X, \Z)$ are  dually defined
, and there exists a topological operation on homology and
  cohomology, called the cap product:
\[\frown: H^{q}(X, \Z) \otimes  H_{p}(X, \Z) \rightarrow H_{p-q}(X, \Z) \]
  We can  state now a duality theorem of Poincar\'e, frequently used
  in geometry.

\begin{theorem}[Poincar\'e duality isomorphism]  Let $X$ be a compact
oriented topological manifold of dimension $n$. The cap product
with the fundamental class $[X] \in H_{n}(X,\Z)$ defines an
isomorphism, for all $j$, $0\leq j\leq n$:
\begin{equation*}D_X: H^j(X,\Z)\xrightarrow{\frown [X]} H_{n-j}(X,\Z)
\end{equation*}
\end{theorem}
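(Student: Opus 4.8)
The plan is to deduce the stated compact case from the more flexible Poincar\'e--Lefschetz duality for arbitrary oriented (possibly non-compact) $n$-manifolds, in which cap product with the fundamental class induces an isomorphism
\[
D_M : H^j_c(M,\Z) \xrightarrow{\ \sim\ } H_{n-j}(M,\Z),
\]
where $H^*_c$ denotes cohomology with compact supports. For $X$ compact one has $H^j_c(X,\Z)=H^j(X,\Z)$ and a global fundamental class $[X]\in H_n(X,\Z)$, so this statement specializes exactly to the theorem. Working with non-compact $M$ and compact supports is unavoidable, since the open subsets produced by the induction below are themselves non-compact.

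First I would set up the orientation and the fundamental class. An orientation of $M$ is a compatible choice of local generators of $H_n(M, M\setminus\{x\};\Z)\simeq\Z$, and for any compact $K\subset M$ one extracts a class $\mu_K\in H_n(M,M\setminus K;\Z)$ restricting to these local generators; when $M=X$ is compact, $K=X$ recovers $[X]$. Cap product $\frown\,\mu_K$ then defines $D_M$ on the directed system over $K$. The base of the induction is $M=\R^n$ (and more generally any convex open set): here $H^j_c(\R^n,\Z)$ is $\Z$ for $j=n$ and $0$ otherwise, while $H_{n-j}(\R^n,\Z)$ is $\Z$ for $j=n$ and $0$ otherwise, and one checks by hand that $D_{\R^n}$ is an isomorphism in the single nontrivial degree. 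Disjoint unions are handled by additivity of both functors.

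The inductive step is a Mayer--Vietoris bootstrap. Given open sets $U,V\subset M$ for which $D_U$, $D_V$ and $D_{U\cap V}$ are already isomorphisms, I would write the two long exact Mayer--Vietoris sequences --- one for $H^*_c$ (which is covariant for open inclusions via extension by zero, so its sequence runs $H^*_c(U\cap V)\to H^*_c(U)\oplus H^*_c(V)\to H^*_c(U\cup V)$) and one for ordinary homology --- and arrange the duality maps into a ladder between them. Verifying that the ladder commutes up to a sign on the connecting homomorphisms reduces to naturality of the cap product together with the compatibility of the local fundamental classes on $U\cap V$. The five lemma then yields that $D_{U\cup V}$ is an isomorphism. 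Inducting on the number of members of a finite cover by Euclidean open sets gives the result for any manifold admitting such a cover (in particular any compact $X$), and a passage to the direct limit over a compact exhaustion extends it to the general oriented manifold.

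The main obstacle is precisely the commutativity of this ladder: one must prove that $\frown\,\mu$ intertwines the two connecting homomorphisms up to the expected sign, which requires a careful chain-level (or sheaf-theoretic) analysis of the cap product and of how the local orientation classes glue across $U\cap V$. The remaining technical points --- defining $H^*_c$ together with its Mayer--Vietoris sequence, and controlling the direct limits --- are routine once this sign-compatibility is in hand. For a triangulable $X$, such as the algebraic varieties considered above, one may instead give the classical geometric proof via the dual cell decomposition, in which $\frown[X]$ sends each cell to its dual cell and the duality becomes manifest; the Mayer--Vietoris argument is what is needed to cover arbitrary topological manifolds.
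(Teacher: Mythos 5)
The paper offers no proof of this theorem: like the cup-product duality stated just before it, it is quoted as a classical fact of topology (with \cite{g-h} as the ambient reference), so there is no argument of the paper to compare yours against step by step. Your proposal is the standard modern proof --- deduce the compact case from Poincar\'e--Lefschetz duality $H^j_c(M,\Z)\simeq H_{n-j}(M,\Z)$ for arbitrary oriented $n$-manifolds, established by a Mayer--Vietoris bootstrap from the case of $\R^n$ --- and all the essential ingredients are present: the compatible local orientation classes $\mu_K$, the covariance of $H^*_c$ under open inclusions by extension by zero, the sign-commutativity of the duality ladder as the one genuine chain-level verification, the five lemma, and direct limits. This buys much more than the statement asks for (duality for non-compact manifolds, and for manifolds that need not be triangulable), at the cost of the compactly-supported machinery; your closing remark correctly identifies the dual-cell argument as the shorter classical route for triangulable spaces such as the algebraic varieties of this paper.

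One step of your induction scheme is misstated, though the repair is standard. If $X=U_1\cup\dots\cup U_k$ with each $U_i$ Euclidean and you take $U=U_1\cup\dots\cup U_{k-1}$, $V=U_k$, then $U\cap V=\bigcup_{i<k}(U_i\cap U_k)$ is a union of $k-1$ open subsets of a chart, but these intersections are in general not themselves Euclidean, so the inductive hypothesis ``covered by $k-1$ Euclidean opens'' does not apply to $U\cap V$, and the induction as you state it does not close. The usual fix is to first prove duality for \emph{arbitrary} open subsets of $\R^n$: for convex opens by hand (your base case); for finite unions of convex opens by your Mayer--Vietoris ladder, noting that this class is stable under the relevant intersections since $(C_1\cup\dots\cup C_m)\cap C$ is again a finite union of convex opens; and for arbitrary opens by writing them as increasing countable unions of such finite unions and using that both $H^*_c$ and $H_*$ commute with these direct limits --- the limit device you invoke only at the very end is needed already here. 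With that stronger statement available for the chart intersections, your induction on the number of members of a finite Euclidean cover goes through and yields the compact case.
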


The homology class $i_*[Z]$ of a  compact complex analytic subspace $i: Z \to X $  of codimension $p$ in
the smooth compact complex manifold $X$ of dimension $n$  corresponds by the inverse of Poincar\'e duality
isomorphism $D_X$, to a fundamental cohomology class:
$$[\eta_Z]^{top} \in H^{2p}(X,\Z).$$

\begin{lemma}
The canonical morphism $H_{2n-2p}(X, \Z) \to H^{2n-2p}(X, \C)^*$
 carry the topological  class $[Z]$  of  an analytic subspace
 $Z$ of codimension $p$ in $X$ into the
 fundamental class $cl (Z)$.\\
 Respectively.  the morphism  $H^{2p}(X,\Z)\to  H^{2p}(X,\C)$  carry the topological  class 
 $[\eta_Z]^{top}$ to $[\eta_Z]$.
 \end{lemma}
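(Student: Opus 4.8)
The plan is to identify the canonical morphism $H_{2n-2p}(X,\Z)\to H^{2n-2p}(X,\C)^*$ with the de Rham integration pairing and to read off both assertions from this identification. First I would recall de Rham's theorem in its dual (pairing) form: the canonical map sends a homology class $\gamma$ to the linear functional $[\omega]\mapsto\int_\gamma\omega$, where $\gamma$ is represented by a singular cycle and $\omega$ by a closed $(2n-2p)$-form. This is exactly the de Rham duality $H^{2n-2p}(X,\C)\simeq H_{2n-2p}(X,\C)^*$ recalled above, precomposed with the coefficient map $H_{2n-2p}(X,\Z)\to H_{2n-2p}(X,\C)$.

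To evaluate this functional on $\gamma=[Z]$, with $m=n-p=\dim_{\C}Z$, I would use that $[Z]\in H_{2m}(X,\Z)$ is represented by the sum $\sum_i\sigma_i$ of the oriented top-dimensional simplices of a triangulation of $Z$, as granted above. Integrating a closed $2m$-form $\omega$ over this cycle gives $\sum_i\int_{\sigma_i}\omega=\int_Z\omega_{|Z}$; since the singular locus $Z\setminus Z_{smooth}$ has real dimension at most $2m-2$, it carries no $2m$-dimensional simplices and contributes nothing, so the sum equals $\int_{Z_{smooth}}\omega_{|Z}$. Convergence of the latter and its independence of the triangulation were already obtained in the lemma above through a desingularisation $\pi:Z'\to Z$ that is an isomorphism over $Z_{smooth}$. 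By definition this integral is $cl(Z)([\omega])$, which proves the first assertion: the image of $[Z]$ is $cl(Z)$.

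For the second assertion I would deduce it from the first by matching the topological and the de Rham Poincar\'e dualities. Write $\eta\in H^{2p}(X,\C)$ for the image of $[\eta_Z]^{top}$ under $H^{2p}(X,\Z)\to H^{2p}(X,\C)$. By definition $[\eta_Z]^{top}=D_X^{-1}[Z]$, that is $[\eta_Z]^{top}\frown[X]=[Z]$, a relation that persists after extension of coefficients to $\C$, so $\eta\frown[X]=[Z]$ in $H_{2n-2p}(X,\C)$. Using the standard compatibility $\langle\alpha\smile\beta,[X]\rangle=\langle\alpha,\beta\frown[X]\rangle$ between cup product, cap product and the Kronecker pairing, together with the identification of $Tr$ with integration over $X$, for every closed form $\varphi$ of degree $2n-2p$ I obtain
\[\int_X\varphi\wedge\eta=\langle[\varphi]\smile\eta,[X]\rangle=\langle[\varphi],\eta\frown[X]\rangle=\langle[\varphi],[Z]\rangle=\int_{Z_{smooth}}\varphi_{|Z},\]
the last equality being the first assertion. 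Thus $\eta$ satisfies $\int_X\varphi\wedge\eta=\int_{Z_{smooth}}\varphi_{|Z}$ for all closed $\varphi$, which is precisely the property characterising $[\eta_Z]$ in the Lemma-Definition above; by non-degeneracy of the Poincar\'e pairing this forces $\eta=[\eta_Z]$.

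The step I expect to be the main obstacle is the first one: verifying rigorously that the triangulated fundamental cycle $[Z]$ computes the integral $\int_{Z_{smooth}}\omega_{|Z}$ and that this integral is insensitive to the singular set. The clean tool is the desingularisation $\pi$ of the earlier lemma, for which $\int_{Z'}\pi^*(\omega_{|Z})=\int_{Z_{smooth}}\omega_{|Z}$ and $\pi_*[Z']=[Z]$; the remaining ingredients, namely de Rham's pairing theorem and the cup/cap compatibility, are standard, so once this point is settled both parts follow formally.
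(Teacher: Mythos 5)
Your proof is correct, and in fact the paper offers no proof of this lemma at all: it is stated as part of the classical background, in a subsection where Poincar\'e duality is ``stated without proof'' and the triangulation of $Z$ is ``taken as granted,'' so your argument is precisely the one the authors implicitly invoke. Identifying the canonical map with the de Rham integration pairing, evaluating on the triangulated cycle $[Z]$, and deducing the second assertion from the first via the adjunction $\langle\alpha\smile\beta,[X]\rangle=\langle\alpha,\beta\frown[X]\rangle$ together with nondegeneracy of the Poincar\'e pairing is the standard and essentially unique route; since all degrees involved are even, sign conventions in the cup/cap formalism cause no trouble. The one step needing care is the one you flag yourself, namely that the sum of top simplices computes $\int_{Z_{smooth}}\omega_{|Z}$: granting, as the paper does, a triangulation in which the singular locus (of real codimension at least $2$ in $Z$) is a subcomplex, the interiors of the $2m$-simplices lie in $Z_{smooth}$ and the claim follows, or alternatively one uses $\pi_*[Z']=[Z]$ for a desingularization $\pi:Z'\to Z$ (a proper degree-one map, isomorphism over $Z_{smooth}$) together with the projection formula, which reduces everything to de Rham theory on the smooth compact manifold $Z'$. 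Either way your argument is complete at the level of rigor of this section.
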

 
 \subsubsection{ Intersection product in topology and geometry}
 On
a triangulated space, cycles are defined as  a sum of triangles
with boundary zero, and homology is defined by classes of cycles
modulo boundaries.
It is always possible to represent  two homology classes of degree $p$ and $q$
 by two cycles of codim. $p$ and $q$  in
`transversal position' , so that
 their intersection is defined as a cycle of
 codim.$p+q$. Moreover, for two representations by
 transversal cycles,  the intersections cycles  are  homologous
 \cite{G-M}, (\cite{E-N} 2.8). Then a theory
of intersection product on homology can be deduced:
\begin{equation*} H_{n-p}(X,\Z)\otimes H_{n-q}(X,\Z)  \xrightarrow{\cap}
H_{n-p-q}(X,\Z)
\end{equation*}
In geometry, two closed submanifolds
$V_1$ and $V_2$ of a compact oriented  manifold $M$ can be deformed into a
transversal position so that their intersection can be defined as
a submanifold $W$ with a sign depending on the orientation (\cite{G-P}, ch 2), then $W$ is defined up to a
deformation such that the homology classes satisfy the relation $
[V_1] \cap [V_2] = \pm[W]$. 
The deformation class of $W$ with sign is called
$V_1 \cap V_2$.

\vskip.1in
\n {\it Poincar\'e duality in Homology} (\cite{g-h}, p 53) . \\
 The intersection pairing:
\begin{equation*} H_j(X,\Z)\otimes H_{n-j}(X,\Z)  \xrightarrow{\cap}
H_0(X,\Z) \xrightarrow {degree} \Z
\end{equation*}
 for  $0\leq j\leq n$ is unimodular:  the induced morphism
$$H_j(X,\Z)\rightarrow  Hom (H_{n-j}(X,\Z), \Z) $$
is surjective and its kernel is the torsion subgroup of
$H_j(X,\Z)$.

 \subsubsection{Relation between Intersection and cup products}
 The cup product previously defined in de Rham cohomology
is a topological product defined with coefficients in $\Z$, but
its definition on cohomology groups is less straightforward. The
trace map can also be defined with coefficients in $\Z$, and we
also have the corresponding duality statement,

\vskip.1in
\n {\it Poincar\'e duality in cohomology}.
 
\n The cup-product and the trace map:
\begin{equation*} H^j(X,\Z)\otimes  H^{n-j}(X,\Z)\xrightarrow{\smile}
H^n(X,\Z) \xrightarrow {Tr} \Z
\end{equation*}
define a unimodular pairing inducing an isomorphism:
$$H^j(X,\Q)\xrightarrow {\sim} Hom (H^{n-j}(X,\Q), \Q)$$
compatible with the  pairing in de Rham cohomology.

\vskip.1in
\n {\it Poincar\'e duality isomorphism transforms the intersection
pairing into the cup product:}\\
The following result is proved in \cite{g-h} (p. 59) in the case
$k' = n-k$:

 \n Let
 $\sigma $ be a $k-$cycle on an oriented manifold $X$ of  real dimension
 $n$ and $\tau$ an $k'-$cycle on $X$ with Poincar\'e duals
 $\eta_{\sigma} \in H^{n-k}(X)$ and $\eta_{\tau}\in H^{n-k'}(X)$,
 then:
 \[ \eta_{\sigma} \smile \eta_{\tau} = \eta_{\sigma \frown \tau} \in
 H^{n-k-k'}(X) \]
 
\subsubsection{} The Hodge type $(p,p)$ of the fundamental
class of analytic compact submanifold of codimension $p$ is an
analytic condition. The search for properties characterizing
classes of cycles  has been motivated by a question of Hodge.

\begin{definition} Let $A = \Z$ or $\Q$, $p \in \N$ and
 let $ \varphi: H^{2p}(X,A)\to H^{2p}(X,\C)$ denotes the canonical map.
 The group of cycles
 \[ H^{p,p}(X,A):= \{ x \in H^{2p}(X,A): \varphi (x) \in H^{p,p}(X,\C)\}\]
 is called the group of Hodge classes of type $(p,p)$.
\end{definition}

\begin{definition} An $r-$cycle of an algebraic variety  $X$ is
a formal finite linear combination $\sum_{i\in [1,h]} m_i Z_i$
of closed irreducible subvarieties $Z_i$ of dimension $r$ with
integer coefficients $m_i$. The group of $r-$cycles is denoted by
$\ZZ_r(X)$.
\end{definition}

For a compact complex algebraic manifold, the class of 
closed irreducible subvarieties of codimension $p$
extends into a linear morphism:
$$cl_A: \ZZ_p(X)\otimes A \to H^{p,p}(X,A): \sum_{i\in [1,h]} m_i Z_i \mapsto
\sum_{i\in [1,h]} m_i \eta_{Z_i}, \forall m_i \in A$$
 The
elements of the image of $cl_{\Q}$ are called rational algebraic
Hodge classes of type $(p,p)$.

\subsubsection{Hodge conjecture} Is the map $cl_{\Q}$ surjective
when $X$ is a projective manifold? In other terms, is any rational
Hodge class algebraic?

\vskip.1in
The question is to construct a cycle of codimension $p$ out of a
rational cohomology element of type ($p,p$) knowing that
cohomology is constructed via topological technique.

 Originally, the  Hodge conjecture was stated with $\Z$-coefficients,
 but there  are torsion elements
which cannot be  represented by algebraic cycles. There exists
compact K\"{a}hler complex manifolds not containing  enough
 analytic subspaces to represent all Hodge cycles \cite{Vo1}.

\begin{remark}[Absolute Hodge cycle] In fact
Deligne added another property for algebraic cycles by introducing
the notion of  absolute Hodge cycle (see \cite{Del82}). An
algebraic cycle $Z$ is necessarily defined over a field extension
$K$ of finite type over $\Q$. Then its cohomology class in the de Rham
cohomology of $X$ over the field $K$ defines, for each embedding $\sigma: K
\to \C$, a cohomology class $[Z]_{\sigma}$ of
type $(p,p)$  in the cohomology  of the complex manifold
$X_{\sigma}^{an}$.
\end{remark}

\begin{remark}[Grothendieck fundamental class] For an algebraic
subvariety $Z$ of codimension $p$ in a variety $X$ of dimension
$n$, the fundamental class can be defined as an element of the
group $Ext^p (\OO_Z, \Omega^p_X)$ (see \cite{Gr1}, \cite{H1}). Let
$U$ be an affine subset and suppose that $Z\cap U$ is defined as a complete
intersection by $p$ regular functions $f_i$, if we use the Koszul
resolution of $\OO_{ Z \cap U}$ defined by the elements $f_i$ to
compute the extension
groups, then the cohomology class is defined by a symbol: \\
\centerline {$\left[
\begin{array}{c}
  d f_1\wedge \cdots \wedge d f_p \\
  f_1 \cdots f_p
\end{array}
\right] \in Ext^p (\OO_{ Z \cap U}, \Omega^p_U).$} This symbol is
independent of the choice of generators of $\OO_{ Z \cap U}$, and
it is the restriction of a unique class defined over $Z$ which
defines the cohomology class of $Z$ in the de Rham cohomology
group $\H^{2p}_Z (X, \Omega^*_X)$ with support in $Z$ (\cite{EZ}).
The extension groups and cohomology groups with support in closed
algebraic subvarieties form the   basic idea to construct the
dualizing complex
 $K^*_X$ of $X$ as part of Grothendieck duality theory (see \cite{H1}).
\end{remark}

\section{Lefschetz decomposition and Polarized  Hodge structure}

In  this chapter, we give more specific results  for smooth
complex {\it projective} varieties, namely the Lefschetz decomposition
and primitive cohomology subspaces, Riemann bilinear relations and
their abstract formulation into polarization of Hodge Structures. We start 
with the statement of the results. Proofs and complements follow.

\subsection{Lefschetz decomposition  and primitive cohomology}
 Let $(X, \omega) $ be a compact
K\"{a}hler manifold of class $[\omega] \in H^2(X,\R)$ of Hodge
type $(1,1)$. The cup-product with $[\omega]$ defines morphisms:
\begin{equation*}
 L: H^q(X,\R)\to H^{q+2}(X, \R), \quad L: H^q(X,\C)\to H^{q+2}(X, \C)
\end{equation*}
Referring to de Rham cohomology, the action of $L$ is represented
on the level of forms as $\varphi \mapsto \varphi \wedge \omega $. Since
$ \omega$ is closed, the image of a closed form (resp. a boundary) is
 closed (resp. a boundary). Let
$n = \dim X$.

\begin{definition} The primitive cohomology subspaces are defined
as:
\begin{equation*}
  H_{prim}^{n-i}(\R) := Ker ( L^{i+1}:H^{n-i}(X,\R)\to
  H^{n+i+2}(X,\R))
\end{equation*}
 and similarly for  complex coefficients
$H_{\rm prim}^{n-i}(\C)\simeq H_{\rm prim}^{n-i}(\R)\otimes_{\R} \C$.
\end{definition}

 The operator $L$ is compatible with Hodge decomposition since it sends the subspace
$H^{p,q}$ to $H^{p+1,q+1}$. We shall say that the morphism $L$  is of Hodge type $(1,1)$; hence
 the  action $L^{i+1}:H^{n-i}(X,\C)\to
  H^{n+i+2}(X,\C)$ is a morphism of Hodge type $(i+1,i+1)$, and the kernel is endowed with an induced
 Hodge decomposition.  This is a strong 
 condition on the primitive subspace,
 since if we
 let: 
 $$H^{p,q}_{\rm prim}:= H^{p+q}_{\rm prim} \cap H^{p,q}(X, \C),$$ 
 then:
$$H_{\rm prim}^i(X, \C) = \oplus_{p+q = i}H^{p,q}_{prim}.$$

The following isomorphism, referred to as Hard Lefschetz Theorem, puts a
strong condition on the cohomology of projective, and more generally
compact K\"{a}hler,  manifolds
 and gives rise to a decomposition of the cohomology in terms of
primitive subspaces:

\begin{theorem} Let $X$ be a compact K\"ahler manifold.\\ 
i) {\rm Hard Lefschetz Theorem}.
 The  iterated linear operator
$L$ induces  isomorphisms for each $i$:
 \begin{equation*}
 L^i: H^{n-i}(X,\R)\xrightarrow {\sim} H^{n+i}(X,\R), \,
  L^i: H^{n-i}(X,\C)\xrightarrow {\sim} H^{n+i}(X,\C)
\end{equation*}
ii) {\rm Lefschetz Decomposition}. The cohomology decomposes into
a direct sum of image of primitive subspaces by $L^r, \, r \geq 0$:
\begin{equation*}
  H^q(X,\R)= \oplus_{r \geq 0} L^r H_{\rm prim}^{q-2r}(\R), \quad
   H^q(X,\C)= \oplus_{r \geq 0} L^rH_{\rm prim}^{q-2r}(\C)
\end{equation*}
The Lefschetz decomposition is compatible with Hodge decomposition.\\
iii) If $X$ is moreover projective, then the action of $L$ is
defined with rational coefficients and the decomposition applies to
rational cohomology.
\end{theorem}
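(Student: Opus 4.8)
The plan is to deduce the three statements from the representation theory of the Lie algebra $\mathfrak{sl}_2(\C)$ acting on the complex cohomology $H^*(X,\C)$. First I would introduce, alongside the Lefschetz operator $L$ (cup product with $[\omega]$), its formal adjoint $\Lambda$ with respect to the $L^2$-inner product, and the counting operator $H$ which acts on $H^k(X,\C)$ as multiplication by the scalar $k-n$. The crucial input, which on a K\"ahler manifold follows from the K\"ahler identities (the relations between $L$, $\Lambda$ and the operators $\partial,\overline\partial$ and their adjoints that are a consequence of $d\omega=0$), is that these three operators satisfy the commutation relations of $\mathfrak{sl}_2$:
\begin{equation*}
[H,L]=2L,\quad [H,\Lambda]=-2\Lambda,\quad [L,\Lambda]=H.
\end{equation*}
Because $L$ and $\Lambda$ commute with the Laplacian $\Delta_d$, they preserve harmonic forms, so by the Hodge theorem of the previous section this gives an honest finite-dimensional representation of $\mathfrak{sl}_2(\C)$ on $H^*(X,\C)=\oplus_k H^k(X,\C)$, graded by the eigenvalues of $H$.

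Next I would invoke the classification of finite-dimensional $\mathfrak{sl}_2$-representations. Decomposing $H^*(X,\C)$ into irreducibles, each irreducible summand is a string of weight spaces symmetric about weight $0$, on which $L^i$ maps the weight-$(-i)$ space isomorphically onto the weight-$(+i)$ space. Translating weights back into cohomological degrees via $H=k-n$, the weight-$(-i)$ space sits in degree $n-i$ and the weight-$(+i)$ space in degree $n+i$; hence $L^i:H^{n-i}\to H^{n+i}$ is an isomorphism. This proves (i). For (ii), the primitive cohomology $H^{n-i}_{\mathrm{prim}}=\ker(L^{i+1})$ is exactly the space of highest-weight vectors (lowest-degree generators of the irreducible strings) in degree $n-i$; the standard fact that every element of an $\mathfrak{sl}_2$-representation is a sum of images under powers of $L$ of highest-weight vectors then yields the direct-sum decomposition $H^q=\oplus_{r\ge0}L^r H^{q-2r}_{\mathrm{prim}}$. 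Compatibility with the Hodge decomposition is automatic because $L$ is of Hodge type $(1,1)$ and $\Lambda$ of type $(-1,-1)$, so the $\mathfrak{sl}_2$-action preserves the $(p,q)$-grading and the whole decomposition splits into $(p,q)$-pieces.

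Finally, for (iii), when $X$ is projective I would take $[\omega]$ to be the first Chern class of an ample line bundle (a hyperplane class), which lies in the image of $H^2(X,\Q)\to H^2(X,\C)$; then $L$ is defined over $\Q$, and since the isomorphisms $L^i$ and the kernels $\ker(L^{i+1})$ are defined by a rationally-defined operator, the Hard Lefschetz isomorphisms and the primitive decomposition descend to $H^*(X,\Q)$.

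I expect the main obstacle to be establishing the $\mathfrak{sl}_2$-commutation relations, i.e.\ the identity $[L,\Lambda]=H$: this rests on the K\"ahler identities, whose proof is a genuinely analytic computation with the Hodge star and the adjoints $\partial^*,\overline\partial^*$ that uses $d\omega=0$ in an essential way, and which is precisely the point where the K\"ahler hypothesis enters. Everything downstream—the representation-theoretic decomposition, the identification of primitive classes with highest-weight vectors, and the rational refinement—is then formal.
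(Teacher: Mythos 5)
Your proposal is correct and coincides with the second proof sketched in the paper, which likewise realizes $(L,\Lambda,h)$ as an $\mathfrak{sl}_2$-triple acting on harmonic forms and concludes by the structure theory of finite-dimensional $\mathfrak{sl}_2$-representations (the paper additionally sketches a first proof: injectivity of $L^i$ on forms via the identity $[L^r,\Lambda](\alpha)=(r(k-n)+r(r-1))L^{r-1}(\alpha)$, combined with Poincar\'e duality to get equality of dimensions). One small correction of attribution: the relation $[L,\Lambda]=H$ is pointwise Hermitian linear algebra, valid without $d\omega=0$; the K\"ahler hypothesis enters instead through the K\"ahler identities in showing that $L$ and $\Lambda$ commute with $\Delta_d$ (equivalently $\Delta_d=2\Delta_{\overline\partial}$), which is exactly what makes the $\mathfrak{sl}_2$-action preserve harmonic forms.
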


\subsubsection{ Hermitian product on cohomology} 

 From the isomorphism in the Hard Lefschetz theorem and
  Poincar\'e duality, we deduce    a scalar product on cohomology of smooth complex projective varieties compatible
 with Hodge Structures and  satisfying relations known as Hodge
 Riemann relations leading to a polarization of the primitive
 cohomology which is an additional highly rich  structure characteristic of such varieties.\\ Representing
  cohomology classes by differential forms, we define a bilinear
 form:
\begin{equation*}
  Q ( \alpha, \beta )= (-1)^{\frac{j(j-1)}{2}}\int _X
 \alpha \wedge \beta \wedge \omega^{n-j}, \quad \forall [\alpha],
  [\beta] \in H^j(X,\C)
\end{equation*}
   where  $\omega$ is the  K\"{a}hler class, the product of $\alpha$
   with $\omega^{n-j}$ represents the action of $L^{n-j}$ and the integral
   of the product with $\beta$ represents Poincar\'e duality.
   
   \vskip 0.1in
   {\it Properties of the product.}
   The above product $\hbox{Q}( \alpha, \beta )$
   depends only on the class  of $\alpha$ and $\beta$.
   The following properties are satisfied:\\
i) the product $\hbox{Q}$ is real  ( it takes real values on real forms) since $\omega$ is real, in other terms the matrix of $\hbox{Q}$ is real,
skew-symmetric if $j$ is odd and symmetric
if $j$ is even;\\
ii) It is non degenerate, by Lefschetz isomorphism and Poincar\'e
 duality;\\
 iii) By consideration of type, the Hodge  and  Lefschetz
 decompositions are  orthogonal relative to $\hbox{Q}$:
\begin{equation*}
\hbox{Q}( H^{p,q}, H^{p',q'}) = 0, \quad \mbox{unless} \,
  \,  p = p', q = q'.
\end{equation*}
 On projective varieties the K\"{a}hler class is
  in the integral lattice defined by cohomology with coefficients in $\Z$, hence the product
  is defined on rational cohomology and preserves the integral lattice.
In this case we have more precise positivity relations in terms of
the primitive component $H_{prim}^{p,q}(\C)$ of the cohomology
$H^{p+q}(X,\C)$.

 \begin{prop}[Hodge-Riemann bilinear relations]
 The  product $i^{p-q}
Q( \alpha,\overline{\alpha})$ is positive definite on the
primitive component $H_{prim}^{p,q}$:

\begin{equation*}
 i^{p-q}
Q( \alpha,\overline{\alpha})> 0, \quad \forall  \alpha
\in
 H_{prim}^{p,q},\,\alpha\neq 0
\end{equation*}
\end{prop}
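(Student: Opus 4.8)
The plan is to reduce the positivity statement to an explicit pointwise computation on harmonic forms, exploiting the fact that on a compact K\"ahler manifold every primitive cohomology class is represented by a harmonic form, and that the Hodge star operator has a clean expression on primitive forms. First I would fix a nonzero primitive class $[\alpha]\in H^{p,q}_{\mathrm{prim}}$ and represent it by its unique harmonic representative $\alpha$, which by the corollaries above is simultaneously $d$-harmonic and $\overline{\partial}$-harmonic, closed, and primitive in the sense that $L^{i+1}\alpha=0$ where $i=n-(p+q)$. The key identity to establish is a formula expressing $\omega^{n-j}\wedge\alpha$ (with $j=p+q$) in terms of the Hodge star $*\alpha$; this is the Weil identity for primitive forms, which states that for a primitive $(p,q)$-form $\alpha$ one has
\begin{equation*}
*(L^r\alpha) = (-1)^{\frac{(p+q)(p+q+1)}{2}}\,\frac{i^{p-q}\, r!}{(n-p-q-r)!}\,L^{n-p-q-r}\alpha,
\end{equation*}
evaluated at the relevant value of $r$. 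The hard part will be setting up and proving this pointwise identity, since it rests on the representation-theoretic structure of the $\mathfrak{sl}_2$-action generated by $L$, its adjoint $\Lambda$, and the counting operator, together with the interaction of $*$ with the type decomposition.

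Granting the Weil identity, the remainder is bookkeeping. Substituting into the definition of $Q$, I would rewrite
\begin{equation*}
i^{p-q}Q(\alpha,\overline{\alpha}) = i^{p-q}(-1)^{\frac{j(j-1)}{2}}\int_X \alpha\wedge\overline{\alpha}\wedge\omega^{n-j}
\end{equation*}
and use the Weil identity (with $r=0$, so $L^{n-j}\alpha = \omega^{n-j}\wedge\alpha$ up to a combinatorial constant) to convert $\omega^{n-j}\wedge\overline{\alpha}$ into a positive constant times $*\,\overline{\alpha}$. The sign prefactor $(-1)^{j(j-1)/2}$ in the definition of $Q$ is chosen precisely to cancel the analogous sign appearing in the Weil identity, so that the integrand becomes a positive multiple of $\alpha\wedge *\overline{\alpha}$. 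At that point the $L^2$-inner product enters: since $(\alpha,\alpha)_{L^2}=\int_X\alpha\wedge *\overline{\alpha}$ by the definition of the Hermitian $L^2$-product recalled earlier, the integral equals a strictly positive constant times $\|\alpha\|^2_{L^2}$.

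Finally I would conclude positivity: because $\alpha$ is harmonic and nonzero, $\|\alpha\|_{L^2}^2>0$, and the combinatorial constant multiplying it is manifestly positive (a ratio of factorials), so $i^{p-q}Q(\alpha,\overline{\alpha})>0$. The orthogonality of distinct Hodge components under $Q$, already noted in property (iii), guarantees that the computation on a single $(p,q)$-summand is not contaminated by cross terms, so it suffices to treat each primitive component separately. I expect the genuine obstacle to be the verification of the Weil identity with the correct sign and constant, as everything downstream is a mechanical substitution; for that identity I would either invoke the linear-algebra computation on the exterior algebra of a Hermitian vector space (reducing to the fibrewise model $\C^n$ with its standard $\omega$) or cite the treatment in \cite{wel} and \cite{g-h} to which the text already defers for the subtle linear algebra of Hermitian metrics.
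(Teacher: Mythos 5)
Your proposal is correct and follows essentially the same route as the paper: the paper likewise represents the class by a primitive harmonic form, admits the Weil star-identity for primitive forms (citing Voisin, Prop.\ 6.29 --- the $r=0$ case of your displayed formula, applied to $\overline{\tilde\alpha}$ of type $(q,p)$), and carries out the same sign bookkeeping to obtain $i^{p-q}Q(\alpha,\overline{\alpha}) = (n-j)!\,\|\tilde\alpha\|^2_{L^2} > 0$. The only difference is cosmetic: you state the identity for general $L^r\alpha$ and sketch its $\mathfrak{sl}_2$/pointwise-Hermitian provenance, whereas the paper simply takes the lemma as admitted.
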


 \subsubsection{ Summary of the proof of Lefschetz decomposition }
  \
 
\n 1) First, we  consider  the action of $L$ on sheaves,
 $L = \wedge \omega: \EE^r_X \to \EE^{r+2}_X$,  then
we introduce its formal adjoint operator with respect to the
Hermitian form: $\Lambda = L^*: \EE^r_X \to \EE^{r-2}_X$ which can
be defined, using the Hodge star operator, by $\Lambda =
*^{-1}\circ L \circ *$. Note that the operator:
\begin{equation*}
 h = \sum_{p = 0}^{2n} (n-p) \Pi^p, \quad h(\sum_p \omega_p) = \sum_p
 (n-p) \omega_p, \quad {\rm for }\,\, \omega_p \in \EE^p_X,
\end{equation*}
where $\Pi^p$ is the projection in degree $p$ on $\oplus_{p \in
[0, 2n]}\EE^p_X$ and $n =\dim X$, satisfies the relation:
\begin{equation*}
 [\Lambda, L] = h.
 \end{equation*}
  from which we can deduce the injectivity of the morphism:
 \begin{equation*}
 L^i: \EE_X ^{n-i}\rightarrow  \EE_X^{n+i}.
\end{equation*}
For this we use the following formula, for $\alpha\in \EE_X^{k}$:
\begin{equation*}
 [L^r, \Lambda] (\alpha)= (r(k-n) + r(r-1)) L^{r-1}(\alpha),
\end{equation*}
which is proved by induction on $r$. \\
The morphism $L^j$ 
{\it commutes with the Laplacian } and, since cohomology classes can be
represented by
 global harmonic sections, $L^j$  induces an isomorphism on
 cohomology vector spaces which are of finite equal dimension by Poincar\'e
 duality:
 \begin{equation*}
 L^j: H^{n-j}(X,\R)\xrightarrow {\sim} H^{n+j}(X,\R)
\end{equation*}
Moreover the extension of the operator $L$  to complex
coefficients  is compatible with the bigrading $(p,q)$ since
$\omega$ is of type ($1,1$).
 The decomposition of the cohomology into direct sum
of image of primitive subspaces by $L^r, \, r \geq 0$ follows from
the previous isomorphisms.\\
2) Another  proof is based on the representation theory of the Lie
algebra $sl_2$. We represent $L$ by the action on global sections
of the operator: 
$$L = \wedge \omega: \EE^{p,q} (X) \to
\EE^{p+1,q+1} (X)$$  
and  its adjoint $\Lambda = L^*: \EE^{p,q} (X)
\to \EE^{p-1,q-1} (X)$ is defined by $\Lambda = *^{-1}\circ L
\circ *$.
 We have the relations:
 
\begin{equation*}
[\Lambda, L] = h, \quad  [h, L] = -2 L, \quad [h,\Lambda] = 2
\Lambda
\end{equation*}

\n Hence, the operators $L$, $\Lambda$, $h$ generate a Lie algebra
isomorphic to $sl_2$. We deduce from the action of the operators $L, \Lambda, h $ on the
space of harmonic forms $\oplus \HH^*_d(X)\simeq \oplus H^*(X)$, a
representation of the Lie algebra $sl_2$ by identifying the
matrices:
$$\left(
\begin{array}{cc}
  0 & 1\\
  0 & 0 \\
\end{array}
\right),\,\left(
\begin{array}{cc}
  0 & 0\\
  1 & 0 \\
\end{array}
\right), \,\left(
\begin{array}{cc}
  1 & 0\\
  0 & -1 \\
\end{array}
\right)\, \mbox{resp. with} \, \Lambda, \,L,\, h.$$ Then the
theorem follows from the general structure of  such
representation (see \cite{g-h} p. 122-124).

\subsubsection{ Proof of Hodge-Riemann bilinear relations} 
We recall  the notion of a primitive form.
 A differential form  $\alpha \in \Omega^k_{X,x,\R}, k \leq n$ is said to be primitive if
 $L^{n-k+1}\alpha = 0$; then every form $\beta \in  \Omega^k_{X,x,\R}$ decomposes in
 a unique way into a direct sum  $\beta = \sum_r L^r  \beta_r$ where each  $\beta_r$ is primitive of degree $k-2r$ with the condition $k-2r \leq$inf$(k, 2n-k)$. Moreover, the decomposition 
 applies to the complexified forms and it is compatible with bidegrees  $(p,q)$ of forms.

 The following lemma  is admitted and is useful in the proof
(\cite{Vo}  prop.6.29) 
 \begin{lemma} Let $\gamma \in \Omega^{q,p}_{X,x} \subset
 \EE^j_{X,x}$ be a primitive element, then
 \begin{equation*}
* \gamma = (-1)^{\frac{j(j+1)}{2}}i^{q-p} \frac{L^{n-j}}{(n-j)!}\gamma
\end{equation*}
 \end{lemma}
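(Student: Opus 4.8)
The statement is pointwise and invariant under the unitary group, so the plan is first to reduce it to linear algebra on the Hermitian vector space $W = T^*_{X,\R,x}\otimes \C$. Choosing a unitary coframe $\varphi_1,\dots,\varphi_n$ as in the exercise above, so that $\omega = \frac{i}{2}\sum_j \varphi_j\wedge \overline{\varphi}_j$, the operators $*$ (the $\C$-linear Hodge star used in the cited reference), $L = \omega\wedge(-)$ and $\Lambda$, together with the notions of type $(q,p)$ and of primitivity, are all determined by this linear datum and commute with the natural $U(n)$-action on $\wedge^* W$. Hence it suffices to prove the identity inside the exterior algebra of the standard Hermitian model $\C^n$. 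As a first consistency check one verifies that both sides have degree $2n-j$ and the same type: since $*$ sends $\EE^{q,p}$ to $\EE^{n-p,n-q}$ and $L^{n-j}$ adds $(n-j,n-j)$ to the bidegree, $*\gamma$ and $L^{n-j}\gamma$ both lie in $\EE^{n-p,n-q}$.

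Next I would use the $sl_2$-triple $(L,\Lambda,h)$ introduced above. By the representation theory of $sl_2$, a form $\gamma$ of degree $j\le n$ is primitive if and only if it is a highest-weight vector, that is $\Lambda\gamma = 0$. Because $\omega$ is real, the star intertwines $L$ and $\Lambda$ (up to sign), so from the relation $\Lambda = *^{-1}\circ L\circ *$ one gets $L(*\gamma) = \pm *(\Lambda\gamma) = 0$. Thus $*\gamma$ is a form of degree $2n-j$ annihilated by $L$, i.e. a lowest-weight vector. By the Lefschetz decomposition the only such forms are of the shape $L^{n-j}\delta$ with $\delta$ primitive of degree $j$, and $\delta$ is uniquely determined; tracking bidegrees as above shows that $\delta$ has type $(q,p)$, the same as $\gamma$.

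It then remains to identify $\delta$. The assignment $\gamma\mapsto \delta$ is a $U(n)$-equivariant endomorphism of the space $P^{q,p}$ of primitive forms of type $(q,p)$, which is an irreducible $U(n)$-module; by Schur's lemma it is multiplication by a scalar $\lambda$ depending only on $(p,q,n)$, so that $*\gamma = \lambda\, L^{n-j}\gamma$. I would fix $\lambda$ by a normalisation: the case $j=0$, $\gamma=1$, gives $*1 = vol = \omega^n/n! = \tfrac{L^n}{n!}(1)$, whence $\lambda = 1/n!$ there; the general value $\lambda = (-1)^{j(j+1)/2}\,i^{q-p}/(n-j)!$ is obtained by a single explicit evaluation on the primitive monomial $dz_1\wedge\cdots\wedge dz_q\wedge d\overline{z}_{q+1}\wedge\cdots\wedge d\overline{z}_{q+p}$ (which is primitive, since no conjugate pair $dz_k\wedge d\overline{z}_k$ occurs, so that $\Lambda$ annihilates it), comparing $*\gamma$ and $L^{n-j}\gamma$ directly in the standard model.

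The main obstacle is precisely this last step: pinning down the universal constant with the correct sign $(-1)^{j(j+1)/2}$, the phase $i^{q-p}$, and the factorial $1/(n-j)!$. The sign and phase arise from the behaviour of the $\C$-linear star with respect to bidegree and from the values of $*^2$, while the factorial reflects the structure constants of the $sl_2$-action, $L^{n-j}$ carrying a highest-weight vector onto a fixed multiple of the lowest-weight vector. Careful bookkeeping of these factors, cleanest via the explicit monomial computation, completes the proof.
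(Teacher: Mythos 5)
The paper offers no proof of this lemma to compare against: it is explicitly admitted, with the reference \cite{Vo} Prop.~6.29, where the argument (going back to Weil \cite{Weil}) is a direct computation -- one expands an arbitrary primitive form in monomials in a unitary coframe on the model $\C^n$ and verifies the identity by explicit combinatorial bookkeeping. Your route is genuinely different and sound in outline. Your observation that type considerations force the $\C$-linear star here (rather than the conjugate-linear $*$ the paper defines in Section 1) is exactly right and resolves a real ambiguity in the statement: the conjugate star would send type $(q,p)$ to $(n-q,n-p)$, incompatible with the type $(n-p,n-q)$ of $L^{n-j}\gamma$ unless $p=q$. The $sl_2$ step is clean: from $\Lambda = *^{-1}\circ L \circ *$ and $\Lambda\gamma = 0$ one gets $L(*\gamma) = *(\Lambda\gamma) = 0$ (no sign is even needed), and the kernel of $L$ in degree $2n-j$ is precisely $L^{n-j}P^{j}$ with $L^{n-j}$ injective on primitives, so $*\gamma = L^{n-j}\delta$ for a unique $\delta$ of type $(q,p)$. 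What your approach buys over the computational one is an a priori explanation of \emph{why} a universal constant $\lambda(p,q,n)$ exists, compressing the entire lemma into a single normalization; Weil and Voisin obtain the constant and the structural statement simultaneously by brute force.

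Two points must be settled before this is a complete proof. First, the Schur step rests on the irreducibility of $P^{q,p}$ as a $U(n)$-module. This is true and classical ($P^{q,p}$ is the ``traceless'' constituent of $\Lambda^{q}V^*\otimes\Lambda^{p}\overline{V}^*$, cut out by the contraction $\Lambda$, and occurs with multiplicity one), but it is nowhere proved in this paper and is of comparable depth to the lemma itself; without it, $\gamma \mapsto \delta$ is merely an equivariant endomorphism and nothing forces it to be scalar, so you must cite it (e.g.\ Weil's book) or prove it. Second, all of the quantitative content -- the sign $(-1)^{j(j+1)/2}$, the phase $i^{q-p}$, and the factor $1/(n-j)!$ -- lives in the deferred monomial evaluation, which you promise but do not perform. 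It is routine but genuinely error-prone: with $\omega = \frac{i}{2}\sum_k \varphi_k\wedge\overline{\varphi}_k$ one has $\lVert\varphi_k\rVert^2 = 2$ in the underlying Riemannian metric (as the exercise in Section 1 records), so computing the star of $dz_1\wedge\cdots\wedge dz_q\wedge d\overline{z}_{q+1}\wedge\cdots\wedge d\overline{z}_{q+p}$ requires passing to the real orthonormal coframe and tracking the resulting powers of $2$ and $i$ together with the shuffle sign that assembles into $(-1)^{j(j+1)/2}$. Your sanity checks are correct -- the monomial is primitive since it contains no conjugate pair $dz_k\wedge d\overline{z}_k$, and the case $j=0$ gives $*1 = \omega^n/n!$ consistently -- so the verdict is: right strategy, one missing citation-level input, and the decisive computation still to be carried out.
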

 It is applied to $\overline{\alpha}$ in the definition of the product $Q$ as follows.
 We represent a primitive cohomolgy class by a primitive
  harmonic form $\tilde \alpha$ of degree $j = p+q$, then  we write
 the product in terms of the $L^2-$norm, as follows
\begin{equation*}
\hbox{F}(\alpha,\alpha)=i^{p-q} Q(\alpha,\overline{\alpha}) = 
 (n-j)!
\int_X \tilde \alpha \wedge  * \overline{\tilde \alpha}\\
   = (n-j)! \vert \vert \tilde \alpha \vert \vert^2_{L^2}
\end{equation*}

 Indeed, since $(-1)^j i^{q-p}
 = (-1)^{q-p} i^{q-p}= i^{p-q}
 $  we deduce from the lemma
  $L^{n-j}\overline{ \tilde \alpha}=
i^{q-p}(n-j)!(-1)^{\frac{j(j-1)}{2}} (*\overline{\tilde \alpha})$.

This result suggest to introduce the Weil operator $C$ on
cohomology:
\begin{equation*}
   C( \alpha )= i^{p-q} \alpha,\quad \forall \alpha  \in
 H^{p,q}
\end{equation*}
Notice that $C$ is a real operator since for a real vector $v =
\sum_{p+q = j} v^{p,q}$, $v^{q,p}= \overline {v^{p,q}}$, hence
$\overline {C v} = \sum \overline {i^{p-q}v^{p,q}} = \sum
\overline i^{p-q} v^{q,p} =  i^{q-p} v^{q,p}$, as $\overline
i^{p-q} = i^{q-p}  $. It depends on the decomposition, in
particular the action of $C$ in a variation of Hodge structure is
differentiable in the parameter space. We deduce from $Q$ a new
non degenerate Hermitian product:
\begin{equation*}
 \hbox{F}( \alpha, \beta )=  Q ( C(\alpha), \overline {\beta} ),\,\,
 \hbox{F}( \beta, \alpha )= \overline { \hbox{F}( \alpha, \beta )}
   \quad \forall [\alpha],  [\beta] \in H^j(X,\C)
\end{equation*}
 We use  $\overline {Q ( \alpha,
 \beta)} =  Q (\overline { \alpha},\overline{\beta})$ since $Q$ is real,
 to check for $\alpha, \beta \in H^{p,q}$:\\
$\overline {\hbox{F}( \alpha, \beta )}=\overline {Q (i^{p-q} \alpha,
\overline {\beta})} = \overline {i}^{p-q} Q (\overline { \alpha},
\beta)=(-1)^j i^{p-q} Q ( \overline { \alpha},\beta)
 =(-1)^{2j} i^{p-q} Q (\beta, \overline { \alpha})=\hbox{F}( \beta, \alpha )$.
\begin{remark}
 i) When the class $[\omega] \in
H^j(X,\Z)$ is integral, which is the case for algebraic varieties,
the product $Q$ is integral, i.e., with integral value on integral
classes.\\
ii) The  integral bilinear form $Q$ extends by linearity to the
complex space $H^k_{\Z}\otimes \C$. Its associated form
$\hbox{F}(\alpha,\beta) := Q(\alpha, \overline {\beta})$ is not Hermitian
if $k$ is odd. One way to overcome this sign problem is to define
$\hbox{F}$ as  $\hbox{F}(\alpha,\beta) := i^kQ(\alpha, \overline {\beta})$,
still this form will not be positive definite in general.
 \end{remark}
\subsubsection{ Projective case}
  In the special projective case
we can choose the fundamental class of an hyperplane section to
represent the K\"{a}hler class $[\omega]$, hence we have an
integral representative of the class $[\omega]$ in the image of  $ H^2 (X, \Z) \to H^2 (X, \C)$,
which has an important consequence since the operator $ L:
H^q(X,\Q)\to H^{q+2}(X, \Q)$ acts on rational cohomology.
 This fact characterizes projective varieties among compact K\"{a}hler
 manifolds since a theorem of Kodaira 
 ( \cite{wel} chap. VI)
 states that a K\"{a}hler variety
 with an integral class $[\omega]$ is projective, i.e., it can be
embedded as a closed analytic subvariety in a projective space,
hence by Chow Lemma it is necessarily an algebraic subvariety.
 \begin{remark} [Topological interpretation] 
  In the projective case, the class $[\omega]$ corresponds to the
 homology class of an hyperplane section $[H] \in H_{2n-2}(X,
 \Z)$,  so that the operator $L$ corresponds to
  the intersection with  $[H]$ in  $X$ and
  the result is  an isomorphism:
\begin{equation*}
 H_{n+k}(X)\xrightarrow {(\cap [H])^k} H_{n-k}(X)
\end{equation*}
The primitive cohomology $H_{prim}^{n-k}(X)$ corresponds to the
image of:
 $$H_{n-k}(X - H, \Q ) \rightarrow H_{n-k}(X, \Q). $$
 In his original work, Lefschetz used the hyperplane section of a projective variety
 in homology to investigate the decomposition theorem proved later in the above analysis 
 setting.
\end{remark}
 \begin{definition} The cohomology of a projective  complex smooth
 variety with its Hodge decomposition and the above positive definite Hermitian product is called a polarized Hodge Structure.
\end{definition}

 \subsection{The   category of Hodge Structures} It is rewarding to
  introduce  the Hodge decomposition  as a formal structure in
  linear algebra without any reference to its construction.
  
\begin{definition} A Hodge structure (HS) of weight $n$ is defined
by the  data: \\
i) A finitely generated abelian group $H_{\Z}$; \\
 ii) A decomposition by complex subspaces:
 \begin{equation*}
H_{\C} := H_{\Z}\otimes_{\Z} \C = \oplus_{p+q=n} H^{p,q}\,\, \, \,
\mbox {satisfying }\, \,\,\, H^{p,q} = \overline {H^{q,p}}.
\end{equation*}
\end{definition}

The conjugation on $H_{\C}$ makes sense with respect to
$H_{\Z}$. A subspace $ V \subset H_{\C} := H_{\Z}\otimes_{\Z}
\C\,$ satisfying $\overline V = V$ has a real structure, that is
$V = (V \cap H_{\R})\otimes_{\R} \C$. In particular $H^{p,p}=
(H^{p,p}\cap H_{\R})\otimes_{\R} \C$.
 We may suppose that $H_{\Z}$ is a free abelian group (the lattice),
if we are interested only in its image  in $ H_{\Q} :=
H_{\Z}\otimes_{\Z} \Q$.

 With such an abstract definition we can perform  linear algebra
 operations on a Hodge Structure and define morphisms.  We remark that  {\it Poincar\'e duality}
  is compatible with the Hodge Structure defined by Hodge decomposition, after an ade\-quate shift in the weight (see below).
We deduce from the previous results the notion of polarization:

\begin{definition}[Polarization of HS] A Hodge structure
$(H_{\Z},(H_{\C}\simeq \oplus_{p+q = n}H^{p,q}))$ of weight $n$ is
polarized if a non-degenerate scalar product $Q$ is defined on
$H_{\Z}$, alternating if $n$ is odd and symmetric if $n$ is even,
such that the terms $H^{p,q}$ of the Hodge decomposition are
orthogonal to each other relative to the
 Hermitian form $\hbox{F}$ on $H_{\C}$ defined as
 $\hbox{F}(\alpha,\beta) := i^{p-q} Q(\alpha, \overline {\beta})$ for
 $\alpha, \beta \in H^{p,q}$ and $\hbox{F}(\alpha,{\alpha})$ is positive definite
on the component of type $(p,q)$, i.e., it satisfies the Hodge-Riemann
bilinear relations.
  \end{definition}

\subsubsection{ Equivalent definition of HS } (see \ref{EHS} below) Let ${\mathbb S} (\R) $ denotes the
subgroup:
\[ {\mathbb S} (\R) = \left\{ M(u,v) = \begin{pmatrix} u & -v \\ v & u \end{pmatrix}
\in GL(2, \R), \quad u, v \in \R \right\}.\] 
It is isomorphic to
$\C^*$ via the group homomorphism $M(u,v) \mapsto z = u+iv$.\\
The interest in this isomorphism is to put the structure of a real
algebraic group on $\C^*$; indeed the set of matrices $M(u,v)$ is
a real algebraic subgroup of $GL (2, \R)$.

\begin{definition} A rational Hodge Structure of weight $m \in \Z$, is defined by
 a $\Q-$vector space $H$ and a representation of real algebraic
groups $\varphi: {\mathbb S}(\R) \to GL (H_{\R})$ such that
 for $t \in \R^* $, $\varphi (t)(v) = t^m v$ for all $v \in H_{\R}$.
\end{definition}

\subsubsection{The Hodge filtration.}
 To study variations of  Hodge Structure, Griffiths introduced an equi\-valent
  structure defined by the Hodge filtration
  which varies holomorphically with parameters.
  \begin{definition}
Given a Hodge Structure ($H_{\Z}, H^{p,q}$)
 of weight $n$, we define a decreasing filtration $F$ by subspaces of $H_{\C}$:
 
\begin{equation*}
F^pH_{\C}:= \oplus_{r\geq p}H^{r,n-r}.
\end{equation*}
\end{definition}

\n  Then, the following decomposition is satisfied:

\begin{equation*}
H_{\C}= F^pH_{\C} \oplus \overline {F^{n-p+1}H_{\C}}.
\end{equation*}
The Hodge decomposition may be recovered from the filtration by
the formula:

\begin{equation*}
H^{p,q} = F^p H_{\C} \cap  \overline {F^q H_{\C}},\quad {\rm for}
\, p+q = n.
\end{equation*}
Hence, we obtain an equivalent definition of Hodge Structures, which played an
important role in the development of Hodge theory, since we will
see in chapter $5$ that the Hodge filtration exists naturally on
the cohomology of any complex proper smooth algebraic variety $X$
and it is induced by the trivial filtration on  the de Rham
complex which may be defined algebraically.
However,  the Hodge  decomposition itself is still deduced
from the existence of a birational projective variety over $X$ and
the decomposition on the cohomology of this projective variety
viewed as a K\"{a}hler manifold.

 \subsubsection{ Linear algebra operations on Hodge structures}
 The interest of the above abstract definition of Hodge Structures is to apply
 classical linear algebra operations.
 
\begin{definition} A morphism $f: H=(H_\Z,H^{p,q}) \rightarrow H'=(H'_\Z,H'^{p,q})$ 
of Hodge Structures, is an
homomorphism $f \colon H_{\Z} \rightarrow {H'}_{\Z}$ such that
$f_{{\C}} \colon H_{{\C}} \rightarrow {H'}_{{\C}}$ is compatible
with the decompositions, i.e, for any $p,q$, $f_\C$ induces a $\C$-linear map
from $H^{p,q}$ into $H'^{p,q}$.
 \end{definition}
 
 Therefore Hodge Structures form a category whose objects are Hodge Structures and morphisms
 are morphisms of Hodge Structures that we have just defined. We have the following important
 result:
 
\begin{proposition} The Hodge Structures of same weight $ n $ form an abelian category. 
\end{proposition}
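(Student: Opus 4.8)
The plan is to verify the axioms of an abelian category directly, using at each stage the fact that a $\Z$-linear map whose complexification respects the bigrading automatically has kernel, image, and cokernel that inherit the bigrading --- a \emph{strictness} property that is the real content here, the rest being bookkeeping.

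First I would establish the additive structure. Given Hodge Structures $H$ and $H'$ of weight $n$, the set of morphisms is a subgroup of $\mathrm{Hom}_{\Z}(H_{\Z},H'_{\Z})$: if $f_{\C}$ and $g_{\C}$ each send $H^{p,q}$ into $H'^{p,q}$, then so does $(f+g)_{\C}$. The trivial Hodge Structure ($H_{\Z}=0$) is a zero object, and the direct sum $H_{\Z}\oplus H'_{\Z}$ equipped with $(H\oplus H')^{p,q}=H^{p,q}\oplus H'^{p,q}$ is a Hodge Structure of weight $n$ serving simultaneously as product and coproduct; one checks the conjugation condition $\overline{(H\oplus H')^{p,q}}=(H\oplus H')^{q,p}$ componentwise.

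Next I would construct kernels and cokernels. For $f\colon H\to H'$, set $(\ke f)_{\Z}:=\ke(f_{\Z})$. Because $\C$ is torsion-free, hence flat over $\Z$, the functor $-\otimes_{\Z}\C$ is exact, so $(\ke f)_{\Z}\otimes_{\Z}\C=\ke(f_{\C})$. Since $f_{\C}$ preserves types, an element $v=\sum_{p+q=n}v^{p,q}$ of $\ke(f_{\C})$ satisfies $f_{\C}(v^{p,q})=0$ for each $(p,q)$ by the directness of the decomposition of $H'_{\C}$; hence $\ke(f_{\C})=\oplus_{p+q=n}(\ke f)^{p,q}$ with $(\ke f)^{p,q}:=\ke(f_{\C})\cap H^{p,q}$. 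As $f$ is defined over $\Z$, $f_{\C}$ commutes with conjugation, giving $\overline{(\ke f)^{p,q}}=(\ke f)^{q,p}$; thus $\ke f$ is a Hodge Structure of weight $n$ with the expected universal property. The cokernel is dual: $(\coke f)_{\Z}:=H'_{\Z}/\im(f_{\Z})$, and exactness of $-\otimes_{\Z}\C$ together with the type decomposition $\im(f_{\C})=\oplus_{p+q=n}f_{\C}(H^{p,q})$ yields a Hodge Structure of weight $n$ on $\coke f$.

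Finally --- and this is the heart of the matter --- I would show that for every $f$ the canonical morphism $\mathrm{Coim}(f)\to\im(f)$ is an isomorphism. On underlying abelian groups this map is the first-isomorphism-theorem comparison $H_{\Z}/\ke(f_{\Z})\xrightarrow{\sim}f_{\Z}(H_{\Z})$, already an isomorphism because finitely generated abelian groups form an abelian category. It remains to see that its inverse is again a morphism of Hodge Structures: after applying $-\otimes_{\Z}\C$ the map becomes a $\C$-linear isomorphism carrying the $(p,q)$-summand of $\mathrm{Coim}(f)$ onto $f_{\C}(H^{p,q})=\im(f)^{p,q}$, so it is a graded isomorphism and its inverse respects types. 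Hence $\mathrm{Coim}(f)\cong\im(f)$ as Hodge Structures, every monomorphism is a kernel and every epimorphism a cokernel, and the category is abelian. I expect the only delicate point to be exactly this last identification: one must check that the comparison map is invertible \emph{within} the category, i.e. that no Hodge data is lost in passing between coimage and image, which is guaranteed precisely by the exactness of complexification and the type-compatibility of morphisms.
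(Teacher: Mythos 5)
Your proof is correct and takes essentially the route the paper itself indicates: since morphisms preserve types, kernel and cokernel inherit the bigrading from $H$ and $H'$, and the resulting strictness makes the coimage--image comparison a graded isomorphism whose inverse again respects types. The paper only sketches this (remarking that the decompositions on kernel and cokernel are induced), and your write-up supplies exactly the missing verifications, in particular the key identity $f_{\C}(H^{p,q}) = \im (f_{\C}) \cap H'^{p,q}$.
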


In
particular, the decomposition on the kernel (resp. cokernel) of a
morphism $\varphi: H \to H'$ is induced by the decomposition of
$H$ (resp. $H'$). For a general sub-vector space $V$ of $H$, the
induced subspaces $H^{p,q} \cap V$ do not define a decomposition
on $V$. If $H$ and $H'$ are of distinct weights, $f$ is
necessarily $0$.

\subsubsection{Tensor product  and Hom }  Let  $H$  and $H'$ be
two HS of weight $n$ and $n'$.\\
1) We define their Hodge Structure tensor product
$H\otimes H'$  of weight $n+n'$ as follows:\\
i) $ (H \otimes H')_{\Z} = H_{\Z} \otimes H'_{\Z}$ \\
ii) the bigrading
  of $(H\otimes H')_{{\C}} = H_{{\C}}\otimes H'_{{\C}}$ is the
   tensor product of the bigradings
   of $H_{{\C}}$ and $H'_{{\C}}$: $(H\otimes H')^{a,b}:=
   \oplus_{p+p' = a, q+q' = b} H^{p,q}\otimes H'^{p',q'}$.
   
   \begin{example} Tate Hodge structure $\Z(1)$ is a Hodge Structure of weight $-2$
 defined by:
\begin{equation*}
H_{\Z}= 2i\pi \Z \subset \C,\quad H_{\C} = H^{-1, -1}
\end{equation*}
 It is purely  bigraded  of type $(-1,-1)$ of rank $1$.
    The  $m-$tensor product  $ \Z(1)\otimes\cdots
 \otimes \Z(1)$ of $\Z(1)$ is a Hodge Structure of weight $-2m$ denoted by $\Z(m)$:
 \begin{equation*}
  H_{\Z}= (2i\pi)^m \Z \subset \C,\quad H_{\C} = H^{-m, -m}
 \end{equation*}
Let $ H = (H_{\Z}, \oplus_{p+q=n} H^{p,q})$ be a Hodge Structure of weight $n$,
its $m-$twist is a Hodge Structure of weight $n-2m$ denoted  $ H(m)$  and
defined by
\begin{equation*}
  H(m)_{\Z}:=  H_{\Z}\otimes (2i\pi)^m \Z,\quad
   H(m)^{p,q}:=  H^{p+m,q+m}
  \end{equation*}
  \end{example}
  \vskip.1in
 \n 2)  Similarly, $\wedge^p H$ is a Hodge Structure of weight $pn$.
 
  \vskip.1in
\n 3) We define a Hodge Structure on  $Hom(H,H')$ of weight  $n'-n$ as follows:\\
i) $ Hom(H, H')_{\Z} := Hom_{\Z} ( H_{\Z}, H'_{\Z})$ ;\\
 ii) the decomposition of $ Hom(H, H')_{\C} := Hom_{\Z}
  ( H_{\Z}, H'_{\Z})\otimes \C
 \simeq Hom_{\C} ( H_{\C}, H'_{\C})$ is given by
$ Hom(H, H')^{a,b}:=\oplus_{p'-p = a, q'-q = b}
Hom_{\C}(H^{p,q},H^{p',q'}) $.\\
  In particular    the dual  $H^*$ to $H$
is a Hodge Structure of weight $-n$.\

 \begin{remark}
The group of morphisms of Hodge Structures is called the internal morphism group
of the category of Hodge Structures and denoted by
 $Hom_{HS} (H,H')$; it is the sub-group of
  $Hom_{{\Z}}(H_{{\Z}},H'_{\Z})$
  of elements of type $(0,0)$ in the Hodge Structure on  $ Hom(H,H')$.
   A homomorphism of  type
$(r,r)$  is a morphism of the Hodge Structures: $ H \to H'(-r)$.
\end{remark}

\n {\it Poincar\'e duality.} If we consider the following trace
map:
\begin{equation*}
H^{2n}( X, \C) \xrightarrow{\sim} \C(-n), \quad \omega \to
\frac{1}{(2i\pi)^n}\int_X \omega.
\end{equation*}
then Poincar\'e duality will be compatible with Hodge Structures:
\begin{equation*} H^{n-i}(X,\C)\simeq
Hom ( H^{n+i}(X,\C), \C(-n) ).
\end{equation*}
The Hodge Structure on homology is defined by duality:
 $$(H_i(X,\C), F) \simeq Hom ((H^i(X,\C),F),\C)$$ 
 with $\C$ pure of
 weight $0$, hence $H_i(X,\Z)$ is pure of weight $-i$.
 Then, Poincar\'e duality becomes an isomorphism  of Hodge Structures:
 $H^{n+i}(X,\C)\simeq H_{n-i}(X, \C)(-n) $.
\vskip 0.1in 
\n {\it Gysin morphism.} Let $f: X \to Y$ be an
algebraic morphism with $\dim X = n$ and $\dim Y = m$, since $f^*:
H^i(Y,\Q) \to H^i(Y,\Q)$ is compatible with Hodge Structures, its Poincar\'e
dual $Gysin (f): H^j(Y,\Q) \to H^{j+2(m-n)}(Y,\Q)(m-n)$ is
compatible with Hodge Structures after a shift by $-2(m-n)$ on the right term.

\subsubsection{Proof of the equivalence with the action of the group ${\mathbb S}$}
\label{EHS}

\begin{lemma} Let
$(H_{\R}, F)$ be a real Hodge Structure of weight  $m$, defined by the
decomposition $H_{\C} = \oplus_{p+q = m} H^{p,q}$, then the action
of ${\mathbb S} (\R) = \C^*$ on $H_{\C}$, defined by: 
$$(z, v) \mapsto
\sum_{p+q = m} z^p {\overline z}^q v_{p,q},$$ 
for $v = \sum_{p+q =
m}v_{p,q}$, corresponds to  a real representation $\varphi: {\mathbb S}
(\R) \to GL(H_{\R})$ satisfying $\varphi (t)(v) = t^m v$ for $t
\in \R^* $.
\end{lemma}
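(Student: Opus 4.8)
The plan is to verify directly that the prescribed formula defines a group homomorphism into $GL(H_{\C})$, that it commutes with complex conjugation (and hence descends to $GL(H_{\R})$), that it is algebraic, and finally that its restriction to $\R^*$ is the scalar $t^m$. First I would fix the operator $\rho(z)\colon H_{\C}\to H_{\C}$ by $\rho(z)(v)=\sum_{p+q=m} z^p\overline{z}^q v_{p,q}$ for $v=\sum v_{p,q}$. Since $z\in\C^*$, every scalar $z^p\overline{z}^q$ is non-zero, so $\rho(z)$ is invertible on each summand $H^{p,q}$ and therefore lies in $GL(H_{\C})$. Because $(z_1 z_2)^p\overline{(z_1 z_2)}^q = z_1^p\overline{z_1}^q\cdot z_2^p\overline{z_2}^q$ acts componentwise, $\rho$ is a homomorphism $\C^*\to GL(H_{\C})$ with $\rho(1)=\mathrm{Id}$.

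The key step is the reality: I must show $\rho(z)$ preserves $H_{\R}$, equivalently that it commutes with the conjugation $\sigma$ of $H_{\C}$ relative to $H_{\R}$. Here I use the defining Hodge symmetry $\overline{H^{p,q}}=H^{q,p}$: for $v=\sum v_{p,q}$ the bidegree-$(a,b)$ component of $\overline v$ is exactly $\overline{v_{b,a}}$. Applying $\rho(z)$ to $\overline v$ therefore gives $\sum_{p,q} z^q\overline z^p\,\overline{v_{p,q}}$, while $\overline{\rho(z)(v)}=\sum_{p,q}\overline{z^p\overline z^q}\,\overline{v_{p,q}}=\sum_{p,q}\overline z^p z^q\,\overline{v_{p,q}}$; since scalars commute these two expressions coincide, so $\rho(z)\circ\sigma=\sigma\circ\rho(z)$. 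Consequently $\rho(z)$ restricts to a real automorphism $\varphi(z)\in GL(H_{\R})$, and $\varphi$ inherits the homomorphism property from $\rho$.

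To see that $\varphi$ is a morphism of real algebraic groups, I would pass to a real basis of $H_{\R}$ and observe that, writing $z=u+iv$, each coefficient $z^p\overline z^q=(u+iv)^p(u-iv)^q$ is a polynomial in $u,v$; hence the matrix entries of $\varphi(M(u,v))$ are polynomials in the coordinates of ${\mathbb S}(\R)$. Finally, the weight condition is immediate: for $t\in\R^*$ one has $z=\overline z=t$, so $z^p\overline z^q=t^{p+q}=t^m$ on every summand, whence $\varphi(t)(v)=t^m v$ for all $v\in H_{\R}$. The only genuinely delicate point is the commutation with conjugation, which rests squarely on the symmetry $\overline{H^{p,q}}=H^{q,p}$ built into the Hodge structure; everything else is a direct computation.
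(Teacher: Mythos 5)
Your proof is correct and follows essentially the same route as the paper: the central computation, showing $\rho(z)$ commutes with the conjugation $\sigma$ via the Hodge symmetry $\overline{H^{p,q}}=H^{q,p}$ (so that the $(a,b)$-component of $\overline v$ is $\overline{v_{b,a}}$), is exactly the paper's verification that $\overline{\varphi(z)}=\varphi(z)$, and the weight condition $\varphi(t)=t^m\,\mathrm{Id}$ for $t\in\R^*$ is obtained identically. Your added checks of the homomorphism property and of algebraicity (matrix entries polynomial in $u,v$) are sound and make explicit points the paper leaves implicit.
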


We prove that $\overline { \varphi(z)} = \varphi(z), z \in \C^*$,
hence it is defined by a real matrix. We deduce from $\overline v
= \sum_{p+q = m}\overline {v_{p,q}}\in \oplus H^{q,p}$ that
$(\overline
v)_{q,p} = \overline {v_{p,q}} \in H^{q,p}$, then: \\
$\overline { \varphi(z)}(v) := \overline { \varphi(z)(\overline
v)} = \overline { \sum_{p+q = m} z^q {\overline z}^p \overline
{v_{p,q}}} =  \sum_{p+q = m} z^p {\overline z}^q
v_{p,q} = \varphi(z)(v)$.\\
In particular, for $t \in \R$, $\varphi(t) (v)= t^m v$ and
$\varphi(i) (v) =
\sum_{p+q = m} i^{p-q} v_{p,q}$ is the Weil operator $C$ on $H$.

Reciprocally, a  representation of the multiplicative commutative
torus: 
$${\mathbb S} (\C) \to GL(H_{\C})$$ 
splits into a direct sum of
representations ${\mathbb S} (\C) \to GL(H^{p,q})$ acting via $ v \to z^p
{\overline z}^q v$, moreover if the representation is real, then
$\overline {H^{p,q}} = H^{q,p}$. The real group $ \R^*$ acts on
$H^{p,q}$ by multiplication by $t^{p+q}$, so that  the sum
$\oplus_{p+q = m}H^{p,q}$  defines a sub-Hodge Structure of weight $m$.

\begin{remark} i) The
complex points of:
\[{\mathbb S} (\C) = \left\{ M(u,v) = \begin{pmatrix} u & -v \\ v & u \end{pmatrix}
\in GL(2, \C), \quad u, v \in \C \right\}\] 
are the matrices with
complex coefficients $u, v \in \C$ with  determinant $u^2+v^2
\not= 0$. Let $z= u+iv, z' = u-iv$, then $zz'= u^2+v^2 \not= 0$
such that $  S(\C) \xrightarrow {\sim} \C^* \times\C^*: (u,v)
\mapsto (z,z')$ is an isomorphism satisfying $z' = \overline z$,
if $u,v\in\R$; in particular $\R^*
\hookrightarrow \C^* \times\C^*: t \mapsto (t,t)$.\\
ii) We can write $\C^* \simeq S^1 \times \R^*$ as the product of
the real points of the unitary subgroup $U(1)$ of ${\mathbb S}$ defined by
$u^2 + v^2 = 1 $ and of the multiplicative subgroup $\G_m (\R) $
defined by $ v = 0 $ in ${\mathbb S}(\R)$, and ${\mathbb S}(\R)$ is the semi-product
of $S^1$ and $\R^*$. Then the representation gives rise to a
representation of $\R^*$ called the scaling since it fixes the
weight and of $U(1)$ which fixes the decomposition into $H^{p,q}$
and on which $\varphi(z)$ acts as multiplication by $z^{p-q}$.
\end{remark}

\subsection{Examples}
{ \it Cohomology of projective spaces.} The polarization is
defined by the first Chern class of the
 canonical line bundle $H = c_1({\OO}_{\P^n}(1))$, dual to the homology
 class of a hyperplane.
\begin{prop}
$H^i(\P^n, \Z) = 0$ for $i$ odd and $H^i(\P^n, \Z) = \Z$ for $i$
even with generator $[H]^i$, the cup product to the power $i$ of
the cohomology class of an hyperplane, hence: $H^{2r}(\P^n, \Z) =
\Z(-r)$ as Hodge Structure.
\end{prop}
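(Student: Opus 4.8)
The plan is to treat three things in turn: the additive integral cohomology groups, the multiplicative generator $[H]^r$, and the identification of the Hodge structure with the Tate twist $\Z(-r)$. For the groups, I would first read off the complex cohomology from the Hodge theory already available: since $\P^n$ is projective, hence K\"ahler, the Hodge decomposition gives $H^i(\P^n,\C)=\oplus_{p+q=i}H^{p,q}$ with $H^{p,q}\simeq H^q(\P^n,\Omega^p_{\P^n})$, and these were computed above to vanish for $p\neq q$ and to equal $\C$ for $p=q$. Thus $H^i(\P^n,\C)=0$ for $i$ odd, while $H^{2r}(\P^n,\C)=H^{r,r}$ is one-dimensional and concentrated in type $(r,r)$. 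To upgrade this to an integral statement and to exclude torsion, I would use the stratification of $\P^n$ by affine cells $\C^k$ ($0\le k\le n$), of real dimension $2k$, giving a CW structure with exactly one cell in each even dimension $0,2,\dots,2n$ and none in odd dimension. The cellular chain complex is then $\Z$ in each even degree and $0$ in odd degrees, so all its differentials vanish; hence $H_k(\P^n,\Z)=\Z$ for even $0\le k\le 2n$ and $0$ otherwise, and by the universal coefficient theorem the same holds for $H^k(\P^n,\Z)$ with no torsion.

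Next I would identify the generator. Write $[H]=c_1(\OO_{\P^n}(1))\in H^2(\P^n,\Z)$ and fix a linear subspace $\P^r\subset\P^n$ with fundamental class $[\P^r]\in H_{2r}(\P^n,\Z)$. By functoriality of the first Chern class under restriction, $[H]|_{\P^r}=c_1(\OO_{\P^r}(1))$, so pairing cohomology against the submanifold gives $\langle [H]^r,[\P^r]\rangle=\int_{\P^r}([H]|_{\P^r})^r=\int_{\P^r}[H_{\P^r}]^r$. Iterating the fundamental-class relation $\int_X\varphi\wedge\eta_{H'}=\int_{H'}\varphi|_{H'}$ for a hyperplane $H'\subset\P^r$, together with the base case $\int_{\P^1}c_1(\OO(1))=1$, shows this integral equals $1$. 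Since $[H]^r$ therefore pairs to $1$ with an integral homology class, it is indivisible in $H^{2r}(\P^n,\Z)\cong\Z$, hence a generator; the vanishing in odd degrees is already established.

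Finally I would match the Hodge structure with $\Z(-r)$. The group $H^{2r}(\P^n,\Z)$ carries the pure Hodge structure of weight $2r$ on the cohomology of a smooth projective variety, and by the first step it is purely of type $(r,r)$. Restriction to the linear $\P^r\subset\P^n$ is a morphism of Hodge structures carrying the generator $[H]^r$ to the top class of $\P^r$, and so is an isomorphism $H^{2r}(\P^n)\xrightarrow{\sim}H^{2r}(\P^r)$. Composing with the trace isomorphism $H^{2r}(\P^r,\C)\xrightarrow{\sim}\C(-r)$, $\omega\mapsto\frac{1}{(2i\pi)^r}\int_{\P^r}\omega$, the class $[H]^r$ is sent to $(2i\pi)^{-r}$, precisely the generator of the lattice $(2i\pi)^{-r}\Z$ of $\Z(-r)$, yielding $H^{2r}(\P^n,\Z)\simeq\Z(-r)$. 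The main obstacle here is bookkeeping the normalizations: verifying that $[H]=c_1(\OO(1))$ is genuinely the indivisible integral generator of $H^2$ (through its integral $1$ against a line) and that the factor $(2i\pi)^r$ in the trace map carries the integral lattice exactly onto $(2i\pi)^{-r}\Z$. Once these constants are aligned with the conventions fixed for $\Z(m)$ and the trace, the identification with the Tate twist is forced.
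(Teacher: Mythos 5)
Your proposal is correct, but there is nothing in the paper to measure it against: the proposition is stated in the Examples subsection without any proof, resting implicitly on the earlier (also unproved) Dolbeault computation $H^q(\P^n,\Omega^p_{\P^n})=0$ for $p\neq q$ and $\simeq\C$ for $p=q$, and on the assertion just before the statement that $H=c_1(\OO_{\P^n}(1))$ is dual to the homology class of a hyperplane. Your argument supplies exactly what is missing, and by the most economical route: the cell decomposition of $\P^n$ by affine spaces gives the integral groups and torsion-freeness directly (the Hodge-theoretic input is then only needed to see the type $(r,r)$, not the groups themselves); the pairing $\langle [H]^r,[\P^r]\rangle=1$, obtained by iterating the fundamental-class relation of Section 1.6 from the base case $\int_{\P^1}c_1(\OO(1))=1$, shows $[H]^r$ is indivisible, hence a generator; and composing restriction to a linear $\P^r$ with the trace map $\omega\mapsto\frac{1}{(2i\pi)^r}\int_{\P^r}\omega$ sends $[H]^r$ to $(2i\pi)^{-r}$, matching the paper's convention $\Z(m)_{\Z}=(2i\pi)^m\Z$ at $m=-r$, so the normalizations do align as you claim. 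The only facts you take on faith are the same ones the paper does: that $c_1(\OO(1))$ coincides with the fundamental class $\eta_{H'}$ of a hyperplane (needed to iterate the fundamental-class relation), and the base-case integral, both of which are consistent with the Fubini--Study normalization of Section 1.3, where the coefficient $\frac{i}{2\pi}$ is chosen precisely so that the class is integral. One small economy worth noting: since your cellular argument already yields $H^i(\P^n,\Z)=0$ in odd degrees, the appeal to the Hodge decomposition for odd-degree vanishing is redundant, though harmless.
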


 \subsubsection{Hodge Structures of dimension 2 and weight 1}
     Let $H$ be a real vector space of dimension $2$ endowed with a
     skew symmetric  quadratic form, $(e_1, e_2)$ a basis in which the matrix
     of  $Q $ is
\[Q = \begin{pmatrix} 0 & 1 \\ -1 & 0 \end{pmatrix}, \quad Q(u,v)
= {^tv}Q u,\quad u , v \in \Q^2 \] then Hodge decomposition
$H_{\C} = H^{1,0} \oplus H^{0,1}$ is defined by the one
dimensional subspace $H^{1,0}$ with  generator $v$ of coordinates
$(v_1,v_2)\in \C^2$. While $Q (v,v) = 0$,  the Hodge-Riemann
positivity condition is written as $i Q(v, \overline v) = -i (v_1
\overline v_2- \overline v_1 v_2 )> 0$, hence $v_2 \not= 0$, so
 we divide by $v_2$ to get a unique representative of $H^{1,0}$
 by a vector of the form $v = (\tau, 1)$ with Im($\tau) > 0$ . Hence the
Poincar\'e half plane \{ $z \in \C :$ Im$z > 0$\} is a classifying
space for polarized Hodge Structures of dimension $2$. This will apply to the
cohomology  $H:= H^1( T, \R)$ of a complex torus of dim.$1$. Note
that the torus  is a projective variety. Indeed,  a non degenerate
lattice in $\C$ defines a Weirstrass function $\PP(z)$ that
defines with its  derivative  an embedding of the torus onto a
smooth elliptic curve $ (\PP(z), \PP'(z)): T \rightarrow  \P_C^2
$.

\subsubsection{Moduli space of Hodge structures of weight 1 {\rm(see \cite{Grif})}} 
The first cohomology group $H_{\Z}$  of a smooth compact algebraic
curve $C$ of genus $g$ is of rank $2g$. The cohomology with
coefficients in $\C$  decomposes into the direct sum: 
$$H_{\C} =
H^{1,0} \oplus H^{0,1}$$ 
of two conjugate subspaces of dimension
$g$. We view here the differentiable structure as fixed, on which
the algebraic structure vary, hence the cohomology is a fixed
vector space on which the decomposition vary. The problem here, is
the classification of the different Hodge Structures on the cohomology.
Recall that $H_{\C}$ is endowed via the cup-product with a non
degenerate alternating form: 
$$Q: (\alpha, \beta) \to \int \alpha
\wedge \beta.$$ 
The subspace $H^{1,0}$ is an element of the
Grassmann variety  $ G(g,H_{\C})$ parametrizing complex vector
subspaces of dimension
$g$. 

The first Riemann bilinear relation states that
   $Q (\alpha, \beta) = 0, \, \forall \alpha, \beta \in H^{1,0}$,
 which translates into a set of algebraic equations satisfied by
 the point
 representing $H^{1,0}$ in $G(g,H_{\C})$, hence the family of such subspaces
 is parameterized by  an algebraic subvariety
 denoted \v{D} $\subset
G(g,H_{\C})$.
 Let $Sp (H_{\C}, Q)$ denotes the symplectic
 subgroup of complex linear automorphisms of $H_{\C}$ commuting with
 $Q$; it acts transitively on \v{D} (see \cite{carl}), hence such variety is
 smooth.
 
  The second positivity relation  is an open condition,
 hence it defines an open subset $D$ of elements representing
 $H^{1,0}$ in \v{D}. The subspace $D$ is called the
 classifying space of polarized Hodge Structures of type $1$
 (the cohomology here is  primitive, hence polarized).
 
 To describe $D$  with coordinates, we choose
a basis $\{ b_1, \ldots, b_g, b_{g+1}, \ldots, b_{2g} \}$ of
$H_{\C}$ in which the matrix of $Q$ is written as a matrix $J$
below,  then $H^{1,0}$ is represented by a choice of free $g$
generators which may be written as a matrix $M$ with $g$ columns
and $2g$ rows satisfying the condition:
\[J= \begin{pmatrix} 0 & I_n \\
-I_n & 0
\end{pmatrix}, 
M = \begin{pmatrix} M_1 \\
M_2
\end{pmatrix}\]

\[ \begin{pmatrix} {^tM}_1 ,\, {^tM}_2
\end{pmatrix}\cdot \begin{pmatrix} 0 & I_n \\
-I_n & 0
\end{pmatrix} \cdot \begin{pmatrix} M_1 \\
M_2
\end{pmatrix} = {^tM}_1 M_2- {^tM}_2 M_1  = 0 \]
 The positivity condition $i Q (\alpha, \overline {\alpha} ) > 0,
 \forall \alpha \not= 0 \in H^{1,0} $ translates into
\[i \cdot \begin{pmatrix} {^t\overline {M}}_1 \, {^t\overline {M}}_2
\end{pmatrix}\cdot \begin{pmatrix} 0 & I_n \\
- I_n & 0
\end{pmatrix} \cdot \begin{pmatrix} M_1 \\
M_2
\end{pmatrix} = i({^t\overline {M}}_1 M_2 - {^t\overline {M}}_2 M_1)  \]
is positive definite. We deduce in particular that the determinant
$\det (M_2) \not= 0$ and we may choose  $M_2 = I_g$, $M_1 = Z$ such that
\[ D \simeq \{ Z :  {^tZ} = Z, \rm{Im}(Z) > 0 \}.\]
that is the imaginary part of the matrix $Z$ defines  a positive
definite  form. Then $D$ is called the generalized Siegel upper
half-plane.
 If $g =1$, then $D$ is the upper half plane.
Let  Sp$(H_{\C}, Q)\simeq$ Sp$(g, \C) =  \{ X \in {\rm GL}(n, \C):
{^tX} \cdot J = J \cdot X \}$, then Sp$(V, Q):= $ Sp$(H_{\C},
Q)\cap $GL$(H_{\R})$ acts transitively on $D$ as follows:
\[ \begin{pmatrix} A & B \\
C & D
\end{pmatrix}\cdot Z = (A\cdot Z + B) \cdot (C\cdot Z + D ) ^{-1}.
\]

It  is a classical but not a trivial result
 that smooth compact curves are homeomorphic  as real surfaces
 to the ``connected'' sum of $g$ tori
 with a natural basis of paths  $\gamma_1, \ldots,\gamma_g,\gamma_{g+1},
 \ldots, \gamma_{2g}$, whose classes generates the homology and such that
 the matrix of the Intersection product is $J$.
 
 The classes obtained by Poincar\'e
 duality isomorphism $ \eta_1, \ldots, \eta_{2g} $ form a basis
 of $H_{\C}$ on which the matrix of the form $Q$
 defined by the product on cohomology is still $J$.
   Let $[\gamma^*_i]$ denotes the ordinary
  dual basis of cohomology. It is not difficult to check on the
  definitions that $[\gamma^*_i] = -\eta_{i+g}$ for $i \in [1,g]$ and
$[\gamma^*_{i+g}] = \eta_i$ for $i \in [g, 2g]$, from which we
deduce that the dual basis satisfy also the matrix $J$. Let
$\omega_i$ be a set of  differential forms defining a basis of
$H^{1,0}$, then  the period matrix of the basis $\omega_i$  with
respect to the basis $[\gamma^*_i]$ is defined by the coordinates
on that basis given by the entries $\int_{\gamma_i} \omega_j$ for
$i \in [1,2g], j \in [1,g]$. Hence such matrices represent
elements in $D$.

\subsubsection{Hodge structures of weight 1 and abelian varieties}
Given a Hodge Structure:
$$( H_{\Z},H^{1,0}\oplus H^{0,1}), $$
the projection on $H^{0,1} $
induces an isomorphism of real vector spaces:
\begin{equation*}
  H_{\R} \to H_{\C} = H^{1,0}\oplus H^{0,1} \to H^{0,1}
\end{equation*}
since $\overline{ H^{0,1}} = H^{1,0}$. Then we deduce that
 $H_{\Z}$ is a lattice in the complex space  $H^{0,1}$, and
 the quotient $T := H^{0,1}/H_{\Z}$ is a complex torus .
 When $H_{\Z}$ is identified with the image
  of cohomology spaces $Im (H^1 (X,\Z)\to H^1(X,\R))$ of a complex manifold
 $X$, resp. $ H^{0,1}$ with $H^1 (X,\OO)$, the torus $T$
 will be identified with the Picard variety
 $Pic^0(X)$ parameterizing the holomorphic line bundles on $X$
 with first Chern class equal to zero as follows. We consider the
exact sequence of sheaves defined by $f \mapsto e^{2i \pi f}$:
\begin{equation*}
 0 \to \Z \to \OO_X  \to \OO_X^* \to 1
\end{equation*}
where $1$ is the neutral element of the sheaf of multiplicative
groups $\OO_X^*$, and its associated long exact sequence of
derived functors of the global sections functor:
\begin{equation*}
  \to H^1 (X,\Z) \to H^1 (X,\OO_X)  \to H^1 (X,\OO_X^*) \to H^2 (X,\Z)
\end{equation*}
 where the morphisms can be interpreted geometrically; when the
space $H^1 (X,\OO_X^*)$ is identified with isomorphisms classes of
line bundles on $X$, the last morphism defines the Chern class of
the line bundle. Hence the torus $T$ is identified with the
isomorphism classes $\LL$ with $c_1(\LL) = 0$:
\begin{equation*}
 T := \frac{H^1(X,\OO_X)}{Im H^1(X,\Z)}\simeq
  Pic^0(X):= Ker(H^1(X,\OO_X^*) \xrightarrow {c_1} H^2 (X,\Z))
\end{equation*}
It is possible to show that the Picard variety of a smooth
projective variety is an abelian variety. (Define a K\"{a}hler
form with integral class on $Pic^0(X)$).

\subsubsection{Hodge structures of weight 2}
\begin{equation*}
 (H_{\Z}, \, H^{2,0}\oplus H^{1,1}\oplus H^{0,2};
\, H^{0,2}=\overline{H^{2,0}} ,\, H^{1,1}=\overline{H^{1,1}};\, Q)
\end{equation*}
 the intersection form $Q$ is symmetric and
 $\hbox{F}(\alpha,\beta)= Q( \alpha,\overline{\beta})$ is Hermitian.
 The decomposition is orthogonal for $\hbox{F}$ with
  $\hbox{F}$ positive definite on $H^{1,1}$, negative definite on
  $H^{2,0}$ and  $ H^{0,2}$.
  
\begin{lemma}
 The HS is completely determined by the subspace
  $ H^{2,0}\subset H_{\C}$ such that $H^{2,0}$ is totally isotropic for $Q$
 and the associated Hermitian product $H$ is negative definite
 on $ H^{2,0}$. The signature of $Q$ is ($h^{1,1}, 2 h^{2,0}$).
\end{lemma}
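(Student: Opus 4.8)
The plan is to unpack the four assertions bundled into the statement and handle them in sequence: that $H^{2,0}$ is totally isotropic for $Q$, that $F$ is negative definite on it, that these two properties allow one to reconstruct the whole decomposition from $H^{2,0}$ alone, and finally that $Q$ has signature $(h^{1,1},2h^{2,0})$. Throughout I would use only the polarization hypotheses already recorded above, namely the $F$-orthogonality of distinct Hodge pieces, the defining identity $F(\alpha,\beta)=Q(\alpha,\overline{\beta})$, and the Hodge--Riemann positivity $i^{p-q}Q(\alpha,\overline{\alpha})>0$.

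First I would record the isotropy. For $\alpha,\beta\in H^{2,0}$ one has $Q(\alpha,\beta)=Q(\alpha,\overline{\overline{\beta}})=F(\alpha,\overline{\beta})$, and since $\overline{\beta}\in H^{0,2}$ while $\alpha\in H^{2,0}$ lie in distinct Hodge pieces, $F$-orthogonality forces $F(\alpha,\overline{\beta})=0$; hence $Q$ vanishes identically on $H^{2,0}$. The negative definiteness of $F$ there is the Hodge--Riemann relation read off at $(p,q)=(2,0)$, where $i^{p-q}=i^{2}=-1$, so $F(\alpha,\overline{\alpha})=Q(\alpha,\overline{\alpha})<0$ for $\alpha\neq0$; the same computation on $H^{0,2}=\overline{H^{2,0}}$ shows $F$ is negative definite there as well.

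Next I would prove the reconstruction. The crucial preliminary is $H^{2,0}\cap\overline{H^{2,0}}=0$: a nonzero $v$ in the intersection would have $\overline{v}\in H^{2,0}$, whence $F(v,v)=Q(v,\overline{v})=0$ by the isotropy just shown, contradicting negative definiteness. Setting $W:=H^{2,0}\oplus\overline{H^{2,0}}$ we then get $\dim_{\C}W=2h^{2,0}$, and since $Q$ is nondegenerate, $\dim_{\C}W^{\perp}=\dim_{\C}H_{\C}-2h^{2,0}=h^{1,1}$. The inclusion $H^{1,1}\subseteq W^{\perp}$ follows from $Q(H^{1,1},H^{2,0})=F(H^{1,1},H^{0,2})=0$ and $Q(H^{1,1},H^{0,2})=F(H^{1,1},H^{2,0})=0$, again by $F$-orthogonality; comparing dimensions gives $H^{1,1}=W^{\perp}$. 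Thus the whole structure is recovered from $H^{2,0}$ through the formulas $H^{0,2}=\overline{H^{2,0}}$ and $H^{1,1}=(H^{2,0}\oplus\overline{H^{2,0}})^{\perp}$. Conversely, any subspace satisfying the two conditions yields a genuine decomposition, since negative definiteness makes $Q|_{W}$ nondegenerate, forcing $W\cap W^{\perp}=0$ and hence $H_{\C}=H^{2,0}\oplus H^{1,1}\oplus H^{0,2}$.

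For the signature I would argue that the three pieces are mutually $F$-orthogonal, with $F$ negative definite on $H^{2,0}\oplus H^{0,2}$ (dimension $2h^{2,0}$) and positive definite on $H^{1,1}$ (dimension $h^{1,1}$), so the Hermitian form $F$ has signature $(h^{1,1},2h^{2,0})$. To transfer this to $Q$ I would diagonalize the real symmetric $Q$ over a basis $e_1,\dots,e_N$ of $H_{\R}$ with $Q(e_i,e_j)=d_i\delta_{ij}$; on the same (now complex) basis $F(e_i,e_j)=Q(e_i,\overline{e_j})=d_i\delta_{ij}$, so $F$ and $Q$ are diagonalized by the identical reals $d_i$ and have equal signatures, giving $Q$ the signature $(h^{1,1},2h^{2,0})$. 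The only nonformal point is the reconstruction step, where one must see that the given $H^{1,1}$ is \emph{forced} to equal $W^{\perp}$; this rests entirely on the dimension count, hence on nondegeneracy of $Q$ together with $H^{2,0}\cap\overline{H^{2,0}}=0$, while everything else is a direct transcription of the Hodge--Riemann relations.
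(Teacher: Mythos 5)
Your proof is correct and takes the same route the paper sketches: the paper disposes of the lemma with the single remark that $H^{0,2}$ is determined by conjugation and $H^{1,1}=(H^{2,0}\oplus H^{0,2})^{\bot}$, which is exactly your reconstruction step. You merely supply the details the paper leaves implicit --- the verification $H^{2,0}\cap\overline{H^{2,0}}=0$, the dimension count forcing $H^{1,1}=W^{\perp}$, the converse, and the transfer of the signature from the Hermitian form $F$ to the real symmetric form $Q$ via a real diagonalizing basis --- all of which are sound.
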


 In the lemma $H^{0,2}$ is determined by conjugation
 and $ H^{1,1} = (H^{2,0}\oplus H^{0,2})^{\bot}$.

\section{ Mixed Hodge Structures  (MHS)}

The theory of Mixed Hodge Structures (MHS) introduced  by
Deligne in \cite{HII} is a generalization of Hodge Structures that
can be defined on the cohomology of all  algebraic varieties.

Since we are essentially concerned by filtrations of vector
spaces, it is not more  difficult  to describe this notion in the
terminology of abelian categories. We start with a formal study of
Mixed Hodge Structures.  

Let $A
 = {\Z}, {\Q}$ or ${\R}$, and define $A \otimes  \Q$ as  $\Q$ if
 $A = \Z$ or $ \Q$ and $\R$ if
 $A = \R$.
 For an $A$-module $H_A$ of
finite type, we write $H_{A \otimes  \Q}$ for the  $(A \otimes
\Q)$-vector space
 $(H_A) \otimes_A (A \otimes  \Q)$. It is a
 rational  space if  $A = \Z $ or  $\Q$ and a real space  if $A = \R$.
 
\begin{definition} [Mixed Hodge structure (MHS)] An $A$-Mixed Hodge
Structure $H$ consists of:\\
 1) an $A$-module of finite type
$H_A$; \\
 2) a finite increasing  filtration  $W$ of the
$ A \otimes  \Q $-vector space $H_A \otimes \Q $
 called the weight filtration; \\
3) a finite decreasing  filtration  $F$ of the ${\C}$-vector space
$H_{\C} = H_A  \otimes_{A} \C$, called the Hodge filtration, such
that the systems:
\begin{equation*}
Gr^W_n H = (Gr^W_n (H_{A \otimes \Q}) ,(Gr^W_n H_{\C}, F))
\end{equation*}
consist of $A\otimes {\Q}$-Hodge Structure of weight $n$.
 \end{definition}

The Mixed Hodge Structure is called real, if $A = \R$, rational,
if $A = \Q$, and integral, if $A = \Z$. We need to define
clearly, how the filtrations are induced, for example:  \\
$ F^p Gr^W_n H_{\C}:= ((F^p \cap W_n\otimes \C) +W_{n-1}\otimes \C
)/ W_{n-1}\otimes \C \subset (W_n\otimes \C) / W_{n-1}\otimes
\C$.

\vskip.1in
\n {\it Morphism of MHS. }  A morphism  $f : H\rightarrow H'$ 
of Mixed Hodge Structures
 is a morphism $f : H_A\rightarrow H'_A $ whose extension to
 $H_{\Q}$ (resp. $H_{\C}$) is compatible with the
filtration $W$, i.e., $f(W_jH_A)\subset W_jH'_A $ (resp.  $F$, i.e.,
$f(F^jH_A)\subset F^jH'_A $), which implies that it is also compatible  with
$\overline F$. 
\vskip.1in

\n These definitions allow us to speak of the category of 
Mixed Hodge Structures.
\vskip.1in

\n The main result of this chapter is
\begin{theorem}[Deligne]\label{abelian} The category of Mixed Hodge 
Structures  is abelian.
\end{theorem}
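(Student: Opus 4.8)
The plan is to verify the axioms of an abelian category, the only substantial point being that the canonical morphism from coimage to image is an isomorphism; this will follow from the \emph{strictness} of morphisms of Mixed Hodge Structures with respect to both $W$ and $F$. First I would record the easy additive structure: the triple $(0,0,0)$ is a zero object, and for two Mixed Hodge Structures $H,H'$ the direct sum $H_A\oplus H'_A$ with the componentwise filtrations $W\oplus W'$ and $F\oplus F'$ is again a Mixed Hodge Structure and serves as biproduct, since $Gr^W_n(H\oplus H')=Gr^W_nH\oplus Gr^W_nH'$ is a Hodge Structure of weight $n$ by the abelian structure on Hodge Structures of fixed weight established in \S2.2. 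Thus $\mathrm{MHS}$ is additive, and it remains to produce kernels and cokernels and to compare coimage with image.

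The key tool, which I would isolate as a lemma, is Deligne's canonical bigrading: for every Mixed Hodge Structure there is a unique decomposition $H_{\C}=\oplus_{p,q}I^{p,q}$ with $W_n\otimes\C=\oplus_{p+q\le n}I^{p,q}$, with $F^p=\oplus_{r\ge p,\,s}I^{r,s}$, and with $\overline{I^{p,q}}\equiv I^{q,p}$ modulo $\oplus_{r<q,\,s<p}I^{r,s}$; moreover $I^{p,q}$ maps isomorphically onto the Hodge component $(Gr^W_{p+q})^{p,q}$. The decisive feature is \emph{functoriality}: since the $I^{p,q}$ are expressible by an explicit formula in $W$, $F$ and $\overline F$, and a morphism $f$ of Mixed Hodge Structures is compatible with $W$, $F$ and hence with $\overline F$, one gets $f(I^{p,q})\subset I'^{p,q}$. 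Granting this, strictness is immediate: as $W_n\otimes\C$ and $F^p$ are each \emph{sums} of the $I^{p,q}$ and $f$ respects the bigrading, $f$ carries $W_n\otimes\C$ onto $\im f\cap(W_n'\otimes\C)$ and $F^p$ onto $\im f\cap F'^p$; the statement for $W$ over $A\otimes\Q$ then follows by descent, since $W$ is rationally defined and strictness is detected after $\otimes_{\Q}\C$.

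With strictness in hand I would construct kernels and cokernels directly. For $f:H\to H'$ put on $\ke(f:H_A\to H'_A)$ the induced filtrations $W\cap K$ and $F\cap K_{\C}$, and on $\coke f$ the quotient filtrations. That these are again Mixed Hodge Structures follows from the bigrading: because $f$ preserves the $I^{p,q}$, the kernel splits as $K_{\C}=\oplus_{p,q}(K_{\C}\cap I^{p,q})$, so the subspaces $I^{p,q}\cap K_{\C}$ form a bigrading of $K_{\C}$ of the required type; intersecting with $K_{\C}$ the equalities for $W$, $F$ and the conjugation relation shows that the induced filtrations satisfy opposedness on each $Gr^W_nK$, which is thereby a (sub-)Hodge Structure of weight $n$ inside $Gr^W_nH$. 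The dual argument on quotients handles $\coke f$, and one checks formally that these objects satisfy the universal properties of kernel and cokernel in $\mathrm{MHS}$.

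Finally, the canonical map $\phi:\mathrm{Coim}(f)=H/\ke f\to\im f=\ke(H'\to\coke f)$ is an isomorphism of underlying $A$-modules by the first isomorphism theorem, and the strictness of $f$ says exactly that the quotient filtrations on the source and the induced filtrations on the target agree under $\phi$; hence $\phi$ is an isomorphism of Mixed Hodge Structures and $\mathrm{MHS}$ is abelian. The main obstacle is the lemma itself: constructing the canonical bigrading and proving its functoriality. This is the technical heart of the matter (the content of \S3.2--3.3 on opposite filtrations), and once it is available every remaining step is essentially formal.
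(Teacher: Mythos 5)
Your proposal is correct and follows essentially the same route as the paper: the functorial bigrading $H_{\C}=\oplus_{p,q}I^{p,q}$ (the paper's proposition on the $I^{p,q}$ together with Remark iii on compatibility of morphisms with the decomposition) is the key lemma, kernels and cokernels carry the induced and quotient filtrations, and the coimage--image comparison reduces to strictness. The only difference is bookkeeping: you derive strictness from the bigrading before constructing kernels and cokernels, whereas the paper constructs them first and records strictness as a corollary.
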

The proof relies on the following:
\vskip.1in

\n  {\it  Canonical decomposition of the weight filtration.}
  While there is an equivalence between the Hodge filtration and
  the Hodge decomposition, there is no such result for the  weight
  filtration  of a Mixed Hodge Structure.  In the category of
   Mixed Hodge Structures,
 the short exact sequence  $0 \to Gr^W_{n-1} \to W_n/W_{n-2}
 \to Gr^W_n \to 0$ is a non-split extension
 of the two pure Hodge structures $Gr^W_n$ and $ Gr^W_{n-1} $.
  To  construct the Hodge decomposition,
  we introduce for each
  pair of integers
$(p,q)$,  the subspaces of $H = H_{\C}$:
\begin{equation*}
I^{p,q}=(F^p\cap W_{p+q})\cap(\overline {F^q}\cap W_{p+q}+
\overline {F^{q-1}}\cap W_{p+q-2} + \overline {F^{q-2}}\cap
W_{p+q-3}+ \cdots )
\end{equation*}
By construction they are related for $p+q = n$ to the components
$H^{p,q}$ of the Hodge decomposition: $Gr^W_{n}H \simeq
\oplus_{n=p+q}H^{p,q}$;

\begin{prop}  The projection  $ \varphi: W_{p+q} \rightarrow
Gr^W_{p+q}H\simeq \oplus_{p'+q'=p+q}H^{p',q'}$ induces an
isomorphism $ I^{p,q} \xrightarrow{\sim} H^{p,q}$.
 Moreover,
\begin{equation*} W_n = \oplus_{p+q\leq
n} I^{p,q}, \quad  F^p = \oplus_{p'\geq p} I^{p',q'}
\end{equation*}
\end{prop}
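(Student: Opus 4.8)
The plan is to prove the three assertions simultaneously by induction on the \emph{length} $\ell$ of the weight filtration, i.e.\ the number of integers $n$ with $Gr^W_n H \neq 0$. First I record the one inclusion that holds unconditionally. If $x \in I^{p,q}$ with $n = p+q$, then $x \in F^p \cap W_n$, so its image $\varphi(x) \in W_n/W_{n-1} = Gr^W_n H$ lies in $F^p Gr^W_n H$; moreover every tail summand $\overline{F^{q-i}}\cap W_{n-i-1}$ with $i\ge 1$ lies in $W_{n-1}$ and dies under $\varphi$, while the leading term $\overline{F^q}\cap W_n$ maps into $\overline{F^q}Gr^W_n H$. Hence $\varphi(I^{p,q}) \subseteq F^p Gr^W_n H \cap \overline{F^q}Gr^W_n H = H^{p,q}$, the $(p,q)$-component of the pure weight-$n$ Hodge structure $Gr^W_n H$. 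The content of the Proposition is that this map is onto with zero kernel and that the resulting pieces split $W$ and $F$.

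For the base case, if $H$ is pure of weight $n$ then $W_{n-1}=0$ and $W_n = H$, so every tail term vanishes and $I^{p,q} = F^p \cap \overline{F^q} = H^{p,q}$; the identities $W_n = \bigoplus_{p+q\le n} I^{p,q}$ and $F^p = \bigoplus_{p'\ge p} I^{p',q'}$ are then precisely the Hodge decomposition and its associated filtration. For the inductive step let $N$ be the top weight, so $W_N = H$ and $W_{N-1}\subsetneq H$ has length $\ell-1$. Comparing the defining formulas shows that for $p+q\le N-1$ the space $I^{p,q}$ computed in $H$ coincides with the one computed in the sub-MHS $W_{N-1}$, since all filtration pieces involved already lie in $W_{N-1}$. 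Thus the induction hypothesis supplies the bigrading of $W_{N-1}$, the isomorphisms $\varphi\colon I^{p,q}\xrightarrow{\sim}H^{p,q}$ for $p+q\le N-1$, and the $W$- and $F$-compatibilities there. It remains to treat weight exactly $N$ and reassemble.

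The crux is that for $p+q=N$ the map $\varphi\colon I^{p,q}\to H^{p,q}$ is an isomorphism. For surjectivity, given $v \in H^{p,q}=F^p Gr^W_N H \cap \overline{F^q}Gr^W_N H$, I would lift it to some $x\in F^p$ (possible because $W_N=H$ makes $F^p\to F^p Gr^W_N H$ onto) and independently to $y\in\overline{F^q}$, so that $x-y\in W_{N-1}=\ker\varphi$, and then correct $x$ by an element of $F^p\cap W_{N-1}$, found via the induction bigrading, so that the corrected lift falls into the conjugate tail $\overline{F^q}\cap W_N+\sum_{i\ge1}\overline{F^{q-i}}\cap W_{N-i-1}$, yielding an element of $I^{p,q}$ over $v$. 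For injectivity I would expand any element of $I^{p,q}\cap W_{N-1}$ in the induction bigrading and match $F$- and $\overline F$-types to force it to vanish. I expect this to be the main obstacle: the weight indices in the tail of the formula for $I^{p,q}$ are shifted relative to those occurring in $W_{N-1}$, so the correction and the type-matching only close up if one carries along, as part of the inductive package, the auxiliary congruence $\overline{I^{p,q}}\equiv I^{q,p}\pmod{W_{p+q-2}}$. Turning this conjugation statement into an inductive invariant is the delicate linear-algebra point (Deligne's lemma on opposite filtrations), and it is precisely what forces the descending weights appearing in the definition of $I^{p,q}$.

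Finally I would assemble the global statements. Surjectivity of each $\varphi\colon I^{p,q}\to H^{p,q}$ for $p+q=N$, together with $Gr^W_N H=\bigoplus_{p+q=N}H^{p,q}$ and $\ker\varphi=W_{N-1}$, gives $H=W_N=W_{N-1}\oplus\bigoplus_{p+q=N}I^{p,q}$; combined with the induction bigrading of $W_{N-1}$ this yields $W_n=\bigoplus_{p+q\le n}I^{p,q}$ for all $n$. For the Hodge filtration, given $x\in F^p$ its image $\varphi(x)$ lies in $F^p Gr^W_N H=\bigoplus_{p'\ge p,\,p'+q'=N}H^{p',q'}$; lifting through the isomorphisms $I^{p',q'}\cong H^{p',q'}$ to $\xi\in\bigoplus_{p'\ge p}I^{p',q'}\subseteq F^p$ with $\varphi(\xi)=\varphi(x)$, I get $x-\xi\in F^p\cap W_{N-1}$, which by induction equals $\bigoplus_{p'\ge p,\,p'+q'\le N-1}I^{p',q'}$. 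Hence $F^p=\bigoplus_{p'\ge p}I^{p',q'}$, completing the induction and the Proposition.
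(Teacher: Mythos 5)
Your global architecture is sound and genuinely different from the paper's. The reduction step is legitimate: since the formula for $I^{p,q}$ only involves intersections with $W_j$ for $j\le p+q$, the spaces $I^{p,q}$ for $p+q\le N-1$ computed in $H$ and in the sub-object $W_{N-1}$ with its induced filtrations coincide, so induction on the length of $W$ makes sense. Your surjectivity correction also closes up, and it is worth recording why: writing $x-y=w\in W_{N-1}$ and expanding $w=\sum w^{p',q'}$ in the inductive bigrading, the components with $p'\ge p$ lie in $F^p$ and may be subtracted from $x$, while each component with $p'<p$ lies, by the very definition of $I^{p',q'}$, in $\overline{F^{q'}}\cap W_{p'+q'}+\sum_{j\ge1}\overline{F^{q'-j}}\cap W_{p'+q'-j-1}$, and a short index check using $p'\le p-1$ (set $i=q-q'+j$ when $q'-j<q$, and absorb the remaining summands into the leading term $\overline{F^q}\cap W_N$) places all of it inside $\overline{F^q}\cap W_N+\sum_{i\ge1}\overline{F^{q-i}}\cap W_{N-i-1}$; so the corrected lift lands in $I^{p,q}$. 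The final reassembly of $W_n$ and $F^p$ from the isomorphisms $I^{p,q}\xrightarrow{\sim}H^{p,q}$ is correct as you wrote it. The paper, by contrast, does not induct on the length at all: it proves injectivity and surjectivity for each fixed $(p,q)$ by a successive-approximation descent through the weight filtration of $H$ itself, the engine being the opposedness isomorphisms $F^p\oplus\overline{F^{q-r+1}}\simeq Gr^W_{m-r}H$; your route buys a cleaner separation of the top weight, at the price of carrying the whole inductive package.

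The one genuine gap is your injectivity step. Expanding an element of $I^{p,q}\cap W_{N-1}$ in the inductive bigrading and ``matching $F$- and $\overline F$-types'' cannot work as stated, because the bigrading $W_{N-1}=\oplus I^{p',q'}$ refines $F$ but does \emph{not} refine $\overline F$: precisely since $\overline{I^{p,q}}\ne I^{q,p}$ in general, $\overline F$ admits no type decomposition along the $I^{p',q'}$, and the congruence $\overline{I^{p,q}}\equiv I^{q,p}\pmod{W_{p+q-2}}$ you propose as an inductive invariant is unproven within your induction and in any case only controls matters modulo $W_{p+q-2}$, so each matching step produces lower-weight error terms on which you must recurse without a termination argument. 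The repair is to abandon the bigrading for this step and argue on graded pieces, where only the induced filtrations matter: if $v\in I^{p,q}$ with $\varphi(v)=0$, then $v\in F^p\cap W_{N-1}$, and reading the tail condition modulo $W_{N-r-1}$ shows that the class of $v$ in $Gr^W_{N-r}H$ lies in $F^p\cap\overline{F^{q-r+1}}\,(Gr^W_{N-r}H)$, which vanishes by purity since $p+(q-r+1)=N-r+1>N-r$; descending on $r$ gives $v=0$. This is exactly the paper's mechanism, it fits inside your length induction without any strengthening, and with it the congruence $\overline{I^{p,q}}\equiv I^{q,p}\pmod{W_{p+q-2}}$ becomes a consequence of the proposition (it is stated as a remark in the text), not an ingredient of its proof.
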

\begin{remark}
i) For three  decreasing opposed filtrations in an abelian
category, Deligne uses in the proof an inductive argument based on
the formula for $i > 0$
  $$F^{p_i} \oplus \overline {F^{q_i}}\simeq Gr^W_{n-i}H,
  \quad p_i +q_i = n-i+1,$$
to construct for $i > 0$ a decreasing family $(p_i, q_i)$ starting
with $(p_0 = p, q_0 = q): p + q = n $ and deduces the existence of a subspace
$A^{p,q} \subset W_m$ projecting isomorphically onto the subspace
of type $(p,q)$ of the Hodge Structure $Gr^W_n H$.
 To simplify, we choose here, for $W$ increasing,  a sequence
of the following type $(p_0,q_0) = (p,q)$, and for $ i > 0 $, $p_i
= p, q_i = q-i +1$ which explains theasymmetry in the formula
(see also \cite{St} Lemma-Definition 3.4),
including the fact that for $i = 1$,
 $\overline {F^{q}}\cap W_{p+q-1}  \subset \overline {F^{q}}\cap W_{p+q}$.\\
ii) In general $I^{p,q}\not= \overline{I^{q,p}}$, we have only
$I^{p,q} \equiv \overline{I^{q,p}}$ modulo $W_{p+q-2}$.\\
iii) A morphism of Mixed Hodge Structures   is necessarily
compatible with  the decomposition, which will be the main
ingredient to prove the strictness  with respect to $W$ and $F$
(see \ref{strict}).
\end{remark}

The proof of this proposition is given below. This proposition is
used to prove Theorem \ref{abelian}.

 The knowledge of the linear algebra underlying
 Mixed Hodge Structures is supposed to help the reader before the reader is confronted
 with their construction. The striking result to remember is that morphisms
 of Mixed Hodge Structures are {\it strict} (see \ref{strict})
 for both filtrations, the weight $W$ and Hodge $F$.
 The complete proofs  are purely algebraic \cite{HII}; the corresponding theory
  in an abelian category is developed for objects with opposite filtrations.
 
 \subsection{Filtrations } Given a morphism in an additive category,
   the isomorphism between the image and co-image is one of the
   conditions  to define an abelian category. In the abelian
   category of vector spaces endowed with finite  filtrations by
   subspaces, if we consider a morphism compatible with
   filtrations $f : (H, F) \to (H', F')$ such that $ f(F^j) \subset F^{' j}$,
   the induced filtration on the image by $F'$ does not coincide
   with the induced filtration by $F$ on the co-image. In this case we say that
   the morphism is strict if they coincide (see \ref{strict}).
This kind of problem will occur for various repeated
restrictions of filtrations and we need here to define with
precision the properties of induced filtrations, since this is at
the heart of the subject of Mixed Hodge Structures.

On a sub-quotient of a filtered vector
 space and in general of an object of an abelian category, there are two ways
 to restrict the filtration (first on the sub-object, then on the quotient and
 the reciprocal way). On an object $A$ with two  filtrations $W$ and $F$,
 we can repeat the restriction for each
 filtration and we get different objects if we start with $W$ then $F$
 or we inverse the order.
 
 We need  to know  precise relations between
 the induced filtrations in these various ways, and to know the precise behavior
 of a linear morphism with respect to such filtrations. For
 example, we will indicate in section $5$, three different ways to induce a
 filtration on the terms of a spectral sequence. A central result
 for Mixed Hodge Structures is to give conditions on the complex such that the three
 induced filtrations coincide.
 Hence, we recall here  preliminaries on filtrations after Deligne.
 \vskip.1in
 
\n Let $ {\A}$ denote an abelian category.
\vskip.1in

 \begin{definition}
 A decreasing (resp. increasing)  filtration $F$ of an object
$A$ of $ {\A}$ is a  family of  sub-objects of $ {\A}$, satisfying
\begin{equation*}
\forall n,m,\quad \quad n\leq m \Longrightarrow F^m(A)\subset
F^n(A)\quad(resp. \, n\leq m\Longrightarrow F_n(A)\subset F_m(A))
\end{equation*}
\end{definition}

\n The pair $(A,F)$ of an object of $\A$ and a decreasing 
(resp. increasing) filtration
will be called a filtered object.
\vskip.1in

\n  If $ F$ is a decreasing filtration  (resp. $W$ an
increasing filtration), a shifted filtration $F[n]$ by an integer
$ n$ is defined as
\begin{equation*}(F[n])^p(A)=F^{n+p}(A) \quad (W[n])_p(A) =
W_{n-p}(A)
 \end{equation*}
 
 Decreasing  filtrations $F$ are  considered for a general
study. Statements for increasing filtrations  $W$ are deduced by
the change of indices  $W_n(A) = F^{-n}(A)$. A filtration is
finite if there  exist integers $n$ and $ m$
 such that $F^n(A) = A$ and $F^m(A) = 0$.\\ 
 A morphism  of
 filtered objects
   $(A,F)\xrightarrow{f}(B,F)$ is a morphism  $A\xrightarrow{f} B$
  satisfying $f(F^n(A)) \subset F^n(B)$ for all $ n \in {\Z}$.
   Filtered objects (resp. of finite filtration) form an
   additive category
  with existence of  $kernel$ and $cokernel$  of a morphism
  with natural induced filtrations
  as well image and  coimage, however the image and  co-image
  will not be necessarily filtered-isomorphic, which is the main
  obstruction to obtain an abelian category.
  
  To lift this obstruction, we are lead to define the notion of strictness for 
  compatible morphims (see \ref{strict} below).
  
   \begin{definition} The graded object  associated to $(A,F)$
   is defined as:
\begin{equation*} Gr_F(A) = \,\, \oplus_{n \in{\Z}}
 Gr_F^n (A) \quad \mbox{where} \quad Gr_F^n (A) =
F^n (A)/F^{n+1}(A).
 \end{equation*}
 \end{definition}
 
 \subsubsection{Induced filtration}
  A filtered object  $(A,F)$  induces
 a filtration on a sub-object $i: B\rightarrow A$ of $A$,  defined by
  $F^n(B) = B\cap F^n(A)$. Dually, the quotient  filtration on $A/B$ is defined by:
\begin{equation*}F^n(A/B) = p(F^n(A)) = (B+F^n(A))/B
  \simeq F^n(A)/(B\cap F^n(A)),
\end{equation*}
  where $p: A \to A/B$ is the projection.
  
  \subsubsection{} The cohomology of a sequence of filtered morphisms:
$$ (A, F)\xrightarrow {f} (B, F) \xrightarrow {g} (C, F)$$
satisfying $g\circ f= 0$
  is defined as   $H = Ker\,g/Im\,f$; it is {\it filtered} and endowed
  with the   quotient filtration of the induced filtration on
   $Ker\,g$. It is
  equal to the  induced filtration on  $H$ by the
   quotient filtration on $B/Im\,f$, since $ H \subset (B/Im\,f)$.
   
\subsubsection{Strictness}\label{strict} For filtered modules over a ring, a morphism
of filtered objects $f:(A,F)\rightarrow (B,F)$ is called
strict if the relation:
   $$ f(F^n(A)) = f(A) \cap F^n(B)$$ 
is  satisfied; that is, any element $b\in F^n(B) \cap Im A$
is already in $Im \, F^n(A)$. 
 
 This  is not  the case for any morphism.
  
  A filtered morphism in an additive  category
  $f: (A,F)\rightarrow (B,F)$
  is called strict if it induces a filtered isomorphism
  $(Coim(f), F) \rightarrow (Im(f), F)$
  from the coimage to the image of  $f$ with their induced
  filtrations.
  
This concept is basic to the theory, so we mention the following
criteria:

\begin{prop} i) A filtered morphism $f: (A, F) \to (B, F)$ of objects
with finite filtrations is strict if and only if the  following
exact sequence of graded objects is exact:
\[ 0 \to Gr_F(Ker f) \to Gr_F(A)\to Gr_F(B)\to  Gr_F(Coker f)\to 0 \]
ii) Let $S: \,\, (A, F)\xrightarrow{f} (B, F) \xrightarrow{g}
(C,F) $ be a sequence  $S$ of strict morphisms such that $g \circ f =
0$, then the cohomology $H$ with induced filtration satisfies:
$$H(Gr_F(S)) \simeq Gr_F(H(S)).$$
\end{prop}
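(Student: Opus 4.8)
The plan is to reduce both parts to one elementary observation: the functor $Gr_F$ is exact on every short exact sequence $0 \to A' \to A \to A'' \to 0$ in which $A'$ carries the induced filtration $F^n(A') = A'\cap F^n(A)$ and $A''$ the quotient filtration. I would first establish this lemma degree by degree through three one-line chases: injectivity of $Gr^n_F(A') \to Gr^n_F(A)$ (an element of $A'\cap F^n(A)$ landing in $F^{n+1}(A)$ already lies in $A'\cap F^{n+1}(A)$), surjectivity of $Gr^n_F(A)\to Gr^n_F(A'')$, and exactness in the middle (an element of $F^n(A)$ mapping into $F^{n+1}(A'')$ differs by an element of $F^{n+1}(A)$ from an element of $A'\cap F^n(A)$). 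I would also record the companion fact, valid for finite filtrations, that a filtered morphism whose underlying map is an isomorphism is a \emph{filtered} isomorphism exactly when it induces an isomorphism on $Gr_F$; this follows by descending induction on the filtration length.

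For part i), I would use the canonical factorization of $f$, giving two short exact sequences $0 \to Ker\,f \to A \to Coim(f)\to 0$ and $0 \to Im(f) \to B \to Coker\,f \to 0$ together with the canonical map $u\colon Coim(f)\to Im(f)$, an isomorphism of underlying objects. Applying the key lemma to both sequences yields exactness of $0 \to Gr_F(Ker\,f)\to Gr_F(A)\to Gr_F(Coim(f))\to 0$ and of $0\to Gr_F(Im(f))\to Gr_F(B)\to Gr_F(Coker\,f)\to 0$. Since $Gr_F(f)$ factors as the surjection $Gr_F(A)\to Gr_F(Coim(f))$, followed by $Gr_F(u)$, followed by the injection $Gr_F(Im(f))\to Gr_F(B)$, the four-term sequence of the proposition is exact at its two ends automatically, and it is exact at $Gr_F(A)$, respectively at $Gr_F(B)$, if and only if $Gr_F(u)$ is injective, respectively surjective. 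By the companion fact, $Gr_F(u)$ being an isomorphism is equivalent to $u$ being a filtered isomorphism, which is precisely the definition of strictness; this proves the equivalence in both directions.

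For part ii), with $f$ and $g$ strict and $g\circ f = 0$ (so $Im\,f \subseteq Ker\,g$ inside $B$ with induced filtrations), I would first translate strictness into two graded identities. Strictness of $f$, namely $f(F^n(A)) = Im\,f \cap F^n(B)$, forces $f(F^n(A))\cap F^{n+1}(B) = f(F^{n+1}(A))$ and hence $Im(Gr_F f) = Gr_F(Im\,f)$; dually, strictness of $g$ gives $Ker(Gr_F g) = Gr_F(Ker\,g)$, both computed with induced filtrations. Consequently the cohomology of $Gr_F(S)$ at $Gr_F(B)$ equals $Ker(Gr_F g)/Im(Gr_F f) = Gr_F(Ker\,g)/Gr_F(Im\,f)$. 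On the other hand, applying the key lemma to the short exact sequence $0 \to Im\,f \to Ker\,g \to H \to 0$, in which $H = H(S)$ carries the quotient of the induced filtration, identifies $Gr_F(H)$ with the same subquotient $Gr_F(Ker\,g)/Gr_F(Im\,f)$ of $Gr_F(B)$. Comparing the two descriptions yields the natural isomorphism $H(Gr_F(S))\simeq Gr_F(H(S))$.

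\textbf{Main obstacle.} The conceptual content is small; the genuine work lies in the bookkeeping of which filtration (induced versus quotient) sits on each subobject, and in checking that the inclusions $Gr_F(Im\,f)\subseteq Gr_F(Ker\,g)\subseteq Gr_F(B)$ produced by the various short exact sequences are mutually compatible as subobjects of $Gr_F(B)$. The most delicate point is the pair of graded identities $Im(Gr_F f) = Gr_F(Im\,f)$ and $Ker(Gr_F g) = Gr_F(Ker\,g)$: these are exactly where strictness enters and form the hinge on which the whole statement turns, so I would isolate and prove them before assembling the two descriptions of the cohomology.
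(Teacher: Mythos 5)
Your proof is correct and complete: the key lemma (exactness of $Gr_F$ on a short exact sequence with induced filtration on the sub-object and quotient filtration on the quotient), the descending-induction fact that for finite filtrations a map which is an isomorphism of underlying objects and induces an isomorphism on $Gr_F$ is a filtered isomorphism, and the two hinge identities $Im(Gr_F f)=Gr_F(Im\,f)$, $Ker(Gr_F g)=Gr_F(Ker\,g)$ all check out, including the compatibility of the subobjects of $Gr_F(B)$ that you rightly flag. The paper itself states this proposition without proof, as a standard criterion going back to Deligne (Th\'eorie de Hodge II, 1.1.11), and your argument via the canonical factorization through $Coim(f)\to Im(f)$ is exactly the standard one, so there is no divergence to report.
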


\subsubsection{Two filtrations}
   Let $A$ be an object of ${\A}$ with two
filtrations $F$ and $G$. By definition, $ Gr^n_F(A)$ is a quotient
of a sub-object of $A$, and as such, it is endowed with an induced
filtration by  $G$. Its associated graded object defines a
bigraded object $Gr^n_G Gr^m_F(A)_{n,m\in {\Z}}$. We refer to \cite{HII}
for:

\begin{lemma}[ Zassenhaus' lemma] The objects $Gr^n_G
Gr^m_F(A)$ and $Gr^m_F Gr^n_G(A)$ are isomorphic.
\end{lemma}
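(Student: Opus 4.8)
The plan is to compute each of the two iterated graded objects explicitly as a subquotient of $A$ and to exhibit both as canonically isomorphic to one and the same manifestly symmetric subquotient, so that the asserted isomorphism becomes transparent. The only structural input needed is that, in the abelian category $\A$, the subobjects of a fixed object form a modular lattice; equivalently, one has the Dedekind modular identity $B\cap(C+D)=(B\cap C)+D$ whenever $D\subset B$. (If one prefers to argue with elements, the Freyd--Mitchell embedding theorem reduces the statement to modules over a ring, where this identity is classical.)

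First I would unwind the definitions of the induced filtrations. Since $Gr^m_F(A)=F^m/F^{m+1}$ is a subquotient of $A$, the filtration induced by $G$ is the quotient, by $F^{m+1}$, of the filtration $G^n\cap F^m$ induced on the subobject $F^m$; that is,
\begin{equation*}
G^n\,Gr^m_F(A)=\frac{(G^n\cap F^m)+F^{m+1}}{F^{m+1}}.
\end{equation*}
Passing to the associated graded in the index $n$ then gives
\begin{equation*}
Gr^n_G\,Gr^m_F(A)=\frac{(G^n\cap F^m)+F^{m+1}}{(G^{n+1}\cap F^m)+F^{m+1}}.
\end{equation*}
Applying the second isomorphism theorem (using $G^{n+1}\cap F^m\subset G^n\cap F^m$, so that the numerator equals $(G^n\cap F^m)+\bigl[(G^{n+1}\cap F^m)+F^{m+1}\bigr]$) and then the modular identity twice, with $F^{m+1}\subset F^m$, I obtain the clean formula
\begin{equation*}
Gr^n_G\,Gr^m_F(A)\simeq\frac{F^m\cap G^n}{(F^{m+1}\cap G^n)+(F^m\cap G^{n+1})}=:Z^{m,n}.
\end{equation*}

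The decisive point is that the subquotient $Z^{m,n}$ is symmetric under exchanging the pair $(F,m)$ with $(G,n)$: its numerator $F^m\cap G^n$ and its denominator $(F^{m+1}\cap G^n)+(F^m\cap G^{n+1})$ are each invariant under that swap. Running the identical computation with the roles of $F$ and $G$ interchanged therefore yields $Gr^m_F\,Gr^n_G(A)\simeq Z^{m,n}$ as well, and composing the two isomorphisms proves the lemma. I expect the main obstacle to be purely bookkeeping: ensuring that the intersection-and-sum manipulations are legitimate in a general abelian category rather than only for modules. This is precisely where the modularity of the subobject lattice enters, and it is the one place where element-chasing must be replaced by the Dedekind identity (or circumvented via Freyd--Mitchell).
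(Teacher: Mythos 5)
Your proof is correct and is essentially the argument this paper relies on: the text gives no proof of its own but defers to Deligne \cite{HII} (1.2.1), where both iterated graded objects are identified, exactly as you do, with the symmetric subquotient $(F^m\cap G^n)/\bigl((F^{m+1}\cap G^n)+(F^m\cap G^{n+1})\bigr)$ via the second isomorphism theorem and the modularity of the subobject lattice. The only cosmetic remark is that a single application of the modular law (with $G^{n+1}\cap F^m\subset G^n\cap F^m$, giving $B\cap(C+D)=C+(B\cap D)$ and then $G^n\cap F^m\cap F^{m+1}=G^n\cap F^{m+1}$) already yields the denominator, so your ``twice'' overcounts slightly.
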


\begin{remark}Let $H$ be a third filtration on $A$. It induces a
filtration on $Gr_F(A)$, hence on $Gr_G Gr_F(A)$. It induces also
a filtration on $Gr_F Gr_G(A)$. These filtrations do not
correspond in general under the above isomorphism.
 In the formula $Gr_H Gr_G Gr_F(A)$, $G$ et $H$ have
symmetric role, but not  $F$ and $G$.
 \end{remark}
 
\subsubsection{Hom and tensor functors} If $A$ and $B$ are
two filtered objects of  ${\A}$, we define a  filtration on the
left exact functor $Hom$:
\begin{equation*}
F^k Hom(A,B) = \lbrace f: A\rightarrow B : \forall n,
f(F^n(A))\subset F^{n+k}(B) \rbrace
\end{equation*}
Hence:
 \begin{equation*}
 Hom ((A,F),(B,F))= F^0(Hom(A,B)).
 \end{equation*}
 If $A$ and $B$ are modules on some ring, we define:
\begin{equation*}
 F^k(A\otimes B) = \sum_m Im
(F^m(A)\otimes F^{k-m}(B) \rightarrow A\otimes B)
\end{equation*}

\subsubsection{Multifunctor} In general if $H: {\A}_1\times \ldots
\times {\A}_n\rightarrow \B$
 is a  right exact multi-additive functor, we define:
\begin{equation*} F^k(H(A_1,...,A_n))=  \sum
_{\sum{k_i=k}} Im(H(F^{k_1}A_1,...,F^{k_n}A_n)\rightarrow
H(A_1,...,A_n))
\end{equation*}
and dually if  $H$ is left exact:
\begin{equation*}
 F^k(H(A_1,...,A_n)=  \bigcap_{\sum k_i = k } Ker (H(A_1,...,A_n)
 \rightarrow  H(A_1/F^{k_1}A_1,...,A_n/F^{k_n}A_n))
\end{equation*}
 If $H$ is exact, both definitions are equivalent.
 
 \subsection{Opposite filtrations}
We review here the definition of Hodge Structure in terms of filtrations, as it
is adapted to its generalization to Mixed Hodge Structure.  For any $A$-module
$H_A$, the complex conjugation extends to a conjugation on the
space $H_{{\C}}=  H_A \otimes_A {\C}$. A filtration $ F$ on
$H_{{\C}}$ has a conjugate filtration $\overline F$ such that
$({\overline F})^jH_{\C}= \overline {F^jH_{\C}}$.

\begin{definition} ([HS1])  An A-Hodge structure $H$ of weight $n$
  consists of:\\
i) an $A$-module of finite type  $H_A$,\\
 ii) a finite filtration  $F$ on $H_{{\C}}$ (the Hodge filtration) such that
$F$ and its conjugate $\overline F$ satisfy the relation:
\begin{equation*}
Gr^p_F Gr^q_{\overline F} (H_{\C}) =
 0, \quad \mbox{for} \,\,  p+q \neq n
  \end{equation*}
\end{definition}

The module $H_A$, or its image in $H_{\Q}$ is called the lattice.
The Hodge Structure is real when $A = \R$, rational when $A = \Q$ and integral
when $A = \Z$.

 \subsubsection{Opposite filtrations} The linear
algebra of Mixed Hodge Structures applies to  an abelian category $\A$  if we use the
following definition where
no conjugation map appears. \\
Two finite filtrations $F$ and $G$
on an object $A$ of ${\A}$ are n-opposite if:
\begin{equation*}
Gr^p_F Gr^q_G(A)= 0\quad \mbox{for} \,\, p+q\neq n.
\end{equation*}
hence the Hodge  filtration $F$ on a Hodge Sructure of weight $n$ is
$n$-opposite to its conjugate
 $\overline F$. The following  constructions
will define an equivalence of categories between  objects of
${\A}$ with two  $n$-opposite filtrations and  bigraded objects of
${\A}$ of the following type.

\begin{example} Let $A^{p,q}$ be a  bigraded object of
${\A}$ such that
  $A^{p,q}=0$, for all but a finite number of
pairs $(p,q)$ and  $A^{p,q} =0$ for $p+q\neq n$; then we define
two  $n$-opposite filtrations on $A= {\sum_{p,q}} A^{p,q}$
$$F^p(A)= {\sum_{p'\geq p}} A^{p',q'} ,\quad
G^q(A)= {\sum_{q'\geq q}} A^{p',q'}$$ 
We have $ Gr^p_F Gr^q_G(A)
= A^{p,q}$.
 \end{example}
 
  We state reciprocally \cite{HII}:
  
 \begin{prop}
i)  Two finite filtrations  $F$ and $G$ on an object $A$
 are $n$-opposite, if and only if:
   $$\forall \, p,q,\quad p+q=n+1 \Rightarrow F^p(A)
 \oplus  G^q(A)\simeq A.$$
ii) If $F$ and $G$ are $n$-opposite, and if we put
$A^{p,q}=F^p(A)\cap G^q(A)$, for $p+q=n$, $A^{p,q}=0$ for $ p+q
\neq n$, then $A$ is a direct sum of $A^{p,q}$.
 \end{prop}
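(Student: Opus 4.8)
The plan is to recast $n$-oppositeness as a \emph{purity} statement and then treat the two halves of (i) separately: the vanishing of intersections, which is essentially formal, and the filling-up of sums, which is the real work. First I would record the reformulation. Saying that $F$ and $G$ are $n$-opposite, i.e. $Gr^p_F Gr^q_G(A)=0$ for $p+q\neq n$, is the same as saying that for each $q$ the induced $F$-filtration on $Gr^q_G(A)$ is concentrated in degree $n-q$ (so $F^{n-q}Gr^q_G(A)=Gr^q_G(A)$ and $F^{n-q+1}Gr^q_G(A)=0$), and symmetrically that each $Gr^p_F(A)$ is $G$-pure of weight $n-p$. Unwinding the induced filtration, this translates into: $F^p\cap G^q\subseteq G^{q+1}$ whenever $p+q>n$, and $G^{q}=(F^{n-q}\cap G^{q})+G^{q+1}$, together with the two symmetric statements with the roles of $F$ and $G$ exchanged.

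For the forward implication of (i) I would prove two claims. \emph{Claim A: $F^p\cap G^q=0$ whenever $p+q>n$.} Starting from $F^p\cap G^q\subseteq G^{q+1}$, intersecting again with $F^p$ gives $F^p\cap G^q\subseteq F^p\cap G^{q+1}$; since $p+(q+1)>n$ as well, this step repeats, yielding $F^p\cap G^q\subseteq F^p\cap G^{q+k}$ for every $k$, and finiteness of $G$ forces the right-hand side to vanish. \emph{Claim B: $F^p+G^q=A$ whenever $p+q\le n+1$.} This is the substantive step, and I would argue by descending induction on $p$ (so $q=n+1-p$ increases), the base case $F^p=A$ for $p\ll 0$ being trivial. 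For the inductive step, purity of $Gr^{q-1}_G(A)$, which is $F$-pure of weight $n-(q-1)=p$, gives $G^{q-1}\subseteq F^p+G^q$; combining this with the inductive hypothesis $F^{p+1}+G^{q-1}=A$ yields $A=F^{p+1}+G^{q-1}\subseteq F^p+G^q$. Specializing Claims A and B to $p+q=n+1$ gives exactly $F^p\cap G^q=0$ and $F^p+G^q=A$, i.e. $F^p\oplus G^q\simeq A$. For the converse, assuming $F^p\oplus G^q\simeq A$ for all $p+q=n+1$, monotonicity of the filtrations upgrades this to $F^p\cap G^q=0$ for all $p+q>n$ and $F^p+G^q=A$ for all $p+q\le n+1$; then I would compute directly
\[ Gr^p_F Gr^q_G(A)=\frac{(F^p\cap G^q)+G^{q+1}}{(F^{p+1}\cap G^q)+G^{q+1}} \]
and check vanishing off the line $p+q=n$: for $p+q>n$ both intersections are $0$, while for $p+q<n$ one writes a member of $F^p\cap G^q$ as $a+b$ with $a\in F^{p+1}$, $b\in G^{q+1}$ using $F^{p+1}+G^{q+1}=A$ (valid since $p+q+2\le n+1$), notes $a\in F^{p+1}\cap G^q$, and deduces $F^p\cap G^q\subseteq (F^{p+1}\cap G^q)+G^{q+1}$, so the quotient is $0$.

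Part (ii) then follows from (i). Directness of $\sum_{p+q=n}A^{p,q}$ uses only the vanishing $F^r\cap G^s=0$ for $r+s>n$: if $y\in A^{p_0,n-p_0}$ also lies in $\sum_{p\neq p_0}A^{p,n-p}\subseteq F^{p_0+1}+G^{n-p_0+1}$, writing $y=a+b$ accordingly forces $a\in F^{p_0+1}\cap G^{n-p_0}=0$ and then $y\in F^{p_0}\cap G^{n-p_0+1}=0$. Spanning follows by descending induction from the purity identity $F^p=(F^p\cap G^{n-p})+F^{p+1}$, which gives $F^p=\sum_{p'\ge p}A^{p',n-p'}$ and hence $A=\bigoplus_{p+q=n}A^{p,q}$ once $F^p=A$.

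The main obstacle is not the combinatorics but executing these element-style manipulations in an arbitrary abelian category $\A$: the objects $F^p\cap G^q$ and $F^p+G^q$ must be handled as subobjects, and steps such as ``write $x=a+b$'' justified via members/pseudo-elements (or via the Freyd--Mitchell embedding). The genuinely non-formal ingredient is the sum-filling Claim B: unlike the vanishing of intersections it is not a purely diagrammatic consequence, and it relies on the purity reformulation together with finiteness of the filtrations to make the descending induction terminate.
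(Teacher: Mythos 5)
Your proposal is correct. One point of reference first: the paper does not actually prove this proposition---it states it and defers to Deligne \cite{HII}---so there is no in-text proof to diverge from, and your argument is essentially the standard one from that source: recast $n$-oppositeness as purity of the induced filtrations on graded pieces, kill the intersections $F^p\cap G^q$ for $p+q>n$ by iterating $F^p\cap G^q\subseteq G^{q+1}$ against finiteness of $G$, fill the sums by induction using $G^{q-1}=(F^p\cap G^{q-1})+G^q$, verify the converse by the explicit formula for $Gr^p_F Gr^q_G(A)$, and derive (ii) from the identity $F^p=(F^p\cap G^{n-p})+F^{p+1}$. All of these steps check out, including the index bookkeeping ($F^{p+1}+G^{q+1}=A$ is indeed available exactly when $p+q<n$, and your directness argument in (ii) correctly uses $F^{p_0+1}\cap G^{n-p_0}=0$ and $F^{p_0}\cap G^{n-p_0+1}=0$).

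Two minor repairs. First, the base of your descending induction in Claim B sits at the wrong end: since the step deduces the statement at $p$ from the statement at $p+1$, the anchor must be at $p\gg 0$, where $F^p+G^{n+1-p}=A$ holds because $G^{n+1-p}=A$ by finiteness of $G$; the case $p\ll 0$ with $F^p=A$, which you cite as the base, is the terminal case of the descent, not a valid starting point. Both extremes are trivially true, so nothing breaks, but as written the induction is unanchored. Second, the symmetric purity statements (roles of $F$ and $G$ exchanged), which you need for the spanning identity $F^p=(F^p\cap G^{n-p})+F^{p+1}$ in (ii), do not follow formally from the definition, which is a priori asymmetric in $F$ and $G$; they rest on Zassenhaus' lemma $Gr^q_G Gr^p_F(A)\simeq Gr^p_F Gr^q_G(A)$, which the paper records immediately before this proposition and which you should invoke explicitly. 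Your closing caveat about element-style manipulations is well taken and correctly resolved: every step is a statement in the modular lattice of subobjects, or can be justified via members or a Freyd--Mitchell embedding.
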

 
  Moreover, $F$
and $G$ can be deduced from the
 bigraded object $A^{p,q}$ of $A$ by the above  procedure.
 We recall the previous equivalent definition of Hodge Structure:
 
 \begin{definition}([HS2])
 An $A$-Hodge Structure on $H$ of weight $ n$ is a pair of
 a finite A-module $H_A$ and a  decomposition into a  direct sum on $
 H_{{\C}}= H_A \otimes_A \C$:
\begin{equation*} H_{{\C}} =
{ \oplus_{p+q=n}} H^{p,q}\quad \mbox{such that}\quad {\overline
H^{p,q}} = H^{q,p}
\end{equation*}
\end{definition}
 The relation with the previous definition is given by  $H^{p,q} =
F^p(H_{{\C}})\cap \overline F^q(H_{{\C}})$ for $ p+q=n$.

\subsubsection{Complex Hodge Structures} For some arguments in analysis, we
don't need to know that ${\overline F}$ is the conjugate of $F$.

\begin{definition}  A complex Hodge Structure  of weight $n$ on a complex vector space
  $H$ is given by a pair of $n-$opposite
  filtrations $F$ and ${\overline F}$, hence a
  decomposition into a  direct sum of subspaces:
\begin{equation*}
H = \oplus_{p+q =n} H^{p,q}, \quad \mbox{where}\,\, H^{p,q}= F^p
\cap {\overline F}^q
\end{equation*}
\end{definition}
The two $n-$opposite
  filtrations $F$ and ${\overline F}$ on a complex Hodge Structure of weight $n$
  can be recovered from the decomposition  by the formula:
  \begin{equation*}
F^p = \oplus_{i\geq p} H^{i, n-i} \quad {\overline F}^q =
\oplus_{i\leq n-q}H^{i, n-i}
\end{equation*}
Here we don't assume the existence of conjugation although we
  keep the notation ${\overline F}$. An $A$-Hodge 
  Structure of weight $n$ defines a complex Hodge Structure
   on  $H = H_{\C}$.
   
 To define polarization,  we recall that the conjugate space
${\overline H}$ of a complex vector space $H$, is the same group
$H$ with a different complex structure, such that the identity map
on the group $H$ defines a real linear map $
 \sigma :  H \rightarrow {\overline H}$ and the
product by scalars satisfy the relation $ \forall \lambda \in \C,
v \in H $, $\lambda \times_{{\overline H}}\sigma ( v ) := \sigma
({\overline \lambda}\times_H v )$, then the complex structure on
${\overline H}$ is unique. On the other hand a complex linear 
morphism $f: V \to V'$ defines a
complex linear conjugate morphism ${\overline f}: {\overline V} \to {\overline V'}$
satisfying ${\overline f}(\sigma ( v )) = \sigma (  f(v))$.

\begin{definition}
 A polarization  of a complex Hodge Structure of weight $n$ is a bilinear morphism
 $S:H\otimes{\overline H} \to \C$ such that:
\[   S(x,  \sigma
 (y) ) = (-1)^n\overline { S(y, \sigma(x) )}\, \hbox{ for }\,  x,y \in L 
 \hbox{ and } S( F^p,
\sigma({\overline F}^q)) = 0 \,\, {\rm for } \,\,  p+q > n. \]
 and moreover $S(C(H) u, \sigma(v))$ is a positive definite Hermitian
 form on $H$ where $C(H)$ denotes the Weil action on $H$.
\end{definition}

\begin{example} A complex Hodge Structure  of weight $0$ on a complex vector space
  $H$ is given by a decomposition into a  direct sum of subspaces
$ H = \oplus_{p\in \Z} H^p $, then $H^{i,j} = 0 $ for $i+j \not=
0$, $F^p = \oplus_{i \geq p} H^p $ and $ {\overline F}^q =
\oplus_{i\leq -q} H^i $.

 A polarization is an Hermitian form on $H$ for which the
decomposition is orthogonal and whose restriction to $H^p$ is
definite for $p$ even and negative definite for odd $p$.
\end{example}

\subsubsection{Examples of Mixed Hodge Structure}
1) A Hodge Structure  $H$ of weight $n$, is a Mixed Hodge Structure 
with weight  filtration:
\begin{equation*}
W_i(H_{{\Q}})=0 \quad \mbox {for} \quad  i<n \qquad \mbox {and}
\quad W_i( H_{\Q}) =
 H_{{\Q}}\quad  \mbox {for}
 \quad i\geq n.
 \end{equation*}
2) Let $(H^i,F_i)$ be a finite family of $A$-Hodge Structures of weight $i\in
{\Z}$; then $H =\oplus_{i} H^i$ is endowed with the following Mixed Hodge Structure: 
\begin{equation*}
H_A= \oplus_{i} H^i_A, \quad W_n = \oplus_{i\leq n} H^i_A \otimes
\Q,\quad
 F^p=\oplus_{i} F^p_i.
 \end{equation*}
 3) Let $H_\Z = i \Z^n \subset \C^n$, then we consider the isomorphism
   $H_\Z \otimes \C \simeq\C^n$ defined with respect to the
   canonical basis $e_j$ of $\C^n$ by:
   \[H_{\C} \xrightarrow{\sim} \C^n: i e_j \otimes (a_j+ i b_j)
   \mapsto i(a_j+ i b_j) e_j = (-b_j + i a_j)e_j \]
   hence the conjugation $\sigma (i e_j \otimes (a_j+ i b_j)) =
   i e_j \otimes (a_j - i b_j)$ on
$H_\C $, corresponds to  the following conjugation on $\C^n$:
$\sigma (-b_j+ i a_j) e_j = (b_j + i a_j) e_j$.\\
4) Let $ H = (H_{\Z}, F, W)$ be a Mixed Hodge Structure; 
its $m-$twist is a Mixed Hodge Structure 
denoted by  $ H(m)$ and defined by:
\begin{equation*}
  H(m)_{\Z}:=  H_{\Z}\otimes (2i\pi)^m \Z,\,
  W_r H(m):=  (W_{r+2m} H_{\Q}) \otimes (2i\pi)^m \Q, \,
  F_r H(m):=  F^{r+m}H_{\C}.
  \end{equation*}
  
 \subsubsection{Tensor product  and Hom }  Let  $H$  and $H'$ be
two Mixed Hodge Structures.\\
1) We define their Mixed Hodge Structure tensor product $H\otimes H'$,  by applying
the above general rules of filtrations, as follows:\\
i) $ (H \otimes H')_{\Z} = H_{\Z} \otimes H'_{\Z}$ \\
ii) $W^r (H\otimes H')_{{\Q}} := \sum_{ p+p' = r} W^p
H_{\Q}\otimes W^{p'} H'_{\Q}$\\
 iii) $F^r (H\otimes H')_{{\C}} :=
\sum_{ p+p' = r} F^p H_{\C}\otimes F^{p'} H'_{\C}$.\\
2) The Mixed Hodge Structure on  $Hom(H,H')$ is defined   as follows:\\
i) $ Hom(H, H')_{\Z} := Hom_{\Z} ( H_{\Z}, H'_{\Z})$ \\
 ii)  $ W_r Hom(H, H')_{\Q} :=  \{ f:
Hom_{\Q}(H_{\Q},H'_{\Q}): \forall n,  f (W_nH)\subset W_{n+r} H'\} $\\
iii)  $ F^r Hom(H, H')_{\C} :=  \{ f:
   Hom_{\C}(H_{\C},H'_{\C}): \forall n,  f (F^nH)\subset F^{n+r} H'\} $.\\
  In particular  the dual  $H^*$ to $H$
is a Mixed Hodge Structure.

 \subsection{Proof of the canonical decomposition of the weight filtration}
 {\it Injectivity of $\varphi$.} Let  $m=p+q$, then
we have the inclusion
 modulo $W_{m-1}$ of  the image
$\varphi(I^{p,q}) \subset H^{p,q} = (F^p \cap \overline
{F^q})(Gr^W_{m}H)$.
 Let  $v\in I^{p,q}$ such that  $\varphi (v) = 0$,
 then $v \in F^p\cap W_{m-1}$ and the class  $cl(v)$
 in  $ (F^p \cap \overline
{F^q})(Gr^W_{m-1}H) = 0$  since  $p+q > m-1$, hence $cl(v)$ must
vanish; so we deduce that $v \in F^p\cap W_{m-2}$. This is a step
in an inductive argument based on the formula $F^p \oplus
\overline {F^{q-r+1}}\simeq Gr^W_{m-r}H$. We want to prove  $v \in
F^p \cap W_{m-r}$ for all $r
> 0$. We just proved this for $r = 2$. Hence we write
\begin{equation*}
 v\in \overline {F^q}\cap W_{m}+ \sum_{r-1 \geq i
\geq 1}\overline {F^{q-i}}\cap W_{m-i-1}+ \sum_{i > r-1 \geq
1}\overline {F^{q-i}}\cap W_{m-i-1}
 \end{equation*}
 where the right term is in $W_{m-r-1}$, since $W_{m-i-1} \subset
 W_{m-r-1}$ for $i > r-1$, hence
   $v \in F^p\cap \overline {F^{q-r+1}}\cap
W_{m-r}$ modulo $W_{m-r-1}$ since $\overline F$ is decreasing.
 As $(F^p\cap \overline
{F^{q-r+1}})Gr^W_{m-r}H = 0$ for $r > 0$, the class  $cl (v) = 0
\in Gr^W_{m-r}H$, then
  $v \in F^p \cap W_{m-r-1}$.  Finally, as
$W_{m-r} = 0 $ for large $r$, we deduce  $v = 0$.\\
 {\it Surjectivity.} Let $\alpha\in H^{p,q}$; there exists
$v_0\in F^p\cap  W_m$ (resp. $u_0 \in  {F^q}\cap  W_m$) such that
$\varphi (v_0) = \alpha = \varphi ({\overline u_0})$, hence $v_0
={\overline u_0}+w_0$ with $w_0\in W_{m-1}$. Applying the formula
$F^p \oplus \overline {F^{q}}\simeq Gr^W_{m-1}$, the class of
$w_0$ decomposes as $cl (w_0) = cl (v') + cl ({\overline u'})$
with $v' \in F^p\cap W_{m-1}$ and $u' \in F^q \cap W_{m-1}$; hence
there exists $w_1 \in W_{m-2}$ such that
  $v_0 ={\overline u_0}+ v' + {\overline u'} + w_1$. Let
$v_1:= v_0 - v'$ and $ u_1 = u_0 + u'$, then
  \[ v_1 = {\overline u_1} + w_1, \quad {\rm where } \,\, u_1 \in F^q
   \cap W_m, F^p \cap W_m, w_1 \in W_{m-2}.\]
 By an increasing inductive argument on $k$, we apply the formula:
$$F^p \oplus \overline {F^{q-k+1}}\simeq Gr^W_{m-k}$$
 to find vectors $v_k, u_k ,w_k $ satisfying:
 \begin{equation*}
\begin{split}
&v_k\in F^p\cap W_m ,\,\, w_k\in W_{m-1-k},\,\, \varphi (v_k) =
\alpha,\,\,  v_k =
{\overline u_k}+ w_k\\
 &u_k\in F^q\cap
W_m + F^{q-1}\cap W_{m-2}+F^{q-2}\cap W_{n-3}+...+F^{q+1-k}\cap
W_{m-k}
\end{split}
\end{equation*}
  then we decompose the class of
  $w_k$ in $Gr^W_{m-k-1}H $  in the inductive step as above.
  For large  $ k$,  $W_{m-1-k}= 0$ and we
  represent $\alpha$ in $I^{p,q}$. \\
  Moreover $W_n=W_{n-1}\oplus (\oplus_{ p+q=n} I^{p,q})$,
  hence by induction we decompose $W_n$ as direct sum of
  $I^{p,q}$ for $p+q \geq n$. \\
  Next we suppose, by induction, the formula for $F^p$ satisfied
  for all $ v \in W_{n-1}\cap F^p$. The image of an element
  $v\in F^p\cap W_n$ in $Gr^W_n H$ decomposes into Hodge components
  of type $(i,n-i)$ with $i\geq p$ since $v\in F^p\cap W_n$.
  Hence the decomposition of $v$ may be writen as $v = v_1 + v_2$
  with $v_1 \in \oplus_{i< p}I^{i,n-i}$ and
  $v_2 \in \oplus_{i\geq p}I^{i,n-i}$ with $v_1\in W_{n-1}$
  since its image vanish in $Gr^W_n H$. Hence, the formula for $F^p$
  follows by the inductive step.
  
\subsubsection{Proof of the abelianness of the category of MHS and strictness}
The definition of Mixed Hodge Structure has a surprising strong property, since any
morphism of Mixed Hodge Structures is necessarily strict for each filtration $W$ and
$F$. In  consequence, the category is abelian.

\begin{lemma} The kernel (resp. cokernel) of a morphism $f$ of Mixed Hodge Structures:
 $H\rightarrow H'$ is a Mixed Hodge Structure  $K$ with underlying module $K_A$ equal
 to the kernel (resp. cokernel) of
 $f_A: H_A \rightarrow {H'}_A$; moreover $ K_A \otimes {\Q}$ and $K_A \otimes {\C}$
are endowed with induced  filtrations  (resp. quotient
filtrations) by $W$  on $H_{ A \otimes \Q}$ (resp. ${H'}_{A
\otimes \Q}$) and $F$ on $H_{{\C}}$ (resp. ${H'}_{{\C}}$).
\end{lemma}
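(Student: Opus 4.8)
The plan is to show that the canonical decomposition $I^{p,q}$ of Proposition behaves well under morphisms, and then to read off the kernel and cokernel statements directly. The underlying module-level statement is immediate: since $f_A$ is a morphism of $A$-modules, $K_A := \ke f_A$ and $\coke f_A$ are $A$-modules of finite type, and one equips $K_A \otimes \Q$ with the induced (resp. quotient) filtration $W$ and $K_A \otimes \C$ with the induced (resp. quotient) filtration $F$, as recalled in the subsection on induced filtrations. The entire content of the lemma is therefore to verify that these induced data constitute a \emph{Mixed Hodge Structure}, i.e. that for each $n$ the triple $Gr^W_n K$ is an $A \otimes \Q$-Hodge Structure of weight $n$; equivalently, that $Gr^W_n F$ induces an $n$-opposite filtration together with its conjugate on $Gr^W_n K_\C$.

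First I would observe the key reduction: by the previous Proposition, every Mixed Hodge Structure admits the canonical bigrading $H_\C = \oplus_{p,q} I^{p,q}$ with $W_n = \oplus_{p+q \leq n} I^{p,q}$ and $F^p = \oplus_{p' \geq p} I^{p',q'}$, and Remark (iii) records the crucial fact that a morphism of Mixed Hodge Structures is automatically \emph{compatible with this decomposition}, namely $f_\C(I^{p,q}) \subseteq I'^{p,q}$. This compatibility is what makes everything work. I would prove it by noting that $f_\C$ is compatible with $W$, $F$, and hence $\overline F$, and that $I^{p,q}$ is built entirely out of intersections and sums of the spaces $F^p \cap W_k$ and $\overline{F^q} \cap W_k$; applying $f_\C$ carries each such building block into the corresponding block for $H'$, so $f_\C(I^{p,q}) \subseteq I'^{p,q}$.

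Granting this, the bigrading passes to kernel and cokernel: setting $(K_\C)^{p,q} := I^{p,q} \cap \ke f_\C$ on the kernel side, compatibility forces $\ke f_\C = \oplus_{p,q} \big( I^{p,q} \cap \ke f_\C \big)$, because a vector in the kernel decomposes into its $I^{p,q}$-components and each component must itself be killed by $f_\C$ (its image is the $I'^{p,q}$-component of $f_\C$ of the vector). One then checks that the induced filtrations $W$ and $F$ on $\ke f_\C$ are precisely $\oplus_{p+q\leq n}(K_\C)^{p,q}$ and $\oplus_{p'\geq p}(K_\C)^{p',q'}$ respectively, so that the canonical decomposition of $K$ is this induced bigrading; this exhibits $Gr^W_n K_\C$ with its filtration $F$ as a direct sum of weight-$n$ Hodge pieces, which is exactly the Hodge Structure condition. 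The cokernel case is dual: the bigrading descends to the quotient $\coke f_\C = H'_\C / \im f_\C$ because $\im f_\C = \oplus_{p,q} f_\C(I^{p,q})$ is a bigraded subspace, and the quotient filtrations $W$, $F$ agree with the bigrading inherited on the quotient.

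\emph{The main obstacle} will be establishing the decomposition $f_\C(I^{p,q}) \subseteq I'^{p,q}$ rigorously, since the defining expression for $I^{p,q}$ is an iterated alternating sum-and-intersection of conjugate filtration steps, and one must argue that compatibility with $W$, $F$, $\overline F$ separately suffices to control this composite expression; the delicate point is that $I^{p,q}$ is genuinely a sum over the lower conjugate terms $\overline{F^{q-i}}\cap W_{p+q-i-1}$, so one cannot simply invoke that $f_\C$ preserves a single subspace but must track the whole telescoping sum. Once compatibility with the bigrading is secured, the rest is a routine verification that induced and quotient filtrations are compatible with the direct-sum structure, together with the defining property that a direct sum of the bigraded pieces over $p+q=n$ is a Hodge Structure of weight $n$.
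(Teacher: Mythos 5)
Your proposal is correct and takes essentially the same route as the paper: Deligne's proof likewise rests on the single key fact that a morphism of Mixed Hodge Structures is compatible with the canonical decomposition $\oplus_{p,q} I^{p,q}$, deduces that the kernel with its induced filtrations has $Gr^W$ decomposing into intersections with the Hodge components of $Gr^W H$, and settles the cokernel case by duality. Your explicit verification that $f_{\C}(I^{p,q})\subseteq I'^{p,q}$, by pushing $f_{\C}$ through the sums and intersections of the building blocks $F^p\cap W_k$ and $\overline{F^{q-i}}\cap W_k$ in the defining formula, simply supplies the proof of the one-sentence assertion the paper leaves implicit.
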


\begin{proof}
A morphism compatible with
 the filtrations is necessarily compatible with the canonical
  decomposition of the Mixed Hodge Structure 
 into $\oplus I^{p,q}$.
  It is enough to check the statement on $K_{\C}$, hence we drop the
  index by $\C$. We consider on $K = Ker (f)$ the induced filtrations from
$H$. The morphism $Gr^WK\rightarrow Gr^W H$ is injective, since it
is injective on the corresponding terms $I^{p,q}$; moreover, the
filtration $F$ (resp. $\overline F$) of $K$ induces on  $Gr^W K$
the inverse image of the filtration $F$ (resp. $\overline F$) on
$Gr^W H$:
\begin{equation*}
Gr^W K = \oplus_{p,q} (Gr^WK)\cap H^{p,q}(Gr^WH)\,\,\mbox{and}\,\,
H^{p,q}(Gr^WK) = (Gr^WK)\cap H^{p,q}(Gr^WH)
\end{equation*}
 Hence the filtrations  $W, F $ on $K$  define a Mixed Hodge Structure on
  $K$ which is a kernel of $f$ in the category of Mixed Hodge Structures.
The statement on the cokernel follows by duality. \end{proof}

 We still need to prove that for   a morphism  $f$ of Mixed Hodge Structures, the canonical
morphism $Coim (f) \to Im (f)$ is an isomorphism of Mixed Hodge Structures. Since by
the above lemma $Coim (f)$ and $Im (f)$ are endowed
with natural Mixed Hodge Structure, the result follows from the statement:\\
{\it A morphism of Mixed Hodge Structures which induces an isomorphism on the
lattices, is an isomorphism of Mixed Hodge Structures.}

 \begin{corollary} i) Each morphism $f: H\rightarrow H'$ is strictly compatible with the
filtrations $W$ on ${H}_{A \otimes \Q}$ and ${H'}_{A \otimes \Q}$
as well the filtrations $F$ on $H_{{\C}}$ and ${H'}_{\C}$. It
induces morphisms  $ Gr^W_n(f) : Gr^W_n(H_{A \otimes
\Q})\rightarrow Gr^W_n({H'}_{A \otimes \Q})$ compatible with the
${A \otimes \Q}$-Hodge Structures, and morphisms $ Gr^p_F(f) :
Gr^p_F(H_{{\C}})\rightarrow Gr^p_F({H'}_{{\C}})$ strictly
compatible with the induced  filtrations by  $W_{{\C}}$.\\
ii)  The functor $Gr^W_n$ from the category of Mixed Hodge Structures to the category
$A\otimes {\Q}$-Hodge Structures of weight $n$ is exact and the functor $Gr^p_F$
is  also exact.
\end{corollary}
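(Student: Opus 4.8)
The plan is to reduce every assertion to the bigraded linear algebra furnished by the canonical decomposition $H_{\C}=\oplus_{p,q} I^{p,q}$ supplied by the preceding Proposition and the kernel--cokernel Lemma. The one fact I would isolate first is that any morphism $f\colon H\to H'$ of Mixed Hodge Structures respects the bigrading, i.e. $f(I^{p,q})\subset I^{\prime\,p,q}$. This is immediate because $I^{p,q}$ is built solely out of $W$, $F$ and $\overline F$ (the last being preserved since compatibility with $F$ over $A$ forces compatibility with $\overline F$), and $f$ preserves all three; this is exactly the compatibility recorded in Remark iii) and reproved at the start of the kernel--cokernel Lemma. Once this is in hand, the descriptions $W_n(H_{\C})=\oplus_{p+q\le n} I^{p,q}$ and $F^p(H_{\C})=\oplus_{p'\ge p} I^{p',q'}$ turn both filtrations into mere coordinate filtrations for the bigrading, and $f$ becomes, through the decomposition, a bigraded map.

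For strictness with respect to $W$, I would take $b\in (\operatorname{Im}f)\cap W_n H'$, write $b=f(a)$, and decompose $a=\sum_{p,q}a^{p,q}$ with $a^{p,q}\in I^{p,q}$. Then $f(a)=\sum_{p,q} f(a^{p,q})$ with $f(a^{p,q})\in I^{\prime\,p,q}$, and because the sum $\oplus I^{\prime\,p,q}$ is direct while $b\in\oplus_{p+q\le n}I^{\prime\,p,q}$, every component with $p+q>n$ must vanish. Hence $b=f\big(\sum_{p+q\le n}a^{p,q}\big)$ with the argument lying in $W_n H$, giving $(\operatorname{Im}f)\cap W_n H'\subset f(W_nH)$; the reverse inclusion is automatic. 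The identical argument with the index condition $p'\ge p$ yields strictness for $F$ (and, symmetrically, for $\overline F$).

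For the remaining clauses of part i), I note that $Gr^W_n$ identifies with $\oplus_{p+q=n} I^{p,q}\xrightarrow{\sim}\oplus_{p+q=n}H^{p,q}$ by the Proposition, so $Gr^W_n(f)$ carries $H^{p,q}$ into $H^{\prime\,p,q}$ and is a morphism of weight-$n$ Hodge Structures. For $Gr^p_F(f)$ I would use $Gr^p_F H_{\C}\cong\oplus_{q}I^{p,q}$, on which the induced weight filtration is $\oplus_{q\,:\,p+q\le n}I^{p,q}$; since $f$ respects each $I^{p,q}$, the same direct-sum argument shows $Gr^p_F(f)$ is strict for $W_{\C}$. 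Part ii) then follows formally: the strictness of $f$, fed into the strictness criterion of \ref{strict} (its clause ii) applied to a short exact sequence of Mixed Hodge Structures, gives $H(Gr^W_n(S))\simeq Gr^W_n(H(S))$ and likewise for $Gr^p_F$, which is precisely the exactness of these functors; alternatively one reads exactness directly off the fact that an exact sequence of bigrading-preserving maps restricts to an exact sequence on each summand $I^{p,q}$.

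I expect the only genuinely delicate point to be the two-filtration assertion that $Gr^p_F(f)$ is strict for the \emph{induced} weight filtration, since in general, as flagged in the Zassenhaus remark, iterated graded constructions for two filtrations need not behave symmetrically. The reason it causes no trouble here is that the canonical decomposition diagonalizes $W$ and $F$ simultaneously against the same index set $\{(p,q)\}$: the induced filtration on $Gr^p_F$ is therefore unambiguous and the bigraded argument applies verbatim.
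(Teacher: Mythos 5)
Your proof is correct and is essentially the paper's own argument: the paper flags compatibility of morphisms with the canonical decomposition $\oplus_{p,q} I^{p,q}$ as ``the main ingredient'' for strictness (Remark iii and the start of the kernel--cokernel lemma), and your bigraded direct-sum computation --- including the observation that $W$ and $F$ are simultaneously diagonalized against the same index set, which is what neutralizes the Zassenhaus-type worry for the induced $W$ on $Gr^p_F$ --- is exactly the intended fleshing-out. The only detail left implicit is the one-line descent of strictness for $W$ from $H_{\C}$ to $H_{A\otimes\Q}$: since $f$ and $W$ are defined over $A\otimes\Q$, images and intersections of rational subspaces commute with $\otimes_{\Q}\C$, so strictness over $\C$ gives strictness on the rational level.
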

\begin{remark} The above result shows that any  exact sequence of
Mixed Hodge Structures  gives rise to various exact sequences which, in the case of
Mixed Hodge Structures on cohomology of algebraic varieties that we are going to
construct, have in general interesting geometrical interpretation,
since we deduce from any long exact sequence of Mixed Hodge Structures:
$$H'^i \to H^i \to H''^i \to H'^{i+1}$$ 
various exact sequences:
$$ Gr^W_nH'^i \to Gr^W_nH^i \to Gr^W_nH''^i \to Gr^W_nH'^{i+1} $$
for $\Q$ (resp. $\C$) coefficients, and: \\
$$Gr_F^nH'^i \to Gr_F^nH^i \to Gr_F^nH''^i \to Gr_F^nH'^{i+1}$$
$$Gr_F^m Gr^W_nH'^i \to Gr_F^m Gr^W_nH^i \to Gr_F^m
Gr^W_nH''^i \to Gr_F^m Gr^W_nH'^{i+1}.$$
  \end{remark}
  
\subsubsection{Hodge numbers} Let $H$ be a Mixed Hodge Structure and set: 
$$ H^{pq}=
Gr^p_F Gr^q_{\overline F} Gr^W_{p+q}H_{\C} = (Gr^W_{p+q}
H_{{\C}})^{p,q}.$$ 
The Hodge numbers of $H$ are the integers
$h^{pq} = \dim_{{\C}}H^{pq}$, that
 is the Hodge numbers $h^{pq}$ of the Hodge Structure $Gr^W_{p+q}H$.

 \subsubsection{Opposite filtrations} Most of the proofs on the
algebraic
 structure of Mixed Hodge Structure may be carried for three filtrations in an
 abelian category defined as follows \cite{HII}
  \begin {definition} [Opposite filtrations] Three
finite filtrations $W$ (increasing), $F$ and $G$ on an object $A$
of ${\A}$ are opposite if
$$Gr^p_F Gr^q_{G} Gr^W_n(A) = 0\quad {\mbox for} \,\, p+q\not=n.$$
 \end {definition}
  This condition is symmetric  in $F$ and $G$.
It means that  $F$ and $G$ induce on $W_n(A)/W_{n-1}(A)$ two
$n$-opposite filtrations, then $Gr^W_n(A)$ is bigraded
\begin{equation*} W_n(A)/W_{n-1}(A)= \oplus_{p+q=n}
A^{p,q}\quad  {\mbox where}\,\, A^{p,q} = Gr^p_F Gr^q_G
Gr^W_{p+q}(A)
\end{equation*}
\begin{example}
i) A bigraded object $A = \oplus A^{p,q}$ of finite bigrading has
the following three opposite filtrations
$$ W_n= \oplus_{p+q\leq n} A^{p,q}\quad  ,F^p=
\oplus_{p'\geq p} A^{p'q'}\quad  ,G^q= \oplus_{q'\geq q}
A^{p'q'}$$
 ii) In the definition of an $A$-Mixed Hodge Structure, the filtration $W_{\C}$ on $H_{\C}$
 deduced from $W$ by saclar extension, the filtration  F and its
  complex conjugate , form a
system $(W_{{\C}},F,\overline F)$ of three opposite filtrations.
 \end{example}
 
 \begin{theorem}[Deligne]  Let $\A$ be an abelian  category and
  ${\A}'$ the category of objects of ${\A}$
   endowed with three opposite filtrations  $W$ (increasing),
   $F$ and $G$. The  morphisms of ${\A}'$
  are morphisms in ${\A}$ compatible with the three  filtrations.\\
i) ${\A}'$ is an abelian category.\\
 ii) The kernel (resp. cokernel)
of a morphism $f: A\rightarrow B$ in ${\A}'$ is the kernel (resp.
 cokernel ) of $f$ in ${\A}$, endowed with the three induced filtrations
from $A$ (resp. quotient of the filtrations on $B$). \\
iii) Any morphism $f : A\rightarrow B$ in ${\A}'$ is  strictly
compatible with the filtrations $W,F$ and $G$; the morphism $Gr^W
(f)$ is compatible with the bigradings of $Gr^W(A)$ and $Gr^W(B)$;
the morphisms
 $Gr_F(f)$ and $Gr_{G}(f)$ are strictly compatible with the
 induced  filtration by $W$.\\
 iv) The forget  the filtrations functors, as well
  $Gr^W, Gr_F, Gr_{G}$, $Gr^W Gr_F \simeq Gr_F Gr^W$,
  $Gr_F Gr_{G} Gr^W$ and $Gr_{G} Gr^W \simeq
   Gr^W Gr_{G} $ of ${\A}'$ in ${\A}$ are exact.
\end{theorem}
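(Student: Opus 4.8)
The plan is to deduce all four assertions from a single structural result---the existence of a \emph{functorial} bigraded decomposition of $A$ that simultaneously splits $W$, $F$ and $G$---after which (i)--(iv) are essentially bookkeeping. First I would establish the abstract form of the canonical decomposition proved in \S3.3. For an object $A$ of $\A$ carrying three opposite filtrations, set
\[
I^{p,q}=(F^p\cap W_{p+q})\cap\bigl(G^q\cap W_{p+q}+G^{q-1}\cap W_{p+q-2}+G^{q-2}\cap W_{p+q-3}+\cdots\bigr).
\]
The inductive argument given in \S3.3 uses only that $F$ and $G$ induce $n$-opposite filtrations on each $Gr^W_n(A)$, equivalently $F^p\oplus G^q\simeq Gr^W_n(A)$ whenever $p+q=n+1$; it never invokes complex conjugation, so it applies verbatim with $G$ in place of $\overline F$. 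It shows that the projection $W_{p+q}\to Gr^W_{p+q}(A)$ carries $I^{p,q}$ isomorphically onto $A^{p,q}=Gr^p_F Gr^q_G Gr^W_{p+q}(A)$, and that
\[
W_n=\oplus_{p+q\leq n}I^{p,q},\qquad F^p=\oplus_{p'\geq p}I^{p',q'},\qquad G^q=\oplus_{q'\geq q}I^{p',q'}.
\]

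Next I would record the functoriality of this decomposition: any morphism $f:A\to B$ of $\A'$ satisfies $f(I^{p,q}_A)\subset I^{p,q}_B$. This is immediate from the displayed formula, since $f$ maps each $F^a\cap W_b$ and each $G^a\cap W_b$ of $A$ into the corresponding sub-object of $B$, and $f$ preserves intersections and sums of sub-objects. Assertion (ii) follows at once. Giving $K=Ker f$ the induced filtrations, the inclusion $Gr^W K\hookrightarrow Gr^W A$ identifies $Gr^W K$ with the sub-bigraded object $\oplus_{p,q}(Gr^W K\cap A^{p,q})$, so $F$ and $G$ induce on $Gr^W K$ exactly their inverse-image filtrations; hence $W$, $F$, $G$ are opposite on $K$ and $K$ is a kernel of $f$ in $\A'$. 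The case of the cokernel is dual, working with the quotient filtrations on $B$.

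Strictness (iii) is then formal. Because $f$ is diagonal for the two decompositions, $f(F^p(A))=\oplus_{p'\geq p}f(I^{p',q'}_A)=f(A)\cap F^p(B)$, and similarly for $W$ and $G$; so $f$ is strict for all three filtrations, $Gr^W(f)$ respects the bigradings $A^{p,q}\to B^{p,q}$, and $Gr_F(f)$, $Gr_G(f)$ are strict for the induced $W$. For abelianness (i) it remains to see that the canonical arrow $u\colon Coim(f)\to Im(f)$, an isomorphism in $\A$, is an isomorphism in $\A'$; but $Coim(f)$ and $Im(f)$ lie in $\A'$ by (ii), so $u$ is strict by (iii), and being bijective on underlying objects strictness forces $u(F^p Coim(f))=F^p Im(f)$ (likewise for $W$ and $G$), whence $u^{-1}$ again lies in $\A'$. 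Finally (iv): the forgetful functor is exact because kernels and cokernels are computed on underlying objects, and since every morphism of $\A'$ is strict, the strictness criterion recalled in \S3.1 yields $H(Gr(S))\simeq Gr(H(S))$ for $Gr$ equal to any of $Gr^W$, $Gr_F$, $Gr_G$, $Gr^W Gr_F\simeq Gr_F Gr^W$, $Gr_F Gr_G Gr^W$, $Gr_G Gr^W\simeq Gr^W Gr_G$ (the commutations being instances of Zassenhaus' lemma); hence all these graded functors are exact.

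The genuine difficulty is concentrated entirely in the first step, the decomposition $A=\oplus I^{p,q}$. Its surjectivity half---lifting a class of $A^{p,q}$ to an element of $I^{p,q}$ by successively correcting it modulo $W_{m-k-1}$---is the delicate inductive core of Deligne's argument, whereas every subsequent step is purely formal. The main thing I would therefore verify with care is that no stage of the \S3.3 induction quietly uses that $\overline F$ is the conjugate of $F$, so that the proof is legitimately a statement about three sub-object filtrations in an arbitrary abelian category $\A$.
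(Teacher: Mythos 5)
Your overall route---transplant the $I^{p,q}$ decomposition of \S 3.3 to the abstract setting, with $G$ in place of $\overline F$, and deduce (i)--(iv) formally---is exactly Deligne's, but the structural statement you rest everything on is stronger than what that induction proves, and the extra part is false. The decomposition $A=\oplus I^{p,q}$ splits $W$ and $F$ \emph{only}: the paper's proposition asserts $W_n=\oplus_{p+q\leq n}I^{p,q}$ and $F^p=\oplus_{p'\geq p}I^{p',q'}$ and says nothing about the third filtration, and its remark that in general $I^{p,q}\neq\overline{I^{q,p}}$ is precisely the warning that no single such splitting can be symmetric in $F$ and $G$. Your added identity $G^q=\oplus_{q'\geq q}I^{p',q'}$ fails already for mixed Hodge structures: take $H_{\Q}=\Q e_1\oplus\Q e_2$, $W_0=W_1=\Q e_1$, $W_2=H_{\Q}$, $F^0=H_{\C}$, $F^1=\C\,(e_2+\alpha e_1)$, $F^2=0$, with $\alpha\notin\R$. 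This is a MHS ($Gr^W_0$ of type $(0,0)$, $Gr^W_2$ of type $(1,1)$), and here $I^{0,0}=\C e_1$, while $I^{1,1}=(F^1\cap W_2)\cap(\overline F^1\cap W_2+\overline F^0\cap W_0+\cdots)=F^1\cap(\C(e_2+\bar\alpha e_1)+\C e_1)=\C\,(e_2+\alpha e_1)$, whereas $G^1=\overline F^1=\C\,(e_2+\bar\alpha e_1)\neq I^{1,1}$. So the step ``$f(G^q(A))=f(A)\cap G^q(B)$ by diagonality'' has no basis: strictness for $G$ in (iii), the claim in your (ii) that $G$ induces on $Gr^W K$ the inverse image of its filtration on $Gr^W A$, and the exactness of $Gr_G$ in (iv) are all unproved as written.

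The missing idea is to exploit the $F\leftrightarrow G$ symmetry of the \emph{hypothesis} (the paper notes immediately after the definition that oppositeness is symmetric in $F$ and $G$) rather than a nonexistent symmetry of the decomposition: run the \S 3.3 induction a second time with the roles of $F$ and $G$ exchanged, producing $I'^{p,q}=(G^q\cap W_{p+q})\cap(F^p\cap W_{p+q}+F^{p-1}\cap W_{p+q-2}+\cdots)$, which splits $W$ and $G$. Any morphism of $\A'$ preserves both decompositions, so strictness for $F$ follows from $I$, for $G$ from $I'$, and for $W$ from either; with both decompositions in hand your formal deductions of (i), (ii) and (iv) do go through (in (ii) you again need both, one for each of $F$ and $G$ on $Gr^W K$). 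In the MHS case this second decomposition is invisible because there $G=\overline F$ and one simply conjugates the $F$-statements---exactly the conjugation you correctly resolved never to use, yet your proof invoked it covertly at the symmetric-splitting claim. (A minor further point: your element-style manipulations in an arbitrary $\A$ should be justified, e.g.\ by restricting to a small full abelian subcategory and embedding it in a module category, or by rewriting the induction in terms of sub-objects as Deligne does.)
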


\subsection{Complex Mixed Hodge Structures} Although the cohomology of algebraic varieties
carries a Mixed Hodge Structure defined over $\Z$, we may need to work in analysis
without such structure over $\Z$.
 \begin{definition}  A complex Mixed Hodge Structure of weight $n$ on a complex vector space
  $H$ is given by an increasing  filtration $W$ and two
   decreasing filtrations $F$ and $G$ such that
$( Gr^W_k H, F,  G)$,  with the induced filtrations, is a complex HS
of weight $n+k$.
\end{definition}
For $n = 0$, we say a complex Mixed Hodge Structure.  
The definition of complex Hodge Structure
of weight $n$ is obtained  in the particular case when $W_n = H$
and $W_{n-1} = 0$.

 \subsubsection{ Variation of complex Mixed Hodge Structures}
 The structure which appears in deformation theory
  on the cohomology of the fibers of a
  morphism of algebraic varieties leads one to introduce the concept
  of variation of Mixed Hodge Structure.
 \begin{definition} i) A variation (VHS) of complex Hodge Structures on a complex
 manifold $X$ of weight $n$
 is  given by a data
$( \HH, F,  {\overline F})$ where $\HH$ is a complex local system,
$F$ (resp.
   ${\overline F}$) is a
   decreasing filtration varying holomorphically (resp. anti-holomorphically)
    by sub-bundles of the vector bundle
   $\OO_X\otimes_{\C}\HH$ (resp. $\OO_{\overline X}\otimes_{\C}\HH$ on the
   conjugate variety ${\overline X}$ with anti-holomorphic structural sheaf)
   such that for each point $x \in X$, data
$(\HH(x), F(x),  {\overline F}(x))$ form  a Hodge Structure  of weight $n$.
Moreover, the connection $\nabla$ defined by the local system
satisfies Griffiths tranversality: for  tangent vectors $v$
holomorphic and ${\overline u}$ anti-holomorphic
\begin{equation*}
(\nabla_{v} F^p) \subset F^{p-1}, \quad (\nabla_{{\overline u}}
{\overline F}^p) \subset {\overline F}^{p-1}
\end{equation*}
ii)  A variation (VMHS) of complex Mixed Hodge Structures
  of weight $n$ on $X$
 is  given by the following data
\begin{equation*}
( \HH, W,  F,  {\overline F})
\end{equation*}
where $\HH$ is a complex local system, $W$
   an increasing  filtration by sub-local systems (resp.
   ${\overline F}$) is a
   decreasing filtration varying holomorphically (resp. anti-holomorphically)
    satisfying Griffiths tranversality
\begin{equation*}
(\nabla_{v} F^p) \subset F^{p-1}, \quad (\nabla_{{\overline u}}
{\overline F}^p) \subset {\overline F}^{p-1}
\end{equation*}
such that $( Gr^W_k \HH, F,  {\overline F})$, with the induced
filtrations, is a complex VHS  of weight $n+k$.
\end{definition}
For $n = 0$ we just  say a complex Variation of Mixed Hodge Structures.
 Let $\overline {\HH}$ be
the conjugate local system of $\LL$. A linear morphism $S:
\HH\otimes_{\C}\overline {\HH} \to \C_X$ defines  a polarization
of a VHS if it defines a polarization  at each point $x\in X$.
 A complex Mixed Hodge Structure of weight
$n$ is graded polarisable if $(Gr^W_k \HH, F,{\overline F})$ is a
polarized Variation Hodge Structure.

\section{Hypercohomology and spectral sequence of a Filtered Complex}

The abstract definition of Mixed Hodge Structures is  intended to
describe global topological and geometrical  properties of
algebraic varieties and will be established by constructing
special
 complexes of sheaves $K$ endowed with two filtrations $ W$ and $ F$.
 As in the
definition of Mixed Hodge Structures, the filtration $W$ must be
defined on the rational level, while $F$ exists only on the
complex level.

The topological techniques used to construct $W$ on the rational
level are different from the geometrical techniques represented by de Rham complex
used to
construct the filtration $F$ on the complex level.

 Comparison morphisms between the rational  and
complex levels must be added in order  to obtain a satisfactory
functorial theory of Mixed Hodge Structures with respect to
algebraic morphisms. This functoriality on the level of complexes
cannot be obtained in the actual abelian category of complexes of
sheaves but in a modified category called the derived category
\cite{V}, \cite{V2}, \cite{Iver}, that we describe in this
section. 

In this section we are going to introduce the proper terminology. 

In the next section we use  this
 language in order to state rigorously the hypotheses
 on the weight
filtration $W$ and Hodge filtration  $F$ on $K$ needed to induce a
Mixed Hodge Structure on the $i$th-cohomology $( H^i(K), W, F) $
of the complex $K$.

The proof of the existence of the Mixed Hodge Structure is based
on the use of the spectral sequence of the filtered complex $(K,
W)$. In this section we recall what a spectral sequence is and
discuss its behavior with respect to morphisms of filtered
complexes.

The filtration $F$ induces different filtrations on the spectral
sequence in various ways. A detailed study in the next section
will show that these filtrations coincide under adequate
hypotheses and define a Hodge Structure on the terms of the
spectral sequence.

\subsubsection{ Spectral sequence defined  by  a filtered complex $(K,F)$
in an abelian category} We consider  decreasing filtrations. A
change of the sign of the indices transforms an increasing
filtration into a decreasing one, hence any result has a meaning
for both filtrations.

\begin{definition} Let $K$ be a complex of objects of an abelian
category  $ \A$, with a  decreasing filtration  by subcomplexes
$F$. It induces
 a filtration $F$ on the cohomology $H^*(K)$, defined by:
\begin{equation*}
F^iH^j(K) = Im (H^j(F^iK) \rightarrow H^j(K)), \quad \forall i, j
\in \Z.
\end{equation*}
\end{definition}

\vskip.1in
 The spectral sequence defined  by  the filtered complex $(K,F)$
 is a method to compute the graded object $Gr^*_FH^*(K)$.
  It consists  of indexed  objects of $\A$ endowed with differentials
  (see \ref{SS}
 for explicit definitions):
\begin{enumerate}
\item  terms $E_r^{pq}$ for $r > 0, p, q \in \Z$,
 \item differentials
  $ d_r: E_r^{pq}\rightarrow E_r^{p+r,q-r+1} $ such that $\,d_r \circ
  d_r = 0$,
\item  isomorphisms:
 $$E_{r+1}^{pq} \simeq H(E_r^{p-r,q+r-1}
\xrightarrow{d_r} E_r^{pq} \xrightarrow{d_r} E_r^{p+r,q-r+1})$$
 \end{enumerate}
of the $(p,q)$-term of index $r+1$ with
the corresponding cohomology of the sequence with index $r$. To
avoid ambiguity we may write ${_FE_r^{pq}}$ or $E_r^{pq}(K,F)$.
The first term is defined as:
 \[ E_1^{pq} = H^{p+q}(Gr_F^p(K)).\]
 {\it The aim of the spectral sequence} is to compute the terms:
\begin{equation*}
 E_{\infty}^{pq} := Gr_F^p(H^{p+q}(K))
\end{equation*}
 The spectral sequence is said to
degenerate if:
\[ \forall \, p,q, \,\,  \exists \, \,r_0 (p, q) \,\, \hbox{ such that }
\,\, \forall r \geq r_0, \quad E_r^{pq} \simeq E_\infty^{pq}:=
Gr^F_pH^{p+q}(K). \]
 It degenerates at rank $r$ if the
 differentials  $d_i $ of $E_i^{pq}$ vanish for $i \geq r$.\\
 In Deligne-Hodge theory, the spectral sequences of the next section degenerates
 at rank less than $2$.  Most known
 applications are in the case of degenerate spectral sequences
 \cite{g-h}, for example we assume the filtration biregular, that
 is finite in each degree of $K$.  It is often convenient to
 locate the terms on the coordinates $(p,q)$ in some region
 of the plan $\R^2$; it degenerates for example when for $r$ increasing, the
 differential $d_r$ has source or target outside the region of non vanishing
 terms.

 \medskip
 Formulas for the terms  $r > 1$ are  mentioned later in this
section, but we
 will work hard to introduce sufficient conditions to imply specifically
 in our  case
  that $d_r = 0$ for $r > 1$, hence the terms $   E_r^{pq} $ are identical for all
 $r > 1$ and isomorphic to the
 cohomology of the sequence $ E_1^{p-1,q} \xrightarrow{d_1} E_1^{p,q}
 \xrightarrow{d_1} E_1^{p+1,q}$.

\medskip
\n {\it Morphisms of spectral sequences.} A morphism of filtered
complexes  $f:(K,F)\rightarrow  (K',F')$ compatible with the
filtration induces a morphism of the corresponding spectral
sequences.

\medskip
\n {\it  Filtered resolutions}. In presence of two filtrations by
subcomplexes $F$ and $W$ on a  complex  $K$ of
  objects of an abelian  category
${\A}$,  the filtration $F$  induces by restriction a new
filtration $F$ on the terms $W^iK$, which also induces a quotient
filtration $F$ on $Gr^i_WK$. We define in this way the graded
complexes $Gr_FK$, $Gr_WK$ and $Gr_FGr_WK$.

\begin{definition} i) A filtration $F$
on a complex $K$
is called biregular if it is finite in each degree of $K$.\\
ii)  A morphism $f: X \xrightarrow{\approx} Y$ of complexes of
objects of $\A$ is a quasi-isomorphism denoted by $\approx$  if
 the induced morphisms on cohomology $H^*(f): H^*(X) \xrightarrow {\sim}
 H^*(Y)$ are isomorphisms for all degrees.\\
 iii) A morphism $f:(K,F)\xrightarrow {\approx} (K',F)$ of
complexes with biregular filtrations is a filtered
quasi-isomorphism if it is compatible with the filtrations and
induces a quasi-isomorphism on the graded object $Gr^*_F(f):
 Gr^*_F(K)\xrightarrow {\approx}  Gr^*_F(K') $.\\
iv) A morphism $f:(K,F,W)\xrightarrow {\approx}(K',F,W)$ of
complexes
 with  biregular filtrations  $F$  and $W$  is a bi-filtered quasi-isomorphism
   if $Gr^*_F
Gr_W^*(f)$ is a quasi-isomorphism.
\end{definition}
In the case iii) we call $(K',F)$ a filtered resolution of $(K,F)$
and in iv) we say a bi-filtered resolution, while in i) it is just
a resolution.

\begin{prop}
   Let
$f : (K,F) \rightarrow (K',F')$ be a filtered morphism with
biregular filtrations, then the following assertions are
equivalent:\\
i)  $f$ is a filtered quasi-isomorphism.\\
ii)  $E_1^{pq}(f) : E_1^{pq}(K,F) \rightarrow  E_1^{pq}(K',F')$ is
an isomorphism for all $p,q$.\\
iii) $E_r^{pq}(f) : E_r^{pq}(K,F) \rightarrow E_r^{pq}(K',F')$ is
an isomorphism for all $r \geq 1$ and all $p,q$.
\end{prop}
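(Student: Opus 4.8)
The plan is to prove the equivalences by establishing (i) $\Leftrightarrow$ (ii) directly from the definitions and then (ii) $\Leftrightarrow$ (iii) by an induction on $r$; since (iii) contains (ii) as its $r=1$ instance, the only substantive implication in the second pair is (ii) $\Rightarrow$ (iii).

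First I would record the equivalence (i) $\Leftrightarrow$ (ii), which is a matter of unwinding definitions. By the formula for the first term, $E_1^{pq}(K,F) = H^{p+q}(Gr_F^p K)$, and the morphism $E_1^{pq}(f)$ is precisely $H^{p+q}(Gr_F^p(f))$. Fixing $p$ and letting $q$ range over $\Z$, the integer $n = p+q$ ranges over all of $\Z$; hence the assertion that $E_1^{pq}(f)$ is an isomorphism for all $p,q$ is exactly the assertion that $H^n(Gr_F^p(f))$ is an isomorphism for all $n$ and all $p$, i.e. that $Gr_F^p(f)$ is a quasi-isomorphism for every $p$. This is by definition the statement that $f$ is a filtered quasi-isomorphism, so (i) $\Leftrightarrow$ (ii).

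Next, the implication (ii) $\Rightarrow$ (iii) proceeds by induction on $r$. Since $f$ is compatible with the filtrations, it induces for every $r$ morphisms $E_r^{pq}(f)$ commuting with the differentials $d_r$; thus for each fixed $r$ the family $(E_r^{pq}(f))_{p,q}$ is a morphism of the complexes $(E_r^{\bullet\bullet}, d_r)$. The elementary fact I would invoke is that a morphism of complexes in an abelian category that is an isomorphism in each degree induces an isomorphism on cohomology. Applying this through the identification $E_{r+1}^{pq} \simeq H(E_r^{p-r,q+r-1} \xrightarrow{d_r} E_r^{pq} \xrightarrow{d_r} E_r^{p+r,q-r+1})$, the inductive hypothesis that $E_r^{pq}(f)$ is an isomorphism for all $p,q$ yields that $E_{r+1}^{pq}(f)$ is an isomorphism for all $p,q$. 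Starting the induction at $r=1$ from hypothesis (ii), and using (iii) $\Rightarrow$ (ii) trivially (the case $r=1$), closes the loop and gives all three equivalences.

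I do not expect a genuine obstacle: the content of the proposition is formal once the spectral sequence is constructed functorially. The only role of biregularity is to guarantee that the filtrations on the complexes and on cohomology are finite in each degree, so that the spectral sequence and its graded pieces are well-defined and the comparison between $Gr_F$ of the cohomology and the $E_\infty$-terms presents no convergence difficulty. The one point deserving care is checking that the map induced on the subquotient $E_{r+1}$ really coincides with the map induced by $f$ on the cohomology of $(E_r, d_r)$ — that is, that the identifications are natural in the filtered complex; this follows from the functoriality of the entire spectral sequence construction with respect to filtered morphisms, as noted just before the statement.
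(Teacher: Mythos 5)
Your proposal is correct and follows essentially the same route as the paper: the equivalence of (i) and (ii) by unwinding the identification $E_1^{pq}(K,F) = H^{p+q}(Gr_F^p K)$, and (ii) $\Rightarrow$ (iii) by induction on $r$ using the compatibility of $E_{r}^{pq}(f)$ with $d_{r}$ together with the isomorphism $E_{r+1}^{pq} \simeq H(E_r^{p-r,q+r-1} \xrightarrow{d_r} E_r^{pq} \xrightarrow{d_r} E_r^{p+r,q-r+1})$. Your additional remarks on naturality of the identifications and the role of biregularity fill in details the paper leaves implicit, but introduce no new method.
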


By definition of the terms $E_1^{pq}$, (ii) is equivalent to (i).
We deduce iii) from ii) by induction. If we  suppose the
isomorphism in iii) satisfied for $r \leq r_0$, the isomorphism
for $r_0 + 1$ follows since $E_{r_0}^{pq}(f)$ is compatible with
$d_{r_0}$.

\begin{prop} (Prop. 1.3.2 \cite{HII})\label{delignestrict} Let $K$ be a complex with a biregular
filtration $F$. The following conditions are equivalent: \\
i)  The spectral sequence defined by $F$ degenerates at rank $1$
($E_1 = E_{\infty}$)\\
ii) The differentials $d: K^i \rightarrow K^{i+1}$ are strictly
compatible with the filtrations.
\end{prop}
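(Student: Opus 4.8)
The plan is to route both conditions through a single intermediate one, call it $(\ast)$: for every $p$ and $n$ the inclusion $F^{p+1}K\hookrightarrow F^pK$ induces a monomorphism $H^n(F^{p+1}K)\to H^n(F^pK)$. Because the filtration is biregular, $F^pK=K$ for $p\ll 0$ and $F^pK=0$ for $p\gg 0$, so by composing inclusions $(\ast)$ is equivalent to the injectivity of every map $H^n(F^pK)\to H^n(K)$; concretely it says that each long exact cohomology sequence attached to $0\to F^{p+1}K\to F^pK\to Gr^p_FK\to 0$ breaks into short exact sequences. I would first record the purely formal equivalence of (ii) with $(\ast)$. If $d$ is strict and $z\in F^pK^n$ is a cocycle with $z=dw$ for some $w\in K^{n-1}$, then $z\in dK^{n-1}\cap F^pK^n=d(F^pK^{n-1})$, so $z$ is already a coboundary inside $F^pK$ and $H^n(F^pK)\to H^n(K)$ is injective. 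Conversely, given $(\ast)$, any $y\in dK^{n-1}\cap F^pK^n$ is a cocycle of $F^pK$ dying in $H^n(K)$, hence a coboundary in $F^pK$; this is exactly the nontrivial inclusion $dK^{n-1}\cap F^pK^n\subseteq d(F^pK^{n-1})$ defining strictness.

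The implication (ii)$\Rightarrow$(i) is then cleanest straight from the standard description of the differentials on cocycle representatives (cf. \ref{SS}). A class in $E_r^{pq}$ is represented by an $x\in F^pK^{p+q}$ with $dx\in F^{p+r}K^{p+q+1}$, and $d_r$ carries it to the class of $dx$ in $E_r^{p+r,q-r+1}$. Strictness gives $dx\in F^{p+r}K^{p+q+1}\cap dK^{p+q}=d(F^{p+r}K^{p+q})$, so $dx=dw$ with $w\in F^{p+r}K^{p+q}$; but the coboundary of an element of $F^{p+r}$ represents $0$ at the spot $(p+r,q-r+1)$. Hence $d_r=0$ for every $r\geq 1$ and $E_1=E_\infty$.

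The substantial direction is (i)$\Rightarrow$(ii), which I would prove as the strictness inclusion $dK^{n-1}\cap F^pK^n\subseteq d(F^pK^{n-1})$ by descending induction on $p$, the base case $F^pK^n=0$ (for $p\gg 0$) being vacuous by biregularity. Given $y=dx\in F^pK^n$, its image $\bar y\in Gr^p_FK^n$ is a cocycle defining a class in $E_1^{p,n-p}$; degeneration identifies $E_1^{p,n-p}$ with $Gr^p_FH^n(K)$, under which $[\bar y]$ corresponds to the image of the cohomology class of $y$. Since $y$ is a coboundary this class vanishes, so $[\bar y]=0$ in $H^n(Gr^p_FK)$, which means $y=dv+y'$ with $v\in F^pK^{n-1}$ and $y'\in F^{p+1}K^n$. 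The remainder $y'=d(x-v)$ is again a coboundary lying one filtration step higher, so the inductive hypothesis yields $y'=dv'$ with $v'\in F^{p+1}K^{n-1}\subseteq F^pK^{n-1}$; hence $y=dv+dv'=d(v+v')\in d(F^pK^{n-1})$.

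I expect the main obstacle to be precisely the step in the preceding paragraph that extracts $[\bar y]=0$ from degeneration: the equality $E_1=E_\infty$ is an abstract isomorphism of subquotients, and one must verify that the class in $E_1^{p,n-p}$ of a genuine (permanent) cocycle is faithfully detected by its associated graded cohomology class in $E_\infty^{p,n-p}$. Equivalently, one has to upgrade the vanishing of the abstract differentials $d_r$ to the vanishing of the connecting maps of the long exact sequences above — note that $d_1$ is only the composite of such a connecting map with a projection onto $Gr^{p+1}_F$ — which is why the argument must be organized as an induction on the filtration level using explicit cocycle representatives, rather than deduced formally from $d_1=0$ alone.
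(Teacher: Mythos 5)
The paper states this proposition without proof, referring to Deligne \cite{HII} (Prop.~1.3.2), so there is no in-text argument to compare with; judged on its own, your proof is correct and amounts to the classical argument. Your direction (ii)$\Rightarrow$(i) is exactly right: with the representatives of \ref{SS}, strictness converts $dx \in F^{p+r}K^{p+q+1} \cap d K^{p+q}$ into $dx \in d(F^{p+r}K^{p+q}) \subset d(F^{p+1}K^{p+q}) \subset B_r^{p+r,q-r+1}$, so every $d_r$, $r \geq 1$, vanishes, and biregularity then gives $E_1 = E_\infty$. For (i)$\Rightarrow$(ii), the compatibility you flag as the main obstacle is genuine but is settled by the definition of degeneration used in the paper: $E_1 = E_\infty$ means $d_r = 0$ for all $r \geq 1$, so each identification $E_{r+1}^{pq} \simeq \ker d_r / \mathrm{Im}\, d_r = E_r^{pq}$ is canonical, and for a permanent cocycle $y \in Z_\infty^{pq} \subset Z_r^{pq}$ the class of $y$ in $E_{r+1}$ is the image of its class in $E_r$; since biregularity gives $Z_r^{pq} = Z_\infty^{pq}$ and $B_r^{pq} = B_\infty^{pq}$ for large $r$, the $E_1$-class of $y$ vanishes if and only if its $E_\infty$-class does, which is precisely what your descending induction on $p$ invokes (the base case being vacuous by biregularity). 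Two small remarks: the preliminary equivalence of (ii) with your condition $(\ast)$, injectivity of $H^n(F^pK) \to H^n(K)$, is correct but is not actually used in the final chain of implications — it serves as motivation, and $(\ast)$ is also what Theorem \ref{del}(iii) yields in the two-filtration setting; and your closing caution that $d_1 = 0$ alone is insufficient — since $d_1$ is only the connecting morphism $H^n(Gr^p_F K) \to H^{n+1}(F^{p+1}K)$ composed with the projection to $H^{n+1}(Gr^{p+1}_F K)$ — is exactly the right point, and explains why the full hypothesis $E_1 = E_\infty$, not merely $d_1 = 0$, must feed the induction.
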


\subsubsection{Diagrams of morphisms} In the next section, we call
morphism of filtered complexes  $f : (K_1,F_1)\rightarrow (K_2,
F_2) $, a class
   of  diagrams of
morphisms:
$$(K_1, F_1)  \xleftarrow  {g_1 \approx} (K'_1, F'_1)
\xrightarrow {f_1} (K_2, F_2) \quad , \quad (K_1, F_1)
\xrightarrow {f_2} (K'_2, , F'_2) \xleftarrow {g_2 \approx} (K_2,
, F_2)$$  where $g_1$ and $g_2$ are filtered
 quasi-isomorphisms. We think of $f$ as $f_1 \circ g_1^{-1}$ (resp.
 $f =  g_2^{-1} \circ f_2$) by adding the inverse of a quasi-isomorphism
 to the morphisms in  the category. This formal construction is explained
 below.

 It follows that a diagram of filtered  morphisms of complexes
 induces a morphism of the corresponding spectral sequences, but
 the reciprocal statement is not true: the existence of a
 quasi-isomorphism is stronger than the existence of an
 isomorphism of spectral sequences.

The idea to add  the inverse of quasi-isomorphisms to the
morphisms of complexes is due to Grothendieck   (inspired by the
construction of the inverse of a multiplicative system in a ring).
It is an advantage to use such
 diagrams on the complex level, rather than to work with morphisms
 on the cohomology level,  as the theory of derived category
 will show.
 The derived filtered categories described below have been  used
extensively in the theory of perverse sheaves and developments of
Hodge theory in an essential way.

\subsection{Derived filtered category}
 We will use in the next section the language of {\it filtered and
  bi-filtered derived categories},
 to define Mixed Hodge Structure with compatibility between the various data and
 functoriality compatible with the  filtrations.

\subsubsection{ Background on derived category}
The idea of Grothendieck is to construct a new category where the
class of all quasi-isomorphisms become isomorphisms in the new
category. We follow here the construction given by Verdier \cite{V} in two
steps.  In the first step we construct the homotopy category where
the morphisms are classes defined up to homotopy (\cite{V}, see
\cite{Iver},\cite{BBD}), and in the second step, a process of inverting all
quasi-isomorphisms called  localization is carried by a calculus
of fractions similar to the process of inverting a
 multiplicative system in a ring, although in this case the system
 of quasi-isomorphisms is not commutative so that a set of diagram
 relations must be carefully added in the definition of
 a multiplicative system (\cite{Iver},\cite{BBD}).

\subsubsection{The homotopy category $K(\A)$}
Let $\A$ be an abelian category and let $C(\A)$ (resp. $C^+(\A)$,
$C^-(\A)$, $C^b(\A)$) denotes the abelian category of complexes of
objects in $\A$ (resp. complexes $X^*$ satisfying $X^j = 0$,  for
$j<<0$ and for  $j >> 0$, i.e., for $j$ outside a finite interval). \\
An homotopy between two morphisms of complexes $f,g: X^*\to Y^*$
is a family of morphisms $h^j:X^j\to Y^{j-1}$ in $\A$ satisfying
$f^j-g^j=d_Y^{j-1}\circ h^j+h^{j+1}\circ d_X^j$. Homotopy defines
 an equivalence relation on
the additive group $Hom_{C(\A)} (X^*, Y^*)$.

\begin{definition} The category $K(\A)$ has the same object as the
category of complexes $C(\A)$, while the group of morphisms
$Hom_{K(\A)}(X^*, Y^*)$  is the group of morphisms of the two
complexes of $\A$ modulo the homotopy  equivalence relation.
\end{definition}

\subsubsection{ Injective resolutions}. An abelian  category $\A$ is said to
have enough injectives if each object $A \in \A$ is embedded in an
injective object of $\A$. In this case we can give another
description of $Hom_{K(\A)}(X^*, Y^*)$.

 Any complex $ X$ of $\A$
bounded below is quasi-isomorphic to a complex of injective
objects $I^*(X)$ called its injective resolution \cite{Iver}.

\begin{prop}  Given a morphism  $f: A_1 \to A_2 $ in $ C^+(\A)$
   and two injective resolutions $A_i \xrightarrow{\approx}
   I^*(A_i)$ of $A_i$,
   there exists an extension  of $f$ as a morphism of resolutions
   $I^*(f): I^*(A_1) \to I^*(A_2) $; moreover two extensions  of $f$ are
   homotopic. In particular $ Hom_{K^+(\A)}(A_1,A_2) \simeq
 Hom_{K^+(\A)}(I^*(A_1), I^*(A_2))$.
\end{prop}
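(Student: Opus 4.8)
The plan is to deduce both the existence of $I^*(f)$ and its uniqueness up to homotopy from a single comparison principle for complexes of injectives, and then to read off the final identification. The heart of the matter is the following acyclicity lemma: if $Z^*$ is an acyclic complex bounded below and $I^*$ is a bounded-below complex of injective objects of $\A$, then every morphism $Z^*\to I^*$ is homotopic to $0$, so that $Hom_{K^+(\A)}(Z^*,I^*)=0$. I would prove this by constructing a contracting homotopy $h^j:Z^j\to I^{j-1}$ by induction on increasing degree; since both complexes vanish in sufficiently negative degrees the induction begins trivially, and at each stage the exactness of $Z^*$ lets one factor the relevant map through a subobject on which, by injectivity of $I^{j-1}$, the required extension exists. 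This step is where both the injectivity hypothesis and the boundedness below are genuinely used, and the diagram chase producing $h^j$ — in particular checking that the map to be extended is well defined on the appropriate subobject before invoking injectivity — is the main technical obstacle of the whole argument.

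Granting this, I would upgrade it to the comparison lemma: for every quasi-isomorphism $s:X\to Y$ of bounded-below complexes and every bounded-below complex $I^*$ of injectives, precomposition with $s$ gives a bijection
\[ s^*:Hom_{K^+(\A)}(Y,I^*)\xrightarrow{\sim}Hom_{K^+(\A)}(X,I^*). \]
Here I would form the mapping cone $C(s)$, which is bounded below and acyclic precisely because $s$ is a quasi-isomorphism, and apply $Hom_{K^+(\A)}(-,I^*)$ to the exact triangle $X\to Y\to C(s)\to X[1]$. The acyclicity lemma, applied to $C(s)$ and to its shift, annihilates the two neighbouring terms of the resulting long exact sequence, forcing $s^*$ to be an isomorphism.

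With these tools the proposition is immediate. Write $s_i:A_i\to I^*(A_i)$ for the given resolutions. The composite $s_2\circ f:A_1\to I^*(A_2)$ is a morphism into an injective complex, so the comparison lemma for the quasi-isomorphism $s_1$ yields a bijection
\[ s_1^*:Hom_{K^+(\A)}(I^*(A_1),I^*(A_2))\xrightarrow{\sim}Hom_{K^+(\A)}(A_1,I^*(A_2)). \]
I would then define $I^*(f)$ to be a representative of the preimage of the class of $s_2\circ f$; by construction $I^*(f)\circ s_1$ is homotopic to $s_2\circ f$, which is exactly the assertion that $I^*(f)$ extends $f$, while the injectivity contained in the same bijection shows that any two such extensions are homotopic.

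Finally, the assignment $f\mapsto [I^*(f)]$ is thereby well defined on homotopy classes and functorial, and the comparison lemma identifies $Hom_{K^+(\A)}(I^*(A_1),I^*(A_2))$ with $Hom_{K^+(\A)}(A_1,I^*(A_2))$. The latter is precisely the morphism group that becomes $Hom(A_1,A_2)$ after all quasi-isomorphisms are inverted; thus the displayed identification is to be read in the localized derived category $D^+(\A)$, and it records the fundamental fact — exploited throughout the next section — that morphisms in the derived category are computed by injective resolutions of the target.
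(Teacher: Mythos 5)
Your proof is correct and is essentially the paper's own approach: the paper proves nothing here but cites Lemma 4.4 of Hartshorne's \emph{Residues and Duality}, whose standard argument is exactly your two steps — the null-homotopy of any map from a bounded-below acyclic complex into a bounded-below complex of injectives, followed by the cone/long-exact-sequence upgrade to the comparison bijection $s^*$. Your closing caveat is also well taken: the displayed isomorphism $Hom_{K^+(\A)}(A_1,A_2) \simeq Hom_{K^+(\A)}(I^*(A_1), I^*(A_2))$ is literally correct only with the left-hand side read in $D^+(\A)$, as the paper's own later remark on $Hom_{D^+(\A)}$ confirms.
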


See lemma $4.4$ in \cite{H1}. Hence,  an
injective resolution of an object in $\A$ becomes unique up to an
isomorphism and functorial in the category
$K^+(\A)$.\\
The category $K^+(\A)$ is only additive, even if  $\A$ is abelian.
Although we keep the same objects as in $C^+(\A)$, the
transformation on $Hom$ is an important  change in the category
since an homotopy equivalence between two complexes (i.e $f:  X
\to Y$ and $g:
  Y \to X$ such that $g\circ f$ (resp.$f\circ g$) is homotopic to the identity)
   becomes an isomorphism.
   
\begin{remark} The $i$th cohomology  of a sheaf $\FF$  on a
topological space $V$, is defined up to an isomorphism as the
cohomology of the global section of an injective resolution $H^i
(I^*(\FF)(V))$. The complex of global sections $I^*(\FF)(V)$ is
defined up to an homotopy in the  category of groups $C^+(\Z)$,
while it is defined up to an
 isomorphism in the homotopy  category of groups $K^+(\Z)$,
  and called the
higher direct image of $\FF$ by the global section functor
$\Gamma$ or the image $ R \Gamma (V, \FF)$ by the derived functor.
\end{remark}

\subsubsection{The derived category $D(\A)$}
 The resolutions  of a given
complex are quasi-isomorphic. If we want to consider all
resolutions as isomorphic, we must invert quasi-isomorphisms of
complexes. We construct  now a new category
 $D(\A)$ with the same objects as $K(\A)$  but with a different
  additive group of morphisms of two
objects $Hom_{D(\A)}(X,Y)$ that we describe.\\
 Let $I_Y$ denotes
the category whose objects are quasi-isomorphisms $s': Y
\xrightarrow{\approx} Y'$ in $K(\A)$. Let $s'': Y
\xrightarrow{\approx} Y''$ be another object. A  morphism
$Y'\xrightarrow{h} Y'' $ satisfying $h \circ s' = s''$
 defines  a morphism $h: s' \to s''$.
 The key property is that  we can take limits in $K(\A)$:
\[ Hom_{D(\A)} (X,Y) := \lim_{\quad \longrightarrow \,I_Y}
 Hom_{K(\A)}(X, Y') \]
By definition, $X \xrightarrow{f} Y'\xleftarrow{s'}Y $ is equal to
$X \xrightarrow{g} Y'' \xleftarrow{s''}Y $ in the inductive limit
if and only if there exists a diagram $Y' \xrightarrow{u} Y'''
\xleftarrow{v}Y''$ such that $u \circ s' = v \circ s''$ is a quasi
isomorphism $s''': Y \to Y'''$ and $u \circ f = v \circ g $ is the
same morphism $X \to Y'''$. In this case, an element of the group
at right may be represented by a symbol $ {s'}^{-1}\circ f$ and
this representation is not unique since in the above limit
${s'}^{-1}\circ f = {s''}^{-1}\circ g $.\\
We summarize what we need to know here by few remarks:\\
1) A morphism $f: X \to Y$ in $D(\A)$ is represented by a diagram
of morphisms:
 \[ X \xleftarrow{u \approx} Z \xrightarrow{g} Y \quad
{\rm or} \quad X \xleftarrow{g} Z \xrightarrow{u \approx} Y \]
 where $u$ is a quasi-isomorphism in $ K(\A)$ or by a sequence
  of such diagrams.\\
 2) When there are enough injectives, the $Hom$  of two objects $A_1$, $A_2$
 in
$D^+(\A)$ is defined by their injective resolutions (lemma 4.5,
Prop. 4.7 in \cite{H1}):
 \[ Hom_{D^+(\A)}(A_1,A_2) \simeq
Hom_{D^+(\A)}(I^*(A_1), I^*(A_2))\simeq Hom_{K^+(\A)}(I^*(A_1),
I^*(A_2))\]
 In particular,  all resolutions of
 a complex are isomorphic in the derived category.

 \subsubsection{ Triangles}
 We define the shift   of a complex $(K,d_K)$, denoted by $TK$ or $K[1]$,
  by shifting the degrees:
\begin{equation*}
 (TK)^i = K^{i+1} , \quad d_{TK} = -d_K
 \end{equation*}
 Let $u : K\rightarrow K'$ be a morphism of
$C^+{\A} $, the cone $C(u)$ is the complex $TK \oplus K'$ with the
differential $\left({-d_K \atop u}{0 \atop d_{K'}}\right)$ . The
exact sequence associated  to $C(u)$ is:
\begin{equation*}
0\rightarrow K'\rightarrow C(u)\rightarrow TK\rightarrow 0
\end{equation*}
Let $h$ denotes an homotopy  from a morphism $u: K \to K'$ to
$u'$, we define an isomorphism  $I_h: C(u) \xrightarrow {\sim}
C(u')$ by the matrix $\left ({Id \atop h} {0 \atop Id}\right )$
acting on $TK \oplus K'$, which commute with the injections of $K'$
in
$C(u)$ and $C(u')$, and with the projections on $TK$. \\
Let $h$ and $h'$ be two homotopies of $u$ to $u'$. A second
homotopy of  $h$ to $h'$, that is a family of morphisms $k^{j+2}:
K^{j+2}\rightarrow K^{'j}$ for $j\in {\Z}$, satisfying $h-h'
=d_{K'}\circ k - k\circ d_K$, defines an homotopy of $I_h$
to $I_{h'}$.\\
A distinguished (or exact) triangle   in $K({\A})$  is a sequence
of complexes isomorphic to the image of an exact sequence
associated to a cone in $C ({\A})$. We remark:\\
1) The  cone over the identity morphism of a complex $X$ is
homotopic to zero.\\
 2) Using the construction of the mapping cylinder complex over a
 morphism of complexes $u: X \to Y $, one can transform
 $u$, up to an homotopy equivalence  into an injective morphism of
 complexes \cite{Iver}).\\
 3) The derived category
is a triangulated  category (that is endowed with a class of
distinguished triangles).
 A  distinguished  triangle in $D({\A})\,$
is  a sequence of complexes isomorphic to the image of a
distinguished  triangle in $K({\A})\,$. Long exact sequences of
cohomologies are associated to triangles.

\subsubsection{Derived functor}  Let  $T : \A \rightarrow  \B$ be a functor
 of abelian categories. We denote also by $T: C^+{\A} \to C^+{\B}$
the corresponding functor on complexes, and by  $can: C^+{\A}
\to D^+{\A}$  the canonical functor, then we construct  a derived
functor:
 \[ R T: D^+{\A} \to D^+{\B} \]
satisfying $R T \circ can = can \circ T$ under the following
conditions:\\
1) the functor   $T : \A \rightarrow  \B$ is left exact; \\
2)  the category
 ${\A}$ has enough injective objects.
  
The construction is dual with the following condition: $T$ is
 right exact and there exists enough projective.
 
\n a) Given  a complex $K$ in $D^+({\A})$, we start by
choosing an injective resolution of $K$, that is a
quasi-isomorphism $i: K \stackrel{\approx}{\rightarrow} I(K)$
where the components of $I$ are injectives in each degree
(see \cite{H1} Lemma 4.6 p. 42).\\
b) We define $RT(K) = T(I(K))$. \\
c) A morphism  $f: K \to K'$ gives rise to a morphism
 $ R T(K) \to R T(K')$ functorially, since $f$
 can be extended
  to a morphism $I(f): T(I(K))\to T(I(K'))$, defined on
  the injective resolutions uniquely   up to homotopy.\\
  In particular, for a
different choice of an injective  resolution $J(K)$ of $K$, we
have an isomorphism $T(I(K))\simeq T(J(K))$ in $D^+ (\B)$.

\begin{definition}
i) The  cohomology $H^j(R T (K))$ is called the hypercohomology
$R^jT(K)$ of $T$ at $K$.\\
ii) An object $A \in \A$  is $T$-acyclic if
 $R^jT(A) = 0$ for $j>0$.
\end{definition}

\begin{remark}
i) If $K \xrightarrow {\approx } K'$ is a quasi-isomorphism of
complexes, $TK$ and $TK'$ are  not  quasi-isomorphisc in general,
while $RT(K)$ and $RT(K')$ must be quasi-isomorphic since the image of
an isomorphism in the derived category must
be an isomorphism.\\
ii) It is important to know that we can use acyclic objects to
compute $RT$: for any resolution $A(K)$ of a complex $K$ $:
K\xrightarrow{\approx} A(K)$), by acyclic objects, $TA(K)$ is
isomorphic to the complex $R T(K)$.
\end{remark}

\begin{example}
 1) The hypercohomology of the global section functor $\Gamma$
 in the case of
  sheaves on a topological space, is equal to the cohomology
 defined via  flasque resolutions or any ``acyclic '' resolution.
\smallskip

\n 2) {\it Extension groups.} The group of morphisms of two
complexes $Hom_{D(\A)}(X^*,Y^*)$
 obtained  in the new category $D(\A)$ called  derived category
 has a significant interpretation as an extension group:
 \[Hom_{D(\A)}(X^*,Y^*[n])= Ext^n(X^*, Y^*)\]
In presence of enough injectives, these groups are derived from
the $Hom$ functor. In general  the group $Hom_{D(\A)}(A,B[n])$ of
two objects in $\A$ may be still interpreted  as the Yoneda
$n-$extension group \cite{Iver}.
\end{example}

\subsubsection{Filtered homotopy categories $ K^+F(\A), K^+F_2(\A)$}
For an abelian category ${\A}$, let
 $ F{\A}\,$ (resp. $F_2 {\A}) $ denotes the category of
 filtered objects (resp. bi-filtered) of ${\A}$ with finite
 filtration(s),
 $C^+ F{\A}$ (resp. $ C^+F_2 {\A}$ ) the
category of complexes of $ F{\A}$ (resp. $ F_2{\A}$ )  bounded on
the left ( zero in degrees near  $-\infty$) with morphisms of
complexes respecting the filtration(s).\\
 Two morphisms $u, u': (K,F,W) \to (K',F,W)$ to
are homotopic if there exists an homotopy  from $u$ to $u'$
 compatible with  the filtrations, then it induces an homotopy on
each term $k^{i+1}: F^jK^{i+1}\rightarrow F'^jK^{'i}$ (resp. for
$W$) and in particular  $Gr_F(u-u')$ (resp. $Gr_F Gr_W (u-u')$) is
homotopic to $0$.\\
 The homotopy category  whose objects are bounded below complexes
   of filtered (resp.  bi-filtered) objects
of ${\A}$, and whose morphisms are equivalence classes modulo
homotopy compatible with  the filtration(s) is denoted by $K^+
F{\A}$ (resp. $ K^+F_2{\A}$).

\subsubsection{ Derived filtered  categories $ D^+F(\A), D^+F_2(\A)$}
They are deduced from $K^+F{\A}$ (resp. $ K^+F_2{\A}$) by
inverting the filtered quasi-isomorphisms (resp. bi-filtered
quasi-isomorphisms).
 The objects of $D^+F{\A}$ (resp.$ D^+F_2{\A}$ ) are complexes of
 filtered objects of $\A$
as  of $K^+F{\A}$ (resp. $ K^+F_2{\A}$). Hence, the morphisms are
represented by diagrams with filtered (resp. bi-filtered)
quasi-isomorphisms.

 \subsubsection{Triangles}
 The  complex  $T(K,F,W)$ and the cone $C(u)$ of a morphism
 $u : (K, F, W)\rightarrow (K', F, W)$ are  endowed naturally with
 filtrations $F$ and $W$. The   exact sequence associated  to $C(u)$ is
compatible with the filtrations. A filtered homotopy $h$ of
morphisms $u$ and $u'$ defines a filtered isomorphism of cones
$I_h: C(u) \xrightarrow {\sim} C(u')$.\\
Distinguished (or exact) triangles   are defined similarly in $K^+
F {\A}$ and  $K^+ F_2{\A}$ as well in $ D^+F{\A}$ and $ D^+F_2
{\A}$. Long filtered (resp. bi- filtered) exact sequences of
cohomologies are associated to triangles.

\subsection{Derived functor on a filtered complex} Let  $T: \A \to \B$
 be a left exact functor of abelian
 categories with enough injectives in $\A$. We want to construct a
derived functor $RT: D^+F{\A} \to D^+F{\B}$ (resp. $RT: D^+F_2
{\A} \to D^+F_2 {\B}$). Given a filtered complex with biregular
 filtration(s) we  define first the image of the filtrations
 via acyclic
  filtered resolutions.
 Then, we remark that the image of a filtered quasi-isomorphism
  is a filtered quasi-isomorphism, hence the construction factors by $RT$
 through the derived filtered category.
 
 We need  to introduce the concept of $T$-acyclic filtered
resolutions.

\subsubsection{Image of a filtration by a left exact functor}
 Let   $(A,F)$ be a filtered object in $\A$, with a finite
filtration. Since $T$ is left exact,  a filtration  $TF$ of $TA$
is defined by the sub-objects $TF^p(A)$.

If $Gr_F(A)$ is
$T$-acyclic, then the objects $F^p(A)$ are $T$-acyclic as
successive extensions of $T$-acyclic objects. Hence, the image by
$T$ of the sequence of acyclic objects:
$$ 0\rightarrow
F^{p+1}(A)\rightarrow F^p(A) \rightarrow Gr^p_F(A) \rightarrow 0$$
is exact; then:

\begin{lemma} If $Gr_FA$ is a $T-$acyclic object, we have
$ Gr_{TF}TA \simeq T Gr_FA$.
\end{lemma}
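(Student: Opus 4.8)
The plan is to reduce the statement to the exactness, after applying $T$, of the short exact sequences attached to the filtration, which has essentially been set up in the discussion preceding the lemma. Recall that the graded object of the filtered object $(TA, TF)$ is by definition
\begin{equation*}
Gr_{TF}^p(TA) = TF^p(A)/TF^{p+1}(A),
\end{equation*}
where $TF^p(A) = T(F^p(A))$ is regarded as a sub-object of $TA$ through the monomorphism $T(F^p A) \hookrightarrow TA$ induced by left exactness of $T$ on the inclusion $F^p A \hookrightarrow A$. Thus it suffices to produce, for each $p$, a natural isomorphism $TF^p(A)/TF^{p+1}(A) \simeq T(Gr_F^p A)$, and then to sum over the finitely many relevant indices $p$, using that the additive functor $T$ commutes with finite direct sums.

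First I would invoke the hypothesis: since $Gr_F A$ is $T$-acyclic, each $F^p(A)$ is $T$-acyclic as well, being a finite iterated extension of the $T$-acyclic objects $Gr_F^{p'} A$ for $p' \geq p$; in particular $R^1 T(F^{p+1}A) = 0$. Next I would apply the derived functor $RT$ to the short exact sequence
\begin{equation*}
0 \to F^{p+1}(A) \to F^p(A) \to Gr_F^p(A) \to 0,
\end{equation*}
and read off the beginning of the associated long exact sequence of cohomology:
\begin{equation*}
0 \to TF^{p+1}(A) \to TF^p(A) \to T(Gr_F^p A) \to R^1 T(F^{p+1}A).
\end{equation*}
The injectivity on the left is left exactness of $T$, while the vanishing $R^1 T(F^{p+1}A) = 0$ established above forces the map $TF^p(A) \to T(Gr_F^p A)$ to be surjective. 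Hence this sequence is in fact short exact.

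Finally, short exactness identifies the image of $TF^{p+1}(A)$ in $TF^p(A)$ with the kernel of the surjection $TF^p(A) \to T(Gr_F^p A)$, and therefore induces the desired isomorphism $Gr_{TF}^p(TA) = TF^p(A)/TF^{p+1}(A) \xrightarrow{\sim} T(Gr_F^p A)$. Summing over $p$ gives $Gr_{TF}TA \simeq T\,Gr_F A$. I expect the only point requiring genuine care to be the propagation of $T$-acyclicity from the graded pieces to the filtration steps $F^p(A)$, so that $R^1 T(F^{p+1}A)$ really vanishes and the surjectivity is secured; once that is in hand, the remainder is the formal identification of a graded piece with a cokernel.
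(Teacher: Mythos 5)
Your proof is correct and follows essentially the same route as the paper: the key observations — that $T$-acyclicity of $Gr_FA$ propagates to each $F^p(A)$ as a finite iterated extension of acyclic objects, and that the image under $T$ of the short exact sequence $0\to F^{p+1}(A)\to F^p(A)\to Gr^p_F(A)\to 0$ is then exact, identifying $TF^p(A)/TF^{p+1}(A)$ with $T(Gr^p_FA)$ — are exactly the paper's argument, which you have merely made more explicit by writing out the long exact sequence and the vanishing of $R^1T(F^{p+1}A)$.
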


\subsubsection{} Let $A$ be an object with two finite filtrations
$F$ and $W$ such that $Gr_F Gr_WA$ is {\it $T$-acyclic}, then the
objects $Gr_FA $ and $ Gr_WA$ are $T$-acyclic, as well $F^q(A)\cap
W^p(A)$. As a consequence of acyclicity, the sequences: 
$$
0\rightarrow T(F^q\cap W^{p+1})\rightarrow T(F^q\cap
W^p)\rightarrow T((F^q\cap W^p)/(F^q\cap W^{p+1}))\rightarrow 0$$
are exact, and $T(F^q(Gr^p_WA))$ is the image in $T(Gr^p_W(A)$ of
$T(F^q\cap W^p)$. Moreover,  the isomorphism $Gr_{TW}TA \simeq T
Gr_W A$ transforms the filtration $Gr_{TW}(TF)$ on $Gr_{TW}TA$
into the filtration $T(Gr_W(F))$ on  $TGr_W A$.

\subsubsection{  $ R T :
D^+F({\A}) \rightarrow D^+F({\B})$} 

\

Let $F$ be a  biregular
filtration of $K$.

  A filtered   $T$-acyclic  resolution of $K$ is given by a
  filtered  quasi-isomorphism $i \colon (K, F)\rightarrow (K', F') $
  to a   complex with a  biregular filtration such that $Gr^p_F(K^{n})$
   are acyclic for $T$ for all $p$ and $n$. 
  
\begin{lemma}[Filtered derived functor of
 a left exact functor $T:\A \to \B$]
Suppose we are given
 functorially for each filtered
 complex $(K,F)$ a filtered $T$-acyclic resolution   $i: (K,F)\rightarrow
(K',F')$, we define   $  T' : C^+F({\A}) \rightarrow D^+F({\B}) $
  by the formula $ T'(K,F) = (TK', TF')$.
A filtered quasi-isomorphism $f:(K_1,F_1)\rightarrow (K_2,F_2)$ 
induces an isomorphism $ T'(f) :  T(K_1,F_1)\simeq T(K_2,F_2)$ in
$D^+F({\B})$, hence $T'$ factors through a derived functor
 $ R T : D^+F({\A}) \rightarrow D^+F({\B})$ such that
 $RT (K, F) = (TK',TF')$,  and  we have $Gr_{F}RT(K) \simeq R
T(Gr_FK)$.
\end{lemma}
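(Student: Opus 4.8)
The plan is to construct $RT$ in the usual two stages—first descend $T'$ to the filtered homotopy category, then invert the filtered quasi-isomorphisms—using the interchange lemma $Gr_{TF}(TA)\simeq T\,Gr_F(A)$ for $T$-acyclic objects to keep the induced filtration $TF'$ under control at each stage.

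First I would check that $T'$ is a well-defined functor on $C^+F(\A)$ that respects filtered homotopy. Since the $T$-acyclic resolution $i\colon(K,F)\to(K',F')$ is supplied functorially, a filtered morphism $f\colon(K_1,F_1)\to(K_2,F_2)$ extends to $f'\colon(K'_1,F'_1)\to(K'_2,F'_2)$, and $TF'$ is again biregular, so $T'(f)=(Tf',TF')$ is a filtered morphism in $C^+F(\B)$. As $T$ is additive and the resolution is functorial, a filtered homotopy from $f$ to $g$ produces a filtered homotopy from $T'(f)$ to $T'(g)$; hence $T'$ descends to a functor $K^+F(\A)\to K^+F(\B)\to D^+F(\B)$.

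The crucial step is to show that a filtered quasi-isomorphism $f$ is sent to an isomorphism in $D^+F(\B)$. By the criterion characterizing filtered quasi-isomorphisms, it is enough to prove that $Gr^p(T'(f))$ is a quasi-isomorphism in $\B$ for every $p$. Here the interchange lemma enters: because each $Gr^p_{F'_i}(K'^n_i)$ is $T$-acyclic, applying that lemma degreewise gives $Gr^p_{TF'_i}(TK'_i)\simeq T(Gr^p_{F'_i}(K'_i))$, so the graded piece of $(TK'_i,TF'_i)$ is obtained by applying $T$ to the complex $Gr^p_{F'_i}(K'_i)$ of $T$-acyclic objects. Since that complex is a $T$-acyclic resolution of $Gr^p_{F_i}(K_i)$, it computes the ordinary derived functor, yielding
\[
Gr^p_{TF'_i}(TK'_i)\;\simeq\;RT\bigl(Gr^p_{F_i}(K_i)\bigr).
\]
Because $f$ is a filtered quasi-isomorphism, $Gr^p(f)\colon Gr^p_{F_1}(K_1)\to Gr^p_{F_2}(K_2)$ is a quasi-isomorphism, and the ordinary $RT$ on $\B$ carries quasi-isomorphisms to isomorphisms; thus $Gr^p(T'(f))$ is a quasi-isomorphism for all $p$, as needed.

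With this established, $T'$ sends every filtered quasi-isomorphism to an isomorphism, so by the universal property of localization it factors uniquely through $D^+F(\A)$, giving $RT$ with $RT(K,F)=(TK',TF')$; independence of the chosen resolution is then automatic, since two filtered acyclic resolutions of $K$ are linked by filtered quasi-isomorphisms and are therefore canonically identified in the target. The displayed isomorphism obtained above is exactly the final formula $Gr_F\,RT(K)\simeq RT(Gr_F K)$. I expect the main obstacle to be this crucial step—commuting $T$ past $Gr$ through the acyclicity hypothesis and recognizing $T\,Gr_{F'}(K')$ as a representative of $RT(Gr_F K)$—for this is precisely where biregularity and the $T$-acyclicity of the graded pieces are genuinely used.
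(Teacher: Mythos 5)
Your proposal is correct and follows essentially the same route as the paper, which rests on exactly the two ingredients you isolate: the interchange lemma $Gr_{TF}(TA)\simeq T\,Gr_F(A)$ for objects with $T$-acyclic graded pieces (applied degreewise to the resolution $(K',F')$), and the fact that $T$ applied to a quasi-isomorphism between bounded-below complexes of $T$-acyclic objects, here the graded complexes $Gr^p_{F'}(K')$ computing $RT(Gr^p_F K)$, remains a quasi-isomorphism, so that $T'$ inverts filtered quasi-isomorphisms and factors through $D^+F(\A)$. The only superfluous (and slightly under-justified) step is your homotopy-descent paragraph: a functorial resolution need not carry a filtered homotopy between $f$ and $g$ to one between $f'$ and $g'$, but this is harmless since $T'$ is defined on $C^+F(\A)$ and the factorization only requires that filtered quasi-isomorphisms become isomorphisms, filtered homotopy equivalences being among them.
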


 In particular for a different choice $(K'',
F'')$ of $(K',F')$ we have an isomorphism $(T K'', TF'')\simeq (T
K', TF')$ in $D^+F({\B})$ and
$$ R T(Gr_FK)\simeq Gr_{TF'}T(K')\simeq Gr_{TF''}T(K'').$$

\begin{example}
In the particular case of interest, where  ${\A}$ is the category
of sheaves of $A$-modules on a topological space $X$, and where
$T$ is the global section functor $\Gamma$ of $ {\A}$ to the
category of modules over the ring $A$, an example of  filtered
$T$-acyclic resolution of $K$ is the simple complex ${\mathcal
G}^*(K)$, associated to the double complex defined by Godement
resolution (\cite{Iver} Chap.II, \S3.6 p.95 or 
\cite{G} Chap.II, \S 4.3 p.167) ${\mathcal G}^*$ in each degree of $K$,
filtered by ${\mathcal  G} ^*(F^p K)$.

This example will apply to the next result for bi-filtered
complexes $(K, W, F)$ and the resolution  $({\mathcal G}^* K,
{\mathcal G}^*W, {\mathcal G}^* F)$ satisfying
 $$ Gr_{{\mathcal G}^*F} Gr_{{\mathcal G}^*W}({\mathcal G}^* K)
 \simeq {\mathcal  G}^*(Gr_F Gr_W K)$$
 \end{example}
 
 \subsubsection{$R T :
D^+F_2({\A}) \rightarrow D^+F_2({\B})$}

\

Let $F$ and $W$ be two
biregular filtrations of $K$.

 A  bi-filtered  $T$-acyclic resolution  of $K$ is a  bi-filtered quasi-
 isomorphism $i: (K , W, F) \rightarrow (K', W', F')$ of $(K,W, F)$ to a
  bi-filtered complex biregular for each  filtration
 such that  $Gr^p_F Gr^q_W(K^{'n})$ are acyclic for  $T$ for all $p$, $q$ and $n$.

\begin{lemma}
Suppose we are given
 functorially for each bi-filtered
 complex $(K,F,W)$ a bi-filtered $T$-acyclic resolution   $i: (K,F,W)
 \rightarrow
(K',F',W')$, we define   $  T' : C^+F({\A}) \rightarrow
D^+F({\B})$ by the formula $ T'(K,F,W) = (TK', TF', T W')$.\\
A bi-filtered quasi-isomorphism  $f:(K_1,F_1,W_1)\rightarrow
(K_2,F_2,W_2)$ induces an isomorphism $ T'(f) :
T'(K_1,F_1,W_1)\simeq T'(K_2,F_2,W_2)$ in $D^+F_2({\B})$, hence
$T'$ factors through a derived functor
 $ R T : D^+F_2({\A}) \rightarrow D^+F_2({\B})$ such that
 $RT (K,F,W) = (TK',TF',TW')$, and we have
  $Gr_{F}Gr_{W}RT(K) \simeq R T(Gr_FGr_W
 K)$.
\end{lemma}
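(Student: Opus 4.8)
The plan is to imitate the single-filtration lemma proved just above, replacing the graded functor $Gr_F$ everywhere by the bigraded functor $Gr_F Gr_W \simeq Gr_W Gr_F$ and feeding in the bi-filtered acyclicity computation recorded in the subsection on the image of two filtrations by $T$. Concretely, the hypothesis that $Gr^p_F Gr^q_W(K^{'n})$ is $T$-acyclic for all $p,q,n$ yields, functorially in $K'$, a natural isomorphism
\[ Gr_{TF'} Gr_{TW'}(TK') \simeq T\bigl(Gr_{F'} Gr_{W'} K'\bigr), \]
which is the bigraded analogue of the isomorphism $Gr_{TF}TA \simeq T\,Gr_F A$. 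Combined with the fact that the chosen resolution $i$ is a bi-filtered quasi-isomorphism, this isomorphism already delivers the last assertion $Gr_F Gr_W RT(K) \simeq RT(Gr_F Gr_W K)$ once $RT$ has been constructed.

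The substance of the lemma is that $T'$ sends bi-filtered quasi-isomorphisms to isomorphisms of $D^+F_2(\B)$. Given such an $f:(K_1,F_1,W_1)\to(K_2,F_2,W_2)$, functoriality of the resolution supplies a morphism $f':(K'_1,F'_1,W'_1)\to(K'_2,F'_2,W'_2)$ with $i_2\circ f = f'\circ i_1$. Applying $Gr_F Gr_W$ to this square and using that $i_1$, $i_2$ and $f$ are all bi-filtered quasi-isomorphisms, a two-out-of-three argument shows that $Gr_{F'} Gr_{W'}(f')$ is a quasi-isomorphism between bounded-below complexes whose terms are $T$-acyclic.

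The step I expect to be the main obstacle is to pass this quasi-isomorphism through the functor $T$, which in general does not preserve quasi-isomorphisms. The point is that the mapping cone of $Gr_{F'} Gr_{W'}(f')$ is a bounded-below exact complex whose terms, being finite direct sums of the $T$-acyclic objects $Gr^p_{F'} Gr^q_{W'}(K^{'n})$, are again $T$-acyclic; and a bounded-below exact complex of $T$-acyclic objects is carried by $T$ to an exact complex, by splitting it into short exact sequences and chasing the long exact sequences of the $R^j T$. Hence $T\bigl(Gr_{F'} Gr_{W'}(f')\bigr)$ is a quasi-isomorphism, and transporting it along the natural isomorphism above shows that $Gr_{TF'} Gr_{TW'}(Tf')$ is a quasi-isomorphism. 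Thus $T'(f)=Tf'$ is a bi-filtered quasi-isomorphism, that is, an isomorphism in $D^+F_2(\B)$.

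The factorization is then formal. Since $T'$ inverts every bi-filtered quasi-isomorphism, the universal property of the localization defining $D^+F_2(\A)$ produces a unique functor $RT:D^+F_2(\A)\to D^+F_2(\B)$ with $RT\circ can = T'$, so that $RT(K,F,W)=(TK',TF',TW')$. Independence of the choice of acyclic resolution follows as in the single-filtration case: any two bi-filtered $T$-acyclic resolutions are compared by a bi-filtered quasi-isomorphism, which $T'$ has just been shown to send to an isomorphism.
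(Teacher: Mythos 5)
Your proposal is correct and follows exactly the route the paper intends: the paper states this lemma without detailed proof, but its proof is the evident bi-filtered analogue of the preceding single-filtration lemma, resting on the subsection where $Gr_F Gr_W$-acyclicity is shown to give $Gr_{TF}Gr_{TW}(TK')\simeq T(Gr_F Gr_W K')$, followed by factorization through the localization. Your filling-in of the one nontrivial step — that $T$ carries a quasi-isomorphism between bounded-below complexes of $T$-acyclic objects to a quasi-isomorphism, via the acyclic cone split into short exact sequences — is the standard argument the paper takes for granted.
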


 In particular for a different choice $(K'',
F'',W'')$ of $(K',F',W')$ we have an isomorphism $(T K'',
TF'',TW'')\simeq (TK', TF',TW'')$ in $D^+F_2({\B})$ and
$$ R T(Gr_FGr_{W}K)\simeq Gr_{TF'}Gr_{TW'}T(K')\simeq Gr_{TF''}T(K'')$$

\subsection{The spectral sequence defined by a  filtered complex}\label{SS}
A decreasing filtration  $F$ of a complex $K$ by sub-complexes
induces a filtration still denoted $F$ on its cohomology
$H^*(K)$. 

The aim of a spectral sequence is to compute the
associated graded cohomology $Gr_F^* H^*(K)$ of the filtered group
($ H^*(K), F $), out of the cohomology  $H^*(F^iK/F^jK)$ of the
various indices of the filtration. The spectral sequence
$E^{p,q}_r(K,F)$ associated to $F$ (\cite{2 C}, \cite{HII}) leads
for large $r$ and under mild conditions, to such graded cohomology
defined by the filtration. 

A morphism in the derived filtered
category defines a natural morphism of associated spectral
sequences; in particular a quasi-isomorphism defines  an
isomorphism. 

Later we shall study Mixed Hodge Complex where the 
weight spectral sequence
becomes interesting and meaningful since it contains geometrical
information and degenerates at rank $2$. Now, we give the definition of the terms of the 
spectral sequence and some examples.

\n To define the spectral
terms $E_r^{pq}(K,F)$ or ${_FE}_r^{pq}$ or simply $E_r^{pq}$ with
respect to $F$, we put for   $r > 0$ and $p,q \in \Z$:
\begin{equation*}
\begin{split}& Z_r^{pq} = Ker(d: F^p K^{p+q} \rightarrow
K^{p+q+1}/ F^{p+r}K^{p+q+1} )\\
&B_r^{pq} = F^{p+1}K^{p+q} + d(F^{p-r+1} K^{p+q-1})
\end{split}
\end{equation*}
 Such formula still makes sense for $r = \infty$ if we set, for a filtered
  object
$(A,F)$, $F^{- \infty}(A)=A$ and $F^{\infty}(A) = 0$:
\begin{equation*}
\begin{split}& Z_{\infty}^{pq}=Ker(d:F^p K^{p+q} \rightarrow
K^{p+q+1})\\
 &B_{\infty}^{pq} = F^{p+1} K^{p+q}+ d (K^{p+q+1})
  \end{split}
\end{equation*}
 We set by definition:
\[E_r^{pq} = Z_r^{pq}/(B_r^{pq} \cap Z_r^{pq})\quad E_{\infty}^{pq}
= Z_{\infty}^{pq}/B_{\infty}^{pq} \cap Z_{\infty}^{pq}\]
 The
notations  are similar to \cite{HII} but different from \cite{G}.

\begin{remark}
In our case, in order to obtain some arguments by duality, we note
the following equivalent  dual definitions to $Z_r^{pq}$ and
$Z_{\infty}^{pq}$:
\begin{equation*}
\begin{split}& K^{p+q}/B_r^{pq} = coker(d: F^{p-r+1} K^{p+q-1}
\rightarrow K^{p+q}/F^{p+1}(K^{p+q} ))\\
& K^{p+q}/B_{\infty}^{pq} = coker(d:K^{p+q+1}
  \rightarrow K^{p+q}/F^{p+1} K^{p+q})\\
&E_r^{pq} =Im(Z_r^{pq} \rightarrow
 K^{p+q}/B_r^{pq}) = Ker(K^{p+q}/B_r^{pq} \rightarrow
 K^{p+q}/(Z_r^{pq}+B_r^{pq})).
\end{split}
\end{equation*}
\end{remark}
  The term
$Gr_F^p(H^{p+q}(K))$ is said to be the limit of the spectral
sequence.  If the filtration is biregular
 the terms $E_r^{pq}$  compute this limit after a
finite number of steps; for each $p,q$ there exists $r_0$ such
that:
\[ Z_r^{pq} = Z_{\infty}^{pq}, \quad
 B_r^{pq} = B_{\infty}^{pq}, \quad  E_r^{pq} = E_{\infty}^{pq}, \quad
 \forall r > r_0 \]
 Note that in some cases, it is not satisfactory to get only the
 graded cohomology and
this is one motivation to be not happy with spectral sequences and
prefer to keep the complex as in the derived category. \\

\begin{lemma}
For each $r$, there exists a differential $d_r$ on the terms
$E_r^{pq}$ with the property that its cohomology is exactly
$E_{r+1}^{pq}$:
\begin{equation*}
E_{r+1}^{pq} = H(E_r^{p-r,q+r-1} \xrightarrow{d_r} E_r^{pq}
\xrightarrow{d_r} E_r^{p+r,q-r+1}).
\end{equation*}
 where  $d_r$ is induced by the differential $d: Z_r^{pq} \rightarrow
 Z_r^{p+r,q-r+1}$.
 \end{lemma}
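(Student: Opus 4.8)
The plan is to build $d_r$ directly from the complex differential $d$ and then to identify the kernel and the image of the induced map on $E_r$-terms with the subquotients defining $Z_{r+1}$ and $B_{r+1}$. First I would construct $d_r$. If $x \in Z_r^{pq}$, then $x \in F^p K^{p+q}$ and $dx \in F^{p+r}K^{p+q+1}$ by definition; since $(p+r)+(q-r+1)=p+q+1$ and $d(dx)=0$, the element $dx$ lies in $Z_r^{p+r,q-r+1}$, so $d$ restricts to $Z_r^{pq}\to Z_r^{p+r,q-r+1}$. To see it descends to $E_r^{pq}$ I would check it sends $B_r^{pq}\cap Z_r^{pq}$ into $B_r^{p+r,q-r+1}$: writing such an element as $a+db$ with $a\in F^{p+1}K^{p+q}$ and $b\in F^{p-r+1}K^{p+q-1}$, its image is $da\in d(F^{p+1}K^{p+q})\subset B_r^{p+r,q-r+1}$ because $d(db)=0$. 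The resulting $d_r$ satisfies $d_r\circ d_r=0$ since $d\circ d=0$, so its cohomology is defined.

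Next I would compute the kernel. For $x\in Z_r^{pq}$, the class $[x]$ lies in $\ker d_r$ exactly when $dx\in B_r^{p+r,q-r+1}=F^{p+r+1}K^{p+q+1}+d(F^{p+1}K^{p+q})$, say $dx=u+dv$ with $u\in F^{p+r+1}K^{p+q+1}$ and $v\in F^{p+1}K^{p+q}$. Then $d(x-v)=u\in F^{p+r+1}K^{p+q+1}$, so $x-v\in Z_{r+1}^{pq}$, while $v=x-(x-v)\in B_r^{pq}\cap Z_r^{pq}$ gives $[x]=[x-v]$. Hence every class in $\ker d_r$ is represented by an element of $Z_{r+1}^{pq}$, and conversely any $x\in Z_{r+1}^{pq}$ has $dx\in F^{p+r+1}K^{p+q+1}\subset B_r^{p+r,q-r+1}$, so $[x]\in\ker d_r$. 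For the image, $\im d_r$ inside $E_r^{pq}$ is represented by $d(Z_r^{p-r,q+r-1})$, each element $dw$ of which indeed lies in $Z_r^{pq}$.

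Putting these together, $\ker d_r/\im d_r$ is the quotient of $Z_{r+1}^{pq}+(B_r^{pq}\cap Z_r^{pq})$ by $d(Z_r^{p-r,q+r-1})+(B_r^{pq}\cap Z_r^{pq})$, and I would finish by showing that the natural surjection from $Z_{r+1}^{pq}$ onto this quotient has kernel exactly $B_{r+1}^{pq}\cap Z_{r+1}^{pq}$, giving the identification with $E_{r+1}^{pq}=Z_{r+1}^{pq}/(B_{r+1}^{pq}\cap Z_{r+1}^{pq})$. The main obstacle is precisely this last equality. The inclusion $\subseteq$ is easy, since $B_r^{pq}\subset B_{r+1}^{pq}$ and $d(Z_r^{p-r,q+r-1})\subset d(F^{p-r}K^{p+q-1})\subset B_{r+1}^{pq}$. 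For $\supseteq$ I would take $y\in B_{r+1}^{pq}\cap Z_{r+1}^{pq}$ and write $y=a+db$ with $a\in F^{p+1}K^{p+q}$ and $b\in F^{p-r}K^{p+q-1}$; since $y\in F^pK^{p+q}$ forces $db=y-a\in F^pK^{p+q}$, one gets $b\in Z_r^{p-r,q+r-1}$ and $db\in Z_r^{pq}$, whence $a=y-db\in F^{p+1}K^{p+q}\cap Z_r^{pq}\subset B_r^{pq}\cap Z_r^{pq}$. This exhibits $y$ in $d(Z_r^{p-r,q+r-1})+(B_r^{pq}\cap Z_r^{pq})$, completing the verification. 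Throughout, the genuine work is bookkeeping the filtration indices so that each membership $d(F^iK^j)\subset F^iK^{j+1}$ and each shift $(p,q)\mapsto(p+r,q-r+1)$ lands in the intended $Z_\bullet$ or $B_\bullet$; the algebra is a sequence of such checks rather than any single deep step.
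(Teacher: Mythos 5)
Your proof is correct, and every step checks out: the restriction $d(Z_r^{pq})\subset Z_r^{p+r,q-r+1}$, the descent modulo $B_r^{pq}\cap Z_r^{pq}$ (the image of $a+db$ is $da\in d(F^{p+1}K^{p+q})\subset B_r^{p+r,q-r+1}$), the identification of $\ker d_r$ with the image of $Z_{r+1}^{pq}$ via the substitution $x\mapsto x-v$, and the two inclusions establishing $Z_{r+1}^{pq}\cap\bigl(d(Z_r^{p-r,q+r-1})+B_r^{pq}\cap Z_r^{pq}\bigr)=B_{r+1}^{pq}\cap Z_{r+1}^{pq}$, which together give $H(E_r^{pq},d_r)\simeq E_{r+1}^{pq}$. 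However, your route is genuinely different from the paper's. The paper does not verify the lemma by direct bookkeeping with the defining formulas; instead, in the subsection on increasing filtrations it first proves the identification $E_r^{pq}(K,W)=Gr_{-p}^W H^{p+q}(W_{-p+r-1}K/W_{-p-r}K)$, realizing each $E_r$-term as a graded piece of the cohomology of a subquotient complex of $K$, then defines $d_r$ as the composition $\pi\circ\varphi\circ\partial$, where $\partial$ is the connecting morphism of the short exact sequence $0\to W_{-p-r}K/W_{-p-2r}K\to W_{-p+r-1}K/W_{-p-2r}K\to W_{-p+r-1}K/W_{-p-r}K\to 0$, and reads off $H(E_r^{pq},d_r)\xrightarrow{\sim}E_{r+1}^{pq}$ from the inclusion $W_{-p+r-1}K\to W_{-p+r}K$. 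Your argument is the classical Cartan--Eilenberg-style verification: it is self-contained, actually proves the final isomorphism rather than asserting it, and makes transparent that $d_r$ is induced by $d$ on representatives, exactly as the lemma states. The paper's formulation buys structure that it exploits later: the connecting-morphism description is what identifies $d_1$ geometrically (e.g., as the alternating Gysin map in the weight spectral sequence of the logarithmic complex) and behaves well under derived functors in the hypercohomology spectral sequence. One caveat for your write-up: the lemma is stated for complexes in an arbitrary abelian category, and you chase elements; either invoke a Freyd--Mitchell embedding, or observe that each step uses only sums, intersections, images and preimages of subobjects, so it transfers verbatim.
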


  We define $d_r$ later with new notations  for an increasing filtration. \\
   For $r< \infty$, the terms  $E_r$ form a complex of
  objects, graded by the degree $p-r(p+q)$ (or a
 direct sum of complexes with index $p-r(p+q))$.
The  first term may be written as:
\begin{equation*}
E_1^{pq} = H^{p+q}(Gr_F^p(K))
\end{equation*}
so that the differentials $d_1$ are obtained as connecting
morphisms defined by the short exact sequences of complexes
\begin{equation*}
0 \rightarrow Gr_F^{p+1}K \rightarrow F^pK/F^{p+2}K \rightarrow
Gr_F^pK \rightarrow 0.
\end{equation*}
It will be convenient to set  for $r = 0$, $ E_0^{pq} =
Gr_F^p(K^{p+q})$.\\
 The spectral sequence  $E_i^{p,q}(K,F)$ is
said to degenerate at rank $r$ if the differentials $d_i$ are zero
for $i \geq r$ independently of $p,q$, then we have in this case
\begin{equation*}
E_r^{pq} = E_i^{pq} = E_{\infty }^{pq}\quad for\ i\geq r .
\end{equation*}
There is no easy general construction for ($E_r,d_r$) for $r> 0$;
however we will see that in the case of Mixed Hodge Structures of
interest to us the terms with respect to the weight filtration $W$
have  a geometric meaning and degenerate at rank $2$:
${_WE}_2^{pq} = {_WE}_\infty^{pq}$.

  \subsubsection{Equivalent notations for increasing filtrations} For an increasing filtration
$W$ on $K$, the  precedent formulas are transformed by the usual
change of indices to pass from $W$ to a decreasing filtration $F$,
that is $F^i = W_{-i}$.

Let $W$ be an increasing filtration on $K$. We set for all $j$, $n
\leq m$ and $n \leq i \leq m$:
\begin{equation*}
W_i H^j(W_mK/W_nK) = Im(H^j(W_iK/W_nK) \rightarrow H^j(W_mK/W_nK)
\end{equation*}
then we adopt the following  new notations for the terms, for all
$r \geq 1, p$ and $q$:

\begin{lemma} The terms of the spectral sequence for ($K, W$)
are equal to:
\begin{equation*}
E_r^{pq}(K, W)) = Gr_{-p}^W H^{p+q}(W_{-p+r-1}K/W_{-p-r}K).
\end{equation*}
\end{lemma}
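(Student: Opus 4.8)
The plan is to reduce everything to the already-established formulas for a decreasing filtration by performing the standard change of indices $F^i = W_{-i}$, and then to reinterpret the resulting cycle and boundary objects as images of cohomology classes of a quotient complex. Applying $F^i = W_{-i}$ to the definitions of $Z_r^{pq}$ and $B_r^{pq}$ recorded in \ref{SS} gives the explicit descriptions
\begin{equation*}
Z_r^{pq} = \{\, x \in W_{-p}K^{p+q} : dx \in W_{-p-r}K^{p+q+1}\,\}, \quad B_r^{pq} = W_{-p-1}K^{p+q} + d\bigl(W_{-p+r-1}K^{p+q-1}\bigr),
\end{equation*}
together with $E_r^{pq} = Z_r^{pq}/(B_r^{pq}\cap Z_r^{pq})$. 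Set $m = -p+r-1$ and $n = -p-r$, so that $n \leq -p-1 \leq -p \leq m$ since $r \geq 1$, and write $Q = W_mK/W_nK$ for the quotient complex whose $(p+q)$-th cohomology appears on the right-hand side.

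First I would construct a natural surjective map $Z_r^{pq} \to W_{-p}H^{p+q}(Q)$. An element $x \in Z_r^{pq}$ lies in $W_{-p}K^{p+q} \subset W_mK^{p+q}$ and satisfies $dx \in W_{-p-r}K^{p+q+1} = W_nK^{p+q+1}$; hence its image in $Q$ is a cocycle and determines a class $[x] \in H^{p+q}(Q)$. Because $x \in W_{-p}K$, this class already comes from $H^{p+q}(W_{-p}K/W_nK)$, so $[x]$ lies in the subspace $W_{-p}H^{p+q}(Q) = Im\bigl(H^{p+q}(W_{-p}K/W_nK)\to H^{p+q}(Q)\bigr)$ defined in the excerpt. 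Conversely, every class in $W_{-p}H^{p+q}(Q)$ is by definition represented by some $x \in W_{-p}K^{p+q}$ that is a cocycle modulo $W_nK$, i.e. by an element of $Z_r^{pq}$; this gives surjectivity. Composing with the projection $W_{-p}H^{p+q}(Q)\to Gr_{-p}^WH^{p+q}(Q)$ yields a surjection $Z_r^{pq}\to Gr_{-p}^WH^{p+q}(Q)$, and the lemma follows once its kernel is identified with $B_r^{pq}\cap Z_r^{pq}$.

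The kernel computation is the main point. Suppose $x \in B_r^{pq}\cap Z_r^{pq}$, say $x = u + dv$ with $u \in W_{-p-1}K^{p+q}$ and $v \in W_{-p+r-1}K^{p+q-1} = W_mK^{p+q-1}$. Then $[x] = [u]$ in $H^{p+q}(Q)$, and $du = dx \in W_nK^{p+q+1}$ by $d^2=0$ together with $x \in Z_r^{pq}$, so $u$ is a cocycle of $W_{-p-1}K/W_nK$ representing $[x]$; thus $[x] \in W_{-p-1}H^{p+q}(Q)$ and $x$ maps to $0$ in $Gr_{-p}^W$. For the reverse inclusion, suppose $[x] \in W_{-p-1}H^{p+q}(Q)$, so $x - x' = dy + z$ with $x' \in W_{-p-1}K^{p+q}$, $y \in W_mK^{p+q-1}$ and $z \in W_nK^{p+q}$. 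Since $W_n \subset W_{-p-1}$ we may absorb $z$ into $x'$ and write $x = (x'+z) + dy$ with $x'+z \in W_{-p-1}K^{p+q}$ and $dy \in d(W_mK^{p+q-1})$, so $x \in B_r^{pq}$; as $x \in Z_r^{pq}$ already, $x \in B_r^{pq}\cap Z_r^{pq}$. Hence the kernel is exactly $B_r^{pq}\cap Z_r^{pq}$, whence $E_r^{pq} \cong Gr_{-p}^WH^{p+q}(W_{-p+r-1}K/W_{-p-r}K)$.

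I expect the only genuinely delicate step to be this last matching of the two descriptions of the kernel: one must verify both that a representative of the form $u+dv \in B_r^{pq}$ can be arranged to be a cocycle of the smaller quotient $W_{-p-1}K/W_nK$ (using $d^2=0$ and the defining condition of $Z_r^{pq}$), and, conversely, that the $z$-term produced by vanishing in $W_{-p-1}H^{p+q}(Q)$ can be absorbed into the $W_{-p-1}$-part thanks to the inclusion $W_{-p-r}\subset W_{-p-1}$, valid precisely because $r\geq 1$. Everything else is a bookkeeping translation through $F^i=W_{-i}$ of the definitions already in place.
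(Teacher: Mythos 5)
Your proof is correct and takes essentially the same route as the paper: both pass to the quotient complex $K^p_r = W_{-p+r-1}K/W_{-p-r}K$ with its induced filtration and identify $Z_r^{pq}(K,W)$ and $B_r^{pq}(K,W)$ with cocycle and coboundary data there (the paper phrases this as the coincidence, up to the quotient by $W_{-p-r}K^{p+q}$, of these groups with $Z_{\infty}^{pq}(K^p_r,W)$ and $B_{\infty}^{pq}(K^p_r,W)$, so that $E_r^{pq}(K,W) = E_{\infty}^{pq}(K^p_r,W) = Gr_{-p}^W H^{p+q}(K^p_r)$). Your explicit surjection onto $Gr_{-p}^W H^{p+q}(Q)$ together with the kernel computation simply spells out the verifications the paper leaves implicit.
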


\begin{proof}
 Let ($K_r^p,W$) denotes the quotient complex $K^p_r := W_{-p+r-1}K/W_{-p-r}K $
 with   the induced filtration by subcomplexes $W$; we put:
\begin{equation*}
\begin{split}
&Z_{\infty}^{pq}(K^p_r, W) := Ker\,(d:
(W_{-p}K^{p+q}/W_{-p-r}K^{p+q})\rightarrow
(W_{-p+r-1}K^{p+q+1}/W_{-p-r}K^{p+q+1}))\\
&B_{\infty}^{pq}(K^p_r, W):=
(W_{-p-1}K^{p+q}+dW_{-p+r-1}K^{p+q-1})/W_{-p-r}K^{p+q}
\end{split}
\end{equation*}
which coincide, {\it up to the quotient by $W_{-p-r}K^{p+q}$},
with $Z_r^{pq}(K, W)$ (resp. $B_r^{pq}(K, W)$) with:
\begin{equation*}
\begin{split}
&Z^{p,q}_r:= Ker\,(d:W_{-p}K^{p+q}\rightarrow
K^{p+q+1}/W_{-p-r}K^{p+q+1})\\
 & B^{p,q}_r:= W_{-p-1}K^{p+q} + d W_{-p+r-1}K^{p+q-1}
\end{split}
\end{equation*}
then, we define:
\[E_{\infty}^{pq}(K^p_r, W) = {Z_{\infty}^{pq}(K^p_r, W) \over
B_{\infty}^{pq}(K^p_r, W)\cap Z_{\infty}^{pq}(K^p_r, W)} =
Gr_{-p}^WH^{pq}(W_{-p+r-1}K/W_{-p-r}K)\]
 and find:
\begin{equation*}
\begin{split}&E_r^{pq} (K,W)= Z_r^{pq}/ (B_r^{pq} \cap Z_r^{pq})
= Z_{\infty}^{pq}(K^p_r, W)  / (B_{\infty}^{pq}(K^p_r, W) \cap
Z_{\infty}^{pq}(K^p_r, W))\\
&  = E_{\infty}^{pq}(K^p_r, W)  =
Gr_{-p}^WH^{pq}(W_{-p+r-1}K/W_{-p-r}K)
\end{split}
\end{equation*}
\end{proof}

  To define the differential $d_r$,
 we consider the exact sequence:
\begin{equation*}
0 \rightarrow W_{-p-r}K/W_{-p-2r}K\rightarrow  W_{-p+r-1}
K/W_{-p-2r}K\rightarrow W _{-p+r-1}K/W_{-p-r}K\rightarrow 0
\end{equation*}
and  the connecting morphism:
\begin{equation*}
H^{p+q}(W_{-p+r-1}K/W_{-p-r}K) {\buildrel \partial \over
\rightarrow } H^{p+q+1}(W_{-p-r}K/W_{-p-2r}K)
\end{equation*}
 the injection $W_{-p-r}K \rightarrow W_{-p-1}K$ induces
 a morphism: 
 \[ \varphi: H^{p+q+1} (W_{-p-r} K/W _{-p-2r}K) \rightarrow
 W_{-p-r} H^{p+q+1}(W_{-p-1} K/W_{-p-2r} K)\].
 Let $\pi$ denote the projection on the right term below, equal to
 $E_r^{p+r,q-r+1}$:
  \[ W_{-p-r} H^{p+q+1}(W_{-p-1} K/W_{-p-2r} K)
\xrightarrow{\pi} Gr_{-p-r}^W H^{p+q+1}(W_{-p-1} K/W_{-p-2r} K)
\] the composition of morphisms $\pi \circ \varphi \circ
\partial$ restricted to $W_{-p}H^{p+q}(W_{-p+r-1}K/W_{-p-r}K)$
induces the differential:
\begin{equation*}
d_r : E_r^{pq} \rightarrow E_r^{p+r,q-r+1}
\end{equation*}
while the injection $W_{-p+r-1} \rightarrow W_{-p+r}K$ induces the
isomorphism:
  \begin{equation*}
  H(E_r^{pq} , d_r) \xrightarrow {\sim } E_{r+1}^{pq} =
   Gr_{-p}^W H^{p+q}(W_{-p+r}K/W_{-p-r-1}K).
   \end{equation*}

\subsubsection{Hypercohomology spectral sequence} Let $T : {\A}
\rightarrow {\B}$ be a left exact functor of abelian categories,
and $(K,F)$ an object of $D^+F{\A}$ and $R T(K,F):D^+F{\A}\to
D^+F{\B} $ its derived functor. The spectral sequence defined by
 the complex $R T(K,F)$ is written as:
\begin{equation*}
 {_FE}_1^{p,q} = R^{p+q}T(Gr_F^p) \Rightarrow Gr_F^p R^{p+q}T(K)
 \end{equation*}
This is the hypercohomology spectral sequence  of the filtered
complex  $K$. For an increasing filtration $W$ on $K$, we have:
$$ {_WE}_1^{p,q} = R^{p+q}T(Gr_{-p}^W) \Rightarrow Gr_{-p}^W R^{p+q}T(K)$$
 It depends functorially on $K$ and a filtered quasi-isomorphism
  induces an  isomorphism of spectral sequences.
 The differentials $d_1$ of this spectral sequence are the image
 by $T$ of the
 connecting morphisms defined by the short  exact sequences:
  $$0 \rightarrow Gr_F^{p+1}K \rightarrow F^pK/F^{p+2}K
  \rightarrow Gr_F^pK \rightarrow 0.$$

\subsubsection{Examples} 1) Let $K$ be a complex, the canonical
filtration $\tau$ is the increasing filtration by sub-complexes:
$$\tau_{\leq p}\  = (\cdots \to K^{p-1} \to Ker d \to 0 \cdots \to
0 )$$
  then: 
  $$ Gr^{\tau}_{\leq p} K \xrightarrow{\approx} H^p ( K)[-p],
  \quad
   H^i(\tau_{\leq p}(K)) = H^i ( K) \,\,{\rm if }\,\ i \leq
p,\,\, {\, \rm and \,\,} 0  \,\, {\rm if \, }\, i > p.$$
 2) The sub-complexes  of $K $:
 $$ \sigma_{\geq p} K := K^{*\geq p} = (0\to \cdots \to 0 \to K^p \to K^{p+1}\to \cdots) $$
 define a decreasing  biregular filtration, the trivial
filtration of $K$ such that $Gr^p_\sigma K = K^p [-p]$,
i.e., it coincides with the Hodge filtration on de Rham complex. 

  A quasi-isomorphism
$ f : K \rightarrow K'$ is necessarily a filtered
quasi-isomorphism for both, the trivial and the canonical
filtrations. The hypercohomology spectral sequences of a left
exact functor attached to the trivial and canonical filtrations of
$K$ are the two natural hypercohomology spectral sequences of
$K$.\\
 3)  Let $f : X \rightarrow Y$ be a continued map
of  topological spaces. Let ${\mathcal   F}$ be an abelian sheaf
on $X$ and ${\mathcal   F}^*$ a resolution of ${\mathcal   F}$ by
 $f_*-$acyclic sheaves, then $R^if_* {\mathcal   F} \cong
H^i(f_* {\mathcal   F}^*)$.  The hypercohomology spectral sequence
of the global section functor $\Gamma (Y,* )$ of the complex $ R
f_* {\mathcal F}^*$ with its canonical filtration,  is:
$${_{\tau}E}_1^{pq} = \H^{p+q}(Y,R^{-p}f_*{\mathcal   F}[p]) \simeq
H^{2p+q}(Y,R^{-p}f_*{\mathcal   F})  \Rightarrow Gr^{ \tau}_{-p}
H^{p+q}(X,{\mathcal   F}).$$
 This formula illustrates basic difference in Deligne's notation:
 the sheaf $R^{-p}f_*{\mathcal   F}[p]$ is in degree $-p$.\\
 In classical notations, Leray's spectral
sequence for $f$ and ${\mathcal   F}$ starts at:
 \[E_2^{pq} = H^p(Y,R^qf_* {\mathcal  F})\]
 To relate both notations  we need to  renumber  the classical term
$  E_{r+1}^{2p+q,-p}$ into the new term  $E_r^{pq}$.

\section{Mixed Hodge Complex (MHC)}

In this section, we give the definition  of a Mixed Hodge Complex
(MHC) and  prove  Deligne's fundamental theorem that the
cohomology of a Mixed Hodge Complex is endowed with a Mixed Hodge
Structure.

  First, on an algebraic variety $V$, we define a cohomological version of
  a  Mixed Hodge Complex, that we call Cohomological 
  Mixed Hodge Complex (CMHC), which is defined essentially
 by a bi-filtered complex of sheaves $(K_{\C},F,W) $ where the filtration $W$ is
 rationally defined and satisfies precise conditions sufficient
 to define a Mixed Hodge Complex structure on
 the global section functor $R\Gamma (V, K)$. The results proved by Deligne
  are technically difficult and so strong that
 the theory is reduced to constructing such a Cohomological  Mixed Hodge
 Complex  for all algebraic
varieties in the remaining sections; hence the theoretical path to
 construct a Mixed Hodge Structure on a variety follows the pattern:
 \[ \hbox{CMHC} \Rightarrow
\hbox{ MHC} \Rightarrow \hbox{MHS}\]
It is true that a direct
study of the logarithmic complex
   by Griffiths and Schmid \cite{G-S} is very attractive, but the initial work of
    Deligne
   is easy to apply, flexible and helps to go beyond this case
  towards  a general theory.
  
 The de Rham complex of a smooth compact complex
variety is a special case of a Mixed Hodge Complex, called a Hodge
complex (HC) with the characteristic property that it induces a
Hodge Structure on its hypercohomology. We start by rewriting the
Hodge theory that we know, with terminology that is fitted to
our generalization.

 Let $A$ denote $\Z$, $\Q$ or $\R$ and  $A \otimes \Q$ the field
 $\Q$ or $\R$ accordingly as in section $3$, $D^+ (\Z) )$ (resp.
 $D^+ (\C) )$, $D^+ (V,\Z) )$, $D^+ (V,\C) )$)
denotes the derived category of $\Z-$modules (resp. $\C-$vector
spaces and corresponding sheaves on $V$).

\begin{definition}[Hodge Complex (HC)] A Hodge
$A$-complex $ K$ of weight $n$ consists of:\\
i)  A complex $K_A$ of $A$-modules, such that $H^k(K_A)$
is an $A$-module of  finite  type for all $k$;\\
  ii) A filtered complex  $(K_{\C},F) $ of $\C-$vector spaces;\\
   iii) An isomorphism
$\alpha : K_A \otimes \C \simeq K_{\C}$ in $D^+(\C)$.\\
The following axioms  must be satisfied:\\
(HC 1) the differential $d$ of $K_{\C}$ is strictly compatible
with the  filtration $F$, i.e., $d^i: (K_A^i, F) \to (K_A^{i+1}, F)$ is
strict, for all $i$; \\
(HC 2) for all $k$,
 the filtration $F$ on $H^k(K_{\C}) \simeq H^k(K_A) \otimes \C$
 defines an $A$-Hodge Structure of weight $n+k$ on $H^k(K_A)$.
 \end{definition}

  Equivalently, in (HC1)  the
spectral sequence defined by $(K_{\C},F)$ degenerates at
$E_1(E_1=E_{\infty})$ (see \ref{delignestrict} or \cite{HII}), and in (HC2) the
filtration $F$ is
 ($n+k$)-opposed to its complex conjugate  (conjugation makes sense since
  $A \subset \R)$.
  
\begin{definition} Let $X$ be a topological space. An
$A$-Cohomological Hodge Complex (CHC) $K$ of weight  $n$ on $X$,
consists of:  \\
i) A complex of sheaves $K_A $ of $A-$modules on  $X$;\\
  ii) A filtered complex of sheaves $(K_{\C},F) $ of $\C$-vector spaces
  on $X$;\\
 iii) An isomorphism $\alpha:   K_A\otimes \C  \xrightarrow {\approx}
 K_{\C}$ in  $D^+(X,\C)$ of $\C-$sheaves on $X$.\\
Moreover, the following axiom must be satisfied:\\
(CHC) The  triple  $(R \Gamma(K_A), R \Gamma (K_{\C},F), R \Gamma
(\alpha))$ is a Hodge Complex of weight $n$.
\end{definition}

If $(K,F)$ is a Hodge Complex (resp. Cohomological Hodge 
Complex) of weight  $n$, then $(K[m], F[p])$
is a Hodge Complex (resp. Cohomological Hodge 
Complex) of weight $n+m-2p$.

The following statement is a new version of Hodge decomposition Theorem:

\begin{theorem} Let $X$ be a compact complex
algebraic manifold and consider:\\
i) $K_{\Z}$ the  complex reduced  to a  constant sheaf $\Z$ on $X$
in degree zero; \\
 ii)  $K_{\C}$ the analytic de Rham
complex $\Omega ^*_X$ with its  trivial filtration  $F^p = \Omega
^{*\geq p}_X$ by subcomplexes:
\begin{equation*} F^p \Omega ^*_X:= 0 \rightarrow 0 \cdots 0 \rightarrow
\Omega ^p_X \rightarrow \Omega ^{p+1}_X \rightarrow \cdots
\rightarrow \Omega ^n_X \rightarrow 0;
\end{equation*}
iii) The quasi-isomorphism $\alpha \colon K_{\Z} \otimes \C
\xrightarrow{\approx}  \Omega
^*_X$  (Poincar\' e lemma).\\
 Then $(K_{\Z}, (K_{\C},F), \alpha)$ is a Cohomological Hodge Complex
 of weight $0$; its
 hypercohomology on $X$, isomorphic to the cohomology of $X$, carries a
  functorial  Hodge Structure for morphisms of algebraic  varieties.
\end{theorem}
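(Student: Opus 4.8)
The three data (i)--(iii) are given, and the first two axioms of a Cohomological Hodge Complex are immediate: $K_\Z$ is the constant sheaf $\Z$ in degree zero, $(\Omega^*_X,F)$ with $F^p=\Omega^{*\geq p}_X$ is a filtered complex of sheaves of $\C$-vector spaces (the trivial filtration $\sigma_{\geq p}$, cf. \ref{SS}), and $\alpha$ is the quasi-isomorphism of the holomorphic Poincar\'e lemma, giving $\H^k(X,\Omega^*_X)\simeq H^k(X,\C)$. So the entire content is the axiom (CHC): after applying $R\Gamma$ the triple must be a Hodge Complex of weight $0$. Concretely one must check (HC1), the degeneration at $E_1$ of the spectral sequence of $(R\Gamma(X,\Omega^*_X),F)$ --- equivalently, by Proposition \ref{delignestrict}, the strictness of $d$ with respect to $F$ --- and (HC2), that the induced filtration $F^pH^k(X,\C)=Im(\H^k(F^p\Omega^*_X)\to\H^k(\Omega^*_X))$ is $k$-opposite to its conjugate $\overline F$, hence defines a Hodge Structure of weight $k$ on $H^k(X,\Z)$.

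The plan is to prove this first when $X$ is compact K\"ahler, a case which covers all smooth projective varieties (cf. \ref{examples}). Here the first term of the Hodge--de Rham spectral sequence is $E_1^{p,q}=H^q(X,\Omega^p_X)$, and by Lemma \ref{dol} and the Hodge theorem this is $\HH^{p,q}(X)$, the space of harmonic $(p,q)$-forms. The Hodge decomposition theorem of Section $1$ gives $\dim_\C H^k(X,\C)=\sum_{p+q=k}\dim_\C\HH^{p,q}(X)=\sum_{p+q=k}\dim_\C E_1^{p,q}$. Since for a biregular filtration each $E_\infty^{p,q}=Gr_F^pH^{p+q}$ is a subquotient of $E_1^{p,q}$, one always has $\dim H^k=\sum_{p+q=k}\dim E_\infty^{p,q}\leq\sum_{p+q=k}\dim E_1^{p,q}$, with equality exactly when the spectral sequence degenerates at $E_1$; the dimension count therefore yields (HC1). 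Then $F^pH^k=\oplus_{r\geq p}H^{r,k-r}$, so $F$ and $\overline F$ recover the decomposition and (HC2) holds. This realizes the weight-$k$ Hodge Structure on $H^k(X,\Z)$ intrinsically from the filtered de Rham complex, with harmonic forms used only to count dimensions.

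To pass to an arbitrary smooth compact algebraic manifold $X$, which need not be K\"ahler, I would choose, by Chow's lemma together with Hironaka's resolution, a smooth projective (hence K\"ahler) variety $X'$ with a proper birational morphism $\pi\colon X'\to X$. At the level of filtered complexes the pullback of forms $(\Omega^*_X,F)\to(R\pi_*\Omega^*_{X'},F)$ admits a trace splitting $(R\pi_*\Omega^*_{X'},F)\to(\Omega^*_X,F)$ whose composite is multiplication by $\deg(\pi)=1$; passing to hypercohomology this exhibits $(H^k(X,\C),F)$ as a filtered direct factor of $(H^k(X',\C),F)$ via the rational maps $\pi^*$ and $\pi_*$, so the idempotent $e=\pi^*\pi_*$ is a morphism of the pure Hodge Structure $H^k(X')$. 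Its image $\pi^*H^k(X,\C)$ is then a sub-Hodge-Structure, stable under the Hodge decomposition of $H^k(X')$, whence $F$ and $\overline F$ are $k$-opposite on $H^k(X,\C)$; this is (HC2) for $X$, and the same dimension comparison as before gives (HC1).

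The main obstacle is exactly this non-K\"ahler step: showing that the abstractly defined filtration $F$ on $\H^*(X,\Omega^*_X)$ is opposite to its conjugate without any harmonic theory on $X$ itself. The crux is that the trace (Gysin) map is strictly compatible with $F$, so that $e$ is genuinely a morphism of Hodge Structures --- this is the compatibility of Poincar\'e duality with the Hodge filtration recorded earlier --- combined with the strictness of morphisms of Hodge Structures (their category being abelian). Independence of the chosen resolution follows formally, since two resolutions are dominated by a third and the constructions agree after pullback. Finally, for an algebraic morphism $f\colon X\to Y$ the induced $f^*$ comes from a filtered morphism of de Rham complexes defined over $\Z$; it is therefore a morphism of Hodge Structures, strictly compatible with $F$, which gives the asserted functoriality.
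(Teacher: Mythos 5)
Your overall strategy coincides with the paper's: settle the projective case by harmonic theory (degeneration of the Hodge--de Rham spectral sequence via the dimension count $\dim H^k=\sum_{p+q=k}\dim \HH^{p,q}$), then reduce the general compact algebraic case to it by Chow's lemma and Hironaka, using a trace splitting $Tr(f)\circ f^*=Id$ for the birational morphism $f\colon X'\to X$. The divergence, and the gap, is in how you exploit the trace. You want $Tr(f)$ to be a morphism of \emph{filtered} complexes $(Rf_*\Omega^*_{X'},F)\to(\Omega^*_X,F)$, so that $e=f^*\circ Tr(f)$ is an $F$-compatible, rationally defined idempotent of $H^k(X',\C)$, hence an endomorphism of its Hodge Structure, whose image $f^*H^k(X,\C)$ is then a sub-Hodge Structure. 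But the trace furnished by Grothendieck duality exists for each $p$ separately, $Tr(f)\colon Rf_*\Omega^p_{X'}\to\Omega^p_X$; that these maps commute with the de Rham differential (equivalently, assemble into a map of filtered complexes, or induce an $F$-compatible map on hypercohomology) is a genuinely nontrivial compatibility which you assert but do not prove. Your appeal to ``the compatibility of Poincar\'e duality with the Hodge filtration recorded earlier'' is circular here: those earlier statements concern varieties whose Hodge Structure is already established, whereas the Hodge Structure on $H^k(X)$ --- indeed the very assertion that $F$ behaves well on $H^k(X)$ --- is what is being constructed. The same unproven input silently underlies your claim that ``the same dimension comparison as before gives (HC1)'' for non-K\"ahler $X$: without harmonic theory on $X$ you do not know $\dim H^k(X)=\sum\dim E_1^{p,q}(X)$, and injectivity of $f^*$ alone gives only an inequality on $E_1$-terms, so a pure count cannot close the argument; you would need the direct-factor structure at the filtered complex level, i.e.\ again the filtered trace.

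The paper avoids exactly this. It uses the trace only term by term at the $E_1$ level, where no compatibility with $d$ or $F$ is required: $Tr(f)\circ f^*=Id$ on $H^q(X,\Omega^p_X)$ gives injectivity of $f^*$ on $E_1$, and since $f^*$ (a genuine filtered morphism of complexes) commutes with all $d_r$, an induction on $r$ forces $d_r=0$ on $X$ because it vanishes on the K\"ahler manifold $X'$; this yields (HC1). For (HC2) the paper again needs only the injection $f^*$: it gives $F^pH^n(X)\cap\overline{F^{n-p+1}}H^n(X)\hookrightarrow F^pH^n(X')\cap\overline{F^{n-p+1}}H^n(X')=0$, and the complementary equality $\dim F^p+\dim\overline{F^{n-p+1}}=\dim H^n(X)$ is obtained by a dimension count on $X$ itself, combining degeneration with Serre duality $h^{i,j}=h^{N-i,N-j}$ applied to $H^n$ and $H^{2N-n}$. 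Your idempotent argument can be repaired, but either by invoking the theorem that the trace is a morphism of filtered de Rham complexes (a result beyond what this paper establishes, cf.\ \cite{EZ}), or by first proving the orthogonality characterization $F^pH^k=(F^{n-p+1}H^{2n-k})^{\perp}$ on $X$ --- which requires precisely the paper's Serre-duality count, at which point the detour through $e$ is no longer needed.
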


 The
new idea here is to observe the degeneracy of the spectral sequence
of $(\Omega^*_X,F)$ and deduce the definition of the Hodge filtration
from the trivial filtration $F$ on the de Rham complex without any
reference to harmonic forms, although the proof of the
decomposition is given  via a reduction  to the case of a
projective variety, hence a compact K\"{a}hler manifold:

\begin{definition} The Hodge filtration $F$ is defined on
the cohomology as follows:
\begin{equation*}
F^p H^i(X, \C):= F^p \H^i(X, \Omega^*_X):= Im (\H^i(X, F^p
\Omega^*_X) \rightarrow \H^i(X, \Omega^*_X)),
\end{equation*}
where the first equality follows from holomorphic Poincar\'e
lemma on the resolution of the constant sheaf $\C$ by the analytic
de Rham complex $\Omega^*_X$.
\end{definition}

\begin{prop}[Deligne \cite{DI}] Let $X$ be a smooth compact complex algebraic
variety, then the filtration $F$ induced on cohomology by the
 filtration $F$ on the de Rham complex is a Hodge filtration.
 \end{prop}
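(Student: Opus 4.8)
The plan is to reduce everything to the projective (K\"ahler) case via a birational modification and to exploit the two Gysin-type maps it provides. First I would record what must be checked. Since the Hodge Complex in the theorem has weight $0$, for each $k$ I must show (HC1) that the spectral sequence of $(\Omega^*_X,F)$ degenerates at $E_1$, equivalently that the differential is strictly compatible with $F$, and (HC2) that the induced filtration $F$ on $H^k(X,\C)$ is $k$-opposed to its conjugate $\overline F$, i.e. $H^k(X,\C)=\oplus_{p+q=k}(F^p\cap\overline F^q)$ with $\overline{H^{p,q}}=H^{q,p}$. When $X$ is smooth \emph{projective} this is already available: such $X$ is K\"ahler, so by the Hodge decomposition theorem together with the identification $H^{p,q}\simeq H^q(X,\Omega^p_X)$ one has $\dim_{\C} H^k(X,\C)=\sum_{p+q=k}\dim_{\C} H^q(X,\Omega^p_X)$, which forces $E_1$-degeneration, while the relation $\overline{H^{p,q}}=H^{q,p}$ is exactly opposedness.

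For a general smooth compact (hence proper) $X$, by Chow's lemma followed by Hironaka's resolution I would choose a smooth \emph{projective} variety $\tilde X$ together with a proper birational morphism $\pi\colon\tilde X\to X$. This furnishes a pullback $\pi^*$ and a trace (Gysin) map $\pi_*$, both on de Rham cohomology $H^k(-,\C)$ and on the coherent groups $H^q(-,\Omega^p)$, with $\pi_*\circ\pi^*=\mathrm{id}$ because $\pi$ has degree one; in particular $\pi^*$ is injective at both levels. Degeneration (HC1) for $X$ then follows formally: $\pi^*$ is a morphism of the Hodge--de Rham spectral sequences, it is injective on $E_1^{pq}=H^q(-,\Omega^p)$, and the target sequence has all differentials zero by the projective case; from $\pi^*\circ d_1=d_1\circ\pi^*=0$ and injectivity I get $d_1=0$ on $X$, and the same argument propagated by induction gives $d_r=0$ for all $r$. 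Hence $(\Omega^*_X,F)$ degenerates at $E_1$ and $Gr^p_F H^k(X,\C)\simeq H^{k-p}(X,\Omega^p_X)$.

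It remains to prove opposedness (HC2) for $X$, and here the crucial input is that both $\pi^*$ and the trace $\pi_*$ are compatible with $F$: the Gysin map is induced by a genuine morphism $R\pi_*\Omega^*_{\tilde X}\to\Omega^*_X$ in the filtered derived category, and since $\dim\tilde X=\dim X$ there is no shift. Both maps are defined over $\Q$, so they commute with conjugation and are compatible with $\overline F$ as well. Consequently $e:=\pi^*\pi_*$ is an idempotent of $H^k(\tilde X,\C)$ preserving every $F^p$ and every $\overline F^q$, hence preserving each $H^{p,q}(\tilde X)=F^p\cap\overline F^q$. Therefore $V:=\pi^* H^k(X,\C)=e\,H^k(\tilde X,\C)$ is a sub-Hodge structure, $V=\oplus_{p+q=k}(V\cap H^{p,q}(\tilde X))$. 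Using $\pi_*\pi^*=\mathrm{id}$ and the compatibility of $\pi_*$ with $F$ and $\overline F$, one checks that $\pi^*$ is \emph{strict} for both filtrations, so $\pi^*$ identifies $(H^k(X,\C),F,\overline F)$ with $(V,F,\overline F)$; transporting the decomposition of $V$ back yields $H^k(X,\C)=\oplus_{p+q=k}(F^p\cap\overline F^q)$ with $\overline{H^{p,q}}=H^{q,p}$, which is precisely the statement that $F$ is a Hodge filtration.

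The main obstacle I expect is justifying the existence and $F$-compatibility of the trace maps $\pi_*$ --- both the coherent splitting $\pi_*\pi^*=\mathrm{id}$ on $H^q(-,\Omega^p)$ and the fact that $\pi_*$ on de Rham cohomology lifts to a filtered morphism of de Rham complexes. These duality/trace inputs are exactly what makes the reduction to the projective case go through; once they are in hand, both the degeneration and the opposedness are formal consequences of the injectivity of $\pi^*$ and the already-established projective case.
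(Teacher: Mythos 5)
Your reduction to the projective case and your degeneration argument are exactly the paper's: Chow plus Hironaka to get $\pi\colon \tilde X\to X$ projective and birational, the Grothendieck-duality trace giving $Tr(\pi)\circ\pi^*=\mathrm{Id}$ on each $H^q(-,\Omega^p)$, injectivity of $\pi^*$ on the $E_1$ terms, and the inductive propagation of $d_r=0$ from $\tilde X$ to $X$. Where you genuinely diverge is in the proof of opposedness (HC2). The paper only imports from $\tilde X$ the single fact that $F^pH^n(X)\cap\overline{F^{n-p+1}}H^n(X)$ injects into the corresponding intersection on $\tilde X$, hence vanishes; it then proves that the direct sum fills up $H^n(X)$ by a pure dimension count on $X$ itself, using degeneration to write $\dim F^p=\sum_{i\geq p}h^{i,n-i}$ and Serre duality $h^{i,j}=h^{N-i,N-j}$ applied in the complementary degree $2N-n$ to reverse the resulting inequality. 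You instead make $H^k(X,\C)$ a filtered direct summand of $H^k(\tilde X,\C)$ via the idempotent $e=\pi^*\pi_*$, check that $e$ preserves $F$ and $\overline F$ hence the bigrading, and transport the decomposition back through the strictness of $\pi^*$ (your strictness verification, $v=\pi^*(\pi_*v)$ with $\pi_*v\in F^p$, is correct). The trade-off is the one you flag yourself: your route requires a trace that is simultaneously defined over $\Q$ (so it commutes with conjugation) and compatible with $F$, i.e.\ a filtered morphism $R\pi_*\Omega^*_{\tilde X}\to\Omega^*_X$ compatible with the de Rham differential and agreeing with Verdier's topological trace $R\pi_*\Q_{\tilde X}\to\Q_X$ --- a true but nontrivial compatibility (it can also be recovered a posteriori from degeneration plus the perfectness of the Serre pairing on the $E_1=E_\infty$ page, via the adjunction $\int_X\pi_*x\cup y=\int_{\tilde X}x\cup\pi^*y$), and it is precisely what the paper's Serre-duality bookkeeping is designed to avoid: Deligne's argument uses the coherent trace only at the $E_1$ level, with no compatibility with $d$ needed. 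In exchange, your argument buys more: it exhibits $H^k(X)$ directly as a sub-Hodge structure (indeed a direct factor) of the Hodge structure of a projective manifold, with strictness for free, and it is the same retraction pattern the paper itself deploys later (section 7.2) for embedded and singular varieties; so provided you cite or prove the filtered trace, your proof is complete and arguably more conceptual, if less elementary, than the paper's.
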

 
  The proof is based on the  degeneration at rank one of the  spectral
sequence with respect to $F$ defined as follows:
\begin{equation*}
{_FE}_1^{p,q}:= \H^{p+q}(X, Gr^p_F \Omega^*_X) \simeq
H^q(X,\Omega^p_X) \Rightarrow Gr^p_F \H^{p+q}(X,  \Omega^*_X).
\end{equation*}
The degeneration at rank one may be deduced from classical  Hodge
decomposition, but it has  been also obtained by direct algebraic
methods by Deligne and Illusie in \cite{Del-Ill} for the algebraic
de Rham complex with respect to Zariski topology on which the
 filtration $F$ is also defined.
 
 The  isomorphism on complex smooth algebraic
varieties between analytic and algebraic de Rham hypercohomology
defined respectively with analytic and algebraic de Rham complexes
is given by Grothendieck's comparison theorem (see \cite{Gr2}).

 Now, we define the structure including two filtrations by weight $W$
 and  $F$ needed on a complex, in order to
 define a Mixed Hodge Structure on its cohomology.
 
\begin{definition}[MHC] An $A$-Mixed Hodge Complex (MHC) $K$ consists
 of:\\
 i) A complex $K_A $ of $A-$modules such that $H^k(K_A)$ is an
$A$-module of finite type for all~ $k$;\\
 ii) A filtered  complex
 $(K_{A \otimes \Q}, W) $ of
$(A \otimes \Q)-$vector spaces with an increasing  filtration $W$;\\
 iii) An isomorphism $ K_A \otimes \Q \xrightarrow {\sim}
   K_{A \otimes\Q}$ in $D^+(A \otimes \Q )$;\\
iv) A bi-filtered
 complex ($K_{\C},W,F)$ of $\C-$vector spaces  with an increasing
 (resp. decreasing) filtration $W$ (resp. $F$) and an
isomorphism: 
$$\alpha:
(K_{A\otimes\Q},W)\otimes \C  \xrightarrow {\sim} (K_{\C},W)$$ 
in $ D^+F(\C)$.\\
Moreover, the following axiom is satisfied: \\
(MHC) For all $n$, the system consisting of\\
- the complex  $Gr_n^W(K_{A \otimes \Q}) $ of $(A \otimes
\Q)-$vector spaces,\\
- the complex $Gr_n^W(K_{\C}, F)$ of $\C-$vector spaces with
induced $F$ and\\
- the isomorphism
 $Gr_n^W(\alpha):Gr_n^W(K_{A \otimes \Q}) \otimes \C \xrightarrow {\sim}
  Gr_n^W(K_{\C})$,\\
 is an $A \otimes \Q$-Hodge Complex of weight $n$.
 \end{definition}
 
The above  structure has
a corresponding structure on a complex of sheaves on $X$ called a
Cohomological Mixed Hodge Complex:

 \begin{definition}[CMHC] An $A$-Cohomological Mixed Hodge Complex $K$
(CMHC) on a topological space  $X$ consists of:\\
i) A complex  of sheaves $K_A$ of sheaves of $A$-modules on $X$
such that $H^k(X,K_A)$ are $A$-modules of finite type;\\
 ii) A filtered  complex $(K_{A \otimes \Q}, W)$ of sheaves of $(A \otimes
\Q)-$vector spaces on $X$
   with an increasing  filtration $W$ and an
  isomorphism
 $K_A \otimes \Q  \simeq K_{A \otimes \Q}$ in $D^+(X,A \otimes
 \Q)$;\\
iii) A bi-filitered  complex of sheaves $(K_{\C}, W, F)$ of
$\C-$vector spaces on $X$ with an increasing
 (resp. decreasing) filtration $W$ (resp. $F$ ) and an
 isomorphism:
  $$\alpha : (K_{A \otimes \Q}, W) \otimes \C \xrightarrow {\sim} (K_{\C}, W)  $$
   in $D^+F(X, \C)$.\\
Moreover, the following axiom is satisfied: \\
(CMHC) For all $n$, the system consisting of: \\
- the complex $Gr_n^W(K_{A \otimes \Q}) $ of sheaves of  $(A
\otimes \Q)$-vector spaces on $X$; \\
- the complex $Gr_n^W(K_{\C}, F)$ of sheaves of  $ \C$-vector
spaces on $X$ with induced  $F$;\\
- the isomorphism
 $Gr_n^W(\alpha):Gr_n^W(K_{A \otimes \Q}) \otimes \C \xrightarrow {\sim}
  Gr_n^W(K_{\C})$,\\
 is an $A \otimes \Q$-Cohomological Hodge Complex of weight $n$.
 \end{definition}

 If $(K,W,F)$ is a Mixed Hodge Complex (resp. Cohomological Mixed Hodge Complex),
then for all $m$ and $n \in \Z, (K[m],W[m-2n],F[n])$ is a Mixed
Hodge Complex (resp. Cohomological Mixed Hodge Complex).

 In fact, any Hodge Complex or Mixed Hodge Complex
 described here is obtained from de Rham complexes with
modifications (at infinity) as the logarithmic complex described
later in the next section. A new construction of Hodge Complex has
been later introduced with the theory of differential modules and
perverse sheaves \cite{BBD} and \cite{Se1} but will not be covered
in these
lectures. 

Now we describe how  we deduce first the Mixed Hodge Complex from
a Cohomological Mixed Hodge Complex, then a Mixed Hodge 
Structure from a Mixed Hodge Complex.

\begin{prop}  If $K=(K_A,(K_{A \otimes
\Q},W), (K_{\C},W,F))$ and the isomorphism $\alpha$ is an
$A$-Cohomological Mixed Hodge Complex
then:
 $$ R \Gamma K = (R \Gamma K_A, R \Gamma (K_{A \otimes
\Q},W), R \Gamma(K_{\C},W,F))$$ 
with $R\Gamma (\alpha)$ is an
$A$-Mixed Hodge Complex.
\end{prop}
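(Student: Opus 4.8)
The plan is to check each clause in the definition of an $A$-Mixed Hodge Complex for the triple $R\Gamma K$, and to reduce the only substantial clause — the Hodge Complex axiom (MHC) on the weight-graded pieces — to the Cohomological Hodge Complex axiom (CHC) that is already built into the hypothesis that $K$ is a Cohomological Mixed Hodge Complex.

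First I would assemble the underlying data of $R\Gamma K$. The complex $R\Gamma K_A$ is a complex of $A$-modules whose cohomology $H^k(R\Gamma K_A)=H^k(X,K_A)$ is of finite type by item (i) of the definition of a Cohomological Mixed Hodge Complex, so clause (i) of a Mixed Hodge Complex holds. Applying the filtered derived functor $R\Gamma$ constructed in Section $4$ to $(K_{A\otimes\Q},W)$ yields a filtered complex $(R\Gamma K_{A\otimes\Q},W)$, and the bi-filtered version applied to $(K_{\C},W,F)$ yields $(R\Gamma K_{\C},W,F)$; here one uses the functorial $T$-acyclic (bi-)filtered resolutions guaranteed there, concretely the Godement resolution with its induced filtrations. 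The comparison isomorphisms $R\Gamma(K_A)\otimes\Q \simeq R\Gamma(K_{A\otimes\Q})$ in $D^+(A\otimes\Q)$ and $R\Gamma(\alpha)\colon (R\Gamma K_{A\otimes\Q},W)\otimes\C \simeq (R\Gamma K_{\C},W)$ in $D^+F(\C)$ are obtained by applying $R\Gamma$ to the given isomorphisms $K_A\otimes\Q\simeq K_{A\otimes\Q}$ and $\alpha$, using that $R\Gamma$ commutes with tensoring by $\Q$ (which is flat over $A$), so that the comparison passes to the derived category and respects the filtrations.

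The heart of the matter is the axiom (MHC). I would invoke the compatibility of the filtered derived functor with the graded functors established in Section $4$, namely $Gr_n^W R\Gamma \simeq R\Gamma\,Gr_n^W$ and, bi-filtered, $Gr_F Gr_n^W R\Gamma \simeq R\Gamma\,Gr_F Gr_n^W$. These identify, for every $n$, the weight-graded system
$$ \left( Gr_n^W(R\Gamma K_{A\otimes\Q}),\ Gr_n^W(R\Gamma K_{\C},F),\ Gr_n^W(R\Gamma\alpha) \right) $$
with the system
$$ \left( R\Gamma\,Gr_n^W K_{A\otimes\Q},\ R\Gamma\,(Gr_n^W K_{\C},F),\ R\Gamma\,Gr_n^W\alpha \right). $$
By the Cohomological Mixed Hodge Complex axiom (CMHC), the triple $(Gr_n^W K_{A\otimes\Q}, Gr_n^W(K_{\C},F), Gr_n^W\alpha)$ is an $A\otimes\Q$-Cohomological Hodge Complex of weight $n$, and the defining axiom (CHC) of such a complex states precisely that its image under $R\Gamma$ is an $A\otimes\Q$-Hodge Complex of weight $n$. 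Hence the weight-graded system of $R\Gamma K$ is a Hodge Complex of weight $n$ for every $n$, which is exactly axiom (MHC), and $R\Gamma K$ is an $A$-Mixed Hodge Complex.

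The main obstacle, and really the only non-formal point, is the commutation $Gr_F Gr_n^W R\Gamma \simeq R\Gamma\,Gr_F Gr_n^W$ on which the whole reduction rests. It is not automatic: it requires resolutions that are simultaneously $\Gamma$-acyclic on each bigraded piece $Gr_F Gr_W K$, so that the short exact sequences of the filtrations remain exact after $\Gamma$ and the induced filtration on the derived object coincides with the derived image of the induced filtration. This is exactly what the $T$-acyclic bi-filtered resolutions of Section $4$ are designed to provide — for the Godement resolution one has $Gr_{{\mathcal G}^*F}Gr_{{\mathcal G}^*W}({\mathcal G}^* K)\simeq {\mathcal G}^*(Gr_F Gr_W K)$ — and once that compatibility is secured, the rest of the argument is the bookkeeping of transporting the (CHC) axiom across the identification displayed above.
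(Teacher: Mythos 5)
Your proposal is correct and follows exactly the route the paper intends: the paper states this proposition without a separate proof because it is precisely the combination of the filtered and bi-filtered derived functor lemmas of Section 4 (Godement resolutions giving $Gr_F Gr_W R\Gamma(K) \simeq R\Gamma(Gr_F Gr_W K)$) with the axiom (CHC), which by definition converts each weight-graded Cohomological Hodge Complex into a Hodge Complex of weight $n$ under $R\Gamma$. You also correctly single out the commutation of $Gr_F Gr_n^W$ with $R\Gamma$ as the only non-formal point, which is the same emphasis the paper places via its $T$-acyclic bi-filtered resolution lemmas.
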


The main result of Deligne in \cite{HII} and \cite{HIII}
 is algebraic and states in short: 

\begin{theorem}[Deligne] The cohomology of a Mixed Hodge 
Complex carries a Mixed Hodge Structure.
\end{theorem}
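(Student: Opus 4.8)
The plan is to work directly with an $A$-Mixed Hodge Complex $K = (K_A, (K_{A\otimes\Q}, W), (K_\C, W, F))$; the Cohomological case follows from the preceding proposition, which produces a Mixed Hodge Complex by applying $R\Gamma$. The lattice is $H^n(K_A)$, of finite type by axiom (i). I would define the Hodge filtration on $H^n(K_\C)$ as the filtration induced by $F$ on cohomology, $F^p H^n(K_\C) = \mathrm{Im}\bigl(H^n(F^p K_\C) \to H^n(K_\C)\bigr)$, and transport it to $H^n(K_A)\otimes\C$ through $\alpha$. For the weight filtration the naive induced filtration is \emph{not} correct: one shifts by the cohomological degree (Deligne's \emph{d\'ecalage}), setting
\[
W_m H^n(K_{A\otimes\Q}) := \mathrm{Im}\bigl(H^n(W_{m-n}K_{A\otimes\Q}) \to H^n(K_{A\otimes\Q})\bigr),
\]
so that $W$ is $A\otimes\Q$-defined and $\alpha$ identifies its scalar extension with the corresponding filtration on $H^n(K_\C)$. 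The whole problem is then to prove that $(Gr^W_m H^n, F)$ is a pure Hodge Structure of weight $m$.

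The computation rests on the weight spectral sequence ${}_W E_r^{p,q}$ of the filtered complex $(K_\C, W)$, together with its $A\otimes\Q$-counterpart. Its first term is
\[
{}_W E_1^{p,q} = H^{p+q}(Gr^W_{-p} K_\C),
\]
and by axiom (MHC) the graded piece $Gr^W_{-p}K$ is a Hodge Complex of weight $-p$; hence $H^{p+q}(Gr^W_{-p}K_\C)$ carries a Hodge Structure of weight $(-p)+(p+q) = q$. Thus every $E_1^{p,q}$ is pure of weight $q$, and the filtration induced by $F$ is the Hodge filtration of that structure. The differential $d_1 \colon E_1^{p,q} \to E_1^{p+1,q}$ is the connecting morphism of a short exact sequence of Hodge Complexes, so it is a morphism of weight-$q$ Hodge Structures; since Hodge Structures of a fixed weight form an abelian category and morphisms between them are strict, $E_2^{p,q}$ is again pure of weight $q$.

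The decisive point is that the spectral sequence degenerates at $E_2$. The higher differential $d_r \colon E_r^{p,q} \to E_r^{p+r,q-r+1}$ maps a Hodge Structure of weight $q$ to one of weight $q-r+1$; for $r \geq 2$ these weights differ, and a morphism of Hodge Structures of distinct weights vanishes, so $d_r = 0$ and ${}_W E_2 = {}_W E_\infty$. To run this argument rigorously one needs $F$ to descend to each $E_r^{p,q}$ unambiguously and the $d_r$ to respect it; this is furnished by Deligne's \emph{two-filtration lemma}, applied with $W$ and $F$. Because $F$ induces on each $Gr^W_n K$ a filtration whose spectral sequence degenerates at $E_1$ (axiom (HC 1) for the Hodge Complex $Gr^W_n K$), the lemma asserts that the \emph{recurrent}, \emph{first direct} and \emph{second direct} filtrations that $F$ defines on the terms $E_r^{p,q}(K,W)$ coincide for $r$ large, and that the $d_r$ are strictly compatible with them. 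This coincidence is exactly what makes each $E_r^{p,q}$ a genuine Hodge Structure and each $d_r$ a morphism of such, legitimizing the weight argument above.

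It then remains to assemble the pieces. Since ${}_W E_2 = {}_W E_\infty$, the identification $E_\infty^{p,q} \cong Gr^W_{-p} H^{p+q}(K)$ for the naive filtration, combined with the degree shift defining $W$ on cohomology, gives $Gr^W_m H^n(K) \cong {}_W E_2^{\,n-m,\,m}$, which is pure of weight $m$; the filtration $F$ (and, since $A \subset \R$, its conjugate $\overline F$) induces on it precisely the Hodge filtration, so $(Gr^W_m H^n, F)$ is an $A\otimes\Q$-Hodge Structure of weight $m$. Together with the rational definition of $W$ and the comparison $\alpha$, this is exactly a Mixed Hodge Structure on $H^n(K_A)$. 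I expect the main obstacle to be the two-filtration lemma: verifying the coincidence of the recurrent and direct filtrations induced by $F$ and the strictness of the differentials is the genuinely delicate step, whereas purity and $E_2$-degeneration are then formal consequences of the abelian structure of the category of Hodge Structures.
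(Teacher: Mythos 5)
Your proposal is correct and follows essentially the same route as the paper: the shifted weight filtration $W[n]$ and the image filtration $F$ on cohomology, purity of weight $q$ on ${}_WE_1^{p,q}$ from the (MHC) axiom, Deligne's two-filtrations lemma to identify $F_d = F_{rec} = F_{d^*}$ and obtain strictness of the differentials, and vanishing of $d_r$ for $r \geq 2$ as morphisms between Hodge Structures of distinct weights. The only nuance is that in the paper the two-filtrations lemma and the purity argument run as a single interleaved induction on $r$ (strict compatibility of $d_r$ with $F_{rec}$ at each stage feeds the hypothesis of the lemma at the next stage), exactly the delicate point you flag at the end.
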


The proof of this result requires a detailed study of spectral
sequences and will occupy the rest of this section. We give
here, before the proof,  the
properties of the various spectral sequences which
may be of interest as independent results.

 Precisely, the weight
spectral sequence of a Mixed Hodge complex is in the category of
Hodge Structure.

So, the Mixed Hodge Structure on cohomology is approached step by step by Hodge
Structures on the terms of the weight spectral sequence
${_WE_r^{pq}}$ of $(K_{\C}, W)$. However, the big surprise is that
the spectral sequence degenerates quickly, at rank two for $W$ and
at rank one for $F$: this is all which is needed.
\vskip.1in

A careful study shows  that there are various  induced filtrations
by $F$ on the
  weight spectral sequence ${_WE_r^{pq}}$ of $(K_{\C}, W)$.
 Two direct filtrations
$F_d $ and $F_{d^*}$ and one recurrent $F_{rec}$ are induced by
$F$ on the terms ${_WE_r^{pq}}$. Under the conditions imposed on
the two filtrations $W$ and $F$  in the definition of a Mixed Hodge Complex, the
three filtrations coincide, the spectral sequence with respect to
$W$ degenerates at rank $2$ and the induced filtration by $F$ on
the first terms $E^{p,q}_1$ define a Hodge Structure of weight $q$
  (lemma on two filtrations). The filtration
$d_1$ is a morphism of $HS$, hence the terms $E^{p,q}_2$ carry a
Hodge Structures of weight $q$. The proof consists to show that $d_r$ is
compatible with the induced Hodge Structure, but for $r > 1$ it is a morphism
between two Hodge Structures of different weight, hence it must
vanish.

\begin{prop}[MHS on the  cohomology of a MHC] Let $K$ be an  $A$-MHC.\\
i)The filtration $W[n]$ of $  H^n(K_A) \otimes \Q \simeq H^n(K_{A
\otimes \Q})$:
\[ (W[n])_q(H^n( K_{A \otimes \Q}) := \, Im \,
(H^n(W_{q-n} K_{A \otimes \Q}) \to H^n( K_{A \otimes \Q}))\]
 and the  filtration $F$ on  $H^n(K_{\C}) \simeq H^n(K_A) \otimes_A \C$:
\[ F^p(H^n( K_{\C}) := \, Im \,
(H^n(F^p K_{\C}) \to H^n( K_{\C}))\]
 define on $ H^n(K)$ an $A$-Mixed Hodge Structure, i.e.:\\
$$(H^n(K_A),(H^n(K_{A \otimes \Q}), W),(H^n(K_{\C}), W, F))$$ 
is an $A$-Mixed Hodge Structure.\\
ii) On the terms $_WE_r^{pq}(K_{\C}, W)$, the recurrent filtration
and the  two direct filtrations
  coincide $F_d = F_{rec} =
F_{d^*}$ and define the Hodge filtration $F$ of a Hodge Sructure of weight
$q$ and $d_r$ is compatible with $F$.\\
 iii) The morphisms $d_1:\,{_WE}_1^{p,q}\,
\rightarrow \,{_WE}_1^{p+1,q}$ are strictly compatible with $F$.\\
  iv) The   spectral sequence of $(K_{A \otimes \Q}$,
W) degenerates  at rank $2 \, (_WE_2 = {_WE}_{\infty})$.\\
 v) The   spectral sequence of Hodge Structures of $(K_{\C}, F)$ degenerates at
rank $1 \, (_FE_1 = {_FE}_{\infty})$.\\
 vi) The   spectral sequence  of the complex
$Gr_F^p(K_{\C})$,
 with the induced  filtration $W$, degenerates at rank $2$.
 \end{prop}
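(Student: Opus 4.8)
The plan is to reduce everything to the behaviour of the weight spectral sequence ${_WE}_r^{pq}(K_\C, W)$ and to control the various filtrations that $F$ induces on it. First I would record the starting point supplied by the (MHC) axiom: since $Gr_n^W K$ is a Hodge Complex of weight $n$, its hypercohomology $H^k(Gr_n^W K_\C)$ carries a Hodge Structure of weight $n+k$, so that with the indexing of the paper the first term ${_WE}_1^{pq} = H^{p+q}(Gr_{-p}^W K_\C)$ is a Hodge Structure of weight $q$, the complex conjugate filtration being $\overline F$ induced on $Gr^W$. Thus each horizontal line $q = \mathrm{const}$ of the $E_1$-page lives in the category of pure Hodge Structures of weight $q$, which is abelian and whose morphisms are strict for $F$.

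The central difficulty, and the heart of Deligne's argument, is the \emph{lemma on two filtrations}. On the terms ${_WE}_r^{pq}$ the filtration $F$ induces a priori three distinct filtrations: the two \emph{direct} filtrations $F_d$ and $F_{d^*}$, obtained by transporting $F$ through the subquotient presentation $E_r = Z_r/(B_r \cap Z_r)$ either via the cocycles or, dually, via the cochains modulo coboundaries, and the \emph{recurrent} filtration $F_{rec}$, defined inductively by declaring $F_{rec}$ on ${_WE}_{r+1}$ to be the filtration induced on $H({_WE}_r, d_r)$ from $F_{rec}$ on ${_WE}_r$. I would prove by induction on $r$ that, under the (MHC) hypotheses, these three filtrations coincide on every ${_WE}_r^{pq}$. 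The inductive step uses that $d$ is strictly compatible with $F$ on each graded piece $Gr^W_n K_\C$ (axiom (HC1) applied to the Hodge Complex $Gr^W_n K$), together with a diagram chase tracking how $F$ propagates through the successive kernels and images. This is the step I expect to be the main obstacle: it is purely homological but genuinely delicate, requiring careful bookkeeping of the interaction of $W$, $F$ and $d$, and it is exactly where the opposedness built into the Hodge Complex condition is consumed.

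Granting the coincidence $F_d = F_{rec} = F_{d^*} =: F$, the remaining assertions follow by weight considerations. Since $F$ and $\overline F$ are opposed on each ${_WE}_1^{pq}$, the filtration $F$ defines a Hodge Structure of weight $q$ there, which is the $r=1$ case of (ii). The differential $d_1 : {_WE}_1^{pq} \to {_WE}_1^{p+1,q}$ is then a morphism of Hodge Structures of the same weight $q$, hence strict for $F$, which is exactly (iii); consequently ${_WE}_2^{pq}$, being the cohomology of $d_1$, again carries a Hodge Structure of weight $q$. For $r \geq 2$ the differential $d_r : {_WE}_r^{pq} \to {_WE}_r^{p+r,q-r+1}$ would be a morphism of Hodge Structures from weight $q$ to weight $q-r+1$; since a morphism of Hodge Structures of distinct weights vanishes, $d_r = 0$ for all $r \geq 2$. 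This yields the degeneracy ${_WE}_2 = {_WE}_\infty$ of (iv) and completes (ii).

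Finally I would assemble (i), (v) and (vi). For (i): the rank-$2$ degeneracy identifies $Gr^W_{-p} H^{p+q}(K_\C) \simeq {_WE}_\infty^{pq} = {_WE}_2^{pq}$ with a Hodge Structure of weight $q$, so $Gr^W_n H^m(K)$ is pure of weight $n$; together with the filtration $F$ on $H^m(K)$ and the rational structure carried by $K_{A\otimes\Q}$ this is precisely the data of an $A$-Mixed Hodge Structure, the compatibility of $F$ with the pure pieces being guaranteed by the coincidence of induced filtrations above. For (v): the strictness of $d$ with respect to $F$, a consequence of that same coincidence propagated to the abutment, is equivalent by Proposition \ref{delignestrict} to the degeneration ${_F}E_1 = {_F}E_\infty$ of the Hodge spectral sequence at rank $1$. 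For (vi): applying the exact functor $Gr_F^p$ to the weight spectral sequence, and using that $Gr_F$ commutes with the formation of ${_WE}_r$ (again by the coincidence of the direct and recurrent filtrations), transports the rank-$2$ degeneracy of (iv) to the spectral sequence of $(Gr_F^p K_\C, W)$.
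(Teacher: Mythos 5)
Your route is the paper's own: Deligne's two-filtrations lemma forcing $F_d = F_{rec} = F_{d^*}$, the weight-$q$ Hodge Structure on ${_WE}_r^{pq}$, strictness of $d_1$ as a morphism of Hodge Structures of equal weight, vanishing of $d_r$ for $r \geq 2$ by the weight mismatch, and then (i), (v), (vi) extracted from the conclusions of Theorem \ref{del} together with Proposition \ref{delignestrict}. All the ingredients of the paper's proof appear in your proposal.

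There is, however, one structural flaw that would block the proof as you have ordered it. You propose to establish the coincidence $F_d = F_{rec} = F_{d^*}$ on \emph{every} page first, as a self-contained ``purely homological'' induction driven by axiom (HC1) on the graded pieces plus a diagram chase, and only afterwards (``granting the coincidence'') to bring in the weight considerations. But Deligne's two-filtrations theorem (Theorem \ref{del}) is \emph{conditional}: to get the coincidence on ${_WE}_{r+1}$ its hypothesis demands the strict compatibility of $d_s$ with $F_{rec}$ for all $s \leq r$. Axiom (HC1) supplies exactly the case $s = 0$, namely the strictness of the differential of $Gr^W K_{\C}$; it cannot supply $s = 1$. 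Strictness of $d_1$ is a Hodge-theoretic fact: $d_1$ is compatible with the direct filtrations, is defined over $A \otimes \Q$ and hence commutes with conjugation, and is therefore a morphism of Hodge Structures of the same weight $q$ --- and the Hodge Structure on ${_WE}_1^{pq}$ is available only because the coincidence of the three filtrations is already known on that page. For $s \geq 2$, strictness holds only because $d_s = 0$, again a weight argument requiring the Hodge Structure on ${_WE}_2$. So the coincidence on page $r+1$ and the Hodge-theoretic facts on page $r$ must be proved \emph{together}, in a single interleaved induction, exactly as in the paper; your two sequential passes are circular if read literally. Since your second paragraph in fact produces every needed ingredient, the repair is only a reordering --- but it should be explicit that the two-filtrations step is not purely homological for a Mixed Hodge Complex: the purely homological content is only the conditional statement of Theorem \ref{del}, and the opposedness (HC2) together with the rational definition of $d_1$ is consumed at every rung of the ladder, not once at the start.
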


 Notice that the index of the weight is not given by the index of
 the induced filtration $W$ on cohomology, but shifted by $[n]$.
 One should recall that the weight of the Hodge Structure on the terms ${_WE}_r^{p,q}$
 is always $q$, hence the weight of $Gr^W_{-p}H^{p+q}(K)$ is $q$, that is
  $(W[p+q])_q = W_{-p}$.
  \vskip.1in

  Now, we are going to give the proofs of the statements of this introduction.

 \subsection{HS on the cohomology of a smooth compact algebraic variety}
 
 \

\n {\it Proof of the proposition (5.5)}. i) For $X$ projective, we
use the Dolbeault resolution of
 the sub-complex $F^p\Omega ^*_X$ by the  sub-complex $F^p \EE^*_X$
 defined as a simple complex associated to a  double complex:
 \begin{equation*}
F^p \EE^*_X = s((\EE_X^{r,*}, \overline{\partial}), \partial)_{
r\geq p }
\end{equation*}
This  is  a fine resolution (see \ref{dol}), which compute  de
Rham hypercohomology:  
$$\H^n (X, F^p \Omega ^*_X) \simeq H^n(X,
F^p\EE^*). $$ 
Then we can  identify the  terms $E^{pq}_1 =
 H^q(X, \Omega^p_X)$ with the  harmonic forms
 of type $(p,q)$ to deduce  the
 degeneration of the spectral sequence $({_FE}^{p,q}_r, d_r)$
 of the filtered de Rham complex at rank $1$ from classical Hodge theory,
  and moreover
 we obtain  the Hodge decomposition on $X$.

 \vskip 0.1 cm
  ii) If $X$ is not projective, by
 Chow's lemma (see \cite{Sha} p.69), there exists a projective variety and a projective
 birational morphism $f : X' \to X$. By Hironaka's
 desingularization (\cite{Hi}) we can suppose $X'$ smooth, hence $X'$
 is a K\"{a}hler manifold. Then, by Grothendieck's duality theory
 (\cite{DI} {\S} 4 or \cite{H1}), there
 exists for all integers $p$ a trace map $Tr(f): Rf_* \Omega^p_{X'} \to
 \Omega^p_{X}$  inducing a map on cohomology 
 $Tr(f): H^q(X', \Omega^p_{X'}) \to H^q(X,\Omega^p_{X})$, 
 because  $\H^q(X, Rf_* \Omega^p_{X'}) \simeq H^q (X',\Omega^p_{X'}) $,  
 such that the composition with
 the canonical reciprocal morphism 
 $f^*: H^q(X, \Omega^p_{X}) \to H^q(X',\Omega^p_{X'})$  is the identity:
 \[Tr(f) \circ f^* = Id:  H^q(X,
 \Omega^p_{X}) \to H^q(X',\Omega^p_{X'}) \to H^q(X,\Omega^p_{X})\]
  In particular we deduce that $f^*$ is injective. Since the map:
  $$f^*: f^*(\Omega^p_{X}, F) \to (\Omega^p_{X'}, F)$$ 
  is compatible
  with filtrations, we deduce a map of spectral sequences:
 \[ E_1^{pq} = H^q(X, \Omega^p_X) \xrightarrow{f^*}E_1'^{pq} =
H^q(X', \Omega^p_{X'}),\quad  f^*: E_r^{pq}(X) \hookrightarrow
E_r^{pq}(X')\]
 which is injective on all terms and commute with the differentials $d_r$.
  The proof is by induction on $r$ as follows.
 It is true for $r = 1$, as we have just noticed. The differential $d_1$ vanishes on $X'$;
 it must vanish on $X$, then the terms for $r=2 $ coincide with the terms
 for $r = 1$ and we can repeat the argument for all $r$.
 
 The degeneration of the
Hodge spectral sequence on $X$ at rank $1$ follows, and it  is
equivalent to the isomorphism:

\begin{equation*}
 \H^n (X, F^p \Omega ^*_X) \xrightarrow {\sim} F^p \H^n
(X, \Omega ^*_X). \end{equation*}

 Equivalently, the dimension of
the hypercohomology of the de Rham complex $\H^j(X, \Omega^*_X)$
is equal to the dimension of Hodge cohomology
$\oplus_{p+q=j}H^q(X,\Omega^p_X)$. 

  However, Hodge theory tells more. From the Hodge filtration, we  deduce
  the definition of the subspaces:
 \[H^{p,q}(X) := F^p H^i (X, \C ) \cap  \overline {F^q }H^i (X, \C ),\quad
 {\rm for} \, p+q = i\]
satisfying $H^{p,q}(X) = \overline {H^{q,p}(X)}$. We must check
the decomposition:
\begin{equation*}
H^i (X, \C ) = \oplus_{p+q=i} H^{p,q}(X),
 \end{equation*}
 and moreover, we will deduce  $H^{p,q}(X)\simeq H^q(X,\Omega^p_X)$.
We have:
$$F^p H^{n}(X) \cap \overline {F^{n-p+1} }H^{n}(X) \subset F^p
H^{n}(X') \cap \overline {F^{n-p+1}} H^{n}(X') = 0 .$$
This shows that $F^p H^{n}(X) + \overline {F^{n-p+1} }H^{n}(X)$ is a direct sum.
We want to prove that this sum equals $H^n(X)$.

Let $h^{p,q} = {\dim} H^{p,q}(X)$, since  the spectral sequence
degenerates at rank $1$, we have:
\[ {\dim} F^p H^{n}(X) = \sum_{i \geq p} h^{i,n-i}, \, {\dim}
\overline {F^{n-p+1} }H^{n}(X) = \sum_{i \geq n-p+1} h^{i,n-i},\]
  then:
\[ \sum_{i \geq p} h^{i,n-i}+\sum_{i \geq n-p+1} h^{i,n-i} \leq
{\dim}H^n(X) = \sum_i h^{i,n-i}\]
  from which we deduce $\sum_{i \geq p}
h^{i,n-i}\leq \sum_{i \leq n-p} h^{i,n-i}$.

By Serre duality on $X$ of dimension $N$, we have  $ h^{i,j}=
h^{N-i,N-j}$,  which transforms the inequality into:
 $\sum_{N-i \leq N-p} h^{N-i,N-n+i}\leq \sum_{N-i \geq N-n+p}
 h^{N-i,N-n+i}$,
from which we deduce on $ H^m(X)$ for $ m = 2N-n $, the opposite
inequality by setting $j = N-i, q = N-n+p$ shows that, for all $q$ and $ m$:
$$\sum_{j \geq q} h^{j,m-j}\geq \sum_{j \leq m-q} h^{j,m-j}, \quad
$$
In particular:
$$\sum_{i \geq
p} h^{i,n-i}\geq \sum_{i \leq n-p} h^{i,n-i} \quad \hbox{hence}
\quad \sum_{i \geq p} h^{i,n-i} = \sum_{i \leq n-p} h^{i,n-i}.$$
This implies ${\dim} F^p +  {\dim} \overline {F^{n-p+1} } = {\dim}
H^{n}(X)$. Hence:
 \[H^{n}(X) = F^pH^n(X) \oplus \overline {F^{n-p+1} }H^n(X)\]
  which,
  in particular, induces a decomposition: 
 $$F^{n-1}H^{n}(X) = F^p H^{n}(X)
\oplus H^{n-1,n-p+1}(X).$$

\begin{remark}
 If we use distinct notations for $X$ with Zariski topology and
 $X^{an}$ for the analytic associated manifold, then the filtration
$F$ is defined on the algebraic de Rham hypercohomology groups and
the comparison theorem (see \cite{Gr2}) is compatible with the
filtrations: $\H^i(X,F^p \Omega^*_X) \simeq \H^i(X^{an}, F^p
\Omega^*_{X^{an}})$.
\end{remark}

\subsubsection{} Let $\LL$ be a rational local system with a
 polarization, rationally defined, on the associated local system
$\LL_{\C} = \LL \otimes_{\Q}\C$, then the spectral sequence
defined by the Hodge filtration on de Rham complex with
coefficients $\Omega ^*_X\otimes_{\C}\LL$ degenerates at rank $1$:
\begin{equation*} E^{pq}_1 = H^q( X, \Omega ^p_X(\LL))\Rightarrow
H^{p+q}(X,\LL_{\C}), \quad E^{pq}_1 = E^{pq}_{\infty}
\end{equation*}
and the induced filtration by $F$ on cohomology defines a Hodge Structure. The
proof by Deligne proceeds as in Hodge theory.

\subsection{MHS on the cohomology of a Mixed Hodge Complex }
 The proof is based on a delicate study of the induced filtration
 $W$ and $F$ on the cohomology.
 To explain the difficulty, imagine for a moment that we want to give
 a proof by induction on the length of $W$. Suppose that the weights of a
 Mixed Hodge Complex: $(K,W,F)$ vary from
 $ W_0 = 0$ to $W_l = K$ and suppose we did construct the Mixed Hodge Structure on
 cohomology for $l-1$, then we consider the long exact sequence
 of cohomology:
\[H^{i-1}(Gr^W_l K )\to H^i(W_{l-1} K )\to H^i( K) \to H^i(Gr^W_l K )
\to H^{i+1}(W_{l-1} K )\] 
If the result was established then the
morphisms of the sequence are strict, hence the difficulty
 is a question of relative
 positions of the subspaces $W_p$ and $F^q$ on $H^i( K )$ with respect to
 Im$H^i(W_{l-1} K )$ and the projection on $H^i(Gr^W_l K )$. This
 study is known as the two filtrations lemma.

\subsubsection{Two filtrations} This section relates results on
various induced filtrations on terms of a spectral sequence,
contained in \cite{HII} and \cite{HIII} (lemma on two
filtrations). A filtration $W$ of a complex by subcomplexes define
a spectral sequence $E_r(K,W)$. The filtration $F$ induces in
various ways filtrations on $E_r(K,W)$, different
 in general. We discuss here axioms on $W$
and $F$, at the level of complexes, in order to get compatibility
of the various induced filtrations by  $F$ on the spectral
sequence of $(K,W)$. What we have in mind is to find axioms
leading to define the Mixed Hodge Structure with induced
filtrations $W$ and $F$ on cohomology. The proof is technical,
hence  we emphasize here  the main ideas as a guide to Deligne's
proof.

 \subsubsection{} Let $(K,F,W)$ be a   bi-filtered
complex  of objects of an abelian category, bounded below. The
filtration $F$,
 assumed to be biregular, induces on the terms $E_r^{pq}$ of
the spectral sequence $E(K,W)$ various filtrations as follows:

\begin{definition}[$F_d, F_{d^*}$] The first direct filtration
on $E_r(K,W)$ is the filtration $F_d$ defined  for $r$ finite or
$r = \infty$, by  the image:
$$ F_d^p(E_r(K,W)) = Im(E_r(F^pK,W) \rightarrow E_r(K,W)).$$
Dually, the second direct filtration $F_{d^*}$ on  $E_r(K,W)$ is
defined by the kernel:
$$F_{d^*}^p(E_r(K,W)) = Ker(E_r(K,W) \rightarrow E_r(K/F^pK,W)).$$
\end{definition}

 This  definition is temporarily convenient for the next computation,
 since the filtrations $F_d, F_{d^*}$ are naturally induced by $F$,
 hence compatible with the differentials $d_r$.

    Since for $r = 0,\, 1$, $B^{pq}_r \subset Z^{pq}_r$,

  \begin{lemma}
   We have $F_d = F_{d^*}$ on $E_0^{pq}= Gr_F^p(K^{p+q})$
   and $E_1^{pq}=H^{p+q}(Gr^W_p
    K)$.
  \end{lemma}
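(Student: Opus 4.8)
The plan is to trade the abstract definition of the two direct filtrations for an explicit description in terms of the cycle and boundary objects $Z_r^{pq}$, $B_r^{pq}$ of the spectral sequence of $(K,W)$, and then to read off the equality $F_d=F_{d^*}$ from a single application of the modular (Dedekind) law. The whole argument rests on the inclusion $B_r^{pq}\subset Z_r^{pq}$ signalled in the hint, which holds precisely for $r=0,1$. Indeed, using the formulas for the increasing filtration $W$ one has $Z_0^{pq}=W_{-p}K^{p+q}$, $B_0^{pq}=W_{-p-1}K^{p+q}$, and $Z_1^{pq}=\{x\in W_{-p}K^{p+q}:dx\in W_{-p-1}K^{p+q+1}\}$, $B_1^{pq}=W_{-p-1}K^{p+q}+d(W_{-p}K^{p+q-1})$; the inclusion is then immediate, since $W$ is a filtration by subcomplexes (so $d$ preserves $W_{-p-1}$) and since $d^2=0$. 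Hence for $r=0,1$ one has the simplification $E_r^{pq}=Z_r^{pq}/B_r^{pq}$.

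First I would compute the first direct filtration. Writing $a$ for the index of $F$ (to avoid clashing with the spectral index $p$), the fact that $F^aK$ is a subcomplex gives $Z_r^{pq}(F^aK,W)=Z_r^{pq}\cap F^aK^{p+q}$, so the image of $E_r(F^aK,W)\to E_r(K,W)$ is
\[
F_d^a(E_r^{pq})=\frac{(Z_r^{pq}\cap F^aK^{p+q})+B_r^{pq}}{B_r^{pq}}.
\]
Dually, identifying the boundaries of the quotient complex, $B_r^{pq}(K/F^aK,W)$ is the image of $B_r^{pq}+F^aK^{p+q}$, so the kernel of $E_r(K,W)\to E_r(K/F^aK,W)$ is
\[
F_{d^*}^a(E_r^{pq})=\frac{Z_r^{pq}\cap\bigl(B_r^{pq}+F^aK^{p+q}\bigr)}{B_r^{pq}}.
\]

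The key step is then purely formal: because $B_r^{pq}\subset Z_r^{pq}$, the modular law yields
\[
Z_r^{pq}\cap\bigl(B_r^{pq}+F^aK^{p+q}\bigr)=B_r^{pq}+\bigl(Z_r^{pq}\cap F^aK^{p+q}\bigr),
\]
so the two numerators coincide and therefore $F_d^a=F_{d^*}^a$ on $E_0^{pq}$ and $E_1^{pq}$, which is the assertion of the lemma.

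The main obstacle is not the modular law itself but the bookkeeping that precedes it: one must check that forming cycles commutes with restriction to the subcomplex $F^aK$ (using that $d$ preserves $F^a$, so that the extra condition $dx\in F^a$ is automatic) and that forming boundaries is compatible with passage to the quotient $K/F^aK$, so that $F_d^a$ and $F_{d^*}^a$ are genuinely given by the two displayed formulas. It is worth emphasising that the inclusion $B_r^{pq}\subset Z_r^{pq}$ breaks down as soon as $r\geq 2$; this is exactly why the coincidence of the direct filtrations ceases to be automatic at higher ranks, and why the recurrent filtration $F_{rec}$ and the full lemma on two filtrations are needed there.
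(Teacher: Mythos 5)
Your proof is correct and follows essentially the same route as the paper, which disposes of the lemma with the single remark that $B_r^{pq}\subset Z_r^{pq}$ for $r=0,1$: your explicit formulas for $F_d$ and $F_{d^*}$ in terms of $Z_r$, $B_r$ and the application of the modular law are exactly the computation that remark is meant to encode. Your closing observation that the inclusion fails for $r\geq 2$, which is why the recurrent filtration and the two-filtrations theorem are needed at higher ranks, also matches the paper's subsequent development.
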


\begin{definition}[$F _{rec}$]  The recurrent filtration  $F_{rec}$
on $E_r^{pq}$ is defined as follows \\
i)  On $E_0^{pq} , F_{rec} = F_d =
F_{d^*}$. \\
ii) The recurrent filtration  $F_{rec}$ on $E_r^{pq}$  induces a
filtration on ker $d_r$, which induces on its turn the recurrent
filtration  $F_{rec}$ on $E_{r+1}^{pq}$ as a quotient of ker
$d_r$.
\end{definition}

\subsubsection{} The precedent definitions of direct filtrations
apply to $E_{\infty}^{pq}$ as well and they are compatible with
the isomorphism $E_r^{pq} \simeq E_{\infty}^{pq}$ for large $r$.
We deduce, via this  isomorphism a recurrent filtration $F_{rec}$
on $E_{\infty}^{pq}$. The filtrations $F$ and $W$ induce each a
filtration on $H^{p+q}(K)$. We want to prove that the isomorphism
$E_{\infty}^{pq} \simeq Gr_W^{-p}H^{p+q}(K)$ is compatible with
 $F$  at right and $F_{rec}$ on $E_{\infty}^{pq}$.

\subsubsection{Comparison of $F_d, F_{rec}, F_{d^*}$}
 In general we have only the inclusions
\begin{prop} i) On $E_r^{pq}$, we have the inclusions
 \[F_d(E_r^{pq}) \subset F_{rec}(E_r^{pq}) \subset
 F_{d^*}(E_r^{pq})\]
 ii) On $E_{\infty}^{pq}$, the filtration induced by the
filtration $F$ on $H^*(K)$ satisfy
 \[F_d(E_\infty^{pq}) \subset
F(E_\infty^{pq}) \subset F_{d^*}(E_\infty^{pq}). \]
  iii) The differential $d_r$ is compatible with $F_d$ and
  $F_{d^*}$.
 \end{prop}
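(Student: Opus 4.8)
The plan is to treat the three assertions in the order (iii), then (i) by induction on $r$, then (ii) directly, since (iii) and the base case are exactly what the induction for (i) consumes, while (ii) does not involve the recurrent filtration at all.

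\smallskip
\noindent\textbf{Step 1: assertion (iii), by functoriality.} The inclusion $i\colon F^pK\hookrightarrow K$ and the projection $\pi\colon K\to K/F^pK$ are morphisms of $W$-filtered complexes, so each induces a morphism of the associated spectral sequences, $E_r(F^pK,W)\to E_r(K,W)$ and $E_r(K,W)\to E_r(K/F^pK,W)$, compatible with every differential $d_r$. Since by definition $F_d^p(E_r)=\mathrm{Im}\,(E_r(F^pK,W)\to E_r(K,W))$ and $F_{d^*}^p(E_r)=\mathrm{Ker}\,(E_r(K,W)\to E_r(K/F^pK,W))$, the relation $d_r\circ d_r=0$ applied to these maps shows at once that $d_r$ carries $F_d^p(E_r^{pq})$ into $F_d^p(E_r^{p+r,q-r+1})$ and preserves $F_{d^*}^p$. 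This is (iii), and it is precisely what makes the recurrent filtration well defined, as $F_{rec}$ is transported through $\ker d_r$.

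\smallskip
\noindent\textbf{Step 2: assertion (i), by induction on $r$.} Write $\operatorname{ind}(G)$ for the filtration induced on the subquotient $E_{r+1}^{pq}=\ker d_r/\mathrm{Im}\,d_r$ by a filtration $G$ on $E_r^{pq}$; the operation $\operatorname{ind}$ is monotone. Base case: on $E_0^{pq}$ one has $F_d=F_{d^*}$ by the preceding lemma and $F_{rec}$ is defined to equal them, so all three coincide. For the inductive step assume $F_d\subset F_{rec}\subset F_{d^*}$ on $E_r^{pq}$. By definition $F_{rec}(E_{r+1})=\operatorname{ind}(F_{rec}(E_r))$, so monotonicity of $\operatorname{ind}$ gives $\operatorname{ind}(F_d(E_r))\subset F_{rec}(E_{r+1})\subset \operatorname{ind}(F_{d^*}(E_r))$. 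It remains to compare these induced filtrations with the \emph{genuine} direct filtrations on $E_{r+1}$, i.e.\ to prove the two sub-lemmas $F_d(E_{r+1})\subset \operatorname{ind}(F_d(E_r))$ and $\operatorname{ind}(F_{d^*}(E_r))\subset F_{d^*}(E_{r+1})$. For the first, $E_{r+1}(F^pK,W)$ is the $d_r$-cohomology of $E_r(F^pK,W)$; a class there is represented by an $a$ whose image in $E_r(K,W)$ lies in $F_d^p(E_r)\cap\ker d_r$, so its image in $E_{r+1}(K,W)$ lands in $\operatorname{ind}(F_d^p(E_r))$. The second sub-lemma is dual: a $d_r$-cycle lying in $F_{d^*}^p(E_r)=\mathrm{Ker}\,(E_r(K)\to E_r(K/F^pK))$ stays in the kernel after passing to $d_r$-cohomology, hence its class lies in $F_{d^*}^p(E_{r+1})$. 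Chaining the two sub-lemmas with the inductive hypothesis yields $F_d(E_{r+1})\subset F_{rec}(E_{r+1})\subset F_{d^*}(E_{r+1})$. Finally, since $W$ is biregular, $E_r=E_\infty$ for $r\gg 0$, and the stabilization isomorphism is compatible with all three filtrations, so (i) descends to $E_\infty^{pq}$.

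\smallskip
\noindent\textbf{Step 3: assertion (ii), directly on $E_\infty$.} Identify $E_\infty^{pq}\simeq Gr^W_{-p}H^{p+q}(K)$ and let $F(E_\infty)$ be the filtration induced by the filtration $F$ on $H^{p+q}(K)$. Using $F^aH^{p+q}(K)=\mathrm{Im}\,(H^{p+q}(F^aK)\to H^{p+q}(K))$ and the fact that this map $\phi$ is $W$-filtered, one has $\phi(W_{-p})\subset F^aH\cap W_{-p}H$, whence the image of $Gr^W_{-p}\phi$, which is $F_d^a(E_\infty)$, is contained in $F^a(E_\infty)$. Dually, a class in $F^a(E_\infty)$ has a representative in $F^aH\cap W_{-p}H$, and this representative maps to $0$ in $H^{p+q}(K/F^aK)$ because the composite $H(F^aK)\to H(K)\to H(K/F^aK)$ vanishes; hence its class lies in $F_{d^*}^a(E_\infty)=\mathrm{Ker}\,(Gr^W H(K)\to Gr^W H(K/F^aK))$. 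Together these give $F_d(E_\infty)\subset F(E_\infty)\subset F_{d^*}(E_\infty)$.

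\smallskip
I expect the main obstacle to be the two sub-lemmas of Step~2, which compare the direct filtrations on $E_{r+1}$ with those induced from $E_r$: the delicate point is tracking a $d_r$-cohomology class of $E_r(F^pK,W)$ along the map into $E_{r+1}(K,W)$ and checking it represents an element of the induced filtration. These inclusions are in general \emph{strict}, which is exactly why only inclusions are asserted here; the eventual coincidence $F_d=F_{rec}=F_{d^*}$, the genuine lemma on two filtrations, will require the additional Hodge-theoretic hypotheses exploited in the next subsection.
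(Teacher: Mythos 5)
Your proof is correct and is essentially the standard argument behind this proposition — functoriality of the two direct filtrations under $F^pK\hookrightarrow K$ and $K\twoheadrightarrow K/F^pK$ for (iii), induction on $r$ comparing the genuine direct filtrations on $E_{r+1}$ with those induced from $E_r$ via the subquotient $\ker d_r/\mathrm{Im}\,d_r$ for (i), and the identification $E_\infty^{pq}\simeq Gr^W_{-p}H^{p+q}(K)$ together with the image/kernel descriptions of $F$ on $H^*(K)$ for (ii) — which is precisely the proof of Deligne's two-filtrations lemma in [HII] that the paper cites without reproducing. Two cosmetic remarks: in Step 1 what you actually use is that morphisms of $W$-filtered complexes induce morphisms of spectral sequences commuting with $d_r$ (the relation $d_r\circ d_r=0$ plays no role), and the well-definedness of $F_{rec}$ does not depend on (iii) at all, since any filtration on $E_r$ induces one on the subquotient $\ker d_r/\mathrm{Im}\,d_r$ without any compatibility hypothesis.
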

 We want  to show that these three filtrations coincide under the
conditions on the Mixed Hodge Complex, for this we need to
 know the compatibility of $d_r$ with $F_{rec}$ which will be  the
 Hodge filtration of a Hodge Structure.
 In fact, Deligne proves an intermediary statement, that will apply
 inductively for Mixed Hodge Complexes.

\begin{theorem}[Deligne]\label{del} Let  $K$ be a  complex with
 two filtrations $W$ and $F$, where  $W$ is biregular and suppose
 that for each non negative integer $r < r_0$, the differentials
 $d_r$ of the graded complex $E_r(K,W)$ are strictly compatible
  with  $F_{rec}$, then\\
i) For $r \leq r_0$ the sequence
$$0 \rightarrow E_r(F^p K,W) \rightarrow E_r(K,W)
\rightarrow E_r (K/F^p K,W) \rightarrow 0$$ is exact, and for $r =
r_0 +1$, the sequence
$$ E_r(F^pK,W) \rightarrow E_r(K,W) \rightarrow
 E_r(K/F^pK,W)$$
is exact. In particular for $r \leq r_0 + 1$, the two direct and
the recurrent  filtration on $E_r(K,W)$ coincide $F_d = F_{rec} =
F_{d^*}$.\\
ii) For $ a < b$ and $r < r_0$, the differentials
 $d_r$ of the graded complex $E_r(F^aK/F^bK,W)$ are strictly compatible
  with  $F_{rec}$.\\
   iii) If the above condition is satisfied for all $r$, then
the spectral sequence $E(K,F)$ degenerates at $E_1$
  $(E_1 = E_{\infty})$ and the  filtrations $F_d, F_{rec}, F_{d^*}$
 coincide into a unique filtration called $F$. Moreover, the isomorphism
  $E_{\infty}^{pq} \simeq Gr_W^{-p}H^{p+q}(K)$ is compatible
    with the  filtrations $F$ and we
 have an isomorphism of spectral sequences
 \[Gr^p_F E_r(K,W) \simeq E_r(Gr^p_F K,W)\]
\end{theorem}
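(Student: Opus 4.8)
The plan is to prove the three assertions together by an induction on the spectral-sequence index $r$, carrying as inductive hypothesis the short exactness of the three-term sequence
$$(\ast)_r:\quad 0 \to E_r(F^pK,W) \to E_r(K,W) \to E_r(K/F^pK,W) \to 0 .$$
I would first observe that $(\ast)_r$ is essentially a reformulation of the coincidence of the direct filtrations: by the very definitions of $F_d$ and $F_{d^*}$, the image of the left-hand map is $F_d^p E_r(K,W)$ and the kernel of the right-hand map is $F_{d^*}^p E_r(K,W)$, so middle exactness is the equality $F_d^p = F_{d^*}^p$. Combined with the sandwich $F_d \subset F_{rec} \subset F_{d^*}$ from the comparison proposition, this gives $F_d = F_{rec} = F_{d^*}$ on $E_r$ as soon as $(\ast)_r$ holds in the middle.

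For the base of the induction I would treat $r = 0$ and $r = 1$ directly: since $B^{pq}_r \subset Z^{pq}_r$ there, the preceding lemma already gives $F_d = F_{d^*}$ on $E_0$ and $E_1$, and the exactness of $(\ast)_0$, $(\ast)_1$ drops out of the explicit formulas for $Z_r, B_r$. The inductive step is the substance. Assuming $(\ast)_r$ for $r < r_0$, I would regard it as a short exact sequence of complexes for the differential $d_r$ and take $d_r$-cohomology; since $E_{r+1} = H(E_r, d_r)$, this produces a long exact sequence linking the $E_{r+1}$-terms of $F^pK$, $K$ and $K/F^pK$. Here the hypothesis enters decisively: strict compatibility of $d_r$ with $F_{rec}$ is exactly the condition under which $Gr_{F_{rec}}$ commutes with passage to $d_r$-cohomology, which is the strictness criterion (part (ii) of the strictness proposition), giving $Gr_{F_{rec}} E_{r+1} \simeq H(Gr_{F_{rec}} E_r, d_r)$. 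Comparing associated graded objects then forces the long exact sequence to split into the short exact sequences $(\ast)_{r+1}$, so $F_d = F_{rec} = F_{d^*}$ on $E_{r+1}$. Since strictness is available only for $r < r_0$, this argument yields full exactness of $(\ast)_r$ for $r \le r_0$, while at the top step $r = r_0$ the missing strictness of $d_{r_0}$ leaves only middle exactness of $(\ast)_{r_0+1}$, which is precisely the weaker claim of (i).

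Part (ii) I would obtain by feeding the truncated complexes $F^aK/F^bK$ into the same machine: the maps $F^bK \to F^aK \to F^aK/F^bK$ are filtered for both $W$ and $F$, and the already-established coincidence of the three filtrations on $E_r(K,W)$, together with the exact sequences $(\ast)_r$ for $F^aK$, $F^bK$ and $K$, transports strict compatibility of $d_r$ with $F_{rec}$ to the graded complex $E_r(F^aK/F^bK,W)$. For (iii), if strictness holds for every $r$ the induction never stops: all three filtrations agree on each $E_r$ (call the common filtration $F$), and $(\ast)_r$ is short exact for all $r$, including $r = \infty$. The exact sequences $(\ast)_r$ for consecutive $p$ give the asserted isomorphism $Gr^p_F E_r(K,W) \simeq E_r(Gr^p_F K,W)$; at $r = \infty$, using $E_\infty^{pq} \simeq Gr_W^{-p} H^{p+q}(K)$, this shows the identification is compatible with $F$. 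Finally, the graded exactness encoded in these sequences is equivalent to strict compatibility of the differential $d$ of $K$ with $F$, so by Proposition \ref{delignestrict} the spectral sequence $E(K,F)$ degenerates at $E_1$.

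The hard part will be the inductive step, and specifically justifying that strict compatibility of $d_r$ with $F_{rec}$ is the exact hypothesis needed to propagate short exactness from $(\ast)_r$ to $(\ast)_{r+1}$. The delicacy is that $F_{rec}$ is defined by a two-stage sub-then-quotient recipe through $\ker d_r$ rather than being induced directly by a map, so one cannot simply invoke exactness of $Gr$; I would have to check carefully that the strictness criterion really identifies $Gr_{F_{rec}} E_{r+1}$ with the $d_r$-cohomology of $Gr_{F_{rec}} E_r$, that the recurrent filtration, a priori only sandwiched between the two direct ones, collapses onto them at each stage, and that the asymmetry at the extreme index $r_0$ (where only middle exactness survives) is tracked correctly. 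This bookkeeping is the technical heart of Deligne's lemma on two filtrations.
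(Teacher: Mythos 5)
Your strategy is, in substance, the paper's own: an induction on $r$ in which strict compatibility of $d_r$ with $F_{rec}$ propagates exactness of the three-term sequence, the sandwich $F_d \subset F_{rec} \subset F_{d^*}$ collapses the three filtrations as soon as middle exactness (which is literally the equality $F_d = F_{d^*}$, as you correctly observe) is established, duality handles the quotient side, and part (iii) ends with the same appeal to Proposition \ref{delignestrict}. The only difference is packaging: you carry the full short exact sequence $(\ast)_r$ as inductive invariant and aim to kill the connecting morphism of the long exact sequence in $d_r$-cohomology, whereas Deligne carries the staggered property $(P_r)$ (injectivity of $E_i(F^pK,W) \to E_i(K,W)$ for $i \leq r$, image equal to $F^p_{rec}$ for $i \leq r+1$) and computes directly that $d_r E_r(K,W) \cap E_r(F^pK,W) = d_r E_r(F^pK,W)$; these are the same computation in different clothing.

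Two points need repair, however. First, your base case overclaims: $(\ast)_1$ does \emph{not} drop out of the explicit formulas for $Z_r, B_r$. Unconditionally one only has $F_d = F_{d^*}$ on $E_0$ and $E_1$ (because $B_r^{pq} \subset Z_r^{pq}$ there), i.e.\ middle exactness; left exactness of $(\ast)_1$ already requires strictness of $d_0$. For instance, take $K$ the two-term complex $\C \to \C$ with $d$ an isomorphism, $W$ with a single jump, and $F^1K$ the subcomplex concentrated in degree $1$: then $E_1(F^1K,W) \cong \C$ maps to $E_1(K,W) = 0$, and indeed $d$ is not strict for $F$ here. So start the induction at $(\ast)_0$, which \emph{is} unconditional (the inclusion $F^pK \to K$ and the projection $K \to K/F^pK$ are $W$-strict), and let your inductive step manufacture $(\ast)_1$ from strictness of $d_0$. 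Second, ``comparing associated graded objects forces the long exact sequence to split'' is not a valid inference in an abelian category: graded information does not by itself kill connecting morphisms. The mechanism that works is the direct one: once $(\ast)_r$ identifies $E_r(F^pK,W)$ with $F^p_{rec}E_r(K,W)$ compatibly with $d_r$, strictness gives $d_r E_r(K,W) \cap F^p_{rec} = d_r F^p_{rec}$, which says precisely that any $d_r$-boundary lying in $F^p_{rec}$ is a boundary \emph{within} $F^p_{rec}$, i.e.\ the connecting morphism $E_{r+1}(K/F^pK,W) \to E_{r+1}(F^pK,W)$ vanishes; middle exactness at level $r_0+1$ then comes for free from the long exact sequence, which is exactly the asymmetry you track. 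Finally, in (iii), passing from the exact sequences at the $E_\infty = Gr_W H^*$ level to the exactness of $0 \to H^i(F^pK) \to H^i(K) \to H^i(K/F^pK) \to 0$ uses the biregularity of $W$, and it is this injectivity for all $p, i$ that is equivalent to strictness of $d$ for $F$ and hence to $E_1$-degeneration via Proposition \ref{delignestrict}. With these repairs your argument is the paper's proof.
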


\begin{proof} This surprising  statement  looks natural only
if we have in mind the degeneration of $E(K,F)$ at rank $1$ and
the strictness in the category of Mixed Hodge Structures.

For fixed $p$, we consider the following property:
$$(P_r) \, E_i(F^pK,W)
\hbox{ injects into }E_i(K,W)\hbox{ for }i \leq r\hbox{ and its
image is }F^p_{rec}\hbox{ for }i \leq r+1.$$

We already noted that $(P_0)$ is satisfied. The proof by induction
on $r$ will apply as long as $r$ remains $ \leq r_0$. Suppose $r <
r_0$ and $(P_s)$ true for all
 $s \leq r$, we prove $(P_{r+1})$.
 The sequence:
$$E_r(F^pK,W)\xrightarrow{d_r}E_r(F^pK,W)\xrightarrow{d_r}E_r(F^pK,W)$$
injects into:
$$E_r(K,W)\xrightarrow{d_r}E_r(K,W)\xrightarrow{d_r}E_r(K,W)$$
with image $F_d = F_{rec}$, then, the image of $F^p_{rec}$ in
$E_{r+1}$:
$$F^p_{rec}E_{r+1} = Im[Ker
(F_{rec}^pE_r(K,W)\xrightarrow{d_r}E_r(K,W)) \to E_{r+1}(K,W)]$$
coincides with the image of $F^p_{d}$ which is by definition $
Im[E_{r+1}(F^p K,W) \to E_{r+1}(K,W)]$.

Since $d_r$ is strictly compatible with $F_{rec}$, we have:
$$d_r E_r(K,W)
\cap E_r(F^pK,W)= d_r E_r(F^pK,W)$$ which means that $E_{r+1}(F^p
K,W)$ injects into $E_{r+1}(K,W)$, hence we deduce the injectivity
for $r+1$.
 Since ker $d_r$ on $F^p_{rec}$ is equal to ker $d_r$ on $E_{r+1}(F^p
 K,W)$, we deduce $F^p_{rec} = F^p_d$ on $E_{r+2}(K,W)$, which proves
 $(P_{r+1})$.

 Then, i) follows from a dual statement applied to
 $F_{d^*}$ and ii) follows, because $E_r(F^bK,W) \hookrightarrow E_r(F^aK,W)$
   is injective as they are both in $E_r(K,W)$.

iii) We deduce from i) the first exact sequence followed by its
dual:
\begin{equation*}
\begin{split}
& 0\rightarrow E_r(F^{p+1}K,W) \rightarrow E_r(F^pK,W) \rightarrow
 E_r(Gr_F^pK,W)\rightarrow 0\\
& 0\leftarrow E_r(K/F^pK,W) \leftarrow E_r(F^{p+1}K,W)\leftarrow
E_r(Gr_F^pK,W) \leftarrow 0.
\end{split}
\end{equation*}
In view of the injections in i) and the coincidence of $F_d =
F_{rec} = F_{d^*}$ we have a unique filtration $F$, the quotient
of the first two terms in the first exact sequence above is
isomorphic to $Gr_F^pE_r(K,W)$, hence we deduce an isomorphism
\[Gr_F^pE_r(K,W) \simeq E_r(Gr_F^pK,W)\]
compatible with $d_r$ and autodual. If the hypothesis is now true
for all $r$, we deduce an exact sequence:
 $$0 \rightarrow
E_\infty(F^pK,W) \rightarrow E_\infty(K,W) \rightarrow
 E_\infty(K/F^pK,W)\rightarrow 0$$
 which is identical to:
$$0 \rightarrow
Gr_W H^*(F^pK) \rightarrow Gr_W H^*(K) \rightarrow
 Gr_W H^*(K/F^pK)\rightarrow 0$$
 from which we deduce, for all $i$:
 $$0 \rightarrow
H^i(F^pK) \rightarrow  H^i(K) \rightarrow
  H^i(K/F^pK)\rightarrow 0$$
and that the  filtrations $W$  induced  on $H^i(F^pK)$ from
$(F^pK, W)$ and from $(H^i(K), W)$ coincide.
\end{proof}

\subsubsection{Proof of the existence of a MHS on cohomology of a MHC}
The above theorem \ref{del} applies to Mixed Hodge Complexes, since the
hypothesis of the induction on $r$ in the theorem will be
satisfied as follows. If we assume that the  filtrations $F_d =
F_{rec} = F_{d^*}$ coincide for $r < r_0$ and moreover define the
same Hodge filtration $F$ of a Hodge Structure of weight $q$ on
$E^{p,q}_r(K,W)$ and $d_r: E^{p,q}_r \to E^{p+r,q-r+1}_r$ is
compatible with such Hodge Structure, then in particular $d_r$ is
strictly compatible with $F$, hence the induction apply.

\begin{lemma}
  For $r \geq 1$, the differentials
$d_r$ of the spectral sequence  $_WE_r$ are strictly compatible
to the recurrent filtration  $F = F_{rec}$. For $r \geq 2$, they
vanish.
\end{lemma}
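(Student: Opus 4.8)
The plan is to prove both assertions simultaneously by induction on $r$, the inductive hypothesis being that on each term ${}_W E_r^{p,q}$ the three filtrations coincide, $F_d = F_{rec} = F_{d^*} =: F$, that this $F$ is the Hodge filtration of a Hodge Structure of weight $q$, and that $d_r$ is a morphism of these Hodge Structures. Granting the last property for all $s \le r$, the hypotheses of Deligne's theorem \ref{del} (strict compatibility of $d_s$ with $F_{rec}$) are met, so part (i) of that theorem propagates the equality $F_d = F_{rec} = F_{d^*}$ to $E_{r+1}$; the strictness of $d_r$ with respect to $F$ is then automatic, being a formal consequence of $d_r$ being a morphism of Hodge Structures, since morphisms of Hodge Structures are strict for the Hodge filtration.

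For the base case $r = 1$ I would argue as follows. By the lemma comparing $F_d$ and $F_{d^*}$ we already have $F_d = F_{d^*}$ on ${}_W E_1^{p,q} = H^{p+q}(Gr^W_{-p} K)$, and by definition $F_{rec} = F_d$ there. The axiom (MHC) says $Gr^W_{-p}(K_\C, F)$ is a Hodge Complex of weight $-p$; hence by (HC 2) the filtration $F$ induced on $H^{p+q}(Gr^W_{-p} K_\C)$ defines a Hodge Structure of weight $(-p)+(p+q) = q$, and this induced $F$ is exactly the direct filtration $F_d$. Thus on $E_1$ the recurrent filtration is the genuine Hodge filtration of a weight-$q$ Hodge Structure. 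The differential $d_1 \colon E_1^{p,q} \to E_1^{p+1,q}$ is the connecting homomorphism of the short exact sequence of Hodge complexes $0 \to Gr^W_{-p-1}K \to W_{-p}K/W_{-p-2}K \to Gr^W_{-p}K \to 0$; since this sequence is compatible with $F$ and both $H^{p+q}(Gr^W_{-p}K)$ and $H^{p+q+1}(Gr^W_{-p-1}K)$ carry Hodge Structures of the same weight $q$, the map $d_1$ is a morphism of Hodge Structures, hence strictly compatible with $F$.

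For the inductive step I would compute $E_{r+1}^{p,q}$ as the cohomology of $E_r^{p-r,q+r-1} \xrightarrow{d_r} E_r^{p,q} \xrightarrow{d_r} E_r^{p+r,q-r+1}$. By the inductive hypothesis these three terms are Hodge Structures of weights $q+r-1$, $q$, $q-r+1$ respectively. When $r = 1$ all three weights equal $q$, so $d_1$ is a morphism within the abelian category of weight-$q$ Hodge Structures and $E_2^{p,q}$, being its cohomology, inherits a Hodge Structure of weight $q$ whose Hodge filtration is, by its recursive construction, $F_{rec}$. When $r \ge 2$ the source and target of $d_r$ have distinct weights ($q$ versus $q-r+1$), and a morphism of Hodge Structures between spaces of distinct weights is necessarily zero; hence $d_r = 0$, $E_{r+1}^{p,q} = E_r^{p,q}$, and the weight-$q$ Hodge Structure persists. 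In either case $d_r$ is strictly compatible with $F$, trivially so when it vanishes, which closes the induction.

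The main obstacle is the base step: one must verify that the abstract recurrent filtration $F_{rec}$ genuinely agrees on $E_1$ with the Hodge filtration supplied by the (MHC) axiom, and that $d_1$ respects the full Hodge decomposition, i.e. is compatible with $\overline F$ as well as with $F$, not merely filtered. This is precisely where the Hodge-Complex hypothesis on each $Gr^W_n K$ is essential, as it converts the purely filtered conclusions of theorem \ref{del} into statements in the category of Hodge Structures. Once $d_1$ is known to be a morphism of Hodge Structures, strictness re-enters theorem \ref{del} to advance the induction, and the weight bookkeeping on $d_r$ forces the vanishing for $r \ge 2$.
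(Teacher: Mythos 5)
Your overall architecture---induction through Deligne's two-filtrations theorem \ref{del}, the base case supplied by the (MHC) axiom making $Gr^W_{-p}K$ a Hodge Complex of weight $-p$ so that $E_1^{p,q}$ carries a Hodge Structure of weight $q$, and the vanishing of $d_r$ for $r \geq 2$ forced by the weight mismatch between $q$ and $q-r+1$---is exactly the paper's proof. But there is one genuine gap, and it sits precisely at the point you yourself flag as the main obstacle without ever closing it: your justification that $d_1$ is a morphism of Hodge Structures. You argue that since the connecting map of the short exact sequence is compatible with $F$ and both ends carry Hodge Structures of the same weight $q$, the map $d_1$ is a morphism of Hodge Structures. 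That inference is false: a $\C$-linear map between two Hodge Structures of the same weight that preserves $F$ need not preserve $\overline F$. For instance, on $H_{\C} = H^{1,0}\oplus H^{0,1}$ of weight $1$ with $H^{1,0} = \C e_1$ and $H^{0,1} = \C e_2$, the map sending $e_1 \mapsto e_1$ and $e_2 \mapsto e_1$ preserves $F$ (and the weight) but carries $\overline{F}^1 = \C e_2$ outside itself, so it is not a morphism of Hodge Structures. Compatibility with $F$ alone yields a filtered map, not a map of bigraded objects, and strictness does not follow.

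The missing ingredient---the one the paper supplies in a single decisive sentence---is the rational structure of the Mixed Hodge Complex: $d_1$ is induced by the connecting morphism of the weight spectral sequence of $(K_{A\otimes\Q}, W)$, hence it is defined over $A\otimes\Q$, and its complexification therefore commutes with complex conjugation. Compatibility with $F$ then automatically gives compatibility with $\overline{F}$, and only at that point is $d_1$ a morphism of Hodge Structures of weight $q$, hence strictly compatible with $F = F_{rec}$, which is what feeds the induction in theorem \ref{del}. Note also that your closing paragraph mislocates the source of this $\overline{F}$-compatibility: you attribute it to the Hodge-Complex hypothesis on each $Gr^W_n K$, but that hypothesis only produces the Hodge Structures on the $E_1$ terms and says nothing about how $d_1$ interacts with conjugation; it is the comparison isomorphism $\alpha : (K_{A\otimes\Q},W)\otimes\C \simeq (K_{\C},W)$ in the MHC axioms that does this work. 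With that one observation inserted into your base case, the rest of your argument runs exactly as in the paper.
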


The initial statement applies for $r = 1$ by definition of a Mixed
Hodge Complex  since the  complex $Gr_{-p}^W K$ is a Hodge Complex
 of weight  $-p$. Hence,
the two direct filtrations and the recurrent filtration  $F_{rec}$
coincide with the Hodge filtration $F$ on $_WE_1^{pq} = H^{p+q}(Gr_{-p}^W K)$.
The differential $d_1$ is compatible with the direct filtrations,
 hence with  $F_{rec}$, and  commutes
 with  complex conjugation since it is defined on
 $A \otimes \Q$, hence it is  compatible with
 $\overline F_{rec}$. Then it is  strictly compatible
 with the Hodge  filtration  $F = F_{rec}$.

 The filtration $F_{rec}$  defined in this way is $q$-opposed
 to its  complex conjugate and  defines a Hodge Structure of weight $q$ on
 $_WE_2^{pq}$.

  We suppose by induction that the two direct filtrations and the
recurrent filtration coincide on $_WE_s(s \leq r+1) : F_d =
F_{rec} = F_{d^*}$ and  $_WE_r = \, {_WE_2}$.  On $_WE_2^{p,q} =
{_WE}_r^{p,q}$, the filtration $F_{rec} :=F$ is compatible with
$d_r$ and $q$-opposed to its complex conjugate. Hence the morphism
 $ d_r : \, {_WE}_r^{p,q} \rightarrow \, {_WE}_r^{p+r,q-r+1}$
  is a morphism of a Hodge Structure of weight $q$ to a Hodge Structure
   of weight $q-r+1$
  and must vanish  for $r>1 $.
In particular, we deduce that the weight spectral sequence
degenerates at rank $2$.

The filtration on $_WE_{\infty}^{p,q}$ induced by the filtration
$F$ on $H^{p+q}(K)$ coincides with the  filtration $F_{rec}$ on
$_WE_2^{p,q}$.
 
\section{Logarithmic complex, normal crossing divisor and the mixed
cone} 

In the next two sections, we show how the previous abstract
algebraic study  applies to algebraic varieties by constructing
explicitly the Cohomological Mixed Hodge Complex. First, on a smooth complex
variety $X$ containing a Normal Crossing Divisor (NCD) $Y$ with
smooth irreducible components, we introduce the  complex of
sheaves of differential forms  with logarithmic singularities
along $Y$ denoted $\Omega^*_X(Log Y)$ and sometimes
 $\Omega^*_X<Y>$. Its hypercohomology is isomorphic to the cohomology of $X-Y$
with coefficients in $\C$ and it is naturally endowed with two
filtrations $W$ and $F$. It is only when $X$ is compact that the
bi-filtered complex  $(\Omega^*_X(Log Y), W, F)$ underlies a
Cohomological Mixed Hodge complex which defines a Mixed Hodge Structure
on the cohomology of $X-Y$.

 The Mixed Hodge Structure of a smooth variety $V$ depends on the properties at
infinity of $V$, i.e., we have to consider a
compactification of $V$ by a compact algebraic variety $X$, which
is always possible by a result of Nagata \cite{NaG}. Moreover, by
Hironaka's desingularization theorem \cite{Hi}, we can suppose $X$ is
smooth and the complement $Y = X - V $ is a Normal Crossing Divisor
(NCD) with smooth irreducible components. Hence we can use
$(\Omega^*_X(Log Y), W, F)$ to define a Mixed Hodge Structure 
on the cohomology of $X-Y$ and then carry 
this Mixed Hodge Structure onto the cohomology of $V$. It is not
difficult to show that such a Mixed Hodge Structure does not depend on the
compactification $X$ and will be referred to as the 
Mixed Hodge Structure on $V$. We
stress that the weight $W$ of the cohomology $H^i (V)$ is $ \geq
i$, to be precise $W_{i-1} = 0, W_{2i} = H^i (V)$.

 The dual of the logarithmic complex \cite{E} is more natural to construct
 and is an example of the general construction in the next section.
 Similar to the basic Mayer-Vietoris construction in cohomology,
in the case of the normal crossing divisor $Y$, we  define a
resolution of the complex $\Z_Y $ by a complex
  $\Z_{Y_{{\underline *}}}$ defined in  degree $j-1$ by the constant sheaf $\Z$ on the
disjoint sum $Y_{\underline j}$ of the intersections of $j$
distinct components of $Y$. The interest in such resolution is its
construction from the spaces $Y_{\underline j}$ forming what we
call a strict simplicial resolution of $Y$. In such case we will
develop a natural procedure to deduce a canonical Mixed 
Hodge Complex on $Y$. This construction is relatively 
easy to understand in this case, hence it is the best illustration
of the construction in the following section. The weight $W$ of the
cohomology $H^j (Y)$ is $ \leq j$, to be precise $W_{-1} = 0, W_{j} =
H^j(V)$. 

We associate to the natural morphism   $\Z_X \to \Z_{Y_{\underline*}}$  
a complex called the cone defining  the relative cohomology
isomorphic to the cohomology with compact supports of the
complement $X-Y$, Poincar\'e dual to the cohomology of $X-Y$.

  The combination of the two cases, when we consider  a
sub-normal Crossing Divisor $Z$, a union of some components of $Y$,
and consider the  complement $Y - Z$, we obtain an open 
Normal Crossing Divisor which
will be a model for the general case with the  most general type
of Mixed Hodge Structure on the  cohomology 
$H^j(Y-Z)$ of weights varying from $0$
to $2j$.

 Finally we discuss the technique of the mixed cone which
associates  a new Mixed Hodge Complex to a morphism
of Mixed Hodge Complexes at the level of the
homotopy category, which leads to a Mixed Hodge Structure on the relative cohomology
and a long exact sequence of Mixed Hodge Structures. However there exists an
ambiguity on the Mixed Hodge Structure obtained since it depends on an homotopy
between various resolutions \cite{E}. In this case the ambiguity is
related to the embedding of the rational cohomology into the
complex cohomology and equivalently related to the problem of
extension of Mixed Hodge Structure in general.

\subsection{MHS on the cohomology of smooth varieties}
  Now we construct the Mixed Hodge Structure on the 
  cohomology of a smooth complex
  algebraic variety $V$.
 The algebraicity of $V$ allows us to use  a  result of  Nagata \cite{NaG}
 to embed  $V$ as an open Zariski subset of a
 complete variety $ Z$. Then the singularities of $Z$ are included
 in $D:= Z-V$. Since Hironaka's desingularization process
 in characteristic zero (see \cite{Hi}) is carried out by blowing up smooth centers above
  $D$,
there exists a variety $X \to Z$ above $Z$ such that the inverse image
of $D$ is a normal crossing divisor  $Y$  with
 smooth components in $X$ such that $X-Y \simeq Z-D$.
 
 Hence we may start with the hypothesis that $V = X^*:= X-Y$
 is the complement of a normal crossing divisor $Y$  in  a
 smooth proper variety $X$,
 so  the construction  of the Mixed Hodge Structure is  reduced to this situation,
under the condition that we prove its independence  of the choice of $X$. 

 We introduce now
the logarithmic complex and prove that it underlies a 
Cohomological Mixed Hodge Complex
on $X$ and computes the cohomology of $V$.

\subsubsection{Logarithmic complex}
Let $X$ be a complex manifold and $Y$ be 
a Normal Crossing Divisor in $X$. We denote
by $ j: X^* \to X$  the embedding of $X^*:= X-Y$ into $X$.
 A meromorphic form $\omega$ has a pole of order at most $1$ along $Y$
  if at each point $y$, $f \omega$ is holomorphic for some
  local equation  $f$ of $Y$ at $y$. Let $\Omega^*_{X}(*Y)$ denote the
  sub-complex of $j_*\Omega_{X^*}^*$ defined by
   meromorphic forms along  $Y$, holomorphic on $X^*$.
   
  \begin{definition}
 The logarithmic de Rham complex  of $X$
 along a Normal Crossing Divisor $Y$ is the subcomplex
 $\Omega_X^*(Log \, Y)$ of the complex
 $\Omega^*_{X}(*Y)$ defined by the sections $\omega$  such that
   $\omega$ and $d\omega$ both have a pole of order
 at most $1$ along  $Y$.
   \end{definition}
   
    By definition, at each point $y \in Y$, there exist local coordinates
$(z_i)_{i \in [1,n]}$ on $X$ and $ I \subset [1,n]$ such that $Y$ is
defined at $y$ by the equation $\Pi_ {i \in I }  z_i = 0$. Then
$\omega$ has logarithmic poles along  $Y$ if and only if it can be
written locally as:
  \begin{equation*}
\omega = \sum_{i_1,\cdots,i_r} \omega_{i_1,\cdots,i_r}
  \frac{dz_{i_1}}{z_{i_1}} \wedge \cdots  \wedge
  \frac{dz_{i_r}}{z_{i_r}} \quad \hbox { where } \, \; \omega_{i_1,\cdots,i_r} \;
   \hbox{ is holomorphic.}
   \end{equation*}
 This formula may be used as a definition but then we have to prove
 its independence of the choice of coordinates, that is $\omega$ may be
  written in this form with respect to any set of local
  coordinates at $y$.
  
  The ${\OO}_X$-module $\Omega_X^1(Log \,Y)$ is locally free with
 basis  $(dz_i/z_i)_{i \in I}$ and
 $(dz_j)_{ j \in [1,n]-I}$ and
 $\Omega_X^p(Log \,Y) \simeq \wedge^p \Omega_X^1(Log \,Y)$.
 
Let $f: X_1 \rightarrow X_2$ be a morphism of complex manifolds,
with normal crossing divisors  $Y_i$ in $ X_i$ for $i=1,2$, such that
$f^{-1}(Y_2) = Y_1$. Then, the reciprocal morphism $f^*:
f^*(j_{2*} \Omega_{X_2^*}^*) \rightarrow j_{1*}
 \Omega_{X_1^*}^*$ induces a morphism on logarithmic
 complexes:
 $$ f^* : f^* \Omega_{X_2}^* (Log\, Y_2) \rightarrow
\Omega_{X_1}^* (Log \,Y_1).$$

\subsubsection{Weight filtration $W$} Let $Y =  \cup_{ i \in I}
Y_i$ be the union of smooth irreducible divisors. Let $S^q$
denotes the set of  strictly increasing sequences $\sigma =
(\sigma_1,...,\sigma_q)$ in the set of indices $I, \, Y_\sigma =
Y_{\sigma_1...\sigma_q } = Y_{\sigma_1} \cap ... \cap
Y_{\sigma_q},\, Y^q = \coprod_{\sigma \in S^q} Y_{\sigma}$ the
disjoint union of $Y_{\sigma}$. Set $Y^0 = X$ and $\Pi : Y^q
\rightarrow Y$ the canonical projection. An increasing filtration
$W$, called the weight, is defined as follows:
 \begin{equation*}
 W_m(\Omega_X^p(Log \,Y)) = \sum_{\sigma \in S^m}\Omega_X^{p-m}
\wedge dz_{\sigma_1}/z_{\sigma_1} \wedge... \wedge
dz_{\sigma_m}/z_{\sigma_m}
\end{equation*}
The sub-${\OO}_X$-module $W_m(\Omega_X^p(Log \, Y)) \subset
\Omega_X^p(Log \, Y)$ is the smallest sub-module stable by exterior
multiplication with local  sections of $\Omega_X^*$ and containing
the products $dz_{i_1}/z_{i_1} \wedge...\wedge dz_{i_k}/z_{i_k}$
for $k \leq m$ for local equations $z_j$ of the components of $Y$.

\subsubsection{The Residue isomorphism} To define the Poincar\'e residue
along a component of an intersection of $Y_i$, we need to fix and
order a set of hypersurfaces $Y_{i_1}, \ldots,Y_{i_m}$ to
intersect. The following composed Poincar\'e residue is defined on
$Gr^W_m \Omega_X^* (Log \, Y)$:
\begin{equation*} Res : Gr^W_m (\Omega_X^p (Log \, Y))
\rightarrow \Pi_* \Omega_{Y^m}^{p-m} 
\end{equation*}
given by $Res ([\alpha \wedge (dz_{i_1}/ z_{i_1} \wedge ...
\wedge (dz_{i_m}/z_{i_m})]) = \alpha /Y_{i_1, \ldots,i_m}$. 

In the  case $m = 1$, it defines:
  \begin{equation*}
    Res : \Omega_X^* (Log \, Y) \rightarrow \Pi_*
    \Omega^*_{Y^1}[-1]
  \end{equation*}
  where $Y^1$ is the disjoint union of the irreducible components 
  of $Y$.
 In general it is a composition map of such residue
    in the one codimensional case up to a sign.
     We need to prove
    it is well defined, independent of the
  coordinates, compatible with the differentials and
 induces an isomorphism:
\begin{equation*} Res : Gr_m^W (\Omega_X^* (Log \,Y))
\xrightarrow{\sim}\Pi_* \Omega_{Y^m}^* [-m]
\end{equation*}
   We construct its inverse. Consider, for each sequence of indices
  $\sigma = ( i_1, \ldots, i_m)$
in $S^m$, the morphism  ${\rho}_{\sigma}: \Omega_X^p \rightarrow
Gr_m^W(\Omega_X^{p+m} (Log \, Y))$,
 defined locally as:
\begin{equation*}  \rho_{\sigma}(\alpha) =  \alpha \wedge dz_{\sigma_1} /
z_{\sigma_{1}} \wedge  ... \wedge  dz_{i_m} / z_{i_{m}}
\end{equation*}
It  does not depend on the choice of  $z_i$, since for another
choice of coordinates $z'_i$,  $z_i/z'_i$ are holomorphic and the
difference $(dz_i/z_i)-(dz'_i/z'_i)
= d(z_i/z'_i)/(z_i/z'_i)$ is holomorphic. 

Then $\rho_{\sigma} (\alpha) - \alpha \wedge dz'_{i_1} / z'_{i_1}
\wedge  ...\wedge  dz'_{i_m} / z'_{i_m} \in
W_{m-1}\Omega_X^{p+m}(Log Y)$, and successively $\rho_\sigma
(\alpha) - \rho'_\sigma (\alpha) \in W_{m-1}\Omega_X^{p+m}(Log \,
Y)$. We have $\rho_\sigma (z_{i_j} \cdot \beta) = 0$ and
$\rho_\sigma(dz_{i_j} \wedge  \beta') =0$
 for sections $\beta$ of $\Omega_X^p$ and $\beta'$  of
$\Omega_X^{p-1}$; hence $\rho_\sigma$  factors by
 $\overline \rho_\sigma $ on $ \Pi_* \Omega _{Y_\sigma}^p $
  defined locally and  glue globally into
a morphism of complexes on $X$:
\begin{equation*} \overline \rho_\sigma : \Pi_* \Omega _{Y_\sigma}^p \rightarrow
 Gr_m^W (\Omega_X^{p+m}(Log \, Y)), \quad 
 \overline \rho : \Pi_* \Omega_{Y^m}^*[-m] \rightarrow Gr_m^W
\Omega_X^*(Log \, Y).
\end{equation*}

 \begin{lemma} We have the following isomorphisms of sheaves:
 \item  i)  $ H^i(Gr_m^W
\Omega_X^* (Log \, Y)) \simeq \Pi_* \hbox{\bf C}_{Y^m}$ for $i = m$
and $0$ for $i\neq m$,
 \item ii) $  H^i (W_r \Omega_X^* (Log \, Y)) \simeq \Pi_* \hbox{\bf C}_{Y^i}$
for $i \leq r$ and $  H^i (W_r \Omega_X^* (Log \, Y))= 0 $ for $i
> r $, \\
and in particular  $ H^i (\Omega_X^*(Log \, Y)) \simeq \Pi_*
{\C}_{Y^i}$.
  \end{lemma}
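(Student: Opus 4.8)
The plan is to read off both statements from the residue isomorphism
\[ Res : Gr_m^W (\Omega_X^* (Log\,Y)) \xrightarrow{\sim} \Pi_* \Omega_{Y^m}^* [-m] \]
just established, combined with the holomorphic Poincar\'e lemma and the exactness of the pushforward $\Pi_*$. For part i), I would first note that $\Pi : Y^m \to X$ is finite, being a disjoint union of closed immersions of the smooth intersections $Y_\sigma$, so that $\Pi_*$ is exact and commutes with passage to cohomology sheaves of a complex. Applying $H^i$ to the residue isomorphism and using the shift convention $(K[-m])^i = K^{i-m}$ (so that $H^i(K[-m]) = H^{i-m}(K)$) gives $H^i(Gr_m^W \Omega_X^*(Log\,Y)) \simeq \Pi_* H^{i-m}(\Omega_{Y^m}^*)$. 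Since each $Y_\sigma$ is a smooth complex manifold, the holomorphic Poincar\'e lemma says the holomorphic de Rham complex $\Omega_{Y^m}^*$ resolves the constant sheaf $\C_{Y^m}$, whence $H^j(\Omega_{Y^m}^*) \simeq \C_{Y^m}$ for $j=0$ and $0$ otherwise. The right-hand side is therefore $\Pi_* \C_{Y^m}$ precisely when $i=m$ and vanishes otherwise, which is i).

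For part ii) I would argue by induction on $r$, using the short exact sequence of complexes $0 \to W_{r-1}\Omega_X^*(Log\,Y) \to W_r\Omega_X^*(Log\,Y) \to Gr_r^W\Omega_X^*(Log\,Y) \to 0$ and its associated long exact sequence of cohomology sheaves. The base case $r=0$ is the identity $W_0\Omega_X^*(Log\,Y) = \Omega_X^*$, since no logarithmic terms occur in weight $0$; the holomorphic Poincar\'e lemma then gives $H^0 = \C_X = \Pi_*\C_{Y^0}$ and $H^i = 0$ for $i>0$, as $Y^0 = X$. For the inductive step, part i) provides $H^i(Gr_r^W) \simeq \Pi_*\C_{Y^r}$ when $i=r$ and $0$ otherwise, while the inductive hypothesis gives $H^i(W_{r-1}) \simeq \Pi_*\C_{Y^i}$ for $i \le r-1$ and $0$ for $i \ge r$.

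Feeding these into the long exact sequence and scanning degrees: for $i<r$ the graded terms in degrees $i-1$ and $i$ both vanish, so $H^i(W_r) \simeq H^i(W_{r-1}) \simeq \Pi_*\C_{Y^i}$; for $i=r$, since $H^r(W_{r-1}) = 0$ and $H^{r-1}(Gr_r^W) = 0$, the sequence reduces to $0 \to H^r(W_r) \to \Pi_*\C_{Y^r} \to 0$, giving $H^r(W_r) \simeq \Pi_*\C_{Y^r}$; and for $i>r$ the two neighbouring graded terms together with $H^i(W_{r-1})$ all vanish, forcing $H^i(W_r)=0$. This is exactly ii). The final assertion follows because the weight filtration is finite: for $r \ge \dim X$ one has $W_r\Omega_X^*(Log\,Y) = \Omega_X^*(Log\,Y)$, and $Y^i = \emptyset$ for $i > \dim X$, so $H^i(\Omega_X^*(Log\,Y)) \simeq \Pi_*\C_{Y^i}$ holds for all $i$.

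The argument is essentially formal once the residue isomorphism is in hand; the only substantive inputs are the holomorphic Poincar\'e lemma applied on the smooth strata $Y^m$ and the exactness of $\Pi_*$ used to pull sheaf cohomology through the pushforward. The steps needing genuine care are the degree bookkeeping of the shift $[-m]$ in part i) and, in ii), the degree-by-degree tracking of the connecting homomorphisms; the main point, such as it is, is to verify that the vanishing ranges of $H^*(W_{r-1})$ and $H^*(Gr_r^W)$ interlock so that no nonzero connecting map can survive to spoil the computed cohomology sheaves.
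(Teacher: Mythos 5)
Your proof is correct and follows essentially the same route as the paper: part i) from the residue isomorphism together with the holomorphic Poincar\'e lemma on the smooth strata $Y^m$, and part ii) by induction on $r$ via the long exact sequence of cohomology sheaves attached to $0 \to W_{r-1} \to W_r \to Gr_r^W \to 0$. You merely make explicit some details the paper leaves implicit (the exactness of $\Pi_*$ for the finite map $\Pi$, the base case $W_0 = \Omega_X^*$, and the degree bookkeeping), all of which check out.
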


 \begin{proof} The statement in i) follows from the residue
 isomorphism.
 
  The  statement in ii) follows easily by induction
 on  $r$, from i) and the long exact sequence associated to the short exact
 sequence $0 \rightarrow W_r \rightarrow W_{r+1} \rightarrow
Gr_{r+1}^W \rightarrow 0$, written as
 $$H^i(W_r) \rightarrow H^i( W_{r+1}) \rightarrow
H^i (Gr_{r+1}^W) \rightarrow H^{i+1}(W_r)$$
  \end{proof}

   \begin{prop}[Weight  filtration $W$] The  morphisms of filtered  complexes:
\begin{equation*}
(\Omega_X^*(Log \, Y),W) \buildrel \alpha \over \leftarrow
(\Omega_X^*(Log \, Y),\tau) \buildrel \beta \over \rightarrow (j_*
\Omega_{X^*}^*,\tau)
 \end{equation*}
 are filtered quasi-isomorphisms.
 \end{prop}

 \begin{proof} The quasi-isomorphism $\alpha$ follows from the
 lemma.  
 
 The morphism $j$ is Stein, since for each polydisc $U(y)$ in
 $X$ centered at a point $y \in Y$, the inverse image $X^* \cap
 U(y)$ is Stein as the complement of an hypersurface, hence $j$
 is acyclic for coherent sheaves, that is $Rj_*\Omega_{X^*}^*
 \simeq j_* \Omega_{X^*}^*$. By Poincar\'e lemma $\underline{\C}_{X^*} \simeq
 \Omega_{X^*}^*$, so that $Rj_*\underline{\C}_{X^*} \simeq j_* \Omega_{X^*}^*$,
 hence it is enough to prove $Gr^{\tau}_i Rj_*{\C}_{X^*} \simeq \Pi_*
 {\C}_{Y^i}$, which is a local statement. 
 
 For each polydisc $U(y) =
 U$ with $U^* = U - U \cap Y = (D^*)^m \times D^{n-m}$,
the cohomology $\H^i(U,  Rj_*\underline{\C}_{X^*}) = H^i(U^*,  \C)$ can be
computed by
 K\"unneth formula and is equal to $\wedge^i H^1(U^*,  \C)\simeq
 \wedge^i \Gamma(U, \Omega_U^1(Log \, Y)))$ where $dz_i/z_i, i\in
 [1,m]$ form a basis dual to the homology basis since $(D^*)^m \times D^{n-m}$
 is homotopic to an $i-$dimensional torus $(S^1)^m$.
\end{proof}

\begin{corollary}
The weight filtration is rationally defined.
 \end{corollary}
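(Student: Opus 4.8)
The plan is to produce, on the rational level, a filtered complex of sheaves whose complexification is filtered-quasi-isomorphic to $(\Omega_X^*(Log\, Y), W)$. A priori $W$ is only a filtration by sub-$\OO_X$-modules of the logarithmic complex, hence defined over $\C$; to descend it to $\Q$ I would identify it, in the filtered derived category, with the canonical (truncation) filtration $\tau$ on $Rj_*\underline{\Q}_{X^*}$, which is manifestly defined over $\Q$.

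First I would invoke the preceding proposition, which supplies the filtered quasi-isomorphisms
\[(\Omega_X^*(Log\, Y), W) \xleftarrow{\alpha} (\Omega_X^*(Log\, Y), \tau) \xrightarrow{\beta} (j_*\Omega_{X^*}^*, \tau),\]
so that $(\Omega_X^*(Log\, Y), W) \simeq (j_*\Omega_{X^*}^*, \tau)$ in $D^+F(X, \C)$. Next I would reuse the computation from the proof of that proposition: since $j$ is a Stein morphism one has $Rj_*\Omega_{X^*}^* \simeq j_*\Omega_{X^*}^*$, and the holomorphic Poincar\'e lemma gives $\underline{\C}_{X^*} \simeq \Omega_{X^*}^*$, whence $Rj_*\underline{\C}_{X^*} \simeq j_*\Omega_{X^*}^*$. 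Under this isomorphism the filtration $\tau$ on $j_*\Omega_{X^*}^*$ corresponds to the canonical filtration $\tau$ on $Rj_*\underline{\C}_{X^*}$, because truncation is functorial and both sides carry the same cohomology sheaves in each degree. Thus $(\Omega_X^*(Log\, Y), W) \simeq (Rj_*\underline{\C}_{X^*}, \tau)$ as filtered objects of $D^+F(X,\C)$.

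It then remains to descend to $\Q$. The canonical filtration is a purely sheaf-theoretic construction, so it is already defined on $Rj_*\underline{\Q}_{X^*}$ over $\Q$; and because $\C$ is flat over $\Q$, forming cohomology sheaves, hence the truncations $\tau_{\leq m}$, commutes with $-\otimes_{\Q}\C$. Therefore $(Rj_*\underline{\Q}_{X^*}, \tau)\otimes_{\Q}\C \simeq (Rj_*\underline{\C}_{X^*}, \tau)$ as filtered complexes, and chaining the isomorphisms yields
\[(Rj_*\underline{\Q}_{X^*}, \tau)\otimes_{\Q}\C \simeq (\Omega_X^*(Log\, Y), W)\]
in $D^+F(X, \C)$. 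This exhibits $W$ as the complexification of a rational filtration (up to the customary shift in the weight index, which does not affect rationality), which proves the corollary.

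The hard part will be making precise that this chain of identifications transports the canonical filtration to $W$ itself, and not merely to some filtration sharing the same associated graded. This is exactly what is guaranteed by the fact that $\alpha$ and $\beta$ are filtered quasi-isomorphisms rather than plain quasi-isomorphisms, so that they are isomorphisms in $D^+F$, together with the stability of the truncation filtration under the flat extension $-\otimes_{\Q}\C$.
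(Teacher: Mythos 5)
Your proof is correct and follows essentially the same route as the paper: the filtered quasi-isomorphisms $\alpha$, $\beta$ of the preceding proposition identify $(\Omega_X^*(Log\, Y),W)$ with $(Rj_*\underline{\Q}_{X^*},\tau)\otimes_{\Q}\C$ in $D^+F(X,\C)$, and the canonical filtration is defined over $\Q$ and commutes with the flat extension $-\otimes_{\Q}\C$. Your closing observation that any quasi-isomorphism is automatically a filtered quasi-isomorphism for $\tau$ is precisely the fact the paper invokes (stated earlier in its examples on the canonical filtration), so the identifications transport $\tau$ to $W$ itself and not merely to a filtration with the same graded pieces.
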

 
The main point here is that the $\tau$ filtration is defined with
rational coefficients as $(Rj_* \underline{\Q}_{X^*},\tau) \otimes \C$, which
gives the rational definition for $W$.

 \subsubsection{Hodge filtration $F$}  It is defined
  by the  formula
  $F^p =  \Omega_X^{* \geq p}(Log \, Y)$, which
 includes all forms of type $(p',q')$ with  $p' \geq p$. We have:
 \begin{equation*}
 Res : F^p(Gr_m^W \Omega_X^* (Log \, Y)) \simeq \Pi_* F^{p-m}
\Omega_{Y^m}^* [-m]
\end{equation*}
hence a filtered isomorphism:
\begin{equation*} Res : (Gr_m^W \Omega_X^*(Log \, Y),F) \simeq
(\Pi_* \Omega_{Y^m}^*[-m],F [-m]).
\end{equation*}
\begin{corollary}
  The system $\hbox{\bf K}$:
\begin{enumerate}
\item $(\hbox{\bf K}^{\Q},W) =
   (Rj_* \underline{\Q}_{X^*},\tau) \in Ob D^+F(X,\Q)$
\item $(\hbox{\bf K}^{\C},W,F) =
    (\Omega_{X}^*(Log \, Y),W,F) \in ObD^+F_2(X,\C)$
\item The isomorphism $(\hbox{\bf K}^{\Q},W) \otimes \C
    \simeq (\hbox{\bf K}^{\C},W) \, \, {\rm in}\,\, D^+F(X,\C )$
    \end{enumerate}
 is a Cohomological Mixed Hodge Complex on $X$.
 \end{corollary}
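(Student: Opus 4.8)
The plan is to verify directly the three pieces of data (i)--(iii) in the definition of a Cohomological Mixed Hodge Complex together with the single axiom (CMHC), taking maximal advantage of the filtered quasi-isomorphisms and the residue isomorphism already established. Set $A = \Q$, so that $A \otimes \Q = \Q$ and the $A$-complex and rational complex both coincide with $Rj_* \underline{\Q}_{X^*}$. Datum (i) asks that $H^k(X, Rj_* \underline{\Q}_{X^*}) \simeq H^k(X^*, \Q)$ be finite-dimensional, which holds because $X$ is compact and $X^*$ is a smooth algebraic variety; alternatively it follows a posteriori from the finiteness of the cohomology of the smooth compact varieties $Y^m$ used below. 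Datum (ii) is the canonically filtered complex $(Rj_* \underline{\Q}_{X^*}, \tau)$, and the identification $K_\Q \otimes \Q \simeq K_\Q$ is here tautological.

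The comparison datum (iii) is precisely what the Proposition on the weight filtration provides: the chain
\[
(\Omega_X^*(Log\, Y), W) \xleftarrow{\alpha} (\Omega_X^*(Log\, Y), \tau) \xrightarrow{\beta} (j_* \Omega_{X^*}^*, \tau)
\]
consists of filtered quasi-isomorphisms, and composing with the Poincar\'e-lemma identification $Rj_* \underline{\C}_{X^*} \simeq j_* \Omega_{X^*}^*$ (which is automatically a filtered quasi-isomorphism for the canonical filtration $\tau$) yields an isomorphism $(Rj_* \underline{\Q}_{X^*}, \tau) \otimes \C \simeq (\Omega_X^*(Log\, Y), W)$ in $D^+F(X, \C)$. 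That $W$ on the complex side is the scalar extension of the rationally defined $\tau$ is exactly the Corollary asserting that the weight filtration is rationally defined.

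The heart of the argument is the axiom (CMHC): for each $m$ the graded system $Gr_m^W$ must be a $\Q$-Cohomological Hodge Complex of weight $m$. For this I would invoke the filtered residue isomorphism
\[
Res : (Gr_m^W \Omega_X^*(Log\, Y), F) \xrightarrow{\sim} (\Pi_* \Omega_{Y^m}^*[-m], F[-m])
\]
on the complex side, together with the induced rational identification $Gr_m^\tau Rj_* \underline{\Q}_{X^*} \simeq \Pi_* \underline{\Q}_{Y^m}(-m)[-m]$, where the Tate twist $(-m)$ records the factor $(2i\pi)^m$ carried by the rational residue. Since each $Y^m$ is a smooth compact complex algebraic variety, the new version of the Hodge decomposition theorem proved above shows that $(\underline{\Q}_{Y^m}, (\Omega_{Y^m}^*, F))$ is a Cohomological Hodge Complex of weight $0$ on $Y^m$. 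The map $\Pi : Y^m \to X$ is finite, so $\Pi_* = R\Pi_*$ is exact, preserves the Cohomological Hodge Complex structure, and identifies hypercohomology with $H^*(Y^m, -)$. Applying the shift rule for (Cohomological) Hodge Complexes stated above, by which $(K[m'], F[p])$ has weight $n + m' - 2p$, with $n = 0$, $m' = -m$ and $p = -m$ gives weight $m$, the Tate twist $(-m)$ on the rational lattice being exactly the realization of $F[-m]$ at the rational level; thus $Gr_m^W$ is a Cohomological Hodge Complex of weight $m$, as required.

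I expect the main obstacle to lie in the careful matching of the rational and complex structures on the graded pieces rather than in any single hard estimate. Concretely, one must check that the residue isomorphism is filtered for $F$, so that $Gr_m^W$ with its induced Hodge filtration transports faithfully to $(\Omega_{Y^m}^*, F)$ suitably shifted, and that under the comparison of datum (iii) the complex residue is compatible with the rational residue up to precisely the twist $\Q(-m)$; pinning down this bookkeeping is what forces the $(2i\pi)^m$ normalization and guarantees that the pure Hodge structure on $H^k(Gr_m^W)$ has weight $k + m$. Once this compatibility is secured, the axiom (CMHC) follows formally from the weight-$0$ Cohomological Hodge Complex attached to each $Y^m$, and hence $\mathbf{K}$ is a Cohomological Mixed Hodge Complex on $X$.
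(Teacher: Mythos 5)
Your proposal is correct and follows essentially the same route as the paper: the rational definition of $W$ is obtained from the filtered quasi-isomorphisms linking $(\Omega_X^*(Log\,Y),W)$ to $(Rj_*\underline{\Q}_{X^*},\tau)$, and the axiom (CMHC) is verified via the filtered residue isomorphism $(Gr_m^W\Omega_X^*(Log\,Y),F)\simeq (\Pi_*\Omega_{Y^m}^*[-m],F[-m])$, which is a Cohomological Hodge Complex of weight $m$ by the smooth proper case applied to $Y^m$ together with the shift rule $0+(-m)-2(-m)=m$. Your explicit bookkeeping of the Tate twist $\Q(-m)$ at the rational level and the exactness of $\Pi_*$ for the finite map $\Pi$ merely makes precise what the paper leaves implicit in stating the corollary.
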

 
\begin{theorem}[Deligne]
 The system $K = R \Gamma(X,\hbox{\bf K})$ is a
Mixed Hodge Complex. It endows the cohomology 
of $X^* = X-Y$ with a canonical Mixed Hodge Structure.
\end{theorem}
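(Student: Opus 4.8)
The plan is to assemble three results already in place, the first of which carries essentially all the geometric content. The preceding Corollary asserts that the system $\mathbf{K} = ((Rj_*\underline{\Q}_{X^*},\tau),(\Omega_X^*(\mathrm{Log}\,Y),W,F))$ together with its comparison isomorphism $\alpha$ is a Cohomological Mixed Hodge Complex on $X$; this rests on the residue computation of $Gr^W_m\Omega_X^*(\mathrm{Log}\,Y)$ and on the filtered quasi-isomorphisms $(\Omega_X^*(\mathrm{Log}\,Y),W) \leftarrow (\Omega_X^*(\mathrm{Log}\,Y),\tau) \rightarrow (j_*\Omega_{X^*}^*,\tau)$ established earlier, which in particular make the weight filtration rationally defined. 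Granting this, I would apply the Proposition stating that $R\Gamma(X,\mathbf{K})$ is a Mixed Hodge Complex whenever $\mathbf{K}$ is a Cohomological Mixed Hodge Complex: one computes $R\Gamma$ by a $\Gamma$-acyclic (Godement) resolution compatible with both $W$ and $F$, and the filtered derived functor lemmas give $Gr^W_n R\Gamma(X,\mathbf{K}) \simeq R\Gamma(X, Gr^W_n\mathbf{K})$, so the axiom that each $Gr^W_n\mathbf{K}$ is a Cohomological Hodge Complex of weight $n$ passes to the statement that $Gr^W_n K$ is a Hodge Complex of weight $n$.

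Next I would invoke Deligne's theorem that the cohomology of a Mixed Hodge Complex carries a Mixed Hodge Structure, applied to $K = R\Gamma(X,\mathbf{K})$. This equips $H^n(K)$ with the weight filtration $(W[n])_q H^n(K_{A\otimes\Q}) = \mathrm{Im}(H^n(W_{q-n}K_{A\otimes\Q}) \to H^n(K_{A\otimes\Q}))$ and the Hodge filtration $F^p H^n(K_{\C}) = \mathrm{Im}(H^n(F^p K_{\C}) \to H^n(K_{\C}))$; the substance of that theorem, namely the two-filtrations lemma together with degeneration of the weight spectral sequence at rank $2$ and of the Hodge spectral sequence at rank $1$, guarantees that these define a Mixed Hodge Structure. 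I would note the weight shift by $[n]$, so that $Gr^W_{-p}H^{p+q}(K)$ has weight $q$.

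I would then identify the abutment with the cohomology of $X^*$. On the rational side $R\Gamma(X, Rj_*\underline{\Q}_{X^*}) \simeq R\Gamma(X^*,\underline{\Q}_{X^*})$, whence $H^n(K_{\Q}) \simeq H^n(X^*,\Q)$; on the complex side the filtered quasi-isomorphism of the Proposition gives $H^n(X,\Omega_X^*(\mathrm{Log}\,Y)) \simeq H^n(X^*,\C)$, and $\alpha$ matches the two. Hence the Mixed Hodge Structure produced above indeed lives on $H^n(X^*)$, and it restricts to the pure Hodge Structure of the compact case when $Y=\emptyset$.

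The genuinely remaining point, and the one I expect to be the main obstacle, is canonicity: independence of the chosen smooth compactification $X$ with normal crossing boundary $Y$. Here the formal machinery gives nothing for free, so I would argue geometrically. Any two such compactifications are dominated by a third obtained by further blow-ups with smooth centers over the boundary (Hironaka), so it suffices to compare $X$ with such a blow-up $f\colon X' \to X$, for which $f^{-1}(Y)=Y'$. The pullback on logarithmic complexes $f^*\colon f^*\Omega_X^*(\mathrm{Log}\,Y) \to \Omega_{X'}^*(\mathrm{Log}\,Y')$ is a morphism of the underlying data inducing the identity on $H^n(X^*)$; since it is compatible with $W$ and $F$ it defines a morphism of Mixed Hodge Complexes, hence a morphism of Mixed Hodge Structures on cohomology, and because morphisms of Mixed Hodge Structures are strict it must be an isomorphism of Mixed Hodge Structures. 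The same strictness argument simultaneously yields functoriality with respect to algebraic morphisms, completing the proof that the structure is canonical.
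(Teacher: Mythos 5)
Your proposal is correct and takes essentially the same route as the paper: the proof there consists precisely in invoking the general theory (the Corollary that $\hbox{\bf K}$ is a Cohomological Mixed Hodge Complex, the Proposition that $R\Gamma$ of such a complex is a Mixed Hodge Complex, and Deligne's theorem that the cohomology of a Mixed Hodge Complex carries a Mixed Hodge Structure), and the independence of the compactification is settled in the immediately following subsection by the same domination-by-a-third-compactification-plus-strictness argument you give. The only material the paper adds is an instructive direct computation of the weight spectral sequence at rank $1$ (the terms $H^{2p+q}(Y^{-p},\C)$ with alternating Gysin differential $d_1$, and degeneration at rank $2$ since $d_r$ for $r \geq 2$ is a morphism between Hodge Structures of different weights), which your appeal to the general theorem subsumes.
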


\begin{proof} The  result follows directly from the general theory of
Cohomological Mixed Hodge Complex.

Nevertheless, it is interesting to understand what is needed for a
direct proof and compute the  weight spectral sequence at rank
$1$:
 \begin{equation*}
 \begin{split}
& _WE_1^{pq} (R \Gamma (X, \Omega_{X}^*(Log \, Y)) =
\H^{p+q}(X,Gr_{-p}^W \Omega_{X}^*(Log Y))
 \simeq \H^{p+q}(X,\Pi_* \Omega_{Y^{-p}}^*[p])\\
 & \simeq
H^{2p+q}(Y^{-p},\C)  \Rightarrow Gr_q^W H^{p+q}(X^*,\C).
 \end{split}
\end{equation*}
where the double arrow means that the spectral sequence
degenerates to the cohomology graded with respect to the filtration $W$ induced
by the weight on the complex level.
 In fact, we recall the proof up to  rank $2$.
 The  differential $d_1$:
 \begin{equation*}
 d_1 = \sum_{j=1}^{-p} (-1)^{j+1}
G(\lambda_{j,-p}) = G: H^{2p+q}(Y^{-p},\C)  \longrightarrow
H^{2p+q+2}(Y^{-p-1},\C)
 \end{equation*}
 is equal to an alternate
Gysin  morphism,
 Poincar\'e  dual to the  alternate
restriction morphism:
 \begin{equation*} \rho = \sum_{j=1}^{-p}(-1)^{j+1} \lambda_{j,-p}^*
:H^{2n-q}(Y^{-p-1},\C) \rightarrow H^{2n-q}(Y^{-p},\C)
\end{equation*}
hence the first term:
\begin{equation*}
(_WE_1^{pq},d_1)_{p \in \Z} = (H^{2p+q}(Y^{-p},\C) ,d_1)_{p \in
\Z}
 \end{equation*}
 is viewed as a complex in the category of Hodge Structures of weight $q$. It follows that
the terms:
 $$_WE_2^{pq} =H^p(_WE_1^{*,q},d_1)$$ 
 are endowed with a
 Hodge Structure of weight $q$. 
 
  We need to prove that the  differential
  $d_2$   is compatible with the induced Hodge filtration.
   For this we introduced the
   direct filtrations compatible with $d_2$ and proved that they coincide
   with the induced Hodge filtration. The differential $d_2$ is necessarily
    zero since it is a morphism of Hodge Structure of different weights:
  the Hodge Structure of weight $q$ on $E_2^{pq}$    and
 the Hodge Structure of weight $q-1$ on $E_2^{p+2,q-1}$.
 The proof is the same as any Mixed Hodge Complex and  consists 
 of a recurrent argument to show in this way that the
   differentials $d_i$ for $i \geq 2$ are zero.
   \end{proof}

\subsubsection{Independence of the compactification and functoriality}
Let $U$ be a complex smooth variety, $X$ (resp. $X'$) a
compactification of $U$ by a Normal Crossing Divisor $Y$ (resp. $Y'$) at infinity, $j:
U \to X$ (resp. $j': U \to X'$) the open embedding; then $j\times
j': U \to X\times X'$ is a locally closed embedding, with closure
$V $. By desingularizing $V$ outside the image of $U$, we are
reduced to the case where we have a smooth variety $X''
\xrightarrow{f} X$ such that $Y'' := f^{-1}(Y)$ is a Normal Crossing Divisor and $ U \simeq
X'' - Y''$, then we have an induced morphism $f^*$ on the
corresponding logarithmic complexes, compatible with the structure
of Mixed Hodge Complex. It follows that the induced morphism $f^*$ on
hypercohomology is compatible with the Mixed Hodge Structure 
and is an isomorphism
on the hypercohomology
groups, hence it is an isomorphism of Mixed Hodge Structure.

\n {\it Functoriality.} Let $f: U \to V$ be a morphism of smooth
varieties, $X$ (resp. $Z$) smooth compactifications of $U$ (resp.
$V$) by  Normal Crossing Divisor  at infinity, then taking the closure of the graph of
$f$ in $X\times Z$ and  desingularizing, we are reduced to the
case where there exists a compactification $X$ with an extension
$\overline f : X \to Z$ inducing $f$ on $U$. The induced morphism
 $\overline f^*$ on   the corresponding logarithmic
complexes is compatible with the filtrations $W$ and $F$ and with
the structure of Mixed Hodge Complex, hence it is compatible with 
the Mixed Hodge Structure on hypercohomology.

\begin{prop} Let $U$ be a smooth complex algebraic variety.\\
i)  The  Hodge numbers $h^{p,q}:=$ dim.$H^{p,q}(Gr^W_{p+q}H^i(U))$
vanish for $p , q \notin [0,i]$. In particular, the weight of the
cohomology $H^i(U)$ vary from $i$ to $2i$.\\
ii) let $X$ be a smooth compactification of $U$, then:
$$W_iH^i(U) =\hbox{Im } (H^i(X) \to H^i(U)).$$
\end{prop}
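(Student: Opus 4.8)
The plan is to read off both statements from the weight spectral sequence of the logarithmic complex together with its degeneration at rank $2$. Recall from the construction that, writing $Y^m$ for the disjoint union of the $m$-fold intersections of the components of $Y$ (with $Y^0 = X$), the residue isomorphism identifies the first page, as a Hodge structure, with
\begin{equation*}
{_WE_1^{p,q}} \simeq H^{2p+q}(Y^{-p},\C)(p), \quad p \leq 0,
\end{equation*}
pure of weight $q$ (the Tate twist $(p)$ is exactly what lowers the weight of $H^{2p+q}(Y^{-p})$ from $2p+q$ to $q$), and ${_WE_1^{p,q}} = 0$ for $p > 0$. Since the theory of Mixed Hodge Complexes shows that this is a spectral sequence of Hodge structures degenerating at rank $2$, for each weight $w$ the graded piece $Gr_w^W H^i(U) \simeq {_WE_2^{i-w,\,w}} = {_WE_\infty^{i-w,\,w}}$ is, in the abelian category of pure Hodge structures of weight $w$, a subquotient of ${_WE_1^{i-w,\,w}}$.

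For part (i), I would set $m = w - i$ and rewrite the relevant first-page term as $H^{i-m}(Y^m,\C)(-m)$; note that this already forces $m \geq 0$, i.e. $w \geq i$, for the term to be nonzero. The functor sending a pure Hodge structure of weight $w$ to its $(P,Q)$-component is exact, so $H^{P,Q}(Gr_w^W H^i(U))$ (with $P+Q = w$) is a subquotient of the $(P,Q)$-component of $H^{i-m}(Y^m)(-m)$, which under the twist is $H^{P-m,\,Q-m}(Y^m)$. As $Y^m$ is smooth projective, nonvanishing of this space needs $P - m \geq 0$ and $Q - m \geq 0$; combined with $m = P+Q-i \geq 0$, these inequalities yield precisely $0 \leq P \leq i$ and $0 \leq Q \leq i$, proving that $h^{p,q}$ vanishes outside $[0,i]^2$. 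The ``in particular'' clause follows at once, since then the weight $w = P+Q$ lies in $[i,2i]$.

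For part (ii), the key observation is that $W_0\,\Omega_X^*(Log\,Y) = \Omega_X^*$ by the definition of the weight filtration (no logarithmic factors occur in weight $0$). Hence the restriction map $H^i(X) \to H^i(U)$, which via the comparison quasi-isomorphisms $\Omega_X^* \simeq \underline{\C}_X$ and $\Omega_X^*(Log\,Y) \simeq Rj_*\underline{\C}_U$ is induced by the inclusion of sheaves $W_0\,\Omega_X^*(Log\,Y) \hookrightarrow \Omega_X^*(Log\,Y)$, is precisely $\H^i(X, W_0\,\Omega_X^*(Log\,Y)) \to \H^i(X, \Omega_X^*(Log\,Y))$. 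Writing $K = R\Gamma(X,\mathbf{K})$ for the associated Mixed Hodge Complex, so that $W_0 K = R\Gamma(X,\Omega_X^*)$ computes $H^i(X)$, I would then invoke the formula for the induced weight filtration on the cohomology of a Mixed Hodge Complex, $(W[i])_i H^i(K) = \mathrm{Im}(H^i(W_0 K) \to H^i(K))$ (the case $q-n = 0$), to conclude that this image is exactly $W_i H^i(U)$.

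The main obstacle, and the step deserving the most care, is the bookkeeping of the Tate twist in part (i): one must track how the rationally defined residue shifts both the weight (making ${_WE_1^{p,q}}$ pure of weight $q$) and the Hodge bidegrees (by $(m,m)$), since it is this shift that converts the elementary positivity constraints on $Y^m$ into the asserted range for $(P,Q)$. I would also record the minor point that forming the $(P,Q)$-Hodge component is exact on pure Hodge structures of fixed weight, so that the subquotient relation from the degenerate spectral sequence is genuinely preserved. Part (ii) is essentially formal once $W_0$ is correctly identified and the topological restriction map is matched with the complex-level inclusion.
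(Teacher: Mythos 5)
Your part (i) is correct and is essentially the paper's own argument: $Gr^W_w H^i(U)$ is the term ${}_WE_2^{i-w,w}$ of the weight spectral sequence, hence a subquotient, in the abelian category of Hodge structures of weight $w$, of ${}_WE_1^{i-w,w}\simeq H^{i-m}(Y^m,\Q)(-m)$ with $m=w-i$; your Tate-twist bookkeeping ($(P,Q)$-component of the twist equals $H^{P-m,Q-m}(Y^m)$, forcing $m\leq P,Q\leq i$ and $m\geq 0$) is exactly the computation in the paper's proof.

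Part (ii), however, has a genuine gap: your argument assumes from the outset that $Y=X-U$ is a normal crossing divisor, since it runs entirely through the identity $W_0\,\Omega_X^*(Log\, Y)=\Omega_X^*$ inside the logarithmic complex. The statement asserts the equality for an \emph{arbitrary} smooth compactification $X$ of $U$, whose boundary need not be a normal crossing divisor (it need not even be a divisor), and in that generality there is no logarithmic Cohomological Mixed Hodge Complex on $X$ to which your $W_0$ computation applies. The paper closes this case by a step you are missing: choose $\pi\colon X'\to X$ proper with $X'$ smooth, $\pi$ an isomorphism over $U$, and $X'-U$ a normal crossing divisor (Hironaka). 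Since $\pi$ restricts to the identity on $U$, the restriction $H^i(X)\to H^i(U)$ factors through $\pi^*\colon H^i(X)\to H^i(X')$, so $\mathrm{Im}\,(H^i(X)\to H^i(U))\subseteq \mathrm{Im}\,(H^i(X')\to H^i(U))$; conversely the trace map $Tr\,\pi\colon R\pi_*\Q_{X'}\to \Q_X$, which satisfies $Tr\,\pi\circ\pi^*=Id$, is compatible with Hodge structures, and --- being defined at the level of sheaves --- commutes with restriction to $U$, gives the reverse inclusion. Hence the two images coincide, and the normal crossing case, which your $W_0$ argument does establish, yields the general statement. A minor further point: since $W$ is by definition a filtration of $H^i(U,\Q)$, you should either carry out the weight-zero identification at the rational level, as the paper does via $\Q_X\simeq \tau_{\leq 0}Rj_*\Q_U$, or observe that both subspaces in question are rationally defined, so that equality after tensoring with $\C$ suffices.
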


\begin{proof} i) The space $Gr^W_{r}H^i(U)$ is
 isomorphic to the term
$E_2^{i-r,r}$ of the spectral sequence with a Hodge Structure of
weight $r$, hence it is a sub-quotient of   $E_1^{i-r,r}=
H^{2i-r}(Y^{r-i})$ twisted by $\Q(i-r)$. Hence, we have
$h^{p,q}(H^{2i-r}(Y^{r-i})) = 0$, for $p+q = r$, unless $r-i \geq 0,
2i-r \geq 0$ ( i.e $i \leq r \leq 2i$ ) and $p \in [0, 2r-i]$,
$h^{p,q}(E_1^{i-r,r}) = 0$ unless $ p \in [i-r, r]$.

\medskip
 \n ii) Suppose $U$ is the complement of a Normal Crossing
Divisor. Denote $j:U\to X$ the inclusion. By definition 
$W_iH^i(U) =$ Im ($ \H^i(X, \tau_{
\leq 0} Rj_* \Q_U) \to H^i(U,\Q)$), hence it is equal to the image
of $H^i(X,\Q)$ since $\Q $ is quasi-isomorphic to $\tau_{ \leq 0}
Rj_* \Q_U$. If $ X - U$ is not a Normal Crossing Divisor, there
exists a desingularization $\pi: X' \rightarrow X$ with an
embedding of $U$ in $X'$ as the complement of a Normal Crossing
Divisor, then we use the trace map $Tr\, \pi: H^i(X',\Q)\to
H^i(X,\Q) $ satisfying $(Tr \pi) \circ \pi^*= Id$ and compatible
with Hodge Structures. In fact the trace map is defined as a morphism of sheaves
$R \pi_* \Q_{X'} \to \Q_X$ \cite{V2}, hence commutes with the
restriction to $U$. In particular, the images of both cohomology
groups coincide in $H^i(U)$.
\end{proof}

   \begin{xca}[Riemann Surface]
Let $\overline{C}$ be a connected compact Riemann surface of genus
$g$, $Y = \{ x_1, \ldots, x_m\}$ a subset of $m$ points, and  $C =
\overline{C} - Y $  the open surface with $m$ points in
$\overline{C}$ deleted.
 Consider the long exact sequence:
$$0 \rightarrow H^1(\overline{C}, \Z) \rightarrow H^1({C}, \Z)
\rightarrow H^2_Y(\overline{C}, \Z)= \oplus_{i = 1}^{i=m} \Z
\rightarrow H^2(\overline{C}, \Z)\simeq \Z \rightarrow H^2(C, \Z)
= 0$$
 then:
 $$0 \rightarrow H^1(\overline{C}, \Z) \rightarrow H^1({C}, \Z)
\rightarrow \Z^{m-1} \simeq Ker (\oplus_{i = 1}^{i=m} \Z \to
\Z)\to 0$$ 
is a short exact sequence of Mixed Hodge Structures  where $H^1({C}, \Z)=
W_2H^1({C}, \Z)$ is an extension of two different weights:
$W_1H^1({C}, \Z)= H^1(\overline{C}, \Z)$ of rank $2g$ and $Gr^W_2
H^1({C}, \Z)
\simeq \Z^{m-1}$.

The Hodge filtration is given by $F^0H^1(C, \C) = H^1(C,\C)$, $F^2
H^1(C, \C) = 0$, while:
 \[ F^1 H^1(C, \C) \simeq
\H^1(\overline{C},( 0 \to \Omega^1_{\overline{C}}(Log \{x_1,
\ldots, x_m \})\simeq H^0 (\overline{C},
\Omega^1_{\overline{C}}(Log \{x_1, \ldots, x_m \})\] is of rank $g
+ m-1$  and fits into  the exact sequence determined by the
residue morphism:
$$0 \to \Omega^1_{\overline{C}} \to
\Omega^1_{\overline{C}}(Log \{x_1, \ldots, x_m \}) \to \OO_{\{x_1,
\ldots, x_m\}} \to 0.$$
\end{xca}

   \begin{xca}[Hypersurfaces] Let $i: Y \hookrightarrow P$ be a smooth
hypersurface in a projective variety $P$.\\
 1)  To describe the cohomology
of the affine open set $U = P - Y$ we may use, by Grothendieck's
result on algebraic de Rham cohomology, algebraic forms on $P$
  regular on $U$ denoted $\Omega^*(U) = \Omega_P(*Y)$, or
meromorphic forms along $Y$, holomorphic on $U$ denoted
$\Omega_{P^{an}}(*Y) $, where the Hodge filtration is described by
the order of the pole (\cite{HII} Prop. 3.1.11) (but not the
trivial filtration $F$ on $\Omega^*(U)$ and  by Deligne's resulton
the
  logarithmic complex  with its trivial filtration $F$.\\
2) For example, in the  case of a curve $Y$ in a plane $\P^2$,
global holomorphic forms are all rational by Serre's result on
cohomology of coherent sheaves. We have an exact sequence of
sheaves:
$$ 0 \rightarrow \Omega^2_P \rightarrow i_* \Omega^2_P (Log Y
)\rightarrow \Omega^1_Y \rightarrow 0. $$
  Since $h^{2,0} = h^{2,1} = 0$, $H^0(P,\Omega^2_P) =
   H^1(P,\Omega^2_P) = 0$, hence  we deduce from the associated long
    exact sequence, the isomorphism:
  $$Res: H^0(P,\Omega^2_P(Log Y)) \xrightarrow{\sim} H^0(Y,\Omega^1_Y),$$
  that is  $1$-forms on $Y$ are residues
 of rational $2-$forms on $P$ with simple pole along the curve.\\
  In homogeneous coordinates, let $F = 0$ be the homogeneous equation of
  $Y$. We take the residue along $Y$
  of the rational form:
  $$\frac{A(z_0dz_1\wedge dz_2 - z_1dz_0\wedge dz_2 + z_2dz_0\wedge
  dz_1)}{F}$$ where $A$ is homogeneous of degree $d-3$ if $F$ has
  degree $d$ (\cite {carl} example 3.2.8).\\
3) For $P$ projective again, we consider the exact sequence for
  relative  cohomology (or cohomology with support in $Y$):
  $$ H^{k-1}(U) \xrightarrow{\partial} H_Y^{k}(P)
  \rightarrow H^{k}(P)\xrightarrow{j^*} H^{k}(U)$$
which reduces via Thom's isomorphism, to:
$$ H^{k-1}(U) \xrightarrow{r} H^{k-2}(Y)
  \xrightarrow{i_*} H^{k}(P)\xrightarrow{j^*} H^{k}(U)$$
where $r$ is the topological Leray's residue map dual to the tube
over cycle map $\tau:H_{k-2}(Y)
  \rightarrow H_{k-1}(U)$ associating to a cycle $c$
  the boundary in $U$ of a tube over $c$,
   and $i_*$ is Gysin map, Poincar\'e dual to
the map $i^*$ in cohomology.

 For $P = \P^{n+1}$ and $n$ odd, the map $r$ is an isomorphism:
$$ H^{n-1}(Y) \simeq H^{n+1}(P) \to H^{n+1}(U) \xrightarrow{r} H^{n}(Y)
  \xrightarrow{i_*} H^{n+2}(P)= 0\xrightarrow{j^*} H^{n+2}(U)$$
  and for $n$ even the map $r$ is injective:
$$  H^{n+1}(P)=0 \to H^{n+1}(U) \xrightarrow{r} H^{n}(Y)
  \xrightarrow{i_*} H^{n+2}(P)= \Q \xrightarrow{j^*} H^{n+2}(U)$$
then $r$ is surjective onto the primitive cohomology:
$$ r:H^{n+1}(U) \xrightarrow{\im} H_{\rm prim}^{n}(X)$$
\end{xca}

\subsection{MHS of a normal crossing divisor (NCD)}
Let $Y $ be a Normal Crossing Divisor in a proper complex smooth algebraic variety.
 We suppose the  irreducible components
  $(Y_i)_{i \in I}$ of $Y$  smooth and ordered.
  
\subsubsection{ Mayer-Vietoris  resolution} Let $S_q$
denotes the set of strictly increasing sequences $\sigma =
(\sigma_0,...,\sigma_q)$ on the ordered set of indices $I$,
$Y_{\sigma} = Y_{\sigma_0} \cap ... \cap Y_{\sigma_q} ,
Y_{\underline q} =  \coprod_{\sigma \in S_q} Y_{\sigma}$ is the
disjoint union, and for all $j \in [0,q], q \geq 1$ let
$\lambda_{j,\underline q} : Y_{\underline q} \rightarrow
Y_{\underline q-1}$ denotes a
 map inducing for each $\sigma$ the embedding
$\lambda_{j,\sigma}:Y_{\sigma} \rightarrow Y_{\sigma(\widehat j)}$
where $\sigma (\widehat j)= (\sigma_0,...,\widehat
{\sigma_j},..., \sigma_q)$ is obtained by deleting  $\sigma_j$.
Let $\Pi_q : Y_{\underline q} \rightarrow Y$ (or simply $\Pi$)
denotes the canonical projection and $ \lambda^*_{j,\underline q}
: \Pi_* \hbox{\bf Z}_{Y_{\underline {q-1}}}
 \rightarrow \Pi_* \hbox{\bf Z}_{Y_{\underline q}}$ the
restriction map defined by $\lambda_{j,\underline q}$
  for $j \in [0,q]$.
  
 \begin{definition}[Mayer-Vietoris resolution of
 $\Z_Y$] It is defined by the following complex of sheaves
 $\Pi_* {\Z}_{Y{\underline .}}$:
\begin{equation*}
0 \rightarrow \Pi_* \Z_{Y_{\underline 0}} \rightarrow
  \Pi_*\Z_{Y_{\underline 1}} \rightarrow \cdots \rightarrow
  \Pi_* \Z_{Y_{\underline {q-1}}}
{\buildrel {\delta_{q-1}} \over \rightarrow} \Pi_*
\Z_{Y_{\underline q}}\rightarrow \cdots
\end{equation*}
 where $ \delta_{q-1} =  \sum_{j \in [0,q]} (-1)^j \lambda^*_{j,{\underline q}}$.
 \end{definition}
 
 This resolution is associated to  an hypercovering of
$Y$ by topological  spaces in the following sense. Consider the
diagram of spaces over $Y$:
\begin{equation*}
Y_{\underline .} =  ( Y_{\underline 0}\quad \begin{matrix} \leftarrow \\
\leftarrow
\end{matrix}\quad  Y_{\underline 1}\quad
\begin{matrix} \leftarrow \\ \leftarrow \\ \leftarrow \end{matrix}
\quad  \cdots \quad Y_{\underline {q-1}}\quad  \begin{matrix}
\xleftarrow {\lambda_{j,\underline q}}\\ \vdots \\ \longleftarrow
\end{matrix}\quad
 Y_{\underline q}
 \cdots  )\  \xrightarrow{\Pi} Y
\end{equation*}
 This diagram is  the strict simplicial scheme associated in \cite{HII} to  the
 normal crossing divisor  $Y$, called here after
Mayer-Vietoris. The Mayer-Vietoris  complex is canonically
associated  as direct image by $\Pi$ of the sheaf
${\Z}_{Y_{\underline *}}$ equal to ${\Z}_{Y_{\underline  i}}$ on
$Y_{\underline  i}$. The generalization of such resolution is the
basis of the later general construction of Mixed Hodge Structure using  simplicial
covering of an algebraic variety.

\subsubsection{ The cohomological mixed Hodge complex of a NCD} The weight
filtration  $W$ on $\Pi_* {\Q}_{Y \underline .}$  is defined by:
\begin{equation*}
W_{-q} (\Pi_*\Q_{Y_{\underline *}})
 = {\sigma}_{\cdot \geq q} \Pi_* \Q_{ Y_{\underline *}}
=  \Pi_* {\sigma}_{\cdot \geq q}\Q_{ Y_{\underline *}}, \quad
Gr_{-q}^W (\Pi_* \Q_{Y_{\underline *}}) \simeq \Pi _*
\Q_{Y_{\underline q}} [-q]
\end{equation*}
 We introduce the
complexes $\Omega ^*_{Y_{\underline i}}$ of differential  forms on
$Y_{\underline i}$. The simple  complex $s(\Omega^*_{Y_{\underline
*} })$ is associated to the double complex $\Pi_* \Omega
^*_{Y_{\underline *}}$ with the exterior differential $d$ of forms
and the differential $\delta_*$ defined by $\delta_{q-1} = \sum_{j
\in [0,q]} (-1)^j \lambda^*_{j,{\underline q}}$ on $\Pi_*
\Omega^*_{Y_{\underline {q-1}}}$. The weight  $W$, and   Hodge $F$
filtrations are defined as:
\begin{equation*}
  W_{-q} = s(\sigma_{\cdot \geq q}
\Omega^*_{Y_{\underline *}}) = s(0 \rightarrow \cdots 0
\rightarrow \Pi_* \Omega^*_{Y_{\underline q}} \rightarrow \Pi_*
\Omega^*_{Y_{\underline {q+1}}} \rightarrow \cdots )
\end{equation*}

\begin{equation*}
  F^p = s(\sigma_{*\geq p} \Omega_{Y_{\underline *}}^*) = s(0
\rightarrow \cdots  0 \rightarrow \Pi_* \Omega^p_{Y_{\underline
*}} \rightarrow \Pi_* \Omega_{Y_{\underline *}}^{p+1}
\rightarrow\cdots)
\end{equation*}
 We have a filtered isomorphism
in the filtered derived category of sheaves of abelian groups
$D^+F(Y,\C)$ on $Y$:
 \begin{equation*}
  (Gr_{-q}^W s(\Omega_{Y_{\underline *}}^*),F) \simeq (\Pi_*
\Omega_{Y_{\underline q}}^*[-q],F) \quad
 \hbox{in} \quad D^+F(Y,\C).
 \end{equation*}
inducing isomorphisms in $D^+(Y,\C)$:
\begin{equation*}
({\Pi}_* \Q_{Y_{\underline *}},W) \otimes \C = (\C_{Y\underline
.},W) \xrightarrow{ \alpha \, \sim } (s(\Omega_{Y_{\underline
*} }^*),W)
\end{equation*}
\begin{equation*}
 Gr_{-q}^W(\Pi_*\C_{Y_{\underline *}}) \simeq \Pi_*
\C_{Y_{\underline q}}[-q] {\buildrel \sim \over \rightarrow}
{\Pi}_* \Omega_{Y_{\underline q}}^* [-q] \simeq Gr_{-q}^W
s(\Omega_{Y_{\underline *}}^*)
 \end{equation*}
 Let  $\hbox{\bf K}$ be the system consisting of:
\begin{equation*}
 (\Pi_* \Q_{Y_{\underline *}} ,W), \Q_Y \xrightarrow{\sim}
\Pi_*\Q_{Y_{\underline *}}, (s(\Omega_{Y_{\underline *}
}^*),W,F),\quad (\Pi_* \Q_{Y_{\underline *}} ,W) \otimes \C
\xrightarrow{\sim} (s(\Omega_{Y_{\underline *}}^*),W)
 \end{equation*}
 
\begin{prop}
The system $\hbox{\bf K}$
 associated to  a normal crossing divisor $Y$ with smooth  proper irreducible
 components, is a Cohomological Mixed Hodge Complex on $Y$. 
 It defines a functorial Mixed Hodge Structure on the
cohomology $H^i(Y,\Q)$,  with  weights  varying between $0$ and
$i$.
 \end{prop}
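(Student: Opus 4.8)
The plan is to check that $\mathbf{K}$ satisfies the three data conditions together with the single axiom (CMHC) in the definition of a Cohomological Mixed Hodge Complex, and then to invoke the general machinery already established: the proposition sending a CMHC to a Mixed Hodge Complex under $R\Gamma$, followed by Deligne's theorem that the cohomology of a Mixed Hodge Complex carries a Mixed Hodge Structure. Conditions (i)--(iii) are essentially built into the construction of $\mathbf{K}$: the Mayer--Vietoris complex gives the quasi-isomorphism $\Q_Y \xrightarrow{\sim} \Pi_*\Q_{Y_{\underline *}}$, so that $\H^i(Y,\mathbf{K}) \simeq H^i(Y,\Q)$; the weight filtration $W_{-q} = \sigma_{\cdot\geq q}\Pi_*\Q_{Y_{\underline *}}$ is already defined over $\Z$, hence rationally, which yields the rational definition of $W$; and the displayed filtered isomorphism $(\Pi_*\Q_{Y_{\underline *}},W)\otimes\C \simeq (s(\Omega^*_{Y_{\underline *}}),W)$ supplies the comparison $\alpha$ in $D^+F(Y,\C)$.

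The substantive step is the axiom (CMHC): for each $n$ the graded system $Gr^W_n\mathbf{K}$ must be an $A\otimes\Q$-Cohomological Hodge Complex of weight $n$. I would use the identifications recorded just before the statement, $Gr^W_{-q}(\Pi_*\C_{Y_{\underline *}}) \simeq \Pi_*\C_{Y_{\underline q}}[-q]$ and $(Gr^W_{-q}s(\Omega^*_{Y_{\underline *}}),F) \simeq (\Pi_*\Omega^*_{Y_{\underline q}}[-q],F)$, which exhibit $Gr^W_{-q}\mathbf{K}$ as the de Rham system of the disjoint union $Y_{\underline q}$ of $q$-fold intersections, pushed forward along $\Pi$ and shifted by $[-q]$. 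Each $Y_{\underline q}$ is a smooth proper complex algebraic variety, so by the earlier theorem its analytic de Rham complex with trivial filtration, together with the constant sheaf, is a Cohomological Hodge Complex of weight $0$; since $\Pi$ is finite, $\Pi_*$ is exact and $R\Gamma(Y,\Pi_*(-)) \simeq R\Gamma(Y_{\underline q},-)$, so this structure descends to $Y$ unchanged. Applying the shift rule --- $(K[a],F[b])$ is a Cohomological Hodge Complex of weight $m+a-2b$ when $(K,F)$ has weight $m$ --- with $a=-q$ and, crucially, $b=0$ (only the complex is shifted), gives weight $0+(-q)-0=-q$. Hence $Gr^W_{-q}\mathbf{K}$ is a Cohomological Hodge Complex of weight $-q$, verifying (CMHC). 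This bookkeeping of the degree shift against the Hodge shift is the one genuine subtlety and the point where the closed case diverges from the open (logarithmic) case, where the residue lowers the form degree and the Hodge filtration is shifted as well.

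Granting (CMHC), the general propositions give that $K = R\Gamma(Y,\mathbf{K})$ is a Mixed Hodge Complex and that $H^i(Y,\Q) \simeq \H^i(Y,\mathbf{K})$ therefore carries a Mixed Hodge Structure, with $W$ and $F$ induced as in Deligne's theorem. Functoriality is obtained exactly as in the smooth case treated above: a morphism compatible with the ordered components induces pullbacks on the Mayer--Vietoris resolutions and on the complexes $\Omega^*_{Y_{\underline *}}$ respecting $W$ and $F$, hence a morphism of Cohomological Mixed Hodge Complexes and, after $R\Gamma$, of the resulting Mixed Hodge Structures. For the weight bounds I would read off the weight spectral sequence ${_WE}_1^{pq} = \H^{p+q}(Y,Gr^W_{-p}s(\Omega^*_{Y_{\underline *}})) \simeq H^q(Y_{\underline p},\C)$, whose terms are pure of weight $q$ since $Y_{\underline p}$ is smooth and proper; by the lemma on two filtrations the differentials $d_1$ are morphisms of Hodge Structures of weight $q$ and $d_r=0$ for $r\geq 2$, so $Gr^W_{-p}H^{p+q}(Y)$ has weight $q$. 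As $Y_{\underline p}$ is empty for $p<0$ and $H^q$ vanishes for $q<0$, the classes contributing to $H^i(Y)$ obey $p\geq 0$, $q\geq 0$ and $p+q=i$; thus the weights $q=i-p$ of $H^i(Y,\Q)$ lie between $0$ and $i$. The main obstacle is therefore not a hard estimate but the correct identification of the graded pieces and the shift bookkeeping feeding into the already-proved Cohomological Mixed Hodge Complex theory.
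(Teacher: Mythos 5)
Your proposal is correct and follows essentially the same route as the paper: you identify $Gr^W_{-q}\mathbf{K}$ with $(\Pi_*\Omega^*_{Y_{\underline q}}[-q],F)$, a Cohomological Hodge Complex of weight $-q$ by the shift rule applied to the smooth proper varieties $Y_{\underline q}$, feed this into the general CMHC $\Rightarrow$ MHC $\Rightarrow$ MHS machinery, and read the weight bounds off the terms ${_WE}_1^{pq}\simeq H^q(Y_{\underline p},\C)$, exactly as the paper does. Your observation that $F$ is \emph{not} shifted here, in contrast to the logarithmic case where the residue forces $F[-m]$, is precisely the bookkeeping encoded in the paper's filtered isomorphisms.
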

 
In terms of  Dolbeault  resolutions : $(s(\EE^{*,*}_{Y\underline
.}), W, F )$, the statement means that
 the complex of global  sections  $\Gamma (
Y,s(\EE^{*,*}_{Y_{\underline *}}), W, F ):= (\R \Gamma (Y, \C), W,
F)$ is a Mixed Hodge Complex in the following sense:
 \begin{equation*}
 \begin{split}
 & (Gr^W_{-i}(\R \Gamma (Y,\C),F):=
(\Gamma ( Y, W_{-i} s({\EE}^{*,*}_{Y_{\underline *}})/\Gamma (
Y,W_{-i-1}s({\EE}^{*,*}_{Y_{\underline *}}), F)\\
& \simeq (\Gamma ( Y, Gr^W_{-i} s({\EE }^{*,*}_{Y_{\underline
*}}), F) \simeq (\R \Gamma (Y_i,\Omega_{Y_{\underline
i}}^*[-i]),F)
\end{split}
\end{equation*}
is a Hodge Complex of weight $-i$ in the sense that:
\begin{equation*}
 (H^n (Gr^W_{-i}\R \Gamma (Y,\C)),F) \simeq  (H^{n -i}
(Y_{\underline i}, \C), F)
\end{equation*}
is a Hodge Structure of weight $n-i$. \\
The terms  of the  spectral sequence  $E_1(K,W)$ of  $(K,W)$ are
written as:
 \begin{equation*}
_WE_1^{pq} = \H^{p+q}(Y ,Gr_{-p}^W(s \Omega_{Y_{\underline *}}^*))
\simeq
 \H^{p+q}(Y,\Pi_* \Omega^*_{Y_{\underline p}}[-p]) \simeq
 H^q(Y_{\underline p},\C)
  \end{equation*}
  They carry the Hodge Structure of the space $Y_{\underline p} $.
  The  differential is a combinatorial restriction map 
  inducing  a morphism of Hodge Structures:
$$d_1 = \sum_{j \leq p+1} (-1)^j
   \lambda_{j,p+1}^*
   : H^q(Y_{\underline p},\C) \rightarrow H^q(Y_{\underline
    {p+1}},\C).$$
   The spectral sequence
 degenerates at $E_2\,(E_2 =
 E_{\infty})$.
 
\begin{corollary} The Hodge Structure on $Gr^W_q H^{p+q}(Y,\C)$ is the cohomology
of the complex of Hodge Structure defined by $(H^q (Y_{\underline *},\C), d_1)$
equal to $H^q (Y_{\underline p},\C)$ in degree $p \geq 0$:
 \begin{equation*}
 (Gr^W_q H^{p+q}(Y,\C), F)\simeq ((H^p(H^q
(Y_{\underline *},\C), d_1), F).
 \end{equation*}
 In particular, the weight of $ H^i(Y,\C)$ vary in the interval
 [$0,i$] ($Gr^W_q  H^i(Y,\C) = 0$ for $q \notin $ [$0,i$]).
 \end{corollary}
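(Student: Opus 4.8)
The plan is to read the corollary off the weight spectral sequence ${}_WE_r^{pq}$ of the cohomological mixed Hodge complex $\mathbf{K}$ attached to $Y$, using the degeneration and the Hodge-theoretic description of its terms already recorded before the statement. First I would recall the identification of the first page,
\[
{}_WE_1^{pq} \simeq \H^{p+q}(Y, \Pi_*\Omega^*_{Y_{\underline p}}[-p]) \simeq H^q(Y_{\underline p}, \C),
\]
and observe that, since each $Y_{\underline p}$ is a smooth proper (hence compact K\"ahler) variety, $H^q(Y_{\underline p}, \C)$ carries its classical Hodge structure of weight $q$, whose Hodge filtration is the one induced by $F$ through the residue isomorphism $Res$. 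The differential $d_1 = \sum_{j \le p+1} (-1)^j \lambda_{j,p+1}^*$ is an alternating sum of pullbacks along the proper maps $\lambda_{j,p+1} \colon Y_{\underline{p+1}} \to Y_{\underline p}$, each of which is a morphism of Hodge structures; thus $(H^q(Y_{\underline *}, \C), d_1)$ is a complex in the abelian category of pure Hodge structures of weight $q$.

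Next I would invoke the fact, established in Section 2, that Hodge structures of a fixed weight form an abelian category in which morphisms are strict. Consequently the cohomology $H^p$ of the complex $(H^q(Y_{\underline *}, \C), d_1)$ inherits a pure Hodge structure of weight $q$, with Hodge filtration computed from the induced $F$; by definition of the second page this object is exactly ${}_WE_2^{pq} = H^p({}_WE_1^{*,q}, d_1)$, endowed with $F$.

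The main step is the degeneration at rank $2$. By the general theorem on mixed Hodge complexes the differential $d_2 \colon {}_WE_2^{pq} \to {}_WE_2^{p+2,q-1}$ is compatible with $F$, hence is a morphism from the weight-$q$ Hodge structure on ${}_WE_2^{pq}$ to the weight-$(q-1)$ Hodge structure on ${}_WE_2^{p+2,q-1}$; a morphism of pure Hodge structures of distinct weights vanishes, and the same weight argument annihilates all $d_r$ for $r \ge 2$, so ${}_WE_2^{pq} = {}_WE_\infty^{pq}$. I expect the genuinely delicate point to be not this vanishing itself but the verification that the weight-$q$ Hodge structure produced on ${}_WE_2^{pq}$ transports faithfully, under the identification ${}_WE_\infty^{pq} \simeq Gr^W_q H^{p+q}(Y, \C)$ supplied by the mixed Hodge complex structure with its weight-shift convention, onto the one carried by $Gr^W_q H^{p+q}(Y, \C)$; this compatibility is precisely what the lemma on two filtrations guarantees.

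Finally, chaining these isomorphisms gives
\[
(Gr^W_q H^{p+q}(Y, \C), F) \simeq ({}_WE_\infty^{pq}, F) = ({}_WE_2^{pq}, F) \simeq (H^p(H^q(Y_{\underline *}, \C), d_1), F),
\]
which is the asserted formula. For the last assertion I would note that $Y_{\underline p}$ is empty when $p < 0$ (there are no strictly increasing sequences of negative length) and that $H^q$ vanishes for $q < 0$; writing $i = p+q$, these two constraints force $Gr^W_q H^i(Y, \C) = 0$ unless $0 \le q \le i$, so the weights of $H^i(Y, \C)$ lie in the interval $[0,i]$.
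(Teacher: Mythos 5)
Your proposal is correct and follows essentially the same route as the paper: it invokes the Cohomological Mixed Hodge Complex structure on $\hbox{\bf K}$, identifies ${}_WE_1^{pq}\simeq H^q(Y_{\underline p},\C)$ with its weight-$q$ Hodge Structure, notes that the combinatorial differential $d_1$ is a morphism of Hodge Structures, and kills $d_r$ for $r\geq 2$ by the weight argument, exactly as in the text preceding the corollary. One small slip worth noting: in this Mayer--Vietoris setting the filtered identification $(Gr^W_{-q}\, s(\Omega^*_{Y_{\underline *}}),F)\simeq (\Pi_*\Omega^*_{Y_{\underline q}}[-q],F[-q]$-free$)$ is the direct projection onto the simplicial column, not a residue isomorphism --- the residue belongs to the logarithmic complex of the preceding subsection --- but this does not affect the argument.
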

 We will see that the last condition on the weight is true for all
complete varieties.

 \subsection{Relative cohomology and the mixed cone }
  The notion of  morphism of Mixed Hodge Complex  involves compatibility
   between the rational level and complex level in the derived category,
   hence up to quasi-isomorphism. 
   
   To define a Mixed Hodge Structure on
   the relative cohomology, we define
   the notion of mixed cone with respect to a representative of the morphism
    on the level of complexes, hence depending on the representative. 
    
    The
    isomorphism between two structures
  obtained for two representatives depends on the choice of an homotopy, hence
it is not naturally defined. Nevertheless this notion is
interesting in  applications.  Later, one solution is to consider
Mixed Hodge Complex on a category of diagrams, then the diagonal filtration is a
natural example of such construction applied for example on
simplicial varieties.

  \subsubsection{ } \label{6.3.1} A morphism $u : K \rightarrow K'$ of Mixed Hodge Complex (resp.
CMHC) consists of morphisms:
\begin{equation*}
\begin{split}&u_A : K_A \rightarrow K'_A \,\, {\rm in}\,\, D^+A ({\rm
resp.}) D^+(X,A)),\\& u_{A \otimes \Q} : (K_{A \otimes \Q},W)
\rightarrow ({K'}_{A \otimes \Q},W) \,\,{\rm in }\,\, D^+F(A
\otimes \Q)
(\,\,{\rm resp.}\,\, D^+F(X,A \otimes \Q)), \\
&u_{\C} : (K_{\C},W,F) \rightarrow (K'_{\C},W,F) \,\,{\rm in}\,\,
D^+F_2 \C (\,\,{\rm resp.}\,\, D^+F_2(X,\C).)
\end{split}
\end{equation*}
and  commutative  diagrams:
$$\begin{matrix} K_{A\otimes \Q}&\xrightarrow {u_{A\otimes \Q}}&
K'_{A\otimes \Q}\qquad &K_{A\otimes \Q}\otimes \C&\xrightarrow
{u_{A\otimes \Q}\otimes \C}&K'_{A\otimes \Q}\otimes \C\\
\wr\downarrow{\alpha}&&\wr\downarrow{\alpha'}\qquad
&\wr\downarrow{\beta}&& \wr\downarrow{\beta'}\\
 K_A\otimes\Q&\xrightarrow{u_A\otimes \Q}&K'_A\otimes \Q \qquad &
  K_{\C}&\xrightarrow{u_{\C}}& K'_{\C}
  \end{matrix}$$
  in $D^+(A \otimes \Q)$ (resp. $D^+(X, A \otimes \Q)$ and in $D^+F(\C)$
(resp. $D^+F(X,\C))$ with respect to $W$.

\subsubsection{} Let $(K,W)$ be a complex of objects of an abelian category
  $\A$ with an increasing filtration  $W$. We denote by
   $(T_M K, W)$ or $(K[1], W[1])$ the mixed shifted complex
with a translation on degrees
    of $K$ and  $W$: $W_n(T_MK^i) = W_{n-1}K^{i+1}$.
    
\begin{definition}[Mixed cone]
  Let $u : K \rightarrow K'$ be a
morphism  of complexes in  $C^+F (\A)$ with an increasing
filtration. The mixed cone $C_M(u)$ is defined by the complex
$T_MK \oplus K'$ with the differential of  the cone $C(u)$.
\end{definition}

\subsubsection{}
 Let $u: K \rightarrow K'$ be a morphism of a Mixed Hodge Complex. There exists a
 quasi-isomorphism
  $v= (v_A, v_{A \otimes\Q},v_{\C}):\tilde K \xrightarrow{\approx} K$
and  a morphism $\tilde u  = (\tilde u_A, \tilde u_{A \otimes \Q},
\tilde u_{\C}): \tilde K \rightarrow K'$ of Mixed Hodge Complexes such that $ v$ and
 $\tilde u$ are defined successively in  $C^+A$, $C^+F(A
\otimes \Q)$ and $C^+F_2 \C$, i.e., we can find, by definition,
diagrams:
$$K_A  \xleftarrow{ \approx } \tilde K_A \rightarrow {K'}_A  ,
\quad K_{A \otimes \Q} \xleftarrow{ \approx }  \tilde K_{A \otimes
\Q} \rightarrow {K'}_{A \otimes \Q}  , \quad K_{\C}\xleftarrow{
\approx }  \tilde K_{\C} \rightarrow {K'}_{\C},$$
  or in short
$K \xleftarrow {\approx v} \tilde K \xrightarrow{\tilde u}
 K'$ (or equivalently
$K   \xrightarrow {\tilde u} \tilde K' \xleftarrow {\approx v}
K'$) representing $u$.

 \subsubsection{Dependence on homotopy} Consider  a  morphism $u
\colon K \rightarrow K'$ of
  Mixed Hodge Complexes, represented  by a morphism of complexes $ {\tilde u} :
\, \tilde K \rightarrow K'$.\\
To define the mixed cone $C_M( {\tilde u})$ out of: \\
(i) the cones $C({\tilde u_A}) \in C^+(A),\,  C_M(\tilde u_{A
\otimes \Q}) \in C^+F(A \otimes \Q), \, C_M( \tilde u_{\C}) \in
C^+F_2(\C)$,  \\
 we still need to define compatibility
isomorphisms:
\[ \gamma_1:   C_M(\tilde u_{A \otimes \Q}) \simeq
 C(\tilde u_A) \otimes \Q, \quad    \gamma_2:
 (C_M(\tilde u_{\C},W) \simeq (
  C_M(\tilde u_{A \otimes \Q}),W) \otimes \C \]
\n  successively    in $D^+(A \otimes \Q)$ and  $D^+F(\C)$. With the notations of \ref{6.3.1}
the choice of isomorphisms $C_M(\tilde\alpha, \tilde
\alpha')$
 and  $C_M(\tilde\beta, \tilde\beta')$ from the compatibility isomorphisms
 in $K$ and $K'$ does not define compatibility isomorphisms for the cone
 since the diagrams of compatibility are commutative only up to
 homotopy, that is there exists  homotopies $h_1$ and $h_2$
 ~{such that}:
 $$\tilde \alpha' \circ (\tilde u_{A \otimes \Q}) - (\tilde u_A \otimes
 \Q) \circ \tilde \alpha = h_1 \circ d + d \circ h_1, \,\,$$
 and:
 $$\tilde \beta' \circ \tilde u_{\C} - (\tilde u_{A \otimes \Q} \otimes
 \C ) \circ \tilde \beta = h_2 \circ d + d \circ h_2.$$
 ii) Then we can define the compatibility isomorphism as:
 \[C_M(\tilde\alpha, \tilde\alpha', h_1):=  \left({\tilde \alpha
\atop {h_1}}{0 \atop  \tilde \alpha'}\right):  C_M(\tilde u_{A
\otimes \Q}) \xrightarrow{\sim}
 C(\tilde u_A) \otimes \Q \]
 and a similar formula for $C_M(\tilde\beta, \tilde\beta', h_2)$.
 
\begin{definition} Let $u \colon K \rightarrow K'$ be a  morphism
of Mixed Hodge Complexes. The mixed cone $C_M(\tilde u, h_1,h_2)$ constructed above
depends on the choices of the homotopies  ($ h_1,h_2$) and  the
choice of a representative $\tilde u$ of $u$, such that:
\begin{equation*} Gr_n^W(C_M(\tilde u),F)
\simeq (Gr_{n-1}^W(T\tilde K),F) \oplus (Gr_n^WK',F)
\end{equation*}
 is a HC  of weight
$n$; hence  $C_M (\tilde u, h_1, h_2)$ is a Mixed Hodge Complex.
 \end{definition}
 
\section{MHS on the cohomology of a complex  algebraic
variety}

The aim of this section is to prove:

\begin{theorem}[Deligne]
The cohomology of a complex algebraic variety carries a natural
Mixed Hodge Structure.
 \end{theorem}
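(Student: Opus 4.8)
The plan is to reduce the statement to the machinery already assembled, namely the implications $\hbox{CMHC} \Rightarrow \hbox{MHC} \Rightarrow \hbox{MHS}$ of Section 5. Thus it suffices, for an arbitrary complex algebraic variety $X$, to produce a Cohomological Mixed Hodge Complex whose hypercohomology is canonically isomorphic to $H^*(X,\Q)$ together with its complex analogue, and then to invoke the theorem that the cohomology of a Mixed Hodge Complex carries a Mixed Hodge Structure. Two building blocks are already in place: for $X$ smooth one uses the logarithmic complex $(\Omega^*_X(Log\, Y), W, F)$ of a good compactification (Section 6.1), and for a normal crossing divisor one uses the Mayer--Vietoris system $(\Pi_* \Q_{Y_{\underline *}}, W)$ together with the simple complex $s(\Omega^*_{Y_{\underline *}})$ (Section 6.2). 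The remaining task is to glue these local models into a single CMHC for a general, possibly singular and non-compact, $X$.

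First I would realize $X$ by a \emph{smooth simplicial hypercovering}. Using Nagata's compactification and Hironaka's resolution of singularities, together with the standard inductive construction of hypercoverings, one builds a simplicial variety $X_{\underline *}$ with smooth components $X_{\underline n}$ and an augmentation $a \colon X_{\underline *} \to X$ which is of cohomological descent, so that $a^* \colon H^*(X,\Q) \xrightarrow{\sim} H^*(X_{\underline *},\Q)$. Concretely the augmentation should be engineered so that each $X_{\underline n}$ is the complement of a normal crossing divisor $Y_{\underline n}$ in a smooth proper variety; this puts the smooth open case of Section 6.1 at our disposal level by level.

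Next I would assemble the levelwise logarithmic complexes into a bi-filtered complex on the simplicial space. On each $X_{\underline n}$ we have a CMHC $(K^{\Q}_{\underline n}, K^{\C}_{\underline n}, W, F)$, and the cosimplicial structure maps $\lambda^*_{j,\underline n}$ furnish a differential $\delta$ in the simplicial direction. Forming the associated simple complex $s(K_{\underline *})$, I would define the total weight filtration as the \emph{diagonal} filtration combining the internal weight $W$ on each level with the shift by the simplicial degree (exactly as the $\sigma$-filtration plays this role in Section 6.2), and keep $F$ as the simple complex of the Hodge filtrations. The crucial point to verify is that $Gr^W_n$ of this total complex is, degree by degree, a direct sum of shifted Hodge complexes of pure weight $n$ coming from the smooth proper strata, so that the system is a Cohomological Mixed Hodge Complex on $X$; its hypercohomology then computes $H^*(X)$ by cohomological descent and acquires a Mixed Hodge Structure.

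The hard part will be precisely the bookkeeping of the two filtrations on the total complex: one must show that the diagonal weight filtration satisfies the CMHC axiom, i.e. that each $Gr^W_n s(K_{\underline *})$ is a Hodge complex of weight $n$, which amounts to the degeneration statements and the two-filtrations lemma (Theorem \ref{del}) of Section 5 applied in the presence of the simplicial index. A further genuine difficulty is cohomological descent itself---establishing that a smooth simplicial hypercovering computes $H^*(X)$---and the proof that the resulting Mixed Hodge Structure is independent of the chosen hypercovering and functorial for algebraic morphisms; this independence follows the pattern of Section 6.1, by comparing two hypercoverings through a third dominating both and using the strictness of morphisms of Mixed Hodge Structures. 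When $X$ is complete one may shortcut the argument by embedding $X$ in a smooth ambient variety and using the complete embedded construction of Section 7.2, which produces the CMHC directly without the full simplicial apparatus.
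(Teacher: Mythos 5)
Your proposal is correct and follows essentially the same route as the paper: a smooth simplicial hypercovering of cohomological descent (Nagata plus Hironaka), levelwise logarithmic Cohomological Mixed Hodge Complexes assembled into a differential graded complex with the diagonal filtration $\delta(W,L)$ and the simple complex of the Hodge filtrations, then Deligne's theorem that a Mixed Hodge Complex yields a Mixed Hodge Structure, with independence and functoriality obtained by dominating two hypercoverings by a third. Even your closing remark on the complete embedded case matches the paper's alternative construction in Section 7.2, so there is nothing genuinely different to compare.
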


 The uniqueness and the functoriality  will follow easily, once we have fixed
 the case of Normal Crossing Divisors and the smooth case.

We need a construction based on a diagram of algebraic varieties:
\begin{equation*}
X_* =  ( X_0\quad \begin{matrix} \leftarrow \\
\leftarrow
\end{matrix}\quad  X_1\quad
\begin{matrix} \leftarrow \\ \leftarrow \\ \leftarrow \end{matrix}
\quad  \cdots \quad X_{q-1}\quad  \begin{matrix} \xleftarrow
{X_*(\delta_i)} \\ \vdots \\ \longleftarrow
\end{matrix}\quad
 X_q
 \cdots  )
\end{equation*}
similar to  the model in the case of a Normal Crossing Divisor. Here
the $X_*(\delta_i)$ are called the face maps 
(see below \ref{face} for the definition),  one for each $i \in [0,q]$. 

Moreover, we still  need   to state commutativity
relations when we compose the face maps. When we consider diagrams
of complexes of sheaves, we need resolutions of such sheaves, with
compatibility with respect to the maps $X_*(\delta_i)$ so as to avoid
the dependence on homotopy that we met
in the mixed cone construction.

The language of the simplicial category gives a rigorous setting to
state the compatibility relations needed, and  leads to  the
construction in \cite{HIII} of a simplicial hypercovering of an
algebraic variety $X$,
 using a general simplicial technique  combined with
 desingularization  at the various steps.
 
 We will take this topological construction here as granted and concentrate
 on the construction of a
 cohomological mixed Hodge complex on the variety $X$.  This construction is
 based on a diagonal process,
  out of various logarithmic complexes on the terms
 of the simplicial hypercovering, as in the previous case
  of normal crossing divisor,
   without the ambiguity of the choice
   of homotopy, because we carry resolutions of simplicial
   complexes of sheaves, hence functorial in the simplicial
   derived category.

In particular, we should view the simplicial
   category as a set of diagrams and the construction is carried out
   in a ``category of diagrams". In fact, there exists another construction
   based on  the
   ``category of diagrams" of cubical schemes \cite{St}, and
  an alternative construction with diagrams with only four edges
  \cite{E} for embedded varieties.

  In all cases, the Mixed Hodge Structure is constructed first
  for smooth varieties and normal
crossing divisors, then it is deduced for general varieties. The
uniqueness follows from the compatibility of the Mixed Hodge
Structure with Poincar\'e duality and classical exact sequences on
cohomology.

\subsection{MHS on  cohomology of simplicial varieties} To construct
a natural Mixed Hodge Structure on the cohomology of an algebraic
variety $S$, not necessarily smooth or compact, Deligne considers a
simplicial smooth variety $\pi: U_* \to S$ which is a
cohomological resolution  of the original variety in the sense
that the direct image  $\pi_* \Z_{U_*}$ is a resolution of $\Z_S$
(descent theorem, see Theorem \ref{descent}).

On each term of the simplicial resolution, which consists of the
complement of a normal crossing divisor in a smooth compact
complex variety, the various logarithmic complexes are connected
by functorial relations and form a simplicial Cohomological Mixed
Hodge Complex giving rise to the Cohomological Mixed Hodge Complex
defining the Mixed Hodge Structure we are looking for on the
cohomology of $S$. Although such a construction is technically
elaborate, the above  abstract development of Mixed Hodge Complexes
 leads easily to the result without further difficulty.
 
 \subsubsection{Simplicial category}
The simplicial category $\Delta$
 is defined by its objects, its morphisms and the composition of morphisms.\\
 i) The objects of $\Delta$ are the subsets of integers ${\Delta}_n := \lbrace 0,1,
\ldots ,n \rbrace$ for $ n \in {\hbox{\bf N}}$,\\
 ii) The set of morphisms of $\Delta$ are the sets $H_{p,q}$ of increasing mappings from
${\Delta}_p$ to
 ${\Delta}_q$ for integers $p,q \ge 0$, with the natural
composition of mappings : $H_{pq} \times H_{qr} \rightarrow
H_{pr}$. \\
Notice that $f: {\Delta}_p \to {\Delta}_q$ is increasing in the non-strict sense
 $ \forall i < j, \, f(i) \leq f(j)$.

\begin{definition} \label{face} We define for $  0 \leq i \leq n+1$ the
$i-$th face map as the unique strictly increasing mapping such
that $i \not\in {\delta}_i ( {\Delta}_n ) $: ${\delta}_i \colon
{\Delta}_n \rightarrow {\Delta}_{n+1}, \quad i \not\in {\delta}_i
( {\Delta}_n ):= Im \,{\delta}_i$.
\end{definition}

 The semi-simplicial category
$\Delta_{>}$ is obtained when we consider only  the strictly
increasing morphisms in $\Delta$. In what follows we could
restrict the constructions to
 semi-simplicial spaces which underly the simplicial spaces and
  work only with semi-simplicial spaces, since we use only the face
maps.

\begin{definition} A simplicial (resp.
co-simplicial) object $X_*:= (X_n)_{n\in \N}$ of a category
${\mathcal C}$ is a contravariant (resp. covariant) functor from
$\Delta$ to ${\mathcal C}$.\\
A morphism  $a: X_* \to Y_*$ of simplicial (resp. co-simplicial)
objects is defined by its components $a_n: X_n \to Y_n$ compatible
with the various maps image by the functor of simplicial morphisms
in $H_{pq}$ for all $p,q\in\N$.
\end{definition}

The functor $\Gamma:{\Delta} \to {\mathcal C}$  is defined by
$\Gamma (\Delta_n):= X_n$ and for each $f: {\Delta}_p \to
{\Delta}_q$, by $\Gamma (f): X_q \to X_p $ (resp.
 $\Gamma (f): X_p \to X_q $); $\Gamma (f)$  will be denoted by $X_*(f)$.

\subsubsection{Sheaves on a simplicial space}
If ${\mathcal C}$ is the category of topological spaces, we can define 
simplicial topological spaces.
 A sheaf $F^*$ on a simplicial topological space
$X_*$ is defined by: \\
1) A family of sheaves $F^n$ on $X_n$,\\
2) For each $f: \Delta_n \to \Delta_m$ with $X_*(f): X_m \to X_n$,
an $X_*(f)-$morphism $F_*(f)$ from $F^n$ to $F^m$, that is
$X_*(f)^* F^n \to  F^m$ on $ X_m $ satisfying for all $g: \Delta_r
\to \Delta_n$, $F_*(f\circ g) = F_*(f)\circ F_*(g)$.\\ A morphism
$u: F^* \to G^*$ is a family of morphisms $u^n:F^n \to G^n$ such that
for all $f: \Delta_n \to \Delta_m$, $ u^m F^*(f) = G^*(f) u^n$
where the left term is: 
$$X_*(f)^* F^n \rightarrow F^m
\xrightarrow{u^m} G^m$$ 
and the right term is: 
$$X_*(f)^* F^n
\xrightarrow{X_*(f)^*(u_n)}
X_*(f)^* G^n \to G^m .$$ \\
 The  image of the
$i-$th face map  by a functor is also denoted abusively by the
same symbol
 $\delta_i \colon X_{n+1} \rightarrow X_n$.\\
For a ring $A$, we will consider the derived category
 of cosimplicial sheaves of $A-$modules.

\subsubsection{Derived category on a simplicial space}
 The definition of a complex of sheaves $K$ on a simplicial topological space
$X_*$ follows from the definition of  sheaves, it has two degrees
$K := K^{p,q}$ where $p$ is the degree of the complex and $q$ is
the simplicial degree, hence for each $p$, $K^{p,*}$ is a
simplicial sheaf and for each $q$, $K^{*,q}$ is a complex on
$X_q$.

A quasi-isomorphism (resp. filtered, bi-filtered) $\gamma: K \to
K'$ (resp. with filtrations) of simplicial complexes on $X_*$, is
a morphism of simplicial complexes inducing a quasi-isomorphism
$\gamma^{*,q}: K^{*,q} \to {K'}^{*,q}$ (resp. filtered,
bi-filtered) for each space $ X_q$.

The definition of the derived category  (resp. filtered,
bi-filtered) of the abelian category of abelian sheaves of groups
(resp. vector spaces) on a simplicial space is obtained by
inverting the quasi-isomorphisms ( resp. filtered, bi-filtered).

 \subsubsection{}  A topological space $S$ defines a simplicial
constant space $S_*$ such that $S_n = S$ for all $n$ and $S_*(f) =
Id $ for all $f\in H^{p,q}$.

 An augmented simplicial space $\pi: X_* \to S$
 is defined by a family of maps $\pi_n: X_n \to S_n
= S$ defining a morphism of simplicial spaces.

\subsubsection{} If the target category ${\mathcal C}$ is the category
 of complex analytic spaces, we define a simplicial complex analytic space.
The structural sheaves $\OO_{X_n}$ of a simplicial
complex analytic space form a simplicial sheaf of rings.
 Let $\pi: X_* \to S$ be an augmentation to a complex analytic space $S$,
the de Rham complex of sheaves $\Omega^*_{X_n/S}$ for various $n$
form
a complex of sheaves on $X_*$ denoted $\Omega^*_{X_*/S}$\,.

 A simplicial sheaf $F^*$ on a constant simplicial space $S_*$
defined by $S$ corresponds to  a co-simplicial sheaf on $S$; hence
if $F^*$ is abelian, it defines a complex  via the face maps,
with:
$$d = \sum_i (-1)^i
\delta_i: F^n \to F^{n+1}.$$ A complex of abelian sheaves $K$ on
$S_*$, denoted by $K^{n,m}$ with $m$ the cosimplicial degree,
defines a simple complex $sK$:
\[  (sK)^n:= \oplus _{p+q=n} K^{pq};  \quad d(x^{pq}) =
d_K(x^{pq}) + \sum_i(-1)^i \delta_i x^{pq}. \] The filtration $L$
with respect to the second degree will be useful:
\[ L^r(sK) = s (K^{pq})_{q\geq r}. \]

\subsubsection{Direct image in the derived category of abelian sheaves
(resp. filtered, bi-filtered)} For  an augmented simplicial space
$a: X_* \to S $, we define a functor denoted $ R a_{*}$ on
complexes $K$ (resp. filtered $(K,F)$, bi-filtered $(K, F, W)$))
of abelian sheaves on $X_*$. We may view $S$ as a constant
simplicial scheme $S_*$ and $a$ as a morphism $a_*: X_* \to S_*$.
In the first step we construct a complex $I$ (resp. $(I,F)$,
$(I,F,W)$)of acyclic (for example flabby) sheaves,
quasi-isomorphic (resp. filtered, bi-filtered) to $K$ (resp.
 $(K,F)$,  $(K, F, W)$)); we can always take Godement resolutions
(\cite{Iver} Chap. II, \S 3.6 p. 95 or \cite{G} Chap. II, \S 4.3
p.167) for example, then in each degree $p$, $(a_q)_* I^p$ on $S_q
= S$ defines for varying $q$ a cosimplicial sheaf on $S$ denoted
$(a_*)_* I^p$, and a differential graded complex for varying $p$,
which is  a double complex whose associated
 simple complex is denoted $s (a_*)_* I := R a_* K$:
\[  (R a_* K)^n := \oplus _{p+q=n} (a_q)_* I^{p,q};  \quad d x^{pq} =
d_I(x^{pq}) + (-1)^p \sum_{i=0}^{q+1}(-1)^i \delta_i x^{pq} \in (R
a_* K)^{n+1}\] where $q$ is the simplicial index ( $\delta_i
(x^{pq}) \in I^{p,q+1} $ and $p$ is the degree. In particular for
$S$ a point we define the hypercohomology of $K$:
\[ R\Gamma (X_*, K):= sR \Gamma^* (X_*, K); \quad  \H^i  (X_*, K):=
H^i (R\Gamma (X_*, K)). \]
  Respectively, the definition of $R a_* (K, F)$ and  $R a_* (K, F,
  W)$is similar.

 The filtration $L$ on $s (a_*)_* I := R a_* K$ defines a
 spectral sequence:
 \[ E^{pq}_1 = R^q (a_p)_* (K_{|X_p}):= H^q (R (a_p)_* (K_{|X_p}))
  \Rightarrow H^{p+q}(R a_* K):= R^{p+q}a_* K \]
  
\begin{remark}[Topological realization]
 Recall that a morphism of simplices $f: \Delta_n \to \Delta_m$ has a geometric
realization $|f|: |\Delta_n| \to |\Delta_m|$ as the affine map
defined when we identify a simplex $\Delta_n$ with the vertices of
its affine realization in $\R^{\Delta_n}$. We construct the
topological realization of a topological semi-simplicial space
$X_*$ as the quotient of the topological space $Y = \coprod_{n
\geq 0} X_n \times |\Delta_n|$ by the equivalence relation $\RR$
generated by the  identifications:
 \[\forall  f: \Delta_n \to \Delta_m, x \in X_m, a \in |\Delta_n|,
\quad (x,|f|(a))\equiv (X_*(f)(x), a) \]
  The topological realization  $|X_*|$ is the quotient space of
  $Y$, modulo the relation $\RR$, with its quotient topology.
  The construction above of the cohomology amounts to the computation of the
  cohomology of the topological space $|X_*|$ with coefficient in
  an abelian group $A$:
  \[ H^i  (X_*, A)\simeq H^i  (|X_*|, A). \]
  \end{remark}

\subsubsection{Cohomological descent} Let $a: X_* \to S $ be an
augmented simplicial scheme; any abelian sheaf $F$ on $S$, lifts
to a sheaf $a^* F$ on $X_*$ and we have a natural morphism:
\[\varphi(a):F \to  R a_* a^* F \,\,{\rm in }\,\, D^+(S). \]

\begin{definition}[cohomological descent] The morphism $a: X_* \to S $ is of
cohomological descent if the natural morphism $\varphi(a)$ is an
isomorphism in $D^+(S)$ for all abelian sheaves $F$ on $S$.
\end{definition}

 The definition
amounts to the following conditions:
\begin{equation*}
 F \xrightarrow {\sim}Ker (a_{0*}a_0^*F
\xrightarrow{\delta_1 -\delta_0} a_1^*F); \quad  R^ia_*a^* F = 0
\,\, {\rm for} \,\, i > 0.
\end{equation*}
In this case for all complexes $K$ in $D^+(S)$:
\[R\Gamma(S,K) \simeq R\Gamma (X_*, a^* K)\]
and we have a spectral sequence:
\[ E^{pq}_1 = \H^q(X_p,a_p^* K) \Rightarrow \H^{p+q}(S,K),
 \quad d_1=\sum_i (-1)^i \delta_i
: E_1^{p,q} \to E_1^{p+1,q}.\]

 \subsubsection{MHS on cohomology of algebraic varieties} A  simplicial complex variety
 $X_*$ is smooth (resp. proper) if every $X_n$ is smooth (resp.
 compact).

\begin{definition}[NCD] A simplicial Normal Crossing Divisor is a family $Y_n \subset X_n$
 of normal crossing divisors such that the family of open subsets
 $U_n := X_n - Y_n$ form a simplicial subvariety $U_*$ of $X_*$,
 hence
 the family of filtered  logarithmic complexes  $(\Omega^*_{X_n}(Log
 Y_n))_{n \geq 0}, W)$ form a filtered complex on $X_*$.
\end{definition}

The following theorem is admitted here:

\begin{theorem}(\cite{HIII}  6.2.8)\label{descent} For each separated complex variety $S$, \\
i) There exist a simplicial variety proper and smooth  $X_*$ over
$\C$,
 containing a normal crossing divisor $Y_*$ in $X_*$ and
  an augmentation $a: U_* = (X_*-Y_*) \to
 S$ satisfying the cohomological descent property.\\
 Hence for all abelian
 sheaves $F$ on $S$, we have an isomorphism $F \xrightarrow{\sim} R a_* a^*F$.\\
ii)  Moreover, for each morphism $f: S \to S'$, there exists a
morphism
 $f_*: X_* \to X'_*$ of simplicial varieties proper and smooth
  with normal crossing divisors $Y_*$ and $Y'_*$ and   augmented  complements
  $a:  U_* \to  S$ and $a': U'_* \to S'$ satisfying the
   cohomological descent property, with
  $ f( U_* ) \subset U'_*$ and $a' \circ f = a$.
\end{theorem}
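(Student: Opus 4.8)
The plan is to exhibit the augmentation $a\colon U_*\to S$ as the smooth open part of a \emph{proper hypercovering} of $S$, and to derive the cohomological descent property from the fact that proper surjective morphisms are of universal cohomological descent. The elementary input, which will be iterated throughout, is that for any separated complex variety $T$ there is a proper surjective morphism $p\colon W\to T$ with $W$ smooth, together with a smooth compactification $\overline W$ whose boundary is a normal crossing divisor: one stratifies $T$ into smooth locally closed strata, makes these quasi-projective by Chow's lemma (\cite{Sha}), desingularizes the closures by Hironaka (\cite{Hi}), completes by Nagata (\cite{NaG}), and resolves once more so that the boundary is a normal crossing divisor.

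First I would construct the simplicial variety by induction on the simplicial degree, using the coskeleton. Assume an $n$-truncated proper smooth simplicial variety over $S$ has been built, with all terms smooth and equipped with chosen normal crossing compactifications. The matching object $M_{n+1}=(\mathrm{cosk}_n)_{n+1}$ is a finite limit of the lower terms along the face maps, hence a separated variety carrying a natural map to $S$; applying the elementary step to $M_{n+1}$ produces a smooth $U_{n+1}$ and a proper surjection $U_{n+1}\to M_{n+1}$, from which the face maps into degree $n$ are read off. Iterating yields a proper hypercovering $a\colon U_*\to S$ all of whose augmented matching maps are proper surjective. To obtain the simplicial normal crossing divisor, I would at each stage extend the face maps $U_{n+1}\to U_n$ to the compactifications by exactly the device used in the smooth case (the subsection on independence of the compactification): after blowing up $X_{n+1}$ along smooth centres lying over the boundary, the face maps extend to $X_{n+1}\to X_n$, and one arranges $Y_{n+1}:=X_{n+1}-U_{n+1}$ together with the preimages of the $Y_j$ to be a normal crossing divisor. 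The simplicial identities among the extended face maps are forced by construction, so $(X_n,Y_n)_{n\ge0}$ is a genuine simplicial pair with $U_*=X_*-Y_*$ smooth simplicial.

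The core of part (i) is then the descent statement $F\xrightarrow{\sim}Ra_*a^*F$, and this is the step I expect to be the main obstacle. I would prove it in two moves. A single proper surjective morphism $p$ is of universal cohomological descent; this follows from the proper base change theorem, surjectivity giving that $F$ is identified with $\ke(\delta_1-\delta_0\colon p_*p^*F\to \cdots)$ and that the higher direct images of the $0$-coskeleton vanish. A general simplicial criterion (\cite{HIII}) then upgrades this degree-zero statement to full cohomological descent for any simplicial object whose augmented matching maps are all proper surjective, by comparing $Ra_*$ with the totalization of the coskeleta. The genuinely technical burden lies precisely here: making the compactifications compatible across all face maps so that $Y_*$ is simplicial requires iterated blow-ups and resolutions at every simplicial degree with the combinatorics of the simplicial identities, and the descent lemma for proper hypercoverings is itself nontrivial. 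Granting descent, the spectral sequence $E_1^{pq}=\H^q(U_p,a_p^*K)\Rightarrow\H^{p+q}(S,K)$ of the excerpt computes $H^*(S)$ from the smooth open pieces $U_p$.

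Finally, for the functoriality in part (ii) I would run the construction relatively. Given $f\colon S\to S'$, first build a hypercovering $U'_*\to S'$ as above; then perform the inductive coskeleton construction for $S$ with the extra constraint that at each stage $U_{n+1}$ proper-surjects onto the fibre product $M_{n+1}(U_*)\times_{M_{n+1}(U'_*)}U'_{n+1}$. This produces simultaneously the smooth open hypercovering $a\colon U_*\to S$ and a simplicial morphism $f_*\colon U_*\to U'_*$ over $f$, so that the augmentation square commutes, $a'\circ f_*=f\circ a$; extending the face maps to the compactifications and resolving as before makes $f_*\colon X_*\to X'_*$ a morphism of simplicial normal crossing pairs, which is exactly what is required.
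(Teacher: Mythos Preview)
Your proposal is correct and in fact gives considerably more detail than the paper itself, which explicitly states ``The following theorem is admitted here'' and offers only a two-sentence indication: ``The proof is based on Hironaka's desingularisation theorem and on a general construction of hypercoverings described briefly by Deligne in \cite{HIII} after preliminaries on the general theory of hypercoverings. The desingularisation is carried at each step of the construction by induction.'' Your coskeleton-based inductive construction, with resolution at each matching object and compatible compactifications, is exactly the standard argument from \cite{HIII} that the paper defers to, so there is no discrepancy in approach.
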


The proof is based on Hironaka's desingularisation theorem and on
a general contruction of hypercoverings described briefly by
Deligne  in \cite{HIII} after preliminaries on the general theory
of hypercoverings. The desingularisation is carried at each step
of the construction by induction.

 \begin{remark} We can assume  the Normal Crossing Divisor with smooth
irreducible  components.
\end{remark}

\subsubsection{}
An $A$-Cohomological Mixed Hodge Complex $K$ (CMHC) on a
topological simplicial space  $X_*$ consists of:\\
i) A complex $K_A$ of sheaves of $A-$modules on  $X_*$ such that
$\H^k(X_*,K_A)$ are $A$-modules of finite type, and $\H^*
(X_*,K_A) \otimes \Q
\xrightarrow {\sim} \H^{*} (X_*,K_A \otimes \Q)$,\\
ii) A filtered  complex
 $(K_{A \otimes \Q}, W) $ of filtered  sheaves of $A \otimes
\Q$ modules on $X_*$ with an increasing  filtration $ W$ and an
isomorphism
 $K_{A \otimes \Q}  \simeq K_A \otimes \Q$ in the derived category on
 $X_*$,\\
iii) A bi-filitered  complex $(K_{\C}, W, F) $ of  sheaves of
complex vector spaces on $ X_*$ with an increasing
 (resp. decreasing) filtration $W$ (resp. $F$ ) and an isomorphism
  $\alpha : (K_{A \otimes \Q}, W) \otimes \C \xrightarrow {\sim} (K_{\C}, W)$
   in the derived category on $X_*$.\\
Moreover, the following axiom is satisfied \\
(CMHC) The restriction of $K$ to each $X_n$ is an $A$-Cohomological
Mixed Hodge Complex .

\subsubsection{} Let $X_*$ be a  simplicial complex compact smooth
algebraic variety with $Y_*$ a Normal Crossing Divisor in $X_*$
such that $j: U_*= (X_*-Y_*) \to X_*$ is an open simplicial
embedding, then
\[ (R j_* \Z, (Rj_* \Q, \tau_{\leq}),( \Omega^*_{X_*}(Log
Y_*), W, F))\] is a Cohomological Mixed Hodge Complex  on $X_*$.

\subsubsection{}
   If we apply the global section functor to
an $A$-Cohomological Mixed Hodge Complex  $K$ on $X_*$, we get an
$A$-cosimplicial Mixed Hodge
Complex defined as follows:\\
 1) A cosimplicial complex $R
\Gamma^* K_A$ in the derived category of cosimplicial
$A-$modules,\\
2 ) A filtered cosimplicial complex $R \Gamma^* (K_{A\otimes\Q},
W) $ in the derived category of filtered cosimplicial vector
spaces, and an isomorphism  $(R \Gamma^* K_A )\otimes \Q \simeq R
\Gamma^* (K_{A\otimes\Q})$. \\
3) A bi-filtered cosimplicial complex $R \Gamma^* (K_{\C}, W, F)$
in the derived category  of bi-filtered cosimplicial
vector spaces,\\
4) An isomorphism  $R \Gamma^* (K_{A\otimes\Q}, W) \otimes\C
\simeq R \Gamma^* (K_{\C}, W)$  in the
derived category of filtered cosimplicial vector spaces.\\
The hypercohomology
 of a cosimplicial $A$-cohomological mixed Hodge complex on $X_*$ is
 such a complex.

\subsubsection{ Diagonal filtration} To a cosimplicial  mixed
Hodge complex $K$, we associate here a differential graded complex
which is viewed as a double complex whose associated simple
complex is denoted $sK$. We put on $sK$ a weight filtration by a
diagonal process.

\begin{definition}[Differential
graded $A$-MHC] A differential graded  $DG^+$ ( or a complex of
graded objects) is a bounded below complex  with two degrees, the
first defined by the complex and the second  by the gradings. It
can be viewed as a double
 complex.\\
 A differential graded  $A$-Mixed Hodge Complex is defined by a system of
   $DG^+$-complex
(resp. filtered, bi-filtered):
\[K_A, (K_{A\otimes\Q}, W), K_A \otimes \Q \simeq K_{A\otimes\Q},
(K_\C, W, F), (K_{A\otimes\Q}, W)\otimes \C \simeq (K_{\C}, W )\]
 such that for each  degree
$n$ of the grading, the component  $(K_{\C}^{*,n}, W, F) $ underlies
an $A$-Mixed Hodge Complex.
\end{definition}

 A cosimplicial Mixed
Hodge Complex $(K, W, F)$ defines a $DG^+$-$A$-Mixed Hodge Complex
\[sK_A, (sK_{A\otimes\Q}, W),sK_A \otimes \Q \simeq sK_{A\otimes\Q},
 (sK_{\C}, W, F), (sK_{A\otimes\Q}, W)\otimes \C \simeq (sK_{\C}, W ) \]
the degree of the grading is the cosimplicial degree.

\begin{definition}[Diagonal filtration]  The diagonal filtration
$\delta(W,L)$ of $W$ and $L$ on $sK$ is defined by:
\[ \delta(W,L)_n (sK)^i:= \oplus_{p+q=i} W_{n+q}K^{p,q}. \]
where $L^r(sK) = s (K^{p,q})_{q \geq r}$. For a  bi-filtered
complex $(K, W, F)$  with a decreasing $F$, the sum over $F$ is
natural (not diagonal).
\end{definition}

\subsubsection{Properties} We have:
\[ Gr^{\delta(W,L)}_n (sK) \simeq \oplus_p
Gr^W_{n+p}K^{*,p}[-p]\]
 In the case of a $DG^+$-complex
 defined as the hypercohomology of  a complex $(K,W)$ on a
simplicial space $X_*$, we have:
  \[ Gr^{\delta(W,L)}_n R
\Gamma K \simeq \oplus_p R \Gamma (X_p, Gr^W_{n+p}K^{*,p})[-p]. \]
 and for a  bi-filtered complex  with a decreasing $F$:
 \[Gr^{\delta(W,L)}_n R (\Gamma K , F) \simeq \oplus_p
R \Gamma (X_p, (Gr^W_{n+p}K, F))[-p].\]
 Next we remark:

\begin{lemma}
 If $H = ( H_A, W, F)$ is an  $A$-Mixed Hodge Structure, a filtration $L$
of $H_A$ is  a filtration  of Mixed Hodge Structure, if and only
if, for all $n$, 
$$(Gr^n_L H_A, Gr^n_L(W), Gr^n_L(F))$$ 
is an
$A$-mixed Hodge structure.
\end{lemma}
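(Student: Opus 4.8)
The plan is to prove both implications by reducing everything to the canonical bigrading $H_\C=\bigoplus_{p,q}I^{p,q}$ attached to the mixed Hodge structure $H$, and to exploit that this decomposition is functorial: any morphism of mixed Hodge structures carries $I^{p,q}$ into $I^{p,q}$. This is precisely the compatibility with the canonical decomposition recorded just after the construction of the $I^{p,q}$, and it is the source of the strictness of morphisms of $\mathcal{MHS}$. I shall use repeatedly that a sub-$A$-module $H'\subset H$ equipped with the filtrations induced by $W$ and $F$ is a sub-mixed Hodge structure if and only if $H'_\C=\bigoplus_{p,q}(H'_\C\cap I^{p,q})$; indeed this condition is equivalent to the induced $W$ and $F$ defining a mixed Hodge structure on $H'$, and when it holds the inclusion is automatically a morphism of mixed Hodge structures. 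I treat $L$ as a finite decreasing filtration, the increasing case being symmetric.

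For the implication ``$L$ is a filtration of mixed Hodge structure $\Rightarrow Gr^n_LH$ is a mixed Hodge structure'', I would argue that if each $L^nH$ is a sub-mixed Hodge structure then $L^{n+1}H\hookrightarrow L^nH$ is a morphism in the abelian category of mixed Hodge structures and $Gr^n_LH=L^nH/L^{n+1}H$ is its cokernel, hence a mixed Hodge structure with underlying module $Gr^n_LH_A$. Because morphisms of mixed Hodge structures are strict and the functors $Gr^W$ and $Gr_F$ are exact (the corollary following the abelianness of $\mathcal{MHS}$), the filtrations carried by this cokernel coincide with the filtrations $Gr^n_L(W)$ and $Gr^n_L(F)$ induced directly from $W$ and $F$. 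Thus $(Gr^n_LH_A,Gr^n_L(W),Gr^n_L(F))$ is a mixed Hodge structure.

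For the converse I would argue by descending induction on $n$ that $L^nH$, with its induced filtrations, is a sub-mixed Hodge structure, that is $L^nH_\C=\bigoplus_{p,q}(L^nH_\C\cap I^{p,q})$. When $L^{n+1}H=0$ one has $L^nH=Gr^n_LH$, which is a mixed Hodge structure by hypothesis, settling the base case. For the inductive step I assume $L^{n+1}H$ is a sub-mixed Hodge structure and consider the quotient mixed Hodge structure $\widetilde H:=H/L^{n+1}H$ with its projection $\pi\colon H\to\widetilde H$. Since $\pi$ is a strict epimorphism of mixed Hodge structures it preserves the bigradings and $\pi(I^{p,q})=\widetilde I^{p,q}$, where $\widetilde H_\C=\bigoplus_{p,q}\widetilde I^{p,q}$. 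The key elementary point is that, because $\ker\pi=L^{n+1}H_\C$ is contained in $L^nH_\C$, one has $\pi(L^nH_\C\cap I^{p,q})=\pi(L^nH_\C)\cap\pi(I^{p,q})$: any element of $\pi(L^nH_\C)\cap\pi(I^{p,q})$ is the image of some $y\in I^{p,q}$ which, differing from an element of $L^nH_\C$ by an element of $\ker\pi\subset L^nH_\C$, already lies in $L^nH_\C\cap I^{p,q}$. By hypothesis $Gr^n_LH=\pi(L^nH)$ is a mixed Hodge structure, hence a sub-mixed Hodge structure of $\widetilde H$, so $\pi(L^nH_\C)=\bigoplus_{p,q}\bigl(\pi(L^nH_\C)\cap\widetilde I^{p,q}\bigr)=\bigoplus_{p,q}\pi(L^nH_\C\cap I^{p,q})$. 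Lifting this decomposition through $\pi$ and using $\ker\pi=L^{n+1}H_\C=\bigoplus_{p,q}(L^{n+1}H_\C\cap I^{p,q})$ from the inductive hypothesis, I conclude $L^nH_\C=\bigoplus_{p,q}(L^nH_\C\cap I^{p,q})$, as desired.

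I expect the main obstacle to be the careful justification that the canonical bigrading of $H$ is genuinely functorial and that the projection onto $\widetilde H$ is a strict epimorphism sending $I^{p,q}$ onto $\widetilde I^{p,q}$; once this is granted, the entire argument rests on the modular identity $\pi(B\cap C)=\pi(B)\cap\pi(C)$ valid whenever $\ker\pi\subseteq B$, applied with $B=L^nH_\C$ and $C=I^{p,q}$. This is legitimate here precisely because the compatibility of morphisms of mixed Hodge structures with the canonical decomposition, together with the strictness that flows from it, are exactly the facts established in the proof of the abelianness of the category of mixed Hodge structures.
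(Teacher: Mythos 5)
Your proof is correct, and it is worth noting at the outset that the paper itself states this lemma without proof (it appears as a preparatory remark before Deligne's theorem on differential graded Mixed Hodge Complexes), so there is no argument in the text to compare against; what you have written fills a genuine gap, and it does so in exactly the spirit of the paper's proof that Mixed Hodge Structures form an abelian category. Your subobject criterion --- $H'\subset H$ is a sub-MHS if and only if $H'_{\C}=\oplus_{p,q}(H'_{\C}\cap I^{p,q})$ --- is precisely what the paper's kernel lemma establishes in the abelianness proof, where it is recorded as $Gr^W K=\oplus_{p,q}(Gr^WK)\cap H^{p,q}(Gr^WH)$ together with $H^{p,q}(Gr^WK)=(Gr^WK)\cap H^{p,q}(Gr^WH)$; the forward implication of the lemma then follows formally from abelianness, strictness, and exactness of $Gr^W$ and $Gr_F$, as you say, and your descending induction with the modular identity $\pi(B\cap C)=\pi(B)\cap\pi(C)$ for $\ker\pi\subseteq B$ is the right mechanism for the converse.

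Two points that you assert should be made explicit, though neither is a flaw in the architecture. First, the ``if'' direction of your criterion is not a pure formality: conjugation does not preserve $I^{p,q}$ itself but only satisfies $I^{p,q}\equiv\overline{I^{q,p}}$ modulo $W_{p+q-2}$ (Remark ii of the paper), so to see that $H'_{\C}=\oplus_{p,q}(H'_{\C}\cap I^{p,q})$ forces the induced filtrations on $Gr^W_nH'$ to be $n$-opposed you must pass to the graded level and use the injectivity of the projection on $I^{r,s}$, i.e.\ $I^{r,s}\cap W_{r+s-1}=0$, to check that conjugation exchanges the images of $H'_{\C}\cap I^{p,q}$ and $H'_{\C}\cap I^{q,p}$ in $Gr^W_nH'$. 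Second, your identification of the hypothesis ``$Gr^n_LH$ is an MHS'' with ``$\pi(L^nH)$ is a sub-MHS of $\widetilde H=H/L^{n+1}H$'' tacitly uses that the two ways of inducing filtrations on the subquotient $L^nH/L^{n+1}H$ (quotient of the filtration induced on $L^nH$, versus restriction of the quotient filtration on $\widetilde H$) coincide; this is the Dedekind modular law $B\cap(W_m+C)=(B\cap W_m)+C$ for $C\subseteq B$ --- the same identity underlying your key step for $\pi$ --- and it is the content of the paper's observation that the filtered cohomology of a sequence can be computed either way. With those two verifications supplied, the induction is airtight: the base case is the maximal $n$ with $L^{n+1}H=0$, and the lifting step uses $\ker\pi=L^{n+1}H_{\C}=\oplus_{p,q}(L^{n+1}H_{\C}\cap I^{p,q})$ from the inductive hypothesis, exactly as you wrote.
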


\begin{theorem}[Deligne (\cite{HIII} thm. 8.1.15)] Let $K$ be a graded differential
$A$-Mixed Hodge Complex (for example,
 defined by a cosimplicial $A$-mixed Hodge complex).\\
 i) Then, $(sK, \delta(W,L), F)$ is an $A$-Mixed Hodge Complex. \\
 The first terms of the
 weight spectral sequence:
\[_{\delta(W,L)}E_1^{pq}(sK\otimes \Q) = \oplus_n
H^{q-n}(Gr^W_n K^{*,p+n})\]
 form the simple complex $(_{\delta(W,L)}E_1^{pq}, d_1)$ of
  $A\otimes \Q$-Hodge structures of
weight $q$
 associated to the double complex where $m = n+p$ and $E_1^{pq}$
 is represented by the sum of the terms on the diagonal :
 \[\begin{array}{ccccc}
H^{q-(n+1)}(Gr^W_{n+1}
K^{*,m+1})&\xrightarrow{\partial}&H^{q-n}(Gr^W_n
K^{*,m+1})&\xrightarrow{\partial}&H^{q-(n-1)}(Gr^W_{n-1}
K^{*,m+1})\\
d''\uparrow &&d''\uparrow&&d''\uparrow\\
H^{q-(n+1)}(Gr^W_{n+1}
K^{*,m})&\xrightarrow{\partial}&H^{q-n}(Gr^W_n
K^{*,m})&\xrightarrow{\partial}&H^{q-(n-1)}(Gr^W_{n-1} K^{*,m})
\end{array}\]
where $\partial$ is a connecting morphism and $d''$ is
simplicial.\\
  ii) The terms $_LE_r$ for $ r
>0$, of the spectral sequence defined by $(sK_{A\otimes \Q}, L)$
are endowed with a natural $A$-Mixed Hodge Structure, with
differentials $d_r$
compatible with such structures. \\
iii) The filtration $L$ on $H^*(sK)$ is a filtration in the
category of Mixed Hodge Structures and:
\[  Gr^p_L(H^{p+q} ((sK), \delta(W, L)[p+q], F) =  ({_LE}_{\infty}^{pq}, W, F).\]
\end{theorem}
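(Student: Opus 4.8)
The plan is to establish (i) by a direct verification of the (MHC) axiom, and then to read off (ii) and (iii) by analysing the spectral sequence of the filtration $L$, whose graded pieces are themselves Mixed Hodge Complexes. Throughout I would carry out the construction simultaneously on the $A$, $A\otimes\Q$ and $\C$ levels, the comparison isomorphisms being induced in the obvious (non-diagonal, on the $\C$ side for $F$) way.

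For (i), I would start from the graded formula $Gr^{\delta(W,L)}_n(sK) \simeq \oplus_p Gr^W_{n+p}K^{*,p}[-p]$ established above. Since $K$ is a graded differential $A$-Mixed Hodge Complex, each column $(K^{*,p},W,F)$ underlies an $A$-Mixed Hodge Complex, so $Gr^W_{n+p}K^{*,p}$ is an $A\otimes\Q$-Hodge Complex of weight $n+p$. Recalling that shifting a Hodge Complex of weight $w$ by $[m]$ (with unshifted $F$) yields weight $w+m$, each summand $Gr^W_{n+p}K^{*,p}[-p]$ is a Hodge Complex of weight $n$; a finite direct sum of Hodge Complexes of weight $n$ is again one, which is exactly the (MHC) axiom for $(sK,\delta(W,L),F)$. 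The computation of the $E_1$-terms is then the identity
$$ {}_{\delta(W,L)}E_1^{pq} = H^{p+q}(Gr^{\delta(W,L)}_{-p}sK) = \oplus_n H^{q-n}(Gr^W_n K^{*,n+p}), $$
obtained by substituting the graded formula and using $H^{p+q}(M[-a]) = H^{p+q-a}(M)$ with $a=n+p$; each summand is a Hodge Structure of weight $(q-n)+n=q$, and the differential $d_1$ is the sum of the connecting morphism $\partial$ of the weight filtration on each column and the simplicial differential $d''$, giving precisely the displayed double complex of Hodge Structures.

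For (ii) and (iii) I would next identify the graded pieces of $L$. One checks $Gr^p_L(sK) = K^{*,p}[-p]$, the filtrations induced by $\delta(W,L)$ and $F$ being $W[-p]$ and $F$; by the Mixed Hodge Complex shift rule (recall that $(K[m],W[m-2n],F[n])$ is again a Mixed Hodge Complex), taking $m=-p$, $n=0$ shows $(K^{*,p}[-p],W[-p],F)$ is an $A$-Mixed Hodge Complex. Consequently the short exact sequences $0 \to L^{p+1}sK \to L^p sK \to Gr^p_L sK \to 0$ are exact sequences of Mixed Hodge Complexes, and ${}_{L}E_1^{pq} = H^{p+q}(Gr^p_L sK) = H^q(K^{*,p})$ carries an $A$-Mixed Hodge Structure by Deligne's theorem that the cohomology of a Mixed Hodge Complex is a Mixed Hodge Structure. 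Since $d_1$ is induced by a morphism of Mixed Hodge Complexes (the alternating sum of face maps), it is a morphism of Mixed Hodge Structures, and abelianness of that category propagates the structure to every ${}_{L}E_r^{pq}$, each $d_r$ remaining compatible. For (iii) the maps $H^{p+q}(L^p sK) \to H^{p+q}(sK)$ are morphisms of Mixed Hodge Structures, so $L$ filters $H^{p+q}(sK)$ by sub-Mixed Hodge Structures; by the preceding lemma it then suffices that each $Gr^p_L H^{p+q}(sK)$, with induced $\delta(W,L)[p+q]$ and $F$, be a Mixed Hodge Structure, and this object is exactly ${}_{L}E_\infty^{pq}$, as in the displayed formula.

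The main obstacle is the strictness at the heart of (ii)--(iii): one must know that all the maps of the $L$-spectral sequence are strictly compatible with $\delta(W,L)$ and $F$, so that the subquotients ${}_{L}E_r^{pq}$ computed at the level of complexes carry the \emph{same} structure as those computed in the abelian category of Mixed Hodge Structures, and so that ${}_{L}E_\infty^{pq} = Gr^p_L H^{p+q}(sK)$ as Mixed Hodge Structures. This is precisely the content of the two filtrations lemma (Theorem \ref{del}): I would verify its inductive hypothesis, namely strict compatibility of $d_r$ with the recurrent filtration at each stage, which holds here because the gradeds $Gr^p_L sK$ are genuine Mixed Hodge Complexes, forcing the recurrent and the two direct filtrations to coincide and to define Hodge Structures on the terms. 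Granting this, the degeneration of the weight spectral sequence at rank two and of the Hodge spectral sequence at rank one, both inherited from the Mixed Hodge Complex structure of $sK$, close the argument.
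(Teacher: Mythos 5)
Your proposal is correct and follows essentially the same route the paper (after Deligne) intends: the graded formula $Gr^{\delta(W,L)}_n(sK)\simeq \oplus_p Gr^W_{n+p}K^{*,p}[-p]$ together with the shift rule for Hodge Complexes gives (i) and the computation of $_{\delta(W,L)}E_1^{pq}$, the identification $(Gr^p_L sK,\delta(W,L),F)\simeq (K^{*,p}[-p],W[-p],F)$ as a Mixed Hodge Complex puts Mixed Hodge Structures on $_LE_1$ with $d_1$ a morphism of such structures, and the two filtrations lemma (Theorem \ref{del}) combined with strictness of morphisms of Mixed Hodge Structures propagates the structure through the $L$-spectral sequence to yield (ii) and (iii), exactly as in the machinery the paper assembles before the statement. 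The one step to spell out fully is the induction closing the argument: at each stage $d_r$ is compatible with the two direct filtrations, these coincide with the recurrent ones by Theorem \ref{del} applied at the previous stage, so $d_r$ is a morphism of Mixed Hodge Structures and hence automatically strict, which is precisely what re-feeds the inductive hypothesis of Theorem \ref{del} --- your sketch names all these ingredients, so the argument is complete in substance.
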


\subsubsection{} In the case of a smooth simplicial variety complement of
a normal crossing divisor at infinity,
 the cohomology groups $H^n(U_*,\Z)$ are
 endowed with the Mixed Hodge Structure defined
by the following Mixed Hodge Complex:
 \[ R\Gamma (U_*,\Z), \,\,
 R\Gamma (U_*,\Q),\delta(W,L)), R\Gamma (U_*,\Omega^*_{X_*}(Log
Y_*)),\delta(W,L)), F)\] with natural compatibility isomorphisms,
satisfying:
\[Gr^{\delta(W,L)}_n R \Gamma (U_* , \Q) \simeq \oplus_m
 Gr^W_{n+m}R \Gamma (U_m,, \Q))[-m]
\simeq \oplus_m R \Gamma (Y^{n+m}_m, \Q)[-n-2m]\] where the first
isomorphism corresponds to the diagonal filtration and the second
to the logarithmic complex for the open set $U_m$; recall that
$Y^{n+m}_m$ denotes the disjoint  union of intersections of $n+m$
components of the normal crossing divisor $Y_m$ of simplicial
degree $m$. Moreover:
\[_{\delta(W,L)}E_1^{p,q} = \oplus_n
H^{q-2n}(Y^n_{n+p}, Q)   \Rightarrow H^{p+q}(U_*,\Q) \]
 The filtration $F$
induces on $_{\delta(W,L)}E_1^{p,q}$ a Hodge Structure of weight
$b$ and the differentials $d_1$ are compatible with the Hodge
Structures. The term $E_1$ is the simple complex associated to the
double complex of Hodge Structure of weight $q$ where $G$ is an
alternating  Gysin map:
 \[
\begin{array}{ccccc}
H^{q-(2n+2)}(Y^{n+1}_{p+n+1}, \Q)
&\xrightarrow{G}&H^{q-2n}({Y}^n_{p+n+1},
\Q)&\xrightarrow{G}&H^{q-2(n-2)}({Y}^{n-1}_{p+n+1}, \Q)\\
 \sum_i (-1)^i\delta_i\uparrow &
 &\sum_i (-1)^i\delta_i\uparrow&&\sum_i (-1)^i\delta_i\uparrow\\
H^{q-(2n+2)}(Y^{n+1}_{p+n}, \Q)
&\xrightarrow{G}&H^{q-2n}({Y}^{n}_{p+n},
\Q)&\xrightarrow{G}&H^{q-2(n-2)}({Y}^{n-1}_{p+n}, \Q)
\end{array}
\]
where the Hodge Structure on the columns are  twisted respectively
by $(-n-1), (-n), (-n+1)$, the  lines are defined by the
logarithmic complex, while the vertical differentials are
simplicial. We deduce from the general theory:

\begin{prop} i) The Mixed Hodge Structure on $H^n(U_*,\Z) $ is  defined by the
graded differential mixed Hodge complex associated to the
simplicial Mixed Hodge Complex  defined by the logarithmic complex
on each term of $X_*$  and it is functorial in the
couple $(U_*,X_*)$,\\
ii)  The rational weight spectral sequence degenerates at rank $2$
and the Hodge Structure on $E_2$ induced by $E_1$ is
isomorphic to the Hodge Structure on $Gr_W H^n(U_*,\Q)$.\\
iii)The Hodge numbers $h^{pq} $ of $H^n(U_*,\Q)$ vanish  for $p
\notin[0,n]$ or $q \notin [0,n]$.\\
iv) For $Y_* = \emptyset$, the Hodge numbers $h^{pq}$ of
$H^n(X_*,\Q)$ vanish for $p \notin[0,n]$ or $q \notin [0,n]$ or
$p+q > n$.
\end{prop}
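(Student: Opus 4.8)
The plan is to assemble the statement from the machinery already in place rather than to rebuild it. For the given smooth simplicial variety $U_* = X_* - Y_*$ with $X_*$ proper and smooth and $Y_*$ a simplicial normal crossing divisor, I would take the simplicial logarithmic system $(Rj_*\Z,\,(Rj_*\Q,\tau),\,(\Omega^*_{X_*}(Log\, Y_*),W,F))$. On each term $X_n$ this is a Cohomological Mixed Hodge Complex by the logarithmic construction of section $6$, and the functoriality of the logarithmic complex recalled there, namely the morphisms $f^*: f^*\Omega^*_{X_2}(Log\, Y_2)\to \Omega^*_{X_1}(Log\, Y_1)$ attached to the face maps, shows that the whole system is a CMHC on the simplicial space $X_*$. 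Applying $R\Gamma$ produces a cosimplicial Mixed Hodge Complex, and forming the associated simple complex $sK$ with the diagonal filtration $\delta(W,L)$ places us exactly in the hypothesis of Deligne's Theorem (\cite{HIII} thm. 8.1.15). That theorem yields part (i): $(sK,\delta(W,L),F)$ is an $A$-Mixed Hodge Complex, hence $H^n(U_*,\Z)$ inherits a Mixed Hodge Structure. Functoriality in the couple $(U_*,X_*)$ follows from functoriality of the logarithmic construction together with part (ii) of Theorem \ref{descent}, which supplies the simplicial morphisms needed to compare two choices.

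For part (ii) I would run the by now standard weight argument. By Theorem 8.1.15 the first terms ${}_{\delta(W,L)}E_1^{pq} = \oplus_m H^{q-2m}(Y^m_{p+m},\Q)(-m)$ are pure Hodge Structures of weight $q$ and $d_1$ is a morphism of Hodge Structures, so the $E_2$ terms again carry Hodge Structures of weight $q$. The differential $d_r$ for $r\geq 2$ sends $E_r^{pq}$ of weight $q$ to $E_r^{p+r,q-r+1}$ of weight $q-r+1$; being a morphism of Hodge Structures between groups of distinct weights it must vanish, so the spectral sequence degenerates at rank $2$ and ${}_{\delta(W,L)}E_2^{pq}\simeq Gr^W_q H^{p+q}(U_*,\Q)$. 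This is the same mechanism already used for a single Mixed Hodge Complex and for the normal crossing divisor case, so I expect no new difficulty here.

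Parts (iii) and (iv) reduce to bookkeeping on the $E_1$ terms, which is the only place I expect genuine care to be required. Each $Y^m_{p+m}$ is smooth and compact, so $H^{q-2m}(Y^m_{p+m})$ is pure of weight $q-2m$ with Hodge types $(a,b)$, $a+b=q-2m$, $a,b\geq 0$, and the twist $(-m)$ moves these to type $(a+m,b+m)$ of weight $q$. Since $Gr^W_q H^n(U_*)$ is a subquotient of $E_1^{n-q,q}$ and $Gr^W$, $Gr_F$ are exact on Mixed Hodge Structures, its Hodge numbers $h^{a',b'}$ occur only at $a'=a+m$, $b'=b+m$ with $a+b=q-2m$, $m\geq 0$. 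The lower bounds $a',b'\geq 0$ are immediate; for the upper bounds I would use that $Y^m_{p+m}$ has nonnegative simplicial degree $p+m\geq 0$, i.e. $q\leq n+m$, so that $a'=a+m\leq (q-2m)+m = q-m\leq n$ and likewise $b'\leq n$, which proves (iii). For (iv), the hypothesis $Y_* = \emptyset$ kills every term with $m\geq 1$, leaving $E_1^{pq}=H^q(X_p)$ with no twist; now the simplicial degree is $p\geq 0$, so the weight satisfies $q=n-p\leq n$ and the untwisted Hodge types satisfy $a'+b'=q\leq n$, giving the additional vanishing for $p+q>n$. The main obstacle is precisely to keep the several indexing conventions straight, the interplay of the diagonal weight $\delta(W,L)$, the Tate twists, and the simplicial degree, so that the inequalities $q\leq n+m$ (respectively $q\leq n$) are obtained correctly; once these are in hand the conclusions are direct applications of the cited results.
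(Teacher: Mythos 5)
Your proposal is correct and takes essentially the same route as the paper: the paper likewise assembles the simplicial logarithmic Cohomological Mixed Hodge Complex, passes through $R\Gamma$ and the diagonal filtration $\delta(W,L)$ so as to invoke Deligne's Theorem (\cite{HIII} thm.~8.1.15), kills $d_r$ for $r\geq 2$ by the weight argument, and reads the Hodge-number bounds off the terms ${}_{\delta(W,L)}E_1^{pq}=\oplus_m H^{q-2m}(Y^m_{p+m},\Q)(-m)$. Your bookkeeping for (iii) and (iv) (the inequalities $q\leq n+m$, resp.\ $q\leq n$, coming from nonnegativity of the simplicial degree) is precisely the computation the paper leaves implicit when it says the proposition is ``deduced from the general theory.''
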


\begin{definition} The Mixed Hodge Structure on the cohomology
of a complex algebraic variety
 $H^n(X,\Z)$  is defined by any logarithmic simplicial resolution of
 $X$  via the isomorphism
 with $H^n(U_*,\Z) $ defined by the augmentation $a: U. \to X$.
\end{definition}

The Mixed Hodge Structure just defined  does not depend on the
resolution and  is functorial in $X$ since we can reduce to the
case where a morphism $f:X \to Z$ is covered by a morphism of
hypercoverings.

\subsubsection{Problems} 1) Let $i: Y \to X $ be a closed subvariety
of $X$ and $j: U := X-Y \to X$ the embedding of the complement.
Then the two long exact sequences of cohomology:
\begin{equation*}
\begin{split}
&\ldots \to H^i( X,X-Y, \Z) \to   H^i( X, \Z) \to  H^i(Y, \Z) \to
 H^{i+1}_Y( X, \Z)\to \ldots \\
 &\ldots \to H^i_Y( X, \Z) \to   H^i( X, \Z) \to  H^i( X-Y, \Z) \to
 H^{i+1}_Y( X, \Z)\to \ldots
 \end{split}
\end{equation*}
underly  exact sequences of Mixed Hodge Structure.

The idea is to use a simplicial  hypercovering of the morphism $i$
in order to define two Mixed Hodge Complexes: $K(Y)$ on $Y$ and
$K(X)$ on $X$ with a well defined morphism on the level of
complexes $i^*: K(X) \to K(Y)$ (resp. $j^*: K(X) \to K(X-Y)$),
then the long exact sequence is associated to the mixed cone
$C_M(i^*)$(resp. $C_M(j^*)$).

In particular, one deduce associated long exact sequences by
taking the graded spaces with respect to the filtrations $F$ and
$W$.

\n 2) {\it K\"{u}nneth formula} \cite{St}. Let $X$ and $Y$ be two
algebraic
 varieties, then the isomorphisms of cohomology vector spaces:
   \[ H^i( X \times Y, \C) \simeq \oplus_{r+s = i}H^r(X, \C)
   \otimes H^s(Y, \C)\]
   underly isomorphisms of $\Q$-mixed Hodge structure.
   The answer is in two steps:\\
   i) Consider the tensor product of two mixed Hodge complex defining
   the mixed Hodge structure of $X$
   and $Y$ and deduce the right term, direct sum of tensor product
   of mixed Hodge structures.\\
   ii) Construct a quasi-isomorphism of the tensor product with a
   mixed Hodge complex defining the mixed Hodge structure of $X\times Y$.\\
   iii) Deduce  that the cup product on  the cohomology of an algebraic
    variety is compatible with mixed Hodge structure.

\subsection{MHS on the
cohomology of a complete embedded algebraic  variety}
  For embedded varieties into smooth varieties,
   the mixed Hodge structure on  cohomology can be  deduced by a simple
   method using
exact sequences, once the mixed Hodge structure for normal
crossing divisor has been constructed, which should easily
convince of the natural aspect of this theory. The technical
ingredients consist of Poincar\'e duality and its dual the trace
(or Gysin ) morphism.

Let $p: X' \rightarrow X$  be a proper morphism of complex smooth
varieties of same dimension,  $Y$ a closed subvariety of $X$ and
$Y'$ = $p^{-1}(Y)$. We suppose that $Y'$ is a Normal Crossing
Divisor in $X'$ and the restriction of $p$ induces  an isomorphism
$p_{/X'-Y'} : X'-Y' \xrightarrow{\sim} X-Y$:
$$
\begin{array}{ccccc}Y'&\xrightarrow{i'}&X'&\xleftarrow{j'}&X'-Y'\\
\quad\downarrow{p_Y}&&\quad\downarrow{p}&&\quad\downarrow{p_{X'-Y'}}\\
Y&\xrightarrow{i}&X&\xleftarrow{j}&X-Y
\end{array}$$
The  trace  morphism $Tr p$ is defined as Poincar\'e dual to the
inverse image $p^*$ on cohomology, hence the $Tr p$ is compatible
with the Hodge Structures. It can be defined at the level of sheaf
resolutions of $\Z_{X'}$ and $\Z_{X} $ as constructed by Verdier
\cite{V2}, that is in derived category $ Tr \, p \colon R p_*
\Z_{X'} \rightarrow \Z_X $
   hence we deduce
  by restriction  morphisms  depending on
the embeddings of $Y$ and $Y'$ into $X'$.\\
$(Tr \, p)/_Y : Rp_* \Z_{Y'}
  \rightarrow
  \Z_Y, \, (Tr\, p)/_Y : H^{i} (Y',\Z) \rightarrow
H^i(Y,\Z)$,  and  $Tr p:  H_c^{i} (Y',\Z) \rightarrow
H_c^i(Y,\Z)$.
\begin{remark}  Let $U$ be a neighbourhood of $Y$ in $X$, retract by deformation
onto $Y$ such that $U' = p^{-1}(U)$ is a retract by deformation onto
$Y'$; this is the case if $Y$ is a  sub-variety of $X$. Then the
morphism $(Tr\, p)/_Y$ is deduced from $Tr(p/_U)$ in the diagram:
$$\def\normalbaselines{\baselineskip20pt
\lineskip3pt \lineskiplimit3pt }
\def\mapright#1{{\mathbb S}mash{
\mathop{\longrightarrow}\limits^{#1}}}
\def\mapdown#1{\Big\downarrow \rlap{$\vcenter{\hbox{${\mathbb S}criptstyle#1$}}$}}
\begin{array}{ccc}H^{i}(Y',\Z)&\xleftarrow{\sim}&
H^{i}(U',\Z)\\\quad \downarrow{(Tr\,p)/_Y}&&\quad\downarrow{Tr(p/_U)}\\
H^i(Y,\Z)&\xleftarrow{\sim}&H^i(U,\Z)\end{array}$$
\end{remark}
Consider now the diagram:
$$\begin{array}{ccccc}R\Gamma_c(X' - Y',\Z)& \xrightarrow{j'_*}&
R\Gamma(X',\Z)&
\xrightarrow{{i'}^*}& R\Gamma(Y',\Z)\\
Tr p\downarrow{}&&\downarrow{} Tr p&&\downarrow{} (Tr p)_{|Y}\\
R\Gamma_{c}(X-Y,\Z)& \xrightarrow{j_*}& R\Gamma(X,\Z)&
\xrightarrow{i^*}& R\Gamma(Y,\Z)\end{array}$$

\begin{prop} \cite{E}  i) The morphism $p_Y^* \colon H^i(Y,\Z)
\rightarrow H^i (Y',\Z)$ is injective with retraction $(Tr\,
p)_{/Y}$.\\
 ii) We have a quasi-isomorphism of
$i_* \Z_Y$ with the cone $C({i'}^* - Tr \, p)$ of the morphism
${i'}^* - Tr \, p$. The long exact sequence  associated to the
cone splits into short exact sequences:
  $$ 0 \rightarrow H^i(X',\Z) \xrightarrow{ {i'}^*-Tr \, p}
H^i(Y',\Z) \oplus H^i(X,\Z) \xrightarrow{(Tr \, p)_{/Y}+i^*}
 H^i(Y,\Z) \rightarrow 0.$$
 Moreover ${i'}^* - Tr \, p$ is a  morphism of mixed Hodge structures.
 In particular,
the weight of $ H^i(Y,\C)$ vary in the interval
 [$0,i$] since this is true for $Y'$ and $X$.
 \end{prop}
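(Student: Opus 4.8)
The plan is to work entirely with complexes of sheaves on $X$, pushing everything forward along $p$ and $i$, and to exploit two properties of Verdier's sheaf-theoretic trace $Tr\, p \colon Rp_* \Z_{X'} \to \Z_X$ (the map denoted $Tr\, p$ in the diagram above): first, that it is a one-sided inverse to pullback, i.e.\ $Tr\, p \circ p^* = \mathrm{Id}_{\Z_X}$ in $D^+(X,\Z)$ (the degree is $1$ since $p$ is birational); second, that it commutes with restriction to closed subvarieties, producing $(Tr\, p)_{/Y}$, and that over $X-Y$, where $p$ is an isomorphism, it reduces to the identity. Combined with proper base change for the cartesian square formed by $i, i', p, p_Y$, namely $i^* Rp_* \Z_{X'} \simeq Rp_{Y*}\Z_{Y'}$, and with $Rp_* i'_* \simeq i_* Rp_{Y*}$, these furnish all the compatibilities needed.

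First I would prove (i). Applying $i^*$ to the identity $Tr\, p \circ p^* = \mathrm{Id}_{\Z_X}$ and invoking proper base change, the composite $(Tr\, p)_{/Y}\circ p_Y^* \colon \Z_Y \to Rp_{Y*}\Z_{Y'} \to \Z_Y$ equals $\mathrm{Id}_{\Z_Y}$ in $D^+(Y,\Z)$. Passing to hypercohomology shows that $p_Y^* \colon H^i(Y,\Z)\to H^i(Y',\Z)$ is injective with retraction $(Tr\, p)_{/Y}$.

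For (ii) the geometric heart is to show that $i_*\Z_Y$ is the cone $C({i'}^* - Tr\, p)$, i.e.\ that the commutative square relating $Rp_*\Z_{X'}$, $\Z_X$, $i_*Rp_{Y*}\Z_{Y'}$, $i_*\Z_Y$ through $Tr\, p$, ${i'}^*$, $i^*$, $(Tr\, p)_{/Y}$ is homotopy cartesian. I would check this by comparing the cones of the two \emph{vertical} arrows ${i'}^*$ and $i^*$. The localization triangles $j'_!\Z_{X'-Y'}\to \Z_{X'}\to i'_*\Z_{Y'}$ on $X'$ and $j_!\Z_{X-Y}\to \Z_X\to i_*\Z_Y$ on $X$, together with $Rp_* i'_* \simeq i_* Rp_{Y*}$, identify these cones up to a shift with $Rp_* j'_!\Z_{X'-Y'}$ and $j_!\Z_{X-Y}$; the map between them induced by $Tr\, p$ is an isomorphism because over $X-Y$ it is the identity (there $p$ is an isomorphism, by proper base change), while over $Y$ both stalks vanish, since $p^{-1}(y)\subset Y'$ forces the restriction of $j'_!\Z_{X'-Y'}$ to be zero. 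Hence $i_*\Z_Y \simeq C({i'}^* - Tr\, p)$; applying $R\Gamma(X,-)$ yields the long exact sequence with the displayed maps. Because each of $Tr\, p$, ${i'}^*$, $i^*$, $(Tr\, p)_{/Y}$ underlies a morphism of cohomological mixed Hodge complexes, the mixed cone construction of Section~6.3 realizes this as a long exact sequence of mixed Hodge structures.

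Finally I would deduce the splitting and the weight estimate. By (i) the map $(Tr\, p)_{/Y}+i^*$ is surjective, since any $y\in H^i(Y)$ is the image of $(p_Y^* y,0)$; hence every connecting homomorphism vanishes and the long exact sequence breaks into the short exact sequences of the statement, with ${i'}^*-Tr\, p$ injective by exactness. As the category of mixed Hodge structures is abelian with strict morphisms (Theorem~\ref{abelian}), these are exact sequences of mixed Hodge structures, and ${i'}^*-Tr\, p$ is indeed a morphism of such. Since $X$ and $X'$ are complete and smooth, $H^i(X)$ and $H^i(X')$ are pure of weight $i$, while $H^i(Y')$ has weights in $[0,i]$ by the normal crossing divisor case; exactness of $Gr^W$ then forces the quotient $H^i(Y,\C)$ to have weights in $[0,i]$. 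The main obstacle is the sheaf-level construction of $Tr\, p$ and the verification that the square is homotopy cartesian; once these compatibilities of the trace are in hand, the remaining steps are formal.
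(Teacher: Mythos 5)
Your proposal is correct and takes essentially the same route as the paper: Verdier's sheaf-level trace with $Tr\,p \circ p^* = Id$, the localization triangles for $(X',Y')$ and $(X,Y)$, the fact that the trace is an isomorphism over $X-Y$ (making the square homotopy cartesian and identifying $i_*\Z_Y$ with the cone), the splitting of the long exact sequence via the retraction from (i), and the weight bound inherited from $H^i(Y')$ and $H^i(X)$ by exactness of $Gr^W$. Your stalkwise verification via proper base change simply makes explicit what the paper compresses into its diagram of triangles and the reference to \cite{E}.
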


\begin{definition} The mixed Hodge structure of  $Y$ is defined as  cokernel of
${i'}^*-Tr \, p$ via its isomorphism with $ H^i(Y,\Z)$, induced by
$(Tr \, p)_{/Y}+i^*$. It coincides with Deligne's mixed Hodge
structure.
 \end{definition}

 This result
shows the uniqueness of the theory of mixed Hodge structure, once
the Mixed Hodge Structure of the normal crossing divisor $Y'$ has
been constructed. The above  technique  consists  in the
realization of the Mixed Hodge Structure on the cohomology of $Y$
as relative cohomology with Mixed Hodge Structures on $X$, $X'$
and $Y'$ all smooth proper or normal crossing divisor. Notice that
the Mixed Hodge Structure on $H^i(Y, \Z)$ is realized as a
quotient and not as an extension.

\begin{prop} Let $ p: X' \to X$ be a desingularization of a
complete variety $X$, then for all integers $i$, we have
\[ W_{i-1} H^i ( X , \Q) = Ker \,(H^i ( X , \Q) \xrightarrow{p^*}
 H^i ( X' , \Q)\]
\end{prop}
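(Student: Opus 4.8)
The plan is to use that $p^{*}\colon H^{i}(X,\Q)\to H^{i}(X',\Q)$ is a morphism of mixed Hodge structures, hence \emph{strict}, and to combine this with the two extreme weight facts available here. On the one hand, $X'$ is smooth and proper (a compact K\"ahler manifold), so $H^{i}(X',\Q)$ is \emph{pure of weight} $i$; that is, $W_{i-1}H^{i}(X',\Q)=0$ and $W_{i}H^{i}(X',\Q)=H^{i}(X',\Q)$. On the other hand, $X$ is complete, so by the vanishing $h^{pq}=0$ for $p+q>i$ proved above (for a proper smooth simplicial resolution, i.e. the case $Y_{*}=\emptyset$) the weights of $H^{i}(X,\Q)$ are $\le i$, whence $W_{i}H^{i}(X,\Q)=H^{i}(X,\Q)$. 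First I would dispose of the easy inclusion: since $p^{*}$ respects $W$ and $W_{i-1}H^{i}(X',\Q)=0$, we get $p^{*}(W_{i-1}H^{i}(X,\Q))\subseteq W_{i-1}H^{i}(X',\Q)=0$, so $W_{i-1}H^{i}(X,\Q)\subseteq\ker p^{*}$.

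For the reverse inclusion I would reduce to an injectivity statement on the top graded piece. Because $p^{*}$ is a morphism of $\mathrm{MHS}$ it induces $Gr_{i}^{W}(p^{*})\colon Gr^{W}_{i}H^{i}(X,\Q)\to Gr^{W}_{i}H^{i}(X',\Q)=H^{i}(X',\Q)$, the last equality by purity. I claim it suffices to show that $Gr_{i}^{W}(p^{*})$ is injective. Indeed, if $x\in\ker p^{*}$, then since $W_{i}H^{i}(X,\Q)=H^{i}(X,\Q)$ the element $x$ has a class in $Gr^{W}_{i}H^{i}(X,\Q)=H^{i}(X,\Q)/W_{i-1}$, and this class is sent by $Gr_{i}^{W}(p^{*})$ to the class of $p^{*}(x)=0$; injectivity then forces $x\in W_{i-1}H^{i}(X,\Q)$, giving $\ker p^{*}\subseteq W_{i-1}H^{i}(X,\Q)$.

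The main step, and the principal obstacle, is the injectivity of $Gr_{i}^{W}(p^{*})$. Here I would invoke a proper smooth simplicial resolution $a\colon X_{\bullet}\to X$ of cohomological descent (the descent theorem, with $Y_{*}=\emptyset$). Its weight spectral sequence $E_{1}^{p,q}=H^{q}(X_{p},\Q)\Rightarrow H^{p+q}(X,\Q)$ degenerates at $E_{2}$, with each $E_{1}^{p,q}$ pure of weight $q$; hence $Gr^{W}_{i}H^{i}(X,\Q)=E_{2}^{0,i}=\ker\!\big(H^{i}(X_{0},\Q)\to H^{i}(X_{1},\Q)\big)$, and the augmentation $a_{0}^{*}$ induces on $Gr^{W}_{i}$ the canonical inclusion $E_{2}^{0,i}\hookrightarrow E_{1}^{0,i}=H^{i}(X_{0},\Q)$, which is injective. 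To compare the arbitrary resolution $p$ with $a_{0}$ I would dominate both by a single resolution: resolving the closure of the graph of the birational map $X'\dashrightarrow X_{0}$ produces a smooth proper $\tilde X$ with proper birational maps $q\colon\tilde X\to X'$ and $r\colon\tilde X\to X_{0}$ such that $p\circ q=a_{0}\circ r=:\tilde p$. Then on graded pieces $Gr_{i}^{W}(\tilde p^{*})=Gr_{i}^{W}(r^{*})\circ Gr_{i}^{W}(a_{0}^{*})$; the second factor is injective as just noted, and $Gr_{i}^{W}(r^{*})\colon H^{i}(X_{0},\Q)\to H^{i}(\tilde X,\Q)$, a map of pure weight-$i$ structures, is injective because $Tr\,r\circ r^{*}=\mathrm{id}$ for a proper birational morphism of smooth proper varieties (Poincar\'e duality, as in the smooth argument of \S5.1). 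Hence $Gr_{i}^{W}(\tilde p^{*})$ is injective; but it also factors as $Gr_{i}^{W}(q^{*})\circ Gr_{i}^{W}(p^{*})$, so $Gr_{i}^{W}(p^{*})$ is injective, which completes the proof.

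The delicate points I expect to have to argue carefully are precisely these last ones: that the top weight $Gr^{W}_{i}H^{i}(X,\Q)$ is realized as the edge term $E_{2}^{0,i}$ of the descent weight spectral sequence and so injects into $H^{i}(X_{0},\Q)$, and that the trace retraction $Tr\,r\circ r^{*}=\mathrm{id}$ (valid because $\dim X'=\dim X_{0}=\dim\tilde X=\dim X$ and all three are smooth proper) lets one transport injectivity along the common dominating resolution. Everything else reduces to the formal strictness of morphisms of mixed Hodge structures.
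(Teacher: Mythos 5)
Your strategy is viable and genuinely different from the paper's, and most of it is correct: the inclusion $W_{i-1}H^i(X,\Q)\subseteq \ker p^*$ via strictness and purity of $H^i(X',\Q)$ (note that $X'$ smooth complete need not be K\"ahler, but the paper's \S 5.1 establishes purity in exactly this generality); the reduction, using $W_iH^i(X,\Q)=H^i(X,\Q)$ for complete $X$, to the injectivity of $Gr^W_i(p^*)$; and the key identification $Gr^W_iH^i(X,\Q)\simeq E_2^{0,i}=\ker\bigl(H^i(X_0,\Q)\to H^i(X_1,\Q)\bigr)$, with $Gr^W_i(a_0^*)$ realized as the edge monomorphism. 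This last point is indeed correct: when $Y_*=\emptyset$ the diagonal filtration $\delta(W,L)$ reduces to $L$, each $E_1^{p,q}=H^q(X_p,\Q)$ is pure of weight $q$, and the weight spectral sequence degenerates at $E_2$.

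The genuine gap is in your domination step. Theorem \ref{descent} furnishes $a_0\colon X_0\to X$ proper and surjective as part of a hypercovering, but \emph{not} birational: $X_0$ may contain irreducible components mapping into the singular locus of $X$. Then the rational map $X'\to X_0$ over $X$ whose graph you propose to resolve does not exist, the $\tilde X$ you construct maps onto only part of $X_0$, so $r$ is not surjective, $Tr\,r\circ r^*=Id$ fails, and $r^*$ may kill precisely the classes you must control: for $0\neq\xi\in Gr^W_iH^i(X,\Q)$, the class $a_0^*\xi$ generally has nonzero components on the extra pieces of $X_0$. Two repairs are available. (1) The construction behind Theorem \ref{descent} performs a desingularization at each step, so you may simply build the hypercovering with $X_0=X'$; then $Gr^W_i(p^*)$ \emph{is} the edge monomorphism $E_2^{0,i}\hookrightarrow E_1^{0,i}$ and the entire comparison step disappears. (2) Keep $X_0$ arbitrary, take $\tilde X$ a desingularization of the full fibre product $X'\times_X X_0$ so that $r$ is surjective, and prove injectivity of $r^*$ by cutting out a subvariety of $\tilde X$ generically finite over each component of $X_0$, desingularizing, and using the trace; the retraction becomes $\deg\cdot Id$, which suffices over $\Q$.

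For comparison, the paper argues without hypercoverings: with $Y\subset X$ the singular locus and $Y'=p^{-1}(Y)$, it uses the exact sequence of Mixed Hodge Structures $H^{i-1}(Y',\Q)\to H^i(X,\Q)\to H^i(X',\Q)\oplus H^i(Y,\Q)\to H^i(Y',\Q)$; since $H^{i-1}(Y',\Q)$ has weights $\leq i-1$, strictness gives an injection of $Gr^W_iH^i(X,\Q)$ into the middle sum, and everything reduces to the injectivity of $Gr^W_i(p_{|Y'}^*)\colon Gr^W_iH^i(Y,\Q)\to Gr^W_iH^i(Y',\Q)$, proved by induction on $\dim Y$ using exactly the generically-finite-cover-and-trace trick of repair (2). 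So the trace idea you invoke is also the engine of the paper's proof, deployed there along an induction on the singular locus rather than through the weight spectral sequence of a hypercovering; your route, once repaired, is shorter if one accepts the full simplicial machinery, while the paper's stays at the level of the resolution square.
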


Let $i: Y \to X$ be the subvariety of singular points in $X$ and
let $Y' := p^{-1} (Y)$, $ i': Y' \to X'$, then we have a long
exact sequence:
\[   H^{i-1}(Y',\Q) \to H^i(X,\Q) \xrightarrow{ (p^*, - i^*)}
H^i(X',\Q) \oplus H^i(Y,\Q) \xrightarrow{i'^* + p_{|Y'}^*}
 H^i(Y',\Q)   \ldots \]
 Since the weight of $ H^{i-1}(Y',\Q)$ is $\leq i-1$, we deduce an
 injective morphism $Gr^W_i H^i(X,\Q) \xrightarrow{ (p^*, - i^*)}
Gr^W_i  H^i(X',\Q) \oplus Gr^W_i  H^i(Y,\Q)$. It is enough to
prove  for any element $ a \in Gr^W_i H^i(X,\Q)$ such that $Gr^W_i
(p^*)(a) = 0$, we have $Gr^W_i(i^*) (a) = 0$; which follows from $
Gr^W_i(p_{|Y'}^* \circ i^*) (a)) = 0$ if we prove $ Gr^W_i
(p_{|Y'}^*)$ is injective. By induction on dim.$Y$, we may assume
the injectivity for a resolution $\tilde Y \to Y$. There exists a
subvariety $Z \subset Y'$generically  covering $Y$, then a
desingularization $\tilde Z$ of $ Z' \times_Y \tilde Y$ is a
ramified covering of $\tilde Y$, hence $H^i(\tilde Y, \Q)$ injects
into $H^i(\tilde Z, \Q)$. We deduce that $H^i( Y, \Q) \to
H^i(\tilde Z, \Q)$ is injective and in particular  the factor
$H^i( Y, \Q) \to H^i(Y', \Q)$ is also injective.

\begin{remark}
[Mixed Hodge Structure on the cohomology of an embedded algebraic
variety] The construction still  apply for non proper varieties if
we construct
the Mixed Hodge Structure of an open normal crossing divisor.\\
{\it Hypothesis}. Let  $i_Z: Z \to X$ a closed embedding and $i_X:
X \to P$
 a closed embedding in a  projective space (or any proper smooth complex
 algebraic variety). By Hironaka desingularization we
 construct a diagram:
 $$\begin{array}{ccccc}
     Z'' & \rightarrow & X'' & \rightarrow & P'' \\
     \downarrow & &\downarrow  &  &\downarrow  \\
    Z' & \rightarrow & X' & \rightarrow & P' \\
     \downarrow & & \downarrow  &  & \downarrow  \\
     Z & \rightarrow& X & \rightarrow & P  \\
   \end{array}$$
first by blowing up centers over $Z$ so to obtain a smooth space
$p: P' \to P$ such that   $Z':= p^{-1}(Z)$ is a normal crossing
divisor; set  $X':= p^{-1}(X)$, then:
$$p|:X'-Z' \xrightarrow{\sim} X-Z, \quad p|:P' - Z'
\xrightarrow{\sim} P - Z$$
 are isomorphisms since the modifications are all over $Z$.
 Next, by blowing up centers over $X'$ we obtain a smooth space
$q: P'' \to P'$ such that $X'':= q^{-1}(X')$ and  $Z'':=
q^{-1}(Z')$
 are  normal crossing divisor, and $q|:P'' - X''
\xrightarrow{\sim} P' - X'$. Then, we deduce the diagram:
$$\begin{array}{ccccc}
    X'' - Z'' & \xrightarrow{i''_X} & P'' - Z'' &
     \xleftarrow{j''} & P'' - X''\\
     q_X\downarrow & &q\downarrow  &  &q\downarrow\!\wr  \\
   X' - Z' & \xrightarrow{i'_X} & P' - Z' & \xleftarrow{j'} & P' - X'\\
   \end{array} $$
Since all modification are above $X'$, we still have an
isomorphism induced by $q$ at right.  For dim.$P = d$ and all
integers $i$, the morphism $q^*: H^{2d-i}_c( P'' - Z'', \Q) \to
H^{2d-i}_c( P' - Z', \Q)$ is well defined on cohomology with
compact support since $q$ is proper; its Poincar\'e dual is called
the trace morphism $Tr q: H^i( P'' - Z'', \Q) \to H^i( P' - Z',
\Q)$ and satisfy the relation $ Tr q\, \circ \, q^*\, = \, Id $.
Moreover, the trace morphism is defined as a morphism of sheaves
$q_* \Z_{P'' - Z''}
 \to \Z_{P' - Z'}$ \cite{V2}, hence an induced trace morphism
 $(Tr q)|(X''-Z''): H^i( X'' - Z'', \Q) \to H^i( X' - Z', \Q)$ is well
 defined.

\begin{prop} With the  notations of the above diagram, we have
short exact sequences:
\begin{equation*}
\begin{split}
0 \to H^i( P'' -  Z'', \Q) \xrightarrow{(i''_X)^* - Tr q}&
 H^i( X'' -  Z'', \Q)\oplus
  H^i( P' -  Z', \Q) \\
  & \xrightarrow{(i'_X)^* - (Tr q)|(X''- Z'')}
 H^i( X' -  Z', \Q) \to 0\\
 \end{split}
\end{equation*}
\end{prop}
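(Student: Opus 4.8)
The plan is to recognize this statement as an instance of the earlier Proposition of \cite{E} on the trace morphism, now applied to the \emph{open} varieties $P''-Z''$, $X''-Z''$, $P'-Z'$ and $X'-Z'$ in place of the proper data $X'$, $Y'$, $X$ and $Y$. First I would verify that the relevant square is an abstract blow-up square. Since $Z\subset X$ gives $Z'=p^{-1}(Z)\subset X'=p^{-1}(X)$ and $Z''=q^{-1}(Z')\subset X''=q^{-1}(X')$, removing the divisors does not alter the locus where $q$ fails to be an isomorphism: one has $(P''-Z'')\setminus(X''-Z'')=P''-X''$ and $(P'-Z')\setminus(X'-Z')=P'-X'$, and by construction $q$ restricts to an isomorphism $P''-X''\xrightarrow{\sim}P'-X'$. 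Thus $q\colon P''-Z''\to P'-Z'$ is proper and an isomorphism over the open complement of the closed subvariety $X'-Z'$, and the square with maps $i''_X,i'_X,q,q_X$ is exactly of the type treated before.

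Next I would install the trace morphism for these open varieties. Because $q$ is proper, Verdier's construction \cite{V2} provides a morphism of complexes of sheaves $Tr\,q\colon Rq_*\Z_{P''-Z''}\to\Z_{P'-Z'}$ satisfying $Tr\,q\circ q^{*}=\mathrm{Id}$, so that $q^{*}\colon H^i(P'-Z')\to H^i(P''-Z'')$ is split injective and $Tr\,q$ is split surjective on cohomology. Being a genuine morphism of sheaves, $Tr\,q$ commutes with restriction to the closed subvariety $X'-Z'$; this yields the restricted trace $(Tr\,q)|(X''-Z'')\colon Rq_{X*}\Z_{X''-Z''}\to\Z_{X'-Z'}$ with $(Tr\,q)|(X''-Z'')\circ q_X^{*}=\mathrm{Id}$, the analogue here of part i) of the earlier proposition.

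Then comes the main step. The trace retraction exhibits $\Z_{P'-Z'}$ as a direct summand of $Rq_*\Z_{P''-Z''}$ in the derived category, the complementary summand $C$ being supported on $X'-Z'$ since $q$ is an isomorphism over $P'-X'$; likewise $\Z_{X'-Z'}$ is a direct summand of $Rq_{X*}\Z_{X''-Z''}$ with complement $C_X$. I would identify $C$ with $i'_{X*}C_X$, so that the cone $C((i''_X)^{*}-Tr\,q)$ of the morphism $(i''_X)^{*}-Tr\,q$ from $Rq_*\Z_{P''-Z''}$ to $i'_{X*}Rq_{X*}\Z_{X''-Z''}\oplus\Z_{P'-Z'}$ becomes quasi-isomorphic to $i'_{X*}\Z_{X'-Z'}$, exactly as $i_*\Z_Y$ was realized in the earlier proposition. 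Taking hypercohomology, the long exact sequence of the cone splits, by these direct-sum decompositions, into the asserted short exact sequences for every $i$. Finally each arrow is a morphism of mixed Hodge structures: $(i''_X)^{*}$ and $(i'_X)^{*}$ are pullbacks by algebraic maps, the trace and the restricted trace are Gysin morphisms, and since all four varieties have the same dimension no Tate twist intervenes; the mixed Hodge structures are those carried by the cohomology of complements of normal crossing divisors constructed in the previous subsections.

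The delicate point, which I expect to be the main obstacle, is the sheaf-level identification of the cone with $i'_{X*}\Z_{X'-Z'}$ (equivalently $C\simeq i'_{X*}C_X$) together with the splitting, performed \emph{compatibly} with the mixed Hodge structures. This rests on $Tr\,q$ being an actual morphism of complexes of sheaves commuting with restriction, so that the square of traces and restrictions commutes, and on the compatibility of both traces and pullbacks with the cohomological mixed Hodge complex structure; controlling this for the non-proper $P'-Z'$ and $X'-Z'$, rather than for the proper varieties of the earlier proposition, is where the real work lies.
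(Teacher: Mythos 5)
Your proposal is correct and follows the paper's own route: the short exact sequences come from the long exact sequence attached to the square whose right-hand vertical arrow $P''-X''\xrightarrow{\sim}P'-X'$ is an isomorphism, split by the retraction identities $Tr\,q\circ q^{*}=Id$ and $(Tr\,q)|(X''-Z'')\circ q_X^{*}=Id$. Your sheaf-level identification of the cone of $(i''_X)^{*}-Tr\,q$ with $i'_{X*}\Q_{X'-Z'}$ is precisely the mechanism of the earlier proposition of \cite{E} that the paper invokes implicitly when it ``deduces'' the long exact sequence, so you have written out the same argument one level lower (and your worry about mixed Hodge compatibility is not needed for the proposition itself, which only asserts exactness of $\Q$-vector spaces; the MHS statement is deferred to the corollary).
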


Since we have a vertical  isomorphism $q$ at right of the above
diagram, we deduce a long exact sequence of cohomology spaces
containing the sequences of the proposition; the injectivity of
$(i''_X)^* - Tr q$ and  the surjectivity of $(i'_X)^* - (Tr
q)|X'')$ are  deduced from $ Tr q \circ q^* = Id $ and $ (Tr
q)|(X''- Z'') \circ q^*|(X'- Z') = Id $, hence the long exact
sequence  splits into short exact sequences.

\begin{corollary} The cohomology $H^i( X -
 Z, \Z)$ is isomorphic to $H^i( X' -
 Z', \Z)$ since $X-Z \simeq X'-Z'$ and then carries the
 Mixed Hodge Structure isomorphic to  the cokernel of
$(i''_X)^* - Tr q$ acting as a morphism of  Mixed Hodge Structures.
\end{corollary}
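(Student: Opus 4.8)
The plan is to recognize the displayed short exact sequence of the preceding proposition as a sequence of Mixed Hodge Structures, so that its right-hand term is exhibited as the cokernel of a morphism of Mixed Hodge Structures. First I would record the two identifications that make the statement meaningful. Since all the blow-up centers producing $p\colon P' \to P$ lie over $Z$, the induced map restricts to an isomorphism $X' - Z' \xrightarrow{\sim} X - Z$, whence $H^i(X-Z,\Z)\simeq H^i(X'-Z',\Z)$ for every $i$; this settles the topological half of the assertion and reduces the problem to equipping $H^i(X'-Z',\Z)$ with a Mixed Hodge Structure.

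Next I would verify that the three remaining terms of the sequence carry canonical Mixed Hodge Structures. By construction $P''$ and $X''$ are smooth and proper and $Z''$ is a normal crossing divisor with smooth components in each, while $P'$ is smooth proper with $Z'$ a normal crossing divisor; hence $P''-Z''$, $X''-Z''$ and $P'-Z'$ are smooth complex algebraic varieties, each the complement of a normal crossing divisor in a smooth proper variety. By the construction of the logarithmic complex (Section 6.1), each of $H^i(P''-Z'')$, $H^i(X''-Z'')$ and $H^i(P'-Z')$ carries a canonical Mixed Hodge Structure, independent of the compactification.

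I would then show that the two maps assembling $\phi:=(i''_X)^*-Tr\,q$ are morphisms of Mixed Hodge Structures. The restriction $(i''_X)^*\colon H^i(P''-Z'')\to H^i(X''-Z'')$ is the pullback along the morphism of smooth varieties $i''_X\colon X''-Z''\to P''-Z''$, hence a morphism of Mixed Hodge Structures by the functoriality established in Section 6.1. For the trace map I would use its definition: $q$ is proper and, being a composition of blow-ups, birational of relative dimension $0$, so $P''-Z''$ and $P'-Z'$ are smooth of the same dimension $d$; the pullback $q^*\colon H^{2d-i}_c(P'-Z')\to H^{2d-i}_c(P''-Z'')$ is a morphism of Mixed Hodge Structures on compactly supported cohomology, and $Tr\,q$ is its Poincar\'e dual. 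Since Poincar\'e duality is an isomorphism of Mixed Hodge Structures after a Tate twist by $(-d)$, and the twist is the same on source and target because the two varieties have equal dimension, $Tr\,q$ is a morphism of Mixed Hodge Structures of weight $0$. Consequently $\phi$ is a morphism of Mixed Hodge Structures.

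Finally I would invoke that the category of Mixed Hodge Structures is abelian (Theorem \ref{abelian}) and that its morphisms are strict, so that $\coke\phi$, computed on the underlying lattice as in the strictness corollary, carries a canonical Mixed Hodge Structure with quotient weight and Hodge filtrations. Exactness of the displayed sequence identifies this cokernel with $H^i(X'-Z',\Q)$ as a vector space, while $H^i(X'-Z',\Z)$ furnishes the lattice; transporting along $X'-Z'\simeq X-Z$ yields the Mixed Hodge Structure on $H^i(X-Z,\Z)$. The main obstacle is the compatibility of the trace morphism $Tr\,q$ with the Mixed Hodge Structures: one must check carefully that Poincar\'e duality between $H^i$ and the dual of compactly supported cohomology is an isomorphism of Mixed Hodge Structures on these open smooth varieties, and that the relative-dimension-zero hypothesis makes the Tate twists cancel, so that $\phi$ genuinely lands in the abelian category in which strictness applies.
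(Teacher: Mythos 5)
There is a genuine gap, and it sits at the heart of the construction. You assert that ``$P''$ and $X''$ are smooth and proper and $Z''$ is a normal crossing divisor\dots in each,'' so that $X''-Z''$ is a smooth variety, complement of a NCD in a smooth proper variety. This is false: by the paper's construction, $q\colon P''\to P'$ is obtained by blowing up centers over $X'$ precisely so that $X'':=q^{-1}(X')$ and $Z'':=q^{-1}(Z')$ become \emph{normal crossing divisors} in $P''$. Since $X'$ is the preimage of the (in general singular) embedded variety $X$, the space $X''$ is a NCD --- a union of smooth components crossing normally, hence singular --- and $X''-Z''$ is an \emph{open} normal crossing divisor, the complement of a sub-NCD inside a NCD. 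This is exactly the case flagged in the introduction to Section 6, whose cohomology $H^i$ has weights ranging over all of $[0,2i]$; its Mixed Hodge Structure comes from combining the Mayer--Vietoris simplicial resolution of 6.2 with logarithmic complexes on the intersection strata, \emph{not} from the smooth-variety construction of 6.1. Consequently your argument that $(i''_X)^*$ is a morphism of Mixed Hodge Structures ``by the functoriality established in Section 6.1'' collapses: $i''_X$ is not a morphism of smooth varieties, and the compatibility must instead be checked between the logarithmic CMHC on $P''$ and the open-NCD CMHC on $X''$. Indeed the whole point of the double blow-up (first over $Z$, then over $X'$), and the paper's explicit remark after the corollary, is that the middle term is handled by the open NCD case, so that the MHS on an arbitrary embedded variety is reduced to the two special cases without the simplicial theory; your reading makes the second blow-up pointless.

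A secondary deviation: you construct $Tr\,q$ as the Poincar\'e dual of $q^*$ on compactly supported cohomology and then argue the Tate twists cancel. For $Tr\,q\colon H^i(P''-Z'')\to H^i(P'-Z')$, between smooth varieties of equal dimension, this is defensible, but the paper works instead with Verdier's sheaf-level trace $q_*\Z_{P''-Z''}\to\Z_{P'-Z'}$, and this is not cosmetic: the sheaf-level definition is what makes the restricted trace $(Tr\,q)|(X''-Z'')\colon H^i(X''-Z'')\to H^i(X'-Z')$ well defined --- Poincar\'e duality is unavailable on the singular $X''-Z''$ --- and that restricted trace is needed to exhibit the second arrow of the exact sequence as the map identifying the cokernel of $(i''_X)^*-Tr\,q$ with $H^i(X'-Z')$. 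Your final step (cokernel in the abelian category of MHS, strictness, transport along $X'-Z'\simeq X-Z$) is correct in outline, but it only becomes available once the middle term is given its MHS by the open-NCD construction and $(i''_X)^*-Tr\,q$ is shown to be a morphism of MHS at the level of those CMHCs.
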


The left term carry a  Mixed Hodge Structure as the special case
of the complementary of the normal crossing divisor: $Z''$ into
the smooth proper variety $P''$ , while the middle term is the
complementary of the normal crossing divisor: $Z'' $  into the
normal crossing divisor: $X'' $. Both cases can be treated by the
above special cases without the general theory of simplicial
varieties. This shows that the Mixed Hodge Structure is uniquely
defined by the construction on an open normal crossing divisor and
the logarithmic case for a smooth variety.
\end{remark}


\begin{thebibliography}{A}

 \bibitem{BBD} A.A. Beilinson, J. Bernstein, P. Deligne, \emph{ Analyse et
Topologie sur les espaces singuliers, Vol.I}, 100, Ast\'erisque,
France, 1982.

\bibitem{bott-tu} R. Bott, L.~W.~Tu, \emph{ Differential Forms in Algebraic
Topology}, third corrected printing, Springer, New York, 1995. 

\bibitem{carl} J. Carlson,
S. Muller-Stach, C. Peters,  \emph{ Period mappings and Period
Domains}.

\bibitem{2 C}  H. Cartan, S. Eilenberg, \emph{Homological Algebra},
Princeton Math. series 19, Princeton Univ. Press 1956.

\bibitem{DI} P. Deligne, \emph{ Th\'eor\`eme de Lefschetz et crit\`eres de
d\'eg\'en\'erescence de  suites spectrales}. Publ.Math. IHES, 35,
1968, 107-126.

\bibitem{HII} P. Deligne, \emph{ Th\'eorie de Hodge II}. Publ. Math. IHES 40
1972, 5-57.

\bibitem{HIII} P. Deligne, \emph{ Th\'eorie de Hodge III}. Publ. Math. IHES
44, 1975, 6-77.

\bibitem{Del82} P. Deligne, \emph{ Hodge cycles on abelian varieties} (notes by J. S. Milne), in Lecture Notes in
Math. \textbf{900} (1982), pp. 9 - 100, Springer.

\bibitem{Del-Ill}  P. Deligne, L. Illusie, \emph{Rel{\`e}vements
modulo $p^2$ et d{\'e}composition du complexe de de Rham}. Inv.
Math., 89, 1987, 247-270.

\bibitem{EZ}  F. El Zein,  \emph{ La classe fondamentale d'un
cycle},
 Compositio  Mathematica, 29, 1974, p 9-33.

\bibitem{E}  F. El Zein, \emph{ Mixed Hodge Structures},  Trans.
AMS, 275, 1983, 71-106.

\bibitem{E-N} F. Elzein,  A. N\'emethi, \emph{ On the weight filtration
of the homology of algebraic varieties: the generalized Leray
cycles}, Ann. Scuola Norm. Sup. Pisa Cl. Sci. (5), vol I, 2002,
869- 903.

\bibitem{E-T} F. El Zein, L. W. Tu,  \emph{\v{C}ech Cohomology
and Derived Functors}, in these lecture notes.

\bibitem{Fr}  A. Fr\"ohlicher, \emph{Relations between the cohomology
groups of Dolbeault and topological invariants},
Proc. Nat. Acad. Sci. USA  41, 1955, 641-644.


\bibitem{G} R. Godement, \emph{  Topologie alg\'ebrique et
th\'eorie des faisceaux}. Paris, Hermann, 1958.

\bibitem{G-M} M. Goresky,  R. MacPherson, \emph{Intersection homology theory},
Topology, 19, 1980.

\bibitem{Grif} P. Griffiths, \emph{A Transcendental Method in Algebraic
Geometry}, Congr\`es int. math., 1970. Tome 1, p 113-119.

\bibitem{g-h} P.~Griffiths, J.~Harris, \emph{Principles of Algebraic
Geometry}, John Wiley, 1978.

\bibitem{G-S} P. Griffiths, W. Schmid, \emph{ Recent developments in Hodge theory}, in
Discrete subgroups of Lie groups, Oxford univ. press, 1973.
 
\bibitem{Gr1} A. Grothendieck, \emph{ The cohomology theory of abstract
algebraic varieties},  Proc. Int. Cong. Math., Edinburgh, 1958,
103-118.
 
\bibitem{Gr2} A. Grothendieck, \emph{On the de Rham cohomology of
algebraic varieties},  Publ.  Math. IHES 29, 1966,
p 95 - 103.

\bibitem{10 Gr II} A. Grothendieck,\emph{ Sur quelques points d'alg\`ebre
 homologique},T\^ohoku Math.J. 9,1957,119-221.
 
 \bibitem{G-P} V. Guillemin,  A. Pollack, \emph{ Differential topology}, New
Jersey, 1974.
 
 \bibitem{H1} R. Hartshorne,  \emph{
Residues and Duality},
 Springer Lecture Notes in Math. 20, 1966.
 
 \bibitem{H} R. Hartshorne, \emph{ Algebraic geometry}. Graduate Texts in Math.  52, Springer, 1977, 496 pp.
 
\bibitem{Hi} H. Hironaka, \emph{ Resolution of singularities of
an algebraic variety over a field of characteristic zero},  
Ann. Math. 79, 1964,  I: 109 - 203 ,  II:  205 - 326.

\bibitem{Hodge} W.V.D. Hodge, \emph{ The theory and Applications of Harmonic Integrals},  
Cambridge Univ. Press, New York, 1941 (2nd ed., 1952). 

\bibitem{Iver}  B. Iversen, \emph{Cohomology of Sheaves},
Springer, Universitext, 1986.

\bibitem{Lefsch} S. Lefschetz, \emph{  L'Analysis Situs et la G\'eom\'etrie Alg\'ebrique},  
Gauthiers-Villars, Paris, 1924.

\bibitem{NaG} M. Nagata, \emph{ Imbedding  of an abstract
 variety in a complete variety},  J. of Math. Kyoto 2, I, 1962,
1-10.

\bibitem{St}  C. Peters, J. Steenbrink, \emph{Mixed Hodge
structures},  Springer, 2008.

\bibitem{Se1}  M. Saito, \emph{ Modules de Hodge
polarisables},  Pub. RIMS, 553, Kyoto univ. 1986.

\bibitem{Sha}  I. R. Shafarevich, \emph{  Basic
algebraic geometry 2},  Springer, 1994.

\bibitem{tu_m} L. W. Tu,  \emph{ An Introduction to Manifolds},
second edition, Springer, Universitext, 2011.

\bibitem{V} J.L. Verdier, \emph{ Des cat\'egories d\'eriv\'ees des cat\'egories
 ab\'eliennes}, Ast\'erisque  239, 1996.

\bibitem{V2} J.L. Verdier, \emph{ Dualit\'e dans la cohomologie des espaces
localement compacts }, S\'eminaire Bourbaki  300, 1965.

\bibitem{Vo} C. Voisin, \emph{ Hodge
theory and complex algebraic geometry}, Cambridge Univ. Press, 2002.

\bibitem{Vo1} C. Voisin, \emph{ A counterexample to the Hodge
conjecture extended to  K\"{a}hler varieties}, Int. Math. Res. 20,
1057-1075, 2002.

\bibitem{Warn} F.W. Warner, {\it Foundations of Differentiable
Manifolds and Lie  groups}, Graduate Texts in Math.  94, Springer, 1983. 

\bibitem{Weil} A. Weil, {\it  Introduction aux vari\'et\'es K\"{a}hl\'eriennes },  
Hermann, Paris, 1958.

\bibitem{wel}  R.O. Wells Jr,  {\it  Differential Analysis on Complex Manifolds},  
Graduate Texts in Math.  65, Springer, 1979.
 
\end{thebibliography}
 \end{document}